\newtheorem{theorem}{Theorem}
\newtheorem{lemma}[theorem]{Lemma}
\newtheorem{definition}[theorem]{Definition}
\newtheorem{proposition}[theorem]{Proposition}
\newtheorem{corollary}[theorem]{Corollary}
\newtheorem{remark}[theorem]{Remark}
\numberwithin{theorem}{section}
\numberwithin{equation}{section}
\newcommand{\mint}{- \mskip-19,5mu \int}
\def\N{\mathbb{N}}
\def\R{\mathbb{R}}
\def\bg{{\boldsymbol\gamma}}
\renewcommand{\d}{\mathrm{d}}
\newcommand{\dx}{\mathrm{d}x}
\newcommand{\dy}{\mathrm{d}y}
\newcommand{\dt}{\mathrm{d}t}
\newcommand{\ds}{\mathrm{d}s}
\newcommand{\dtau}{\mathrm{d}\tau}
\renewcommand{\epsilon}{\varepsilon}
\newcommand{\essup}{\operatornamewithlimits{ess\,sup}}
\DeclareMathOperator{\Div}{div}
\DeclareMathOperator{\dist}{dist}
\DeclareMathOperator{\loc}{loc}
\newcommand{\osc}{\operatornamewithlimits{osc}}
\renewcommand{\epsilon}{\varepsilon}
\newcommand{\eps}{\varepsilon}
\renewcommand{\rho}{\varrho}
\def\eqn#1$$#2$${\begin{equation}\label#1#2\end{equation}}
\newcommand{\pl}{\partial}
\newcommand{\al}{\alpha}
\newcommand{\gm}{\gamma}
\newcommand{\lm}{\lambda}
\newcommand{\varep}{\varepsilon}
\newcommand{\sig}{\sigma}
\newcommand{\z}{\zeta}
\let\TeXchi\chi
\newbox\chibox
\chibox \hbox{\raise\dp0 \box 0 }
\def\chi{\copy\chibox}
\def\Xint#1{\mathchoice
    {\XXint\displaystyle\textstyle{#1}}%
    {\XXint\textstyle\scriptstyle{#1}}%
    {\XXint\scriptstyle\scriptscriptstyle{#1}}%
    {\XXint\scriptscriptstyle\scriptscriptstyle{#1}}%
    \!\int}
\def\XXint#1#2#3{\setbox0=\hbox{$#1{#2#3}{\int}$}
    \vcenter{\hbox{$#2#3$}}\kern-0.5\wd0}
\def\bint{\Xint-}
\def\dashint{\Xint{\raise4pt\hbox to7pt{\hrulefill}}}
\def\Xiint#1{\mathchoice
    {\XXiint\displaystyle\textstyle{#1}}%
    {\XXiint\textstyle\scriptstyle{#1}}%
    {\XXiint\scriptstyle\scriptscriptstyle{#1}}%
    {\XXiint\scriptscriptstyle\scriptscriptstyle{#1}}%
    \!\iint}
\def\XXiint#1#2#3{\setbox0=\hbox{$#1{#2#3}{\iint}$}
    \vcenter{\hbox{$#2#3$}}\kern-0.5\wd0}
\def\biint{\Xiint{-\!-}}
\subjclass[2020]{35K65, 35K67, 35B45, 35B65, 35K92, 76S05}
\keywords{Schauder estimates, gradient estimates, doubly non-linear parabolic equations, degenerate/singular parabolic equations and systems}
\begin{document}

\title[Schauder estimates for parabolic $p$-Laplace systems]
{Schauder estimates for parabolic $p$-Laplace systems}
\date{\today}

\author[V. B\"ogelein]{Verena B\"{o}gelein}
\address{Verena B\"ogelein,
Fachbereich Mathematik, Universit\"at Salzburg,
Hellbrunner Str. 34, 5020 Salzburg, Austria}
\email{verena.boegelein@plus.ac.at}

\author[F. Duzaar]{Frank Duzaar}
\address{Frank Duzaar,
Fachbereich Mathematik, Universit\"at Salzburg,
Hellbrunner Str. 34, 5020 Salzburg, Austria}
\email{frankjohannes.duzaar@plus.ac.at}

\author[U. Gianazza]{Ugo Gianazza}
\address{Ugo Gianazza,
Dipartimento di Matematica ``F. Casorati",
Universit\`a di Pavia,
via Ferrata 5, 27100 Pavia, Italy}
\email{ugogia04@unipv.it}

\author[N. Liao]{Naian Liao}
\address{Naian Liao,
Fachbereich Mathematik, Universit\"at Salzburg,
Hellbrunner Str. 34, 5020 Salzburg, Austria}
\email{naian.liao@plus.ac.at}

\author[C. Scheven]{Christoph Scheven}
\address{Christoph Scheven,
Fakult\"at f\"ur Mathematik, 
Universit\"at Duisburg-Essen, Thea-Leymann-Str. 9, 45127 Essen, Germany}
\email{christoph.scheven@uni-due.de}

\begin{abstract}
We establish the local H\"older regularity of the spatial gradient of bounded weak solutions $u\colon E_T\to\R^k$ to the non-linear system of parabolic type
\begin{equation*}
  \partial_tu-\Div\Big( a(x,t)\big(\mu^2+|Du|^2\big)^\frac{p-2}2Du\Big)=0
  \qquad\mbox{in $E_T$},
\end{equation*}
where $p>1$, $\mu\in[0,1]$, and the coefficient $a\in L^\infty(E_T)$ is bounded below by a positive constant and is H\"older continuous in the space variable $x$.
As an application, we prove H\"older estimates for the gradient of weak solutions to a doubly non-linear parabolic equation
in the super-critical fast diffusion regime. 
\end{abstract}

\maketitle

\setcounter{tocdepth}{1}
 \tableofcontents

\section{Introduction}

Let $E$ be a bounded open set in $\R^N$ and $E_T:=E\times(0,T]$ for some $T>0$. Consider weak solutions $u\colon E_T\to\R^k$ to the following non-linear system of parabolic type
\begin{equation}\label{p-laplace-intro}
  \partial_tu-\Div\Big( a(x,t)\big(\mu^2+|Du|^2\big)^\frac{p-2}2Du\Big)=0
  \qquad\mbox{in $E_T$},
\end{equation}
for parameters $p>1$ and $\mu\in[0,1]$,  where the coefficient $a\in L^\infty(E_T)$ satisfies
\begin{equation}\label{prop-a-intro}
\left\{
  \begin{array}{c}
    C_o\le a(x,t)\le C_1,\\[6pt]
    |a(x,t)-a(y,t)|\le C_1|x-y|^\alpha,
  \end{array}
  \right.
\end{equation}
for a.e.~$x,y\in E$ and $t\in(0,T]$, with a H\"older exponent
$\alpha\in(0,1)$ and structural constants $0<C_o\le C_1$. 
For {\it bounded} weak solutions to this system, we establish local H\"older regularity of their spatial gradient.
The precise result, including the corresponding quantitative estimates of Schauder-type, is stated as follows. Notation can be found in Section~\ref{S:notation}.

\begin{theorem}[Schauder estimate for parabolic $p$-Laplace systems]\label{theorem:schauder}
Let $p>1$ and $\mu\in[0,1]$. Then there exist a constant
$C>0$ and a H\"older exponent $\alpha_o\in (0,1)$ both depending on $N,p,C_o,C_1,\alpha$ and $k$ such that, whenever $u$ is a bounded weak solution to the parabolic system 
\eqref{p-laplace-intro} with \eqref{prop-a-intro}, we have 
\begin{equation*}
    Du\in C^{\alpha_o,\alpha_o/2}_{\mathrm{loc}}\big(E_T,\R^{kN}\big).
\end{equation*}
Moreover, for any compact subset $\mathsf{K}\subset E_T$ 
with $\varrho:= \tfrac14\min\{1,
\mathrm{dist}_{\mathrm{par}}(\mathsf {K},\partial_\mathrm{par} E_T)\}$, we have
the quantitative local $L^\infty$-gradient estimate
\begin{equation}
    \label{gradient-sup-bound-final}
    \sup_{\mathsf{K}}|Du|
    \le
    C \bigg[\frac{\osc_{E_T}u}{\rho}
    +
    \Big(\frac{\osc_{E_T}u}{\rho}\Big)^{\frac{2}{p}}
    +\mu
    \bigg]
    =:
    C\lambda,
\end{equation}
and the quantitative local gradient H\"older estimate
\begin{equation}\label{gradient-holder-bound-final}
    |Du(x_1,t_1) - Du(x_2,t_2)|
    \le
    C\lambda
    \Bigg[\frac{|x_1-x_2|+\sqrt{\lambda^{p-2}|t_1-t_2|}}{\min\{1,\lambda^{\frac{p-2}{2}}\}\rho}\Bigg]^{\alpha_o}
\end{equation} 
for any $(x_1,t_1),(x_2,t_2)\in\mathsf{K}$.
\end{theorem}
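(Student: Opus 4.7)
The plan is to follow a perturbation/freezing strategy in intrinsic geometry, which is by now the standard route to Schauder-type estimates for degenerate and singular parabolic $p$-Laplace systems. The starting point is to derive the $L^\infty$-bound \eqref{gradient-sup-bound-final} first, via a Moser/De Giorgi iteration on intrinsic cylinders $Q_\rho^\lambda(x_0,t_0):=B_\rho(x_0)\times(t_0-\lambda^{2-p}\rho^2,t_0]$, where the parameter $\lambda$ is chosen in accordance with the right-hand side of \eqref{gradient-sup-bound-final} so that $|Du|\lesssim\lambda$ on $\mathsf{K}$. This fixed $\lambda$ then plays the role of an intrinsic scale on which the system effectively behaves like a uniformly parabolic one (after the scaling $x\mapsto x_0+\rho\tilde x$, $t\mapsto t_0+\lambda^{2-p}\rho^2\tilde t$).

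Next, on such an intrinsic cylinder I would freeze the coefficient and introduce the comparison map $v$ solving the homogeneous frozen system
\begin{equation*}
\partial_t v-\Div\!\Big(a(x_0,t_0)\big(\mu^2+|Dv|^2\big)^{\frac{p-2}{2}}Dv\Big)=0
\qquad\text{in }Q_\rho^\lambda(x_0,t_0),
\end{equation*}
with $v=u$ on the parabolic boundary. Using the Hölder assumption \eqref{prop-a-intro} on $a(\cdot,t)$ and testing the difference equation with $u-v$, one obtains a comparison estimate of the form
\begin{equation*}
\dashint_{Q_\rho^\lambda(x_0,t_0)}|Du-Dv|^p\,\dxt
\,\le\,C\,\lambda^p\,\rho^{\alpha p}.
\end{equation*}
The ``hard'' input will be a Campanato-type excess-decay estimate for the gradient of $v$: for the frozen, constant-coefficient system, one can establish
\begin{equation*}
\dashint_{Q_{\theta\rho}^\lambda}\big|Dv-(Dv)_{Q_{\theta\rho}^\lambda}\big|^2\,\dxt
\,\le\,C\,\theta^{2\beta}\dashint_{Q_\rho^\lambda}\big|Dv-(Dv)_{Q_\rho^\lambda}\big|^2\,\dxt
\end{equation*}
for some universal $\beta\in(0,1)$, by a Liouville argument in intrinsic coordinates together with linearisation around a non-zero gradient.

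Combining the two ingredients yields an iterative inequality for the excess of $Du$ on $Q_{\theta\rho}^\lambda$ of the shape $\Phi(\theta\rho)\le C\theta^{2\beta}\Phi(\rho)+C\lambda^2\rho^{2\alpha}$. A standard Campanato iteration then produces a scale-invariant Morrey-type decay $\Phi(\rho)\le C\lambda^2\rho^{2\alpha_o}$ for some $\alpha_o\in(0,\min\{\alpha,\beta\})$, from which the parabolic Hölder continuity \eqref{gradient-holder-bound-final} follows after reinterpreting the intrinsic distance in standard coordinates (which accounts for the factor $\min\{1,\lambda^{(p-2)/2}\}$ in the denominator). The main obstacle lies in handling the alternative inherent to intrinsic scaling: on a given cylinder one must dichotomise between the \emph{non-degenerate regime} where $|Du|\simeq\lambda$, in which the frozen system is genuinely uniformly parabolic and the perturbation argument runs smoothly, and the \emph{degenerate regime} where $|Du|\ll\lambda$, in which one instead exploits a smallness propagation or a De Giorgi-type alternative to reduce the intrinsic scale. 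Keeping both branches quantitatively consistent, uniformly in $p\in(1,\infty)$ and $\mu\in[0,1]$ and down to the singular limit $\mu=0$, is what makes the argument delicate.
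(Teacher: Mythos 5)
Your freezing step contains a genuine error: you freeze the coefficient at a point $(x_0,t_0)$ in \emph{both} variables, but assumption \eqref{prop-a-intro} only gives H\"older continuity of $a$ in the spatial variable, uniformly in $t$; in time the coefficient is merely bounded and measurable. Consequently $|a(x,t)-a(x_0,t_0)|$ need not be small on $Q_\rho^{(\lambda)}(x_0,t_0)$, and the comparison estimate you claim, $\dashint_{Q_\rho^{(\lambda)}}|Du-Dv|^p\,\dxt\le C\lambda^p\rho^{\alpha p}$, fails. The paper freezes only in space, comparing with the solution $w$ of the Cauchy--Dirichlet problem for the system with coefficient $a(x_0,t)$ (Lemma~\ref{lem:comp-2}); the price is that the frozen problem still has rough time-dependent coefficients, so the excess-decay input cannot be the classical constant-coefficient Campanato estimate you invoke, but must be the intrinsic a priori estimate of Proposition~\ref{prop:apriori} (imported from \cite{BDLS-Tolksdorf}), valid for coefficients $b(t)$ depending only on time and holding on cylinders of fixed geometry $\lambda$ under the bound $\sup(\mu^2+|Dw|^2)^{1/2}\le\mathfrak A\lambda$.

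A second gap is the logical order and the uniformity in $p$. You propose to prove the sup-bound \eqref{gradient-sup-bound-final} \emph{first}, by Moser/De Giorgi iteration directly on the system with H\"older coefficients; but those iterations rest on energy estimates for second-order derivatives (Proposition~\ref{prop:estD2u}), which require differentiable coefficients. In the paper they are run only for differentiable coefficients — in particular for the frozen problem, whose coefficient is independent of $x$ — and the quantitative bound \eqref{gradient-sup-bound-final} for $u$ itself is instead extracted a posteriori from the Campanato machinery (bounding the Lebesgue representative $\Gamma_{z_o}$, then an interpolation/iteration argument, Proposition~\ref{prop:quant-bound-for-Du}), under the qualitative a priori assumption $|Du|\in L^\infty_{\rm loc}$, which is finally removed (together with the case $\mu=0$) by mollifying the coefficients and passing to the limit with Ascoli--Arzel\`a. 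Your sketch also does not engage with the feature that is the actual point of the theorem: in the sub-critical range $1<p\le\frac{2N}{N+2}$ gradient bounds in terms of $\|Du\|_{L^p}$ alone are impossible, and one must use the boundedness of $u$ via the $L^m$-estimate of Lemma~\ref{lem:Lq-est} (the integration-by-parts trick producing the oscillation $\boldsymbol\omega$ on the right-hand side); without this ingredient neither the sup-bound with $\osc u$ on the right of \eqref{gradient-sup-bound-final} nor the verification of the intrinsic coupling $|Dw|\lesssim\lambda$ for the comparison map is available for $p$ close to $1$, and your "degenerate/non-degenerate dichotomy" remark does not substitute for it.
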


In their groundbreaking works \cite{DiBenedetto-Friedman, DiBenedetto-Friedman2,
DiBenedetto-Friedman3} DiBenedetto \& Friedman established H\"older regularity of the gradient  of solutions to the model $p$-Laplace system; they covered the super-critical range $p>p_\ast:=\frac{2N}{N+2}$. For an extension to the sub-critical range $1<p\le p_*$ see  \cite{Choe:1991}, although for computational simplicity only the scalar case ($k=1$) was considered. These works dealt with model cases without coefficients, 
whereas systems of parabolic $p$-Laplace-type with H\"older continuous coefficients were treated in
\cite{Chen-DB-89, Kuusi-Mingione, Misawa-Schauder}, focusing on $p>p_*$. Furthermore, the case of variable exponents $p(x,t)> p_\ast$ was included in \cite{Boegelein-Duzaar:p(z)}.

The main new feature of our result is
that we cover systems with H\"older continuous coefficients for arbitrary exponents $p>1$. Moreover, Schauder-type estimates are given in terms of the oscillation of the solution; we will show an interesting application of such novel explicit estimates in Section~\ref{sec:DNL}. Below the critical exponent $p_\ast$, weak solutions need not be locally bounded; see the counterexamples in \cite{BDGLS-23}. Therefore, Schauder-type estimates cannot
be expected to hold for arbitrary weak solutions
to~\eqref{p-laplace-intro} if
$1<p\le p_*$. However, we are able to establish a unified theory for {\it bounded}
solutions. In turn, local boundedness of solutions can be established under the assumption $u\in L^r_{\loc}(E_T,\R^k)$ for some $r>\frac{N(2-p)}{p}$. Therefore, the local boundedness assumption could in fact be replaced by this integrability assumption.

Part of this manuscript has been extracted and extended from the arXiv preprint \cite{BDGLS-23} which does not deal with systems.

\subsection{Method of proof}
The proof of the theorem is divided into two steps. As a first step, in Section \ref{S:grad-bound}, we consider systems with differentiable coefficients. In order to avoid further technical complication that arises from the degeneracy of the system, in such a case we restrict ourselves to the case $\mu>0$. However, it is crucial that all constants are independent of $\mu$. The second step in Section~\ref{sec:Schauder-for-Lipschitz} deals with systems with H\"older continuous coefficients. The method consists in  considering a comparison problem with frozen coefficients: Relevant comparison estimates are prepared in Section \ref{S:4}.

The most important results in Section \ref{S:grad-bound} are the quantitative gradient bounds of solutions. To get them, we adapt a general scheme in \cite[Chapter VIII, Section 4]{DB}, that is, we will first reach $|Du|\in L^{\infty}_{\loc}(E_T)$ in a qualitative way; then, we will turn such qualitative information into precise quantitative estimates. 
Special care is needed for the sub-critical range $1<p\le\frac{2N}{N+2}$ as solutions might be unbounded.  
To get proper gradient bounds, we need to work with {\it bounded} solutions, adapt a trick from \cite[Chapter VIII, Lemma 4.1]{DB} (see also \cite{DiBenedetto-Friedman3, Choe:1991}), and raise the integrability of $|Du|$ by a clever manipulation of the energy estimate for second order derivatives in~\eqref{energy-est}. 
This is a unified treatment for all exponents $1<p\le 2$ among {\it bounded} solutions and yields an estimate of the $L^m$-norm of $|Du|$ for every $m>1$ in terms of its $L^p$-norm and the oscillation of $u$, cf. Lemma~\ref{lem:Lq-est}. With this result at hand, we run Moser's and De Giorgi's iterations to obtain the gradient bound in terms of the $L^p$-norm of $|Du|$ and the oscillation of $u$. 

Once the boundedness of the gradient is established
in the case of differentiable coefficients, then the H\"older continuity of the gradient follows for all exponents $p>1$ from the theory developed in the seminal works by DiBenedetto \& Friedman \cite{DiBenedetto-Friedman, DiBenedetto-Friedman2,
DiBenedetto-Friedman3}, see also \cite{Chen,Choe:1991} 
and the monograph \cite[Chapter~IX]{DB}.

The aim of Section~\ref{sec:Schauder-for-Lipschitz} is to generalize these regularity results to systems with merely H\"older continuous coefficients. The strategy for this is to compare the given solution with a solution to a related problem with frozen coefficients. 
This is also the underlying idea in the earlier work \cite{Boegelein-Duzaar:p(z)}, which covers variable exponents $p(x,t)>\frac{2N}{N+2}$. However, in the present work we develop  
a slightly different approach than that in
\cite{Boegelein-Duzaar:p(z)}, which allows us to include the sub-critical 
case, and meanwhile, contains some simplifications also in the super-critical case. First we prove \textit{ a priori} estimates under the additional assumption that the gradient is bounded. 
The virtual advantage of this approach is that we are able to work with cylinders $Q_{\rho}^{(\lambda)}$ of a fixed geometry given 
by the parameter $\lambda$, which can be chosen in terms of $\|Du\|_{L^\infty}$. 
This represents a significant simplification compared to \cite{Boegelein-Duzaar:p(z)}, in which the authors were forced to use a sequence of cylinders $Q_{\rho_i}^{(\lambda_i)}$ with varying parameters $\lambda_i$ that take into account a possible blow-up of the gradient. 
In order to work with cylinders of a fixed geometry, it is crucial that the article \cite{BDLS-Tolksdorf} provides an improved version of a Campanato-type estimate for differentiable coefficients that holds on cylinders $Q_r^{(\lambda)}$ with a fixed geometry given by $\lambda>0$ and arbitrary $r\in(0,\rho]$, cf. Proposition~\ref{prop:apriori}. In Section~\ref{sec:Schauder-for-Lipschitz}, we prove that this Campanato-type estimate 
can be extended to systems with merely H\"older continuous coefficients, assuming that the gradient is bounded. This yields the desired \textit{ a priori} estimates. Finally, using these \textit{a priori} estimates we prove Theorem~\ref{theorem:schauder} by an approximation argument.  

We note that in the super-critical case $p>\frac{2N}{N+2}$, the method described above can also be used to derive gradient bounds that are independent of the oscillation of the solution and depend only on integrals of $|Du|^p$ instead. The necessary changes are indicated in Remark~\ref{rem:super-critical}. Consequently, in the super-critical range, our approach is not limited to the treatment of bounded solutions.

{\bf Acknowledgments.} 
This research was funded in whole or in part by the Austrian Science Fund (FWF) [10.55776/P31956] and [10.55776/P36272]. For open access purposes, the author has applied a CC BY public copyright license to any author accepted manuscript version arising from this submission. Ugo Gianazza was supported in part by Grant  2017TEXA3H\_002 ``Gradient flows, Optimal Transport and Metric Measure Structures”.

\section{Preliminaries and notation}
\subsection{Notation}\label{S:notation}
Throughout the manuscript $\mathbb{N}_0$ denotes $\mathbb{N}\cup\{0\}$, while $\R^k$ and $\R^N$ are Euclidean spaces. Define $E_T:= E\times (0,T]$ with $E$  a bounded open set of $\R^N$ and $T>0$.
Given a \emph{scalar} function $v\colon E_T\to\R$, by $\nabla v$ we always mean its spatial gradient, whereas for a \emph{vector valued} function $u\colon E_T\to\R^k$, its spatial gradient will be denoted by $D u$.

Moreover, we have
\[
a\vee b:=\max\{a,b\}.
\]
For a point $z_o\in \R^N\times \R$, $N\in \N$,  we always write $z_o=(x_o,t_o)$. 
By $B_\rho(x_o)$
we denote the ball in $\R^N$ with center $x_o\in\R^N$ and radius $\rho>0$. When $x_o=0$ we simply write $B_\rho$, omitting the reference to the center.

We use the symbol 
$$
    Q_{R,S}(z_o):= B_R (x_o)\times (t_o-S,t_o]
$$
to denote a \textbf{general backward parabolic cylinder} with the indicated parameters. Occasionally, we also use $\sig Q_{R,S}(z_o)\equiv Q_{\sig R,\sig S}(z_o)$ for $\sig>0$.
For $\lambda>0$ we define a second type of \textbf{backward intrinsic parabolic cylinders}
\begin{equation}\label{def:cyl-lambda}
    Q_\rho^{(\lambda)} (z_o)
    :=
    B_\rho (x_o)\times (t_o-\lambda^{2-p}\rho^2, t_o] .
\end{equation}
If $\lm=1$, then we omit it. When $z_o=(0,0)$ or the context is clear, we also omit $z_o$.

The {\bf parabolic boundary} of $E_T$ is given by the union of its lateral and initial boundary
$$
    \pl_\mathrm{par} E_T
    :=
    \big(\overline E\times\{0\}\big) \cup \big(\partial E\times(0,T]\big).
$$
For two points $z_1=(x_1,t_1), z_2=(x_2,t_2)\in\R^{N+1}$ their {\bf parabolic distance} is defined as 
$$
    \mathrm{d}_\mathrm{par}(z_1,z_2)
    :=
    |x_1-x_2| + \sqrt{|t_1-t_2|}.
$$
The associated distance of a subset $Q\subset E_T$ to the parabolic boundary $\partial_\mathrm{par} E_T$ is
$$
    \dist_\mathrm{par}(Q,\partial_\mathrm{par} E_T)
    :=
    \inf_{\substack{z_1\in Q\\ z_2\in\partial_\mathrm{par} E_T}} \mathrm{d}_\mathrm{par}(z_1,z_2).
$$
For $\lambda>0$ we also define the following {\bf intrinsic} version of the {\bf parabolic distance} 
\begin{equation*}
  \mathrm{d}_\mathrm{par}^{(\lambda)}(z_1,z_2):=|x_1-x_2|+\sqrt{\lambda^{p-2}|t_1-t_2|}.
\end{equation*}
Although $\mathrm{d}_\mathrm{par}^{(\lambda)}$
also depends on $p$, since $p$ is always fixed we suppress this dependence.

Given a $p$-summable function $v\colon A\to\R^k$, with a measurable set $A$ and $k\in\mathbb{N}$, whenever we want to explicitly point out the set where the function takes values, we write $v\in L^p(A,\R^k)$. In the same way, for example, $v\in C^\infty_0(E,[0,1])$ denotes a function $v\in C^\infty_0(E)$ defined on an open set $E$ which takes values in $[0,1]$.

For a function $v\in L^1(E_T,\R^k)$,  and a measurable subset $A\subset E$ with positive $\mathcal L^N$-measure we define the slice-wise mean $(v)_A\colon(0,T)\to\R^k$ of $v$ on $A$ by 
\begin{equation*}
    (v)_A(t)
    :=
    \bint_A v(\cdot,t)\,\dx,
    \quad\mbox{for a.e. $t\in(0,T)$.}
\end{equation*}
Note that the slice-wise mean is defined for any $t\in(0,T)$ if $v\in C(0,T;L^1(E))$. 
If the set $A$ is a ball $B_\rho(x_o)$, then we write $(v)_{x_o,\rho}(t)$ for short. Similarly, for a measurable set $Q\subset E_T$ of positive $\mathcal L^{N+1}$-measure we define the mean value $(v)_Q$ of $v$ on $Q$ by 
$$
    (v)_Q
    :=
    \biint_Q v \,\dx\dt.
$$
If the set $Q$ is a parabolic cylinder $Q_\rho^{(\lambda)}(z_o)$ of the type \eqref{def:cyl-lambda}, we write $(v)_{z_o,\rho}^{(\lambda)}$ for short. 
When it is clear from the context, which vertex $x_o$ or $(x_o,t_o)$ is meant, we will omit the vertex from the above symbols for simplicity.

\subsection{Parabolic function spaces}
Let $X$ be a Banach space with its natural norm $\|\cdot\|_X$, and $T>0$. 
For $1\le p<\infty$ we denote by $L^p(0,T;X)$ the (parabolic) Bochner space of all (strongly) measurable functions
$v\colon [0,T]\to X$ such that the Bochner norm is finite, i.e.
\begin{equation*}
    \| v\|_{L^p(0,T;X)}:=\bigg[\int_0^T\|v(t)\|_X^p\,\dt\bigg]^\frac1p <\infty.
\end{equation*}
Similarly, for $p=\infty$ the Bochner space $L^\infty(0,T;X)$ consists of all (strongly) measurable functions $v\colon [0,T]\to X$, such that $t\to\|v(t)\|_X$ is essentially bounded, which means that
\begin{equation*}
    \| v\|_{L^\infty (0,T;X)}:=\essup_{t\in [0,T]}\|v(t)\|_X<\infty.
\end{equation*}
These spaces are Banach spaces.  We could have defined these spaces on $(0,T)$ or $(0,T]$, which makes no difference at all, because in the end we get the same spaces. Finally, the Bochner space $C([0,T];X)$ denotes the space of all continuous functions $v\colon[0,T ]\to X$. 
The Bochner norm on this space is given by
\begin{equation*}
    \| v\|_{C([0,T];X)}:=\max_{t\in [0,T]}\|v(t)\|_X.
\end{equation*}
Next, the space $C((0,T);X)$ consists  of all continuous functions $v\colon (0,T)\to X$. 
Alternatively, $C([0,T];X)$  could be defined as the space of all continuous functions 
$v\colon(0,T)\to X$ that have a continuous extension (still denoted by $v$) to $[0,T]$. Analogous definitions hold for $C((0,T];X)$ and $C([0,T);X)$. 

Given a bounded, connected, open set $E\subset\R^N$ and the corresponding cylinder $E_T$, for $\alpha\in(0,1)$ the parabolic H\"older space $C^{\alpha,\alpha/2}(\bar E_T,\R^k)$ consists of all functions $u\colon E_T\to\R^k$, which admit continuous extension to $\bar E_T$, and such that  
\begin{equation*}
    \sup_{t\in [0,T]}\langle u(\cdot ,t)\rangle^{(\alpha)}_{x,E_T}+
    \sup_{x\in\bar E}\langle u(x,\cdot)\rangle^{(\alpha/2)}_{t,E_T}<\infty.
\end{equation*}
Here, we used the short-hand abbreviation
\begin{align*}
    \langle u(\cdot ,t)\rangle^{(\alpha)}_{x,E_T}
    &=
    \sup_{x,x'\in\bar E}\frac{|u(x,t)-u(x',t)|}{|x-x'|^\alpha},\\
    \langle u(x,\cdot)\rangle^{(\alpha)}_{t,E_T}
    &=
    \sup_{t,t'\in [0,T]}\frac{|u(x,t)-u(x,t')|}{|t-t'|^\alpha}.
\end{align*}
The space $C^{\alpha,\alpha/2}(\bar E_T,\R^k)$ is a Banach space. Local variants are defined as usual by letting $C^{\alpha,\alpha/2}_{\mathrm{loc}}(E_T,\R^k)$ the space of functions $v\in C^{\alpha,\alpha/2}(\bar Q_T,\R^k)$ for any compactly contained sub-cylinder $\bar Q_T\Subset E_T$. The range $\R^k$ that appears in various function spaces is omitted if $k=1$ or the context is clear.

\subsection{Auxiliary lemmas}
\begin{lemma}\textup{\cite[Lemma 2.3]{BDLS-boundary}}\label{lem:A}
Let $A>1$, $\kappa>1$, $C>0$ and $i\in\N$. Then, we have 
\begin{equation*}
	\prod_{j=1}^i
	A^{\frac{\kappa^{i-j+1}}{C(\kappa^i-1)}}
	=
	A^\frac{\kappa}{C(\kappa-1)}\quad\mbox{and}\quad
	\prod_{j=1}^i
	A^{\frac{j\kappa^{i-j+1}}{C(\kappa^i-1)}}
	\le
	A^{\frac{\kappa^2}{C(\kappa-1)^2}} .
\end{equation*}	
\end{lemma}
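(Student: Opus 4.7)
The two claims concern products of the form $\prod_{j=1}^i A^{e_{i,j}}$, which by $A^aA^b=A^{a+b}$ collapse to $A^{\sum_j e_{i,j}}$. The base $A$ and the common factor $\frac{1}{C(\kappa^i-1)}$ therefore play no role, and everything reduces to evaluating or bounding the two sums
\[
    S_1:=\sum_{j=1}^i\kappa^{i-j+1}
    \qquad\text{and}\qquad
    S_2:=\sum_{j=1}^i j\,\kappa^{i-j+1}.
\]
The target statements translate into $S_1=\kappa(\kappa^i-1)/(\kappa-1)$ (an identity) and $S_2\le \kappa^2(\kappa^i-1)/(\kappa-1)^2$ (an inequality).

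For $S_1$, the substitution $l=i-j+1$ turns the sum into the geometric series $\sum_{l=1}^i\kappa^l=\kappa(\kappa^i-1)/(\kappa-1)$, which yields the first claim after cancelling $\kappa^i-1$ with the denominator $C(\kappa^i-1)$. For $S_2$, I would apply the same substitution to obtain $S_2=\sum_{l=1}^i(i-l+1)\kappa^l$, then use the combinatorial reading $i-l+1=\#\{m\in\N:l\le m\le i\}$ and swap the order of summation:
\[
    S_2=\sum_{m=1}^i\sum_{l=1}^m\kappa^l=\frac{\kappa}{\kappa-1}\sum_{m=1}^i(\kappa^m-1).
\]
Evaluating the remaining geometric sum produces the exact closed form
\[
    S_2=\frac{\kappa^2(\kappa^i-1)}{(\kappa-1)^2}-\frac{i\kappa}{\kappa-1},
\]
from which the desired inequality follows at once by discarding the non-positive correction term $-i\kappa/(\kappa-1)$ and dividing by $C(\kappa^i-1)$.

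I do not anticipate any analytical obstacle; the proof is pure bookkeeping with finite geometric sums. The one pitfall I would flag is the tempting but lossy route via the convergent series $\sum_{j=1}^\infty jx^j=x/(1-x)^2$: factoring out $\kappa^{i+1}$ and passing to the infinite sum gives only $S_2\le \kappa^{i+2}/(\kappa-1)^2$, and after division by $\kappa^i-1$ this is strictly larger than the required $\kappa^2/(\kappa-1)^2$, so the critical cancellation of $\kappa^i-1$ is lost. The exact interchange-of-summation computation outlined above preserves this cancellation and is the safer path.
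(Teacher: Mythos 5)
Your proof is correct: collapsing each product to $A$ raised to the sum of the exponents, the first sum is an exact geometric series giving $\kappa(\kappa^i-1)/(\kappa-1)$, and your interchange of summation yields the exact value $S_2=\frac{\kappa^2(\kappa^i-1)}{(\kappa-1)^2}-\frac{i\kappa}{\kappa-1}$, so discarding the non-positive correction and using $A>1$ gives the stated bound; your caveat about the lossy infinite-series shortcut is also accurate. Note that the paper itself gives no proof here — the lemma is quoted from \cite[Lemma 2.3]{BDLS-boundary} — so there is nothing in-text to compare against, but your computation is the standard elementary argument one would expect for this statement.
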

\begin{lemma}\textup{\cite[Lemma 6.1, p.191]{Giusti}}\label{lem:tech}
Let $0<\vartheta<1$, $A,B, C\ge 0$ and $\alpha,\beta \ge 0 $. Then there exists a universal constant  $c = c(\alpha,\beta, \vartheta)$
such that, whenever $\phi\colon [R_o,R_1]\times [S_1,S_2]\to \R$  is non-negative bounded function satisfying
\begin{equation*}
	\phi(r_1,s_1)
	\le
	\vartheta \phi(r_2,s_2) 
    + 
    \frac{A}{(s_2-s_1)^\alpha}
    + 
    \frac{B}{(r_2-r_1)^\beta}
    + C
\end{equation*}
for all $R_o\le r_1 <r_2\le R_1$, $S_1\le s_1 <s_2\le S_2$,
we have
\begin{equation*}
	\phi(\varrho,\sigma)
	\le
	c\,  
    \bigg[\frac{A}{(s-\sigma )^\alpha} +
    \frac{B}{(r-\varrho)^\beta} + C\bigg]
\end{equation*}
for any $R_o\le \varrho<r\le R_1$ and $S_1\le \sigma <s\le S_2$.
\end{lemma}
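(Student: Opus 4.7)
The plan is to prove this via the classical Giaquinta--Giusti hole-filling iteration, adapted simultaneously in both variables. Fix $\varrho,\sigma,r,s$ as in the conclusion and introduce a parameter $\tau\in(0,1)$ to be chosen in terms of $\alpha,\beta,\vartheta$ alone. I would construct two geometrically spaced monotone sequences
$$r_i:=\varrho+(1-\tau^i)(r-\varrho),\qquad s_i:=\sigma+(1-\tau^i)(s-\sigma),\qquad i\in\N_0,$$
so that $r_0=\varrho$, $s_0=\sigma$, $r_i\nearrow r$, $s_i\nearrow s$, all $r_i\in[R_o,R_1]$ and $s_i\in[S_1,S_2]$, with successive gaps $r_{i+1}-r_i=\tau^i(1-\tau)(r-\varrho)$ and $s_{i+1}-s_i=\tau^i(1-\tau)(s-\sigma)$.

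Applying the standing hypothesis to each admissible pair $(r_i,s_i)<(r_{i+1},s_{i+1})$ and iterating $k$ times yields
$$\phi(\varrho,\sigma)\le \vartheta^k\,\phi(r_k,s_k)+\sum_{i=0}^{k-1}\vartheta^i\bigg[\frac{A\,\tau^{-i\alpha}}{(1-\tau)^\alpha(s-\sigma)^\alpha}+\frac{B\,\tau^{-i\beta}}{(1-\tau)^\beta(r-\varrho)^\beta}+C\bigg].$$
The decisive step is the choice of $\tau$: pick any $\tau\in(0,1)$ with $\vartheta\tau^{-\alpha}<1$ and $\vartheta\tau^{-\beta}<1$, equivalently $\tau>\max\{\vartheta^{1/\alpha},\vartheta^{1/\beta}\}$, which is possible because $\vartheta<1$ and depends only on $\alpha,\beta,\vartheta$. (If $\alpha=0$ or $\beta=0$, the corresponding constraint is vacuous.) With this choice the geometric series $\sum_i(\vartheta\tau^{-\alpha})^i$, $\sum_i(\vartheta\tau^{-\beta})^i$, $\sum_i\vartheta^i$ all converge.

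Finally I would use the fact that $\phi$ is bounded on $[R_o,R_1]\times[S_1,S_2]$ to conclude $\vartheta^k\phi(r_k,s_k)\to 0$ as $k\to\infty$. Passing to the limit in the displayed iteration yields
$$\phi(\varrho,\sigma)\le c(\alpha,\beta,\vartheta)\bigg[\frac{A}{(s-\sigma)^\alpha}+\frac{B}{(r-\varrho)^\beta}+C\bigg],$$
as asserted. There is no real obstacle in the argument; the one point requiring a little care is that $\tau$ must be fixed uniformly in $A,B,C$ and in the side-lengths $r-\varrho$ and $s-\sigma$, so that the resulting constant $c$ depends only on the stated parameters. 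The qualitative boundedness of $\phi$ (not its magnitude) is precisely what is needed to kill the remainder $\vartheta^k\phi(r_k,s_k)$ in the limit.
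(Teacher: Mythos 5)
Your proof is correct and is essentially the classical hole-filling iteration from Giusti's book, which is exactly the source the paper cites for this lemma (the paper gives no proof of its own). The construction of the geometric sequences, the choice of $\tau>\max\{\vartheta^{1/\alpha},\vartheta^{1/\beta}\}$ to make the series $\sum_i(\vartheta\tau^{-\alpha})^i$ and $\sum_i(\vartheta\tau^{-\beta})^i$ converge, and the use of qualitative boundedness of $\phi$ to discard $\vartheta^k\phi(r_k,s_k)$ are all exactly as in the standard argument.
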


For the next lemma we refer to \cite{Acerbi-Fusco} when $1<p<2$ and \cite{GiaquintaModica:1986-a} when $p\ge 2$.
\begin{lemma}\label{AFGM}
Let $\mu\in (0,1]$ and $p>1$. For any  $\xi,\tilde\xi\in\R^{Nk}$ we have 
\begin{align*}
	\Big|\big( \mu^2 +|\xi|^2\big)^\frac{p-2}{2}\xi 
    -\big( \mu^2 +|\tilde\xi|^2\big)^\frac{p-2}{2}\tilde \xi\Big|
	\le
	C \big(\mu^2 + |\xi|^2 + |\tilde\xi|^2\big)^{\frac{p-2}{2}}
	|\xi-\tilde\xi|
\end{align*}
and 
\begin{align*}
	\Big(\big( \mu^2 +|\xi|^2\big)^\frac{p-2}{2} \xi
    - \big( \mu^2 +|\tilde \xi|^2\big)^\frac{p-2}{2}\tilde\xi \Big)\cdot 
	\big(\xi-\tilde\xi\big)
	&\ge
	\tfrac{1}{C}
	\big(\mu^2 + |\xi|^2 + |\tilde\xi|^2\big)^{\frac{p-2}{2}}
	|\xi-\tilde\xi|^2,
\end{align*}
with a positive constant $C=C (p)$. 
\end{lemma}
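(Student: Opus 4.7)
Define $V(\xi):=(\mu^2+|\xi|^2)^{(p-2)/2}\xi$ and, for fixed $\xi,\tilde\xi\in\R^{Nk}$, set $\eta_s:=\tilde\xi+s(\xi-\tilde\xi)$ for $s\in[0,1]$. The starting point is the fundamental theorem of calculus,
\[
V(\xi)-V(\tilde\xi)=\int_0^1 DV(\eta_s)(\xi-\tilde\xi)\,\d s,
\]
which reduces both inequalities of the lemma to two independent ingredients: (i) a pointwise two-sided bound for $DV(\eta)$ in terms of $(\mu^2+|\eta|^2)^{(p-2)/2}$, and (ii) an equivalence for the scalar integral $\int_0^1(\mu^2+|\eta_s|^2)^{(p-2)/2}\,\d s$ with $(\mu^2+|\xi|^2+|\tilde\xi|^2)^{(p-2)/2}$.

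A direct differentiation yields
\[
\partial_j V_i(\eta)=(\mu^2+|\eta|^2)^{\frac{p-2}{2}}\delta_{ij}+(p-2)(\mu^2+|\eta|^2)^{\frac{p-4}{2}}\eta_i\eta_j,
\]
so the upper bound $|DV(\eta)h|\le C(p)(\mu^2+|\eta|^2)^{(p-2)/2}|h|$ follows at once from $|\eta|^2\le\mu^2+|\eta|^2$ together with Cauchy--Schwarz. For the coercivity $DV(\eta)h\cdot h\ge c(p)(\mu^2+|\eta|^2)^{(p-2)/2}|h|^2$, the case $p\ge 2$ is immediate since both summands are non-negative; when $1<p<2$ I would absorb the negative contribution using $(\eta\cdot h)^2\le|\eta|^2|h|^2$ and the elementary bound $\mu^2+(p-1)|\eta|^2\ge(p-1)(\mu^2+|\eta|^2)$, obtaining the ellipticity constant $c(p)=\min\{1,p-1\}$. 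Inserting these pointwise inequalities into the integral representation reduces both claims of the lemma to the single equivalence
\[
\int_0^1 (\mu^2+|\eta_s|^2)^{\frac{p-2}{2}}\,\d s \;\asymp_p\; \big(\mu^2+|\xi|^2+|\tilde\xi|^2\big)^{\frac{p-2}{2}}.
\]

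One direction of this equivalence is routine in each $p$-regime: from $|\eta_s|^2\le 2(|\xi|^2+|\tilde\xi|^2)$ together with the monotonicity of $t\mapsto t^{(p-2)/2}$ one obtains the required upper bound on the integral when $p\ge 2$ (used in the Lipschitz inequality) and the required lower bound when $1<p<2$ (used in the monotonicity inequality). The main obstacle is the opposite direction in each regime, since $|\eta_s|$ can be much smaller than $|\xi|+|\tilde\xi|$ when the two vectors nearly cancel along the segment. I would handle this by a dichotomy: if $|\xi-\tilde\xi|\le\tfrac12\sqrt{\mu^2+|\xi|^2+|\tilde\xi|^2}$, the triangle inequality gives $\mu^2+|\eta_s|^2\asymp_p\mu^2+|\xi|^2+|\tilde\xi|^2$ uniformly in $s\in[0,1]$ and the estimate is immediate; in the complementary regime one has $\mu^2+|\xi|^2+|\tilde\xi|^2\le C|\xi-\tilde\xi|^2$, and since $s\mapsto|\eta_s|^2$ is a non-degenerate quadratic polynomial attaining its minimum at a single point of $\R$, $(\mu^2+|\eta_s|^2)^{(p-2)/2}$ is integrable on $[0,1]$ precisely because $p>1$, and a short one-dimensional computation bounds the integral by a $p$-dependent multiple of $|\xi-\tilde\xi|^{p-2}$ from both sides. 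Combined with the pointwise bounds on $DV$, this completes the proof; the result is classical and is exactly the combined content of the two references cited immediately before the statement.
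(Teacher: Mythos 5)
Your proposal is correct, and it coincides with the approach of the references the paper itself relies on: the paper gives no proof of this lemma but cites Acerbi--Fusco (for $1<p<2$) and Giaquinta--Modica (for $p\ge 2$), and your argument --- the representation $V(\xi)-V(\tilde\xi)=\int_0^1 DV(\eta_s)(\xi-\tilde\xi)\,\d s$, the two-sided pointwise bounds $c(p)(\mu^2+|\eta|^2)^{\frac{p-2}{2}}|h|^2\le DV(\eta)h\cdot h$, $|DV(\eta)h|\le C(p)(\mu^2+|\eta|^2)^{\frac{p-2}{2}}|h|$, and the equivalence $\int_0^1(\mu^2+|\eta_s|^2)^{\frac{p-2}{2}}\d s\asymp_p(\mu^2+|\xi|^2+|\tilde\xi|^2)^{\frac{p-2}{2}}$ proved by the dichotomy according to whether $|\xi-\tilde\xi|$ is at most $\tfrac12(\mu^2+|\xi|^2+|\tilde\xi|^2)^{1/2}$ --- is precisely the classical proof found there. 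All steps check out: in the first case the uniform comparability of $\mu^2+|\eta_s|^2$ with $\mu^2+|\xi|^2+|\tilde\xi|^2$ follows since either $\mu$ or $\max\{|\xi|,|\tilde\xi|\}$ must be comparable to $(\mu^2+|\xi|^2+|\tilde\xi|^2)^{1/2}$, and in the second case the one-dimensional bounds on $\int_0^1|s-s_0|^{p-2}\,\d s$ are valid exactly because $p>1$.
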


Weak solutions to parabolic equations or systems in general do not possess a time derivative in the Sobolev sense. On the other hand, it is desirable to use weak solutions in testing functions. In order to overcome this problem, proper mollifications in the time variable are often employed. A simple version, termed the Steklov average, is sufficient for many occasions. 
Indeed, given a function $v \in L^1(E_T)$ and $0<h<T$, we define its \textit{forward Steklov-average} $[v]_h$ by 
\begin{equation*}
	[v]_h(x,t) 
	:=
	\left\{
	\begin{array}{cl}
		\displaystyle{\frac{1}{h} \int_t^{t+h} v(x,s) \,\ds ,}
		& t\in (0,T-h) , \\[9pt]
		0 ,
		& t\in (T-h,T) 
\,.
	\end{array}
	\right.
\end{equation*}
Steklov averages are differentiable with respect to $t$. More precisely, we have
\begin{equation*}
    \partial_t [v]_h(x,t)=\tfrac1h \big(v(x,t+h)-v(x,t)\big)
\end{equation*}
for almost every $(x,t) \in E\times (0,T-h)$.
The relevant properties of Steklov averages are summarized in the following lemma. 
\begin{lemma}\label{lm:Stek}
    For any $r\ge 1$ we have
    \begin{itemize}
        \item[(i)] If $v\in L^r(E_T)$, then $[v]_h\in L^r(E_{T})$.  Moreover,  
        $
          \| [v]_h\|_{L^r(E_{T})}
            \le 
            \| v\|_{L^r(E_T)}
        $
        and
           $[v]_h\to v$ in $L^r(E_{T})$ and almost everywhere on $E_T$
           as $h\downarrow 0$.
        \item[(ii)] If $Dv\in L^r(E_T)$, then $D[v]_h=[Dv]_h\to Du$ in $L^r(E_T)$ and almost everywhere on $E_T$ as $h\downarrow 0$. 
        \item[(iii)] If  $v\in C\big(0,T;L^r(E)\big)$, then $[v]_h(\cdot ,t)\to v(\cdot, t)$ in $L^r(E)$
        as $h\downarrow 0$ for every $t\in (0,T-\eps)$ for any $\eps\in (0,T)$.
    \end{itemize}
\end{lemma}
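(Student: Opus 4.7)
The plan is to reduce each assertion to standard measure-theoretic tools: Jensen's/Minkowski's integral inequality for the norm bound, Lebesgue's differentiation theorem for the pointwise convergence, and an approximation argument via continuous functions for the $L^r$-convergence. No genuine obstacle is anticipated, since the statement is a classical technical lemma; the only mild point to keep track of is the truncation $[v]_h\equiv 0$ on $(T-h,T)$ in the definition.

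For (i), the hypothesis $r\ge 1$ permits an application of Jensen's inequality to the convex function $\tau\mapsto|\tau|^r$, giving pointwise
\begin{equation*}
|[v]_h(x,t)|^r\le\tfrac1h\int_t^{t+h}|v(x,s)|^r\,\ds
\qquad\text{for }(x,t)\in E\times(0,T-h).
\end{equation*}
Integrating over $E\times(0,T-h)$ and exchanging the order of integration by Fubini delivers the norm bound $\|[v]_h\|_{L^r(E_T)}\le\|v\|_{L^r(E_T)}$, since each $s\in(0,T)$ is hit by a set of $t$-values of Lebesgue measure at most $h$. Almost everywhere convergence follows from the Lebesgue differentiation theorem in the time variable: at the Lebesgue points of $s\mapsto v(x,s)$—a full-measure subset of $E_T$ by Fubini—one has $[v]_h(x,t)\to v(x,t)$ as $h\downarrow 0$. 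For $L^r$-convergence, given $\epsilon>0$, I would pick $w\in C_c(E_T)$ with $\|v-w\|_{L^r(E_T)}<\epsilon$; then $[w]_h\to w$ uniformly on compact sets by the uniform continuity of $w$, hence in $L^r$, while $\|[v-w]_h\|_{L^r}\le\|v-w\|_{L^r}<\epsilon$ by the norm bound already proved. The triangle inequality concludes the argument.

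For (ii), the pointwise definition of $[v]_h$ lets spatial difference quotients commute with the time integral; passing to the limit yields $D[v]_h=[Dv]_h$ almost everywhere whenever $Dv\in L^1_{\loc}(E_T)$. Applying (i) to $Dv$ in place of $v$ gives the stated convergence. For (iii), the assumption $v\in C(0,T;L^r(E))$ makes the argument direct: by Minkowski's integral inequality in $L^r(E)$,
\begin{equation*}
\bigl\|[v]_h(\cdot,t)-v(\cdot,t)\bigr\|_{L^r(E)}
\le
\tfrac1h\int_t^{t+h}\bigl\|v(\cdot,s)-v(\cdot,t)\bigr\|_{L^r(E)}\,\ds,
\end{equation*}
and the integrand tends to zero uniformly for $s\in[t,t+h]$ as $h\downarrow 0$ by continuity of $s\mapsto v(\cdot,s)$ at the point $t\in(0,T-\eps)$, provided $h<\eps$. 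This completes all three items with only routine arguments.
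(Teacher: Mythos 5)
Your proof is correct. The paper itself states this lemma without proof, treating these as standard facts about Steklov averages, and your arguments (Jensen plus Fubini for the norm bound, Lebesgue differentiation for a.e.\ convergence, density of compactly supported continuous functions for the $L^r$-convergence, commuting the spatial derivative with the time average for (ii), and Minkowski's integral inequality plus one-sided continuity of $s\mapsto v(\cdot,s)$ for (iii)) are exactly the standard route; your handling of the truncation $[v]_h\equiv 0$ on $(T-h,T)$ and of the restriction $h<\eps$ in (iii) is the only point that needed care, and you addressed it.
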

\subsection{Existence of weak solutions}\label{S:existence}
Let us consider 
the following Cauchy-Dirichlet Problem:
\begin{equation}\label{Eq:CDP}
\left\{
\begin{array}{cl}
    \partial_tu - \Div\big(a(x,t)\big(\mu^2+|Du|^2\big)^{\frac{p-2}{2}}Du\big) = 0  & \quad \mbox{in  $E_T$,}\\[6pt]
    u =g &\quad \mbox{on $\partial E\times(0,T]$,}\\[6pt]
    u(\cdot,0)= u_o &\quad\mbox{in $E$,}
\end{array}
\right.
\end{equation}
where $\mu\in[0,1]$, $(x,t)\mapsto a(x,t)$ is a measurable function satisfying $0<C_o\le a\le C_1$, $p>1$, $u_o\in L^{2}(E,\R^k)$ and $g\in  L^p(0,T;W^{1,p}(E,\R^k))$ with $\partial_t g\in L^{p'}(0,T;W^{-1,p'}(E,\R^k))$.
We first define what we mean by a weak solution to the Cauchy-Dirichlet Problem \eqref{Eq:CDP}.

\begin{definition}\label{def:weak}
A function $u$
is termed a {\bf weak solution to the Cauchy-Dirichlet Problem \eqref{Eq:CDP}}, if
\begin{equation*}  
	u\in C\big( [0,T]; L^{2}(E,\R^k)\big) \cap g + L^p \big(0,T; W_0^{1,p} (E,\R^k)\big) 
\end{equation*}
and if the integral identity
\begin{equation}\label{Eq:weak-form-1}
\iint_{E_T} \Big[ (u_o - u)\cdot \pl_t\z+ a(x,t)\big(\mu^2+|Du|^2\big)^{\frac{p-2}{2}}Du\cdot D\z\Big]\dx\dt=0
\end{equation}
is satisfied for any test function
$
\z\in  W^{1,2}(0,T; L^{2}(E,\R^k))\cap L^p (0,T; W^{1,p}_{0}(E,\R^k))
$
such that $\z(\cdot, T)=0$. 
\end{definition}

Note that in the setting of Definition~\ref{def:weak}, the integral identity \eqref{Eq:weak-form-1} implies that $u(\cdot,0)=u_o$ in the sense of $L^2(E)$; see \cite[Proposition~4.9]{BDGLS-23} for $q=1$.

The issue of the existence of weak solutions is extensively studied in the literature, often under quite general assumptions on the structure of the diffusion part; in our case the existence
result can be deduced, for instance, from \cite[Chapter III, Proposition 4.1 and Example 4.A]{Showalter}. We will need the existence result in Sections~\ref{S:4} and \ref{sec:Schauder-for-Lipschitz}.

A related concept that dispenses with any initial/boundary data is as follows.
\begin{definition}\label{def:weak-loc}
A function $u$
is termed a {\bf weak solution} to \eqref{p-laplace-intro}, if
\begin{equation*}  
	u\in C\big( [0,T]; L^{2}(E,\R^k)\big) \cap  L^p \big(0,T; W^{1,p} (E,\R^k)\big) 
\end{equation*}
and if the integral identity
\begin{equation*}
\iint_{E_T} \Big[  u \cdot\pl_t\z - a(x,t)\big(\mu^2+|Du|^2\big)^{\frac{p-2}{2}}Du\cdot D\z\Big]\dx\dt=0
\end{equation*}
is satisfied for any test function
$
\z\in  W^{1,2}(0,T; L^{2}(E,\R^k))\cap L^p (0,T; W^{1,p}_{0}(E,\R^k))
$
such that $\z(\cdot, 0)=\z(\cdot, T)=0$. 
\end{definition}
 A notion of local solution is commonly used in the literature, cf. \cite[Chapter VIII]{DB}. We stress that such notion makes no essential difference from Definition \ref{def:weak-loc} modulo a
localization.
\section{Gradient bound for \texorpdfstring{$p$-}{p-}Laplace systems with differentiable coefficients}\label{S:grad-bound}
In this section, we provide a sup-bound for the spatial gradient of solutions to parabolic $p$-Laplace-type systems with differentiable coefficients. More precisely, let us consider weak solutions to 
\begin{align}\label{eq:diff-systems}
    \partial_t u-\Div\Big( b(x,t)\big(\mu^2+|Du|^2\big)^\frac{p-2}2Du\Big)=0
    \quad\mbox{in $E_T$,}
\end{align}
where $p>1$ and $\mu\in (0,1]$. For the coefficients $b$ we assume that the map $E\ni x\mapsto b(x,t)$ is differentiable for almost every $t\in (0,T)$, that $(0,T)\ni t\mapsto b(x,t)$ is measurable for every $x\in E$, and finally that
\begin{equation}\label{ass:b}
    \left\{
    \begin{array}{c}
        C_o\le b(x,t)\le C_1,\\[5pt]
        | \nabla b(x,t)|\le C_2,
    \end{array}
    \right.
\end{equation}  
for every $x\in E$ and almost every $t\in (0,T)$, 
with constants $0<C_o\le C_1$ and $C_2\ge 0$. 

\subsection{Energy estimates for second order derivatives}
The starting point for gradient boundedness  is an energy inequality involving second order weak derivatives of solutions. The proof can be found in~\cite[Proposition~3.2]{BDLS-Tolksdorf}. 

\begin{proposition}\label{prop:estD2u}
Let $\mu\in (0,1]$ and $p>1$, and assume that assumptions~\eqref{ass:b} are in force. 
Then there exists a constant $C=C(p,C_o,C_1,C_2)$ such that whenever $u$ is a weak solution to the parabolic system \eqref{eq:diff-systems} in the sense of Definition \ref{def:weak-loc}, for any non-negative, non-decreasing, bounded, Lipschitz function $\Phi \colon \R_{\ge 0} \to  \R_{\ge 0} $ and every parabolic cylinder $Q_{R,S}(z_o)=B_{R}(x_o)\times(t_o-S,t_o]\subset E_T$ with $R,S>0$ and every non-negative cut-off function $\zeta\in C^\infty(Q_{R,S}(z_o),\R_{\ge0})$ vanishing on $\partial_{\mathrm{par}}Q_{R,S}(z_o)$ we have 
\begin{align}\label{est:estD2u}
\nonumber
    \sup_{\tau\in(t_o-S,t_o]}&\int_{B_R(x_o)\times\{\tau\}}v\, \zeta^2\,\dx\\
    &\phantom{\le\,}+
    \iint_{Q_{R,S}(z_o)}
    \big(\mu^2+|Du|^2\big)^\frac{p-2}2|D^2u|^2 \Phi(|Du|^2)\zeta^2
    \,\dx\dt 
    \nonumber\\\nonumber
    &\le
    C\iint_{Q_{R,S}(z_o)\cap \{\Phi(|Du|^2)>0\}}
   \big(\mu^2+|Du|^2\big)^\frac{p-2}2\\
   &\qquad\qquad\qquad\qquad\cdot
    \frac{\Phi^2(|Du|^2)|Du|^2}
    {\Phi(|Du|^2) + \Phi'(|Du|^2)|Du|^2}\big(\zeta^2+|\nabla\zeta|^2\big)
    \,\dx\dt \\
    &\phantom{\le\,} +
    C\iint_{Q_{R,S}(z_o)}
    \big(\mu^2+|Du|^2\big)^\frac{p-2}2 \Phi'(|Du|^2)
    |Du|^4 \zeta^2\,\dx\dt\nonumber\\
    &\phantom{\le\,} +
    C \iint_{Q_{R,S}(z_o)}v \big|\partial_t\zeta^2\big|\,\dx\dt,\nonumber
\end{align}
where
\begin{equation}\label{def-v}
    v:=\int_0^{|Du|^2}\Phi(s)\, \ds.
\end{equation}
\end{proposition}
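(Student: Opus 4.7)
The estimate is a Caccioppoli-type inequality for second spatial derivatives, so the natural approach is to differentiate the system \eqref{eq:diff-systems} with respect to $x_s$, test the resulting equation with $\partial_{x_s}u\,\Phi(|Du|^2)\zeta^2$, and sum over $s=1,\dots,N$. Writing $A(\xi):=(\mu^2+|\xi|^2)^{\frac{p-2}{2}}\xi$ with $DA(\xi)=(\mu^2+|\xi|^2)^{\frac{p-2}{2}}I+(p-2)(\mu^2+|\xi|^2)^{\frac{p-4}{2}}\xi\otimes\xi$, the differentiated system reads
\[
  \partial_t(\partial_{x_s}u)-\Div\big(b\,DA(Du)\,\partial_{x_s}Du\big)=\Div\big(\partial_{x_s}b\,A(Du)\big).
\]
Since $u$ has no a priori second spatial derivatives, I would work with difference quotients $\Delta_{s,\tau}u(x,t):=\tau^{-1}(u(x+\tau e_s,t)-u(x,t))$ in place of $\partial_{x_s}u$ and Steklov averages (Lemma~\ref{lm:Stek}) in place of $\partial_tu$; the uniform estimate obtained below then legitimises the passage $\tau,h\downarrow 0$ and simultaneously provides $D^2u\in L^2_{\loc}$ on $\{\Phi(|Du|^2)>0\}$, which is enough to identify the limit.

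\textbf{Time term.} By the chain rule and the definition \eqref{def-v} of $v$,
\[
  \iint\partial_t(\partial_{x_s}u)\cdot\partial_{x_s}u\,\Phi(|Du|^2)\zeta^2\,\dx\dt
  =\tfrac12\iint\partial_tv\,\zeta^2\,\dx\dt.
\]
Integration by parts in time, combined with $\zeta=0$ on $\partial_{\mathrm{par}}Q_{R,S}$ and a sharp cut-off at some terminal $\tau\in(t_o-S,t_o]$, produces the $\tfrac12\sup_{\tau}\int v\zeta^2\,\dx$ on the left-hand side of \eqref{est:estD2u} and the term $C\iint v\,|\partial_t\zeta^2|\,\dx\dt$ on the right.

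\textbf{Spatial term and Young's inequality.} Spatial integration by parts yields the diagonal piece $\iint b\,DA(Du)\partial_{x_s}Du\cdot\partial_{x_s}Du\,\Phi\zeta^2\,\dx\dt$ together with cross pieces involving $D(\Phi(|Du|^2)\zeta^2)=2\Phi'(|Du|^2)(D^2u\cdot Du)\zeta^2+\Phi(|Du|^2)D\zeta^2$. Using $b\ge C_o$ and the ellipticity of $DA$ from Lemma~\ref{AFGM}, the diagonal together with the sign-definite $\Phi'$-cross piece combine to the coercive lower bound
\[
  \tfrac1C\iint(\mu^2+|Du|^2)^{\frac{p-2}{2}}\big[\Phi(|Du|^2)+\Phi'(|Du|^2)|Du|^2\big]|D^2u|^2\zeta^2\,\dx\dt.
\]
All remaining terms are off-diagonal products of $|D^2u|$ with (a) $|D\zeta|$, (b) $|Du|$, or (c) $|\nabla b|\le C_2$ coming from the right-hand side divergence. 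Each is split via the weighted Young inequality
\[
  XY\le \eps\big(\Phi+\Phi'|Du|^2\big)X^2+\tfrac{1}{4\eps}\frac{Y^2}{\Phi+\Phi'|Du|^2},
\]
with $X=(\mu^2+|Du|^2)^{\frac{p-4}{4}}|D^2u|$ and $Y$ chosen as the remaining factor. The $\eps$-part is absorbed into the coercive term; the residuals reproduce the three RHS integrals of \eqref{est:estD2u}, namely the quotient $\Phi^2|Du|^2/(\Phi+\Phi'|Du|^2)$ (from cases (a) and (b)), the $(\zeta^2+|\nabla\zeta|^2)$-factor (from case (a)), and the $\Phi'|Du|^4$-weight (from case (c), where the estimate $|Du|^2\Phi'/(\Phi+\Phi'|Du|^2)\le 1$ is used to convert the Young residual into the form appearing in \eqref{est:estD2u}, with $|\nabla b|^2\le C_2^2$ absorbed into the constant).

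\textbf{Expected obstacle.} The delicate point is the algebraic bookkeeping in the previous step: producing the precise denominator $\Phi+\Phi'|Du|^2$ on the right, rather than a crude bound involving only $\Phi$, is what makes the estimate flexible enough to be iterated in the subsequent Moser and De\,Giorgi schemes of Section~\ref{S:grad-bound}. This requires the $\eps$-weight in Young's inequality to match exactly the combination coming out of the diagonal, which in turn hinges on identifying correctly how the $\Phi'$-derivative contribution from the chain rule cooperates with the principal term. A secondary technical issue is that the differentiation of the system is only formal, so the difference-quotient and Steklov approximations must be kept until after the uniform estimate has been derived, and only then passed to the limit.
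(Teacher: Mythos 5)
Note first that the paper does not reproduce a proof of Proposition~\ref{prop:estD2u}; it cites \cite[Proposition~3.2]{BDLS-Tolksdorf}. Your overall strategy (differentiate the system via difference quotients, test with $\partial_{x_s}u\,\Phi(|Du|^2)\zeta^2$, sum over $s$, use Steklov averages in time, obtain $\tfrac12\partial_t v$ from the chain rule) is indeed the approach of that reference, and the time term, the diagonal coercive term via the ellipticity of $DA$, and the treatment of the boundedness/Lipschitz truncation are fine.

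The genuine gap is in the absorption step that you yourself flag as the delicate point. After testing, the only coercivity carrying the weight $\Phi'$ comes from the part of $D\varphi$ containing $\Phi'(|Du|^2)\nabla(|Du|^2)$, and after summing over $s$ it has the form
\begin{equation*}
  c\iint b\,\big(\mu^2+|Du|^2\big)^{\frac{p-2}{2}}\Phi'(|Du|^2)\,\big|\nabla |Du|^2\big|^2\,\zeta^2\,\dx\dt ,
\end{equation*}
not $\iint(\mu^2+|Du|^2)^{\frac{p-2}{2}}\Phi'|Du|^2|D^2u|^2\zeta^2$. Since Kato's inequality only gives $|\nabla|Du|^2|\le 2|Du|\,|D^2u|$ (the reverse fails pointwise, especially for systems), your weighted Young inequality with $X=(\mu^2+|Du|^2)^{\frac{p-4}{4}}|D^2u|$ produces an $\eps$-term $\eps(\Phi+\Phi'|Du|^2)|D^2u|^2\zeta^2$ that the left-hand side cannot absorb: the available coercivity controls $\Phi|D^2u|^2+\Phi'|\nabla|Du|^2|^2$, which is in general strictly weaker than $(\Phi+\Phi'|Du|^2)|D^2u|^2$. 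With only a crude bound of the cross terms by $|D^2u|$ you can reach the weaker right-hand side with $\Phi|Du|^2$ in place of $\Phi^2|Du|^2/(\Phi+\Phi'|Du|^2)$, but not the stated refined estimate. The missing idea is structural: after summation over the differentiation direction $s$, the dangerous cross terms (those multiplied by $\Phi\,\nabla\zeta^2$ and those produced by $\nabla b$) contract to $\nabla(|Du|^2)$, e.g. $\sum_{s,i}\partial_su^i\,\partial_\alpha\partial_su^i=\tfrac12\partial_\alpha(|Du|^2)$, so they are bounded by $(\mu^2+|Du|^2)^{\frac{p-2}{2}}\Phi\,|\nabla|Du|^2|\,(\zeta|\nabla\zeta|+C_2\zeta^2)$ rather than by $|D^2u|$. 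Splitting then as
\begin{equation*}
  \Phi\,\big|\nabla|Du|^2\big|\cdot Y
  \le
  \eps\,\frac{\Phi+\Phi'|Du|^2}{|Du|^2}\,\big|\nabla|Du|^2\big|^2
  +
  C_\eps\,\frac{\Phi^2|Du|^2}{\Phi+\Phi'|Du|^2}\,Y^2
\end{equation*}
yields an $\eps$-part bounded by $\eps\big[4\Phi|D^2u|^2+\Phi'|\nabla|Du|^2|^2\big]$, which the left-hand side does absorb, and the residual with the correct denominator; the $\Phi'|Du|^4$-term then arises from the $\nabla b$-contribution against the $\Phi'$-part of $D\varphi$, absorbed against $\Phi'|\nabla|Du|^2|^2$. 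Without this use of the special contraction structure your argument, as written, does not yield \eqref{est:estD2u}.
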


The next lemma essentially follows from Proposition~\ref{prop:estD2u} by choosing $\Phi(\tau)=(\delta^2+\tau)^\alpha$. However, since $\Phi$ is not a bounded Lipschitz function we have to choose a truncated version instead. 

\begin{lemma}\label{energy-est}
Let $p>1$ and $\mu\in (0,1]$, and let $u$
be a weak solution to the parabolic system \eqref{eq:diff-systems} in the sense of Definition \ref{def:weak-loc}. Moreover, assume that the hypotheses \eqref{ass:b} are in force and that for some $\alpha\ge 0$ the solution satisfies on some cylinder $ Q_{R,S}(z_o)\Subset E_T$ with $R\in(0,1]$ and $S>0$ the integrability condition 
$$
    |Du|\in L^{p+2\alpha}\big( Q_{R,S}(z_o)\big)\cap  L^{2+2\alpha}\big( Q_{R,S}(z_o)\big).
$$
Then, for every $0< r<R$, $0< s<S$, and every $\delta\in[0,1]$ we have
\begin{align*}
    \tfrac1{1+\alpha}\sup_{t\in (t_o-s,t_o]}&
     \int_{B_r(x_o)\times\{t\}} \big(\delta^2+|Du|^2\big)^{1+\alpha}\, \dx\\
     &\phantom{\le\,}+
     \iint_{Q_{r,s}(z_o)}\big(\mu^2 +|Du|^2\big)^{\frac{p-2}{2}} \big(\delta^2 +|Du|^2\big)^\alpha
     |D^2u|^2\dx\dt\\
     &\le \frac{C(1+\alpha)}{(R-r)^2}
     \iint_{Q_{R,S}(z_o)}
     \big(\mu^2 +|Du|^2\big)^{\frac{p-2}{2}} \big(\delta^2 +|Du|^2\big)^{\alpha+1}
     \,\dx\dt\\
     &\phantom{\le\,}+\frac{C}{S-s}
     \iint_{Q_{R,S}(z_o)}\big(\delta^2+|Du|^2\big)^{1+\alpha}\,\dx\dt,
\end{align*}
for a constant $C=C(p, C_o,C_1,C_2)$.
\end{lemma}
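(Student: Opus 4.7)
The plan is to apply Proposition~\ref{prop:estD2u} with the truncated choice
\begin{equation*}
    \Phi_M(\tau):=\min\bigl\{(\delta^2+\tau)^\alpha,\,M\bigr\},\qquad M\ge 1,
\end{equation*}
together with a standard space--time cut-off $\zeta$, and then to pass to the limit $M\to\infty$. The truncation is needed because $\Phi(\tau)=(\delta^2+\tau)^\alpha$ is unbounded (and merely H\"older at $0$ when $\alpha<1$ and $\delta=0$), whereas Proposition~\ref{prop:estD2u} demands a bounded Lipschitz weight. Once the truncation is lifted via monotone convergence, the integrability assumptions $|Du|\in L^{p+2\alpha}\cap L^{2+2\alpha}$ are precisely what make the right-hand side of Proposition~\ref{prop:estD2u} finite and $M$-independent.

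For the algebraic reduction, on the set $\{(\delta^2+\tau)^\alpha<M\}$ where the truncation is inactive, one computes
\begin{equation*}
    \Phi_M(\tau)+\Phi'_M(\tau)\tau
    =(\delta^2+\tau)^{\alpha-1}\bigl[\delta^2+(1+\alpha)\tau\bigr],
\end{equation*}
which yields
\begin{equation*}
    \frac{\Phi_M^2(\tau)\,\tau}{\Phi_M(\tau)+\Phi'_M(\tau)\tau}
    \le\frac{(\delta^2+\tau)^{\alpha+1}}{1+\alpha},
    \qquad
    \Phi'_M(\tau)\,\tau^2\le\alpha(\delta^2+\tau)^{\alpha+1}.
\end{equation*}
On the complementary set $\Phi'_M\equiv 0$ and $\Phi_M\equiv M\le(\delta^2+\tau)^\alpha$, so the quotient reduces to $M\tau\le(\delta^2+\tau)^{\alpha+1}$ while the second term vanishes. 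Finally, direct integration gives $v_M\le v=\frac{1}{1+\alpha}[(\delta^2+|Du|^2)^{1+\alpha}-\delta^{2(1+\alpha)}]$.

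Now I would select $\zeta\in C^\infty(Q_{R,S}(z_o))$ vanishing on $\partial_{\mathrm{par}}Q_{R,S}(z_o)$ with $\zeta\equiv 1$ on $Q_{r,s}(z_o)$, $|\nabla\zeta|\le C/(R-r)$ and $|\partial_t\zeta^2|\le C/(S-s)$; thanks to $R\le 1$, also $\zeta^2+|\nabla\zeta|^2\le C/(R-r)^2$. Substituting into Proposition~\ref{prop:estD2u} bounds the two spatial right-hand integrals by $\frac{C(1+\alpha)}{(R-r)^2}\iint_{Q_{R,S}(z_o)}(\mu^2+|Du|^2)^{(p-2)/2}(\delta^2+|Du|^2)^{\alpha+1}\dx\dt$ and the time-derivative term by $\frac{C}{S-s}\iint_{Q_{R,S}(z_o)}(\delta^2+|Du|^2)^{1+\alpha}\dx\dt$. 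On the left, $\Phi_M(|Du|^2)\zeta^2$ increases monotonically to $(\delta^2+|Du|^2)^\alpha\zeta^2$, and the sup-term is reconstructed via
\begin{equation*}
    \tfrac{1}{1+\alpha}\int_{B_R\times\{t\}}(\delta^2+|Du|^2)^{1+\alpha}\zeta^2\dx
    =\int v\,\zeta^2\dx+\tfrac{\delta^{2(1+\alpha)}}{1+\alpha}\int\zeta^2\dx,
\end{equation*}
where the additive constant $\delta^{2(1+\alpha)}|B_R|/(1+\alpha)$ is absorbed on the right, since $\iint_{Q_{R,S}}(\delta^2+|Du|^2)^{1+\alpha}\dx\dt\ge\delta^{2(1+\alpha)}|B_R|\,S$ implies $\delta^{2(1+\alpha)}|B_R|\le\frac{1}{S-s}\iint_{Q_{R,S}}(\delta^2+|Du|^2)^{1+\alpha}\dx\dt$. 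Monotone convergence as $M\to\infty$ then yields the lemma. The principal technical care is to verify that each right-hand term of Proposition~\ref{prop:estD2u} admits a uniform $L^1(Q_{R,S})$-majorant; this is exactly why $|Du|\in L^{p+2\alpha}\cap L^{2+2\alpha}$ is imposed, the first integrand behaving like $|Du|^{p+2\alpha}$ and the time-derivative term like $|Du|^{2+2\alpha}$ at infinity.
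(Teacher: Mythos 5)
Your overall route is the same as the paper's: truncate the weight, apply Proposition~\ref{prop:estD2u} with a standard cut-off, estimate the resulting integrands pointwise, absorb the constant $\delta^{2(1+\alpha)}$ contribution via the $\tfrac{1}{S-s}$-term, and remove the truncation in the limit. Your algebra is correct and matches the paper's: the identity for $\Phi_M+\Phi_M'\tau$, the bounds for the quotient and for $\Phi_M'\tau^2$ on both the truncated and untruncated regions, the reconstruction of the sup-term from $v$, and the bookkeeping of the factor $(1+\alpha)$ all check out, and bounding the truncated right-hand side uniformly in $M$ before passing to the limit is a perfectly fine (even slightly cleaner) variant of the paper's dominated-convergence step.

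There is, however, a genuine gap in the case $\delta=0$ with $0<\alpha<1$, which the lemma permits and which is actually needed later (Lemma~\ref{lem:Lq-est} invokes the energy estimate with $\delta=0$ and $\alpha=\tfrac12(m-p-1)$, which can lie in $(0,1)$). Your truncation $\Phi_M(\tau)=\min\{(\delta^2+\tau)^\alpha,M\}$ only removes the growth at infinity; for $\delta=0$ and $\alpha\in(0,1)$ the derivative $\alpha\tau^{\alpha-1}$ blows up as $\tau\downarrow0$, so $\Phi_M$ is not Lipschitz and Proposition~\ref{prop:estD2u} cannot be applied with it. You even flag the merely H\"older behaviour at $0$ in this case, but the proof as written never repairs it. The repair is the paper's two-step limit: prove the estimate first for $\delta>0$ (where your $\Phi_M$, or the paper's choice $(\delta^2+\min\{\tau,m\})^\alpha$, is Lipschitz), note that all constants are independent of $\delta$, and then for $\delta=0$ rerun the inequality with $\tilde\delta>0$ and let $\tilde\delta\downarrow0$, using Fatou on the left and dominated convergence on the right, the majorants being supplied by the same integrability hypotheses. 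For $\alpha=0$ or $\alpha\ge1$ your truncation is already Lipschitz even when $\delta=0$, so the issue is confined to $\alpha\in(0,1)$; with this amendment your argument is complete.
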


\begin{proof}
We first consider the case $\delta>0$. In Proposition \ref{prop:estD2u} we choose 
\begin{equation*}
    \Phi_m(\tau):=\big(\delta^2+\min\{\tau ,m\}\big)^\alpha\quad\mbox{for $\tau\in\R_{\ge 0}$.}
\end{equation*}
Then, $\Phi_m$ is positive, non-decreasing, bounded and Lipschitz continuous for every $m\ge0$. 
Moreover, $\Phi_m(\tau)\uparrow \Phi(\tau):=(\delta^2+\tau)^\alpha$ in the limit $m\to\infty$. Therefore, Proposition~\ref{prop:estD2u} is applicable with $\Phi_m$ instead of $\Phi$, and with a proper choice of $\z$ the energy estimate implies
\begin{align*}
    &\sup_{t\in(t_o-s,t_o]}\int_{B_r(x_o)\times\{t\}}v_m \,\dx +
    \iint_{Q_{r,s}(z_o)}
    \big(\mu^2+|Du|^2\big)^\frac{p-2}2|D^2u|^2 \Phi_m(|Du|^2)
    \,\dx\dt 
    \nonumber\\
    &\quad\le
    \frac{C}{(R-r)^2}\iint_{Q_{R,S}(z_o)}
   \big(\mu^2+|Du|^2\big)^\frac{p-2}2
    \frac{\Phi_m^2(|Du|^2)|Du|^2}
    {\Phi_m(|Du|^2) + \Phi_m'(|Du|^2)|Du|^2}
    \,\dx\dt \nonumber\\
    &\quad\phantom{\le\,} +
    C\iint_{Q_{R,S}(z_o)}
    \big(\mu^2+|Du|^2\big)^\frac{p-2}2 \Phi_m'(|Du|^2)
    |Du|^4 \,\dx\dt \nonumber\\
    &\quad\phantom{\le\,}+
    \frac{C}{S-s} \iint_{Q_{R,S}(z_o)}v_m \,\dx\dt,
\end{align*}
where
\begin{align*}
        v_m:=\int_0^{|Du|^2}\Phi_m(\tau)\,\dtau.
\end{align*}
We note that $v_m\uparrow v$ a.e. in the limit $m\to\infty$, where $v$ is defined analogously with $\Phi$ instead of $\Phi_m$. 
Using either Fatou's lemma or monotone convergence theorem, it follows that the integrals on the left-hand side in the limit $m\to\infty$ converge to the corresponding integrals with $\Phi$ instead of $\Phi_m$. For the last integral on the right-hand side this holds true again by monotone convergence theorem. For the second integral on the right, we have
\begin{align*}
    \iint_{Q_{R,S}(z_o)}&
    \big(\mu^2+|Du|^2\big)^\frac{p-2}2 \Phi_m'(|Du|^2)
    |Du|^4 \,\dx\dt\\
    &=
    \alpha
    \iint_{Q_{R,S}(z_o)\cap\{|Du|^2\le m\}}
    \underbrace{
    \big(\mu^2+|Du|^2\big)^\frac{p-2}2 \big(\delta^2+|Du|^2\big)^{\alpha-1}
    |Du|^4}_{\le (1+|Du|^2)^{\frac{p}2+\alpha}} \,\dx\dt.
\end{align*}
Since $ |Du|\in L^{p+2\alpha}\big( Q_{R,S}(z_o)\big)$, we conclude with the dominated convergence theorem that also these integrals converge in the limit $m\to\infty$. Therefore, it remains to treat the first integral on the right-hand side. Here we have
\begin{align*}
    \big(\mu^2+|Du|^2\big)^\frac{p-2}2&
    \frac{\Phi_m^2(|Du|^2)|Du|^2}
    {\Phi_m(|Du|^2) + \Phi_m'(|Du|^2)|Du|^2}\\
    &\le
    \big(\mu^2+|Du|^2\big)^\frac{p-2}2\big(\delta^2+|Du|^2\big)^\alpha |Du|^2\\
    &\le 
    \big(1+|Du|^2\big)^{\frac{p}2+\alpha}\in L^1\big( Q_{R,S}(z_o)).
\end{align*}
Therefore, we can pass to the limit $m\to\infty$ in these integrals as well. Finally, the function $v$ takes the form
\begin{equation*}
    v=\tfrac1{1+\alpha} \big(\delta^2+|Du|^2\big)^{1+\alpha}
    - 
    \tfrac1{1+\alpha}\delta^{2(1+\alpha)}.
\end{equation*}
We move the integral over $B_R(x_o)$ of the negative part to the right-hand side. Furthermore we estimate this integral simply by
\begin{align*}
     \tfrac1{1+\alpha}\delta^{2(1+\alpha)}|B_R|
     &=
     \tfrac1{(1+\alpha)S}\delta^{2(1+\alpha)}|Q_{R,S}|\le \frac1{S-s}\iint_{Q_{R,S}(z_o)}
     \big(\delta^2+|Du|^2\big)^{1+\alpha}\,\dx\dt.
\end{align*}
This completes the proof in the case $\delta>0$. If $\delta=0$ we proceed as before with some $\tilde\delta>0$ instead of $\delta$ and then let $\tilde\delta\downarrow 0$. 
\end{proof}

\subsection{\texorpdfstring{$L^m$-}{Lm-}gradient estimates in the sub-quadratic case}
Our goal is to implement a Moser iteration procedure to prove the local boundedness of the gradient for any $p>1$. However, in the so-called sub-critical case $p\le\frac{2N}{N+2}$, this is only possible for solutions that satisfy the additional regularity property $|Du|\in L^m_{\mathrm{loc}}(E_T)$ for a sufficiently large power $m>p$. The latter can be shown for {\it bounded} solutions using ideas from \cite[Chapter VIII, Lemma 4.1]{DB}, see also \cite{DiBenedetto-Friedman3, Choe:1991}. In what follows, we present the proof for all $1<p\le2$ in a unified fashion.

\begin{lemma}\label{lem:Lq-est}
Let $\mu\in(0,1]$ and $1<p\le 2$. Then, for any locally bounded weak solution to the parabolic system \eqref{eq:diff-systems} in the sense of Definition \ref{def:weak-loc}, we have $|Du|\in L^m_{\mathrm{loc}}(E_T)$ for every $m>1$.
Moreover, for every $m> 3$, 
we have for every pair of concentric cylinders $Q_{r,s}(z_o)\subset Q_{R,S}(z_o)\Subset E_T$ with $0<r<R\le1$ and $0<s<S$
the quantitative estimate
\begin{align}\label{Lq-est}\nonumber
     \iint_{Q_{r,s}(z_o)}&
    |Du|^m \,\dx\dt\\
    &\le
    \max\bigg\{
    C\epsilon^\frac{(p-2)(m-p)}{p}\bigg[
    \frac{m^\frac32\boldsymbol\omega}{R-r} 
    +
    \Big(
    \frac{m\boldsymbol\omega}{\sqrt{S-s}}
    \Big)^\frac{2}{p}\bigg]^{m-p}\iint_{Q_{R,S}(z_o)}|Du|^p\,\dx\dt, \\\nonumber
    &\phantom{\le
    \max\bigg\{\,\,}
    \epsilon^m\mu^m|Q_{R,S}|
    \bigg\}
\end{align}
for every $\epsilon\in(0,1]$, with a constant $C=C(N,p,C_o,C_1, C_2)$. In  \eqref{Lq-est} we denoted 
$$
    \boldsymbol\omega 
    :=
    \sup_{t\in (t_o-S, t_o]}\sup_{x\in B_R(x_o)}\big| u(x,t)-(u)_{x_o,R}(t)\big|,
$$
and 
$$
    (u)_{x_o,R}(t)
    =\bint_{B_R(x_o)}u(x,t)\,\dx.
$$
\end{lemma}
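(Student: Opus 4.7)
The plan is to implement a Moser-type iteration of the second-derivative energy estimate from Lemma~\ref{energy-est} combined with parabolic Sobolev embedding, using the boundedness of $u$ (encoded in $\boldsymbol{\omega}$) both to supply a starting $L^p$-bound via Caccioppoli and to linearize the resulting self-improvement inequality. The starting ingredient is the standard Caccioppoli estimate $\iint_{Q_{r,s}}|Du|^p\,\dx\,\dt\le C\bigl[(\boldsymbol{\omega}/R)^p+\boldsymbol{\omega}^2/(S-s)\bigr]|Q_{R,S}|+C\mu^p|Q_{R,S}|$, obtained by testing the system with $(u-(u)_{x_o,R})\zeta^p$, which calibrates the natural gradient scale in terms of $\boldsymbol{\omega}$.

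For one iteration step, fix $\alpha\ge 0$ and take $\delta=\mu$ in Lemma~\ref{energy-est}. Introduce $w:=(\mu^2+|Du|^2)^{(p+2\alpha)/4}$; then by direct computation $|Dw|^2\le C(1+\alpha)^2(\mu^2+|Du|^2)^{(p-2)/2+\alpha}|D^2u|^2$, while the choice $r:=4(1+\alpha)/(p+2\alpha)$ gives $w^r=(\mu^2+|Du|^2)^{1+\alpha}$; the two quantities on the LHS of Lemma~\ref{energy-est} therefore control $\iint|Dw|^2$ and $\sup_t\int w^r$ respectively. Applying the parabolic Gagliardo--Nirenberg--Sobolev inequality to a suitable cutoff version of $w$ yields a self-improvement
\begin{equation*}
    \iint_{Q_{r,s}}|Du|^{m}\,\dx\,\dt\le C\biggl[\tfrac{1+\alpha}{(R-r)^2}\iint_{Q_{R,S}}(\mu^2+|Du|^2)^{\frac{p}{2}+\alpha}\,\dx\,\dt+\tfrac{1}{S-s}\iint_{Q_{R,S}}(\mu^2+|Du|^2)^{1+\alpha}\,\dx\,\dt\biggr]^{1+\frac{2}{N}}
\end{equation*}
with $m:=p+2\alpha+4(1+\alpha)/N$. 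To bring this into the form \eqref{Lq-est}, I would partition the domain as $\{|Du|\ge\epsilon\mu\}\cup\{|Du|<\epsilon\mu\}$ (the second set producing the remainder $\epsilon^m\mu^m|Q_{R,S}|$), apply H\"older's inequality to interpolate $\iint|Du|^{2(1+\alpha)}$ between $\iint|Du|^p$ and $\iint|Du|^m$, and invoke Young's inequality together with Lemma~\ref{lem:tech} (with shrinking radii) to absorb the super-linear $\iint|Du|^m$-term into the left-hand side. Iterating with $\alpha$ increasing by roughly $2/N$ each step reaches every $m>1$ after finitely many iterations; the bracket $[m^{3/2}\boldsymbol{\omega}/(R-r)+(m\boldsymbol{\omega}/\sqrt{S-s})^{2/p}]^{m-p}$ then assembles from the cumulated factors $(R-r)^{-2}$ and $(S-s)^{-1}$ recalibrated through the Caccioppoli bound and the intrinsic parabolic scaling $R^p\sim (S-s)\boldsymbol{\omega}^{p-2}$, the $(1+\alpha)$-weight in Lemma~\ref{energy-est} contributes the polynomial $m^{3/2}$, and the $\epsilon$-partition contributes the factor $\epsilon^{(p-2)(m-p)/p}$.

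The main technical obstacle is the sub-critical regime $p\le \tfrac{2N}{N+2}$: for small $\alpha$ the new exponent $m$ need not exceed $2(1+\alpha)$, so the H\"older interpolation above degenerates. This forces starting the iteration at a threshold $\alpha_0\ge\max\{0,\,N(2-p)/4-1\}$, which is precisely where the boundedness of $u$ becomes indispensable, since without it even the initial Caccioppoli bound on $\iint|Du|^p$ is unavailable in the sub-critical range. A further delicate point is the clean appearance of two distinct parabolic scales $\boldsymbol{\omega}/(R-r)$ and $(\boldsymbol{\omega}/\sqrt{S-s})^{2/p}$ in the final bracket: these reflect the competition between standard parabolic and intrinsic $p$-Laplace scalings, and their separation requires carefully balancing the spatial and temporal contributions in Lemma~\ref{energy-est} according to which cylinder geometry dominates.
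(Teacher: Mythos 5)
Your proposal replaces the paper's argument with a Moser-type iteration based on Lemma~\ref{energy-est} plus the parabolic Sobolev embedding. That is essentially the mechanism of Proposition~\ref{lem:L-infty-est-qual-p<2}, and it cannot prove the present lemma, because it cannot get started in the range where the lemma is actually needed. First, Lemma~\ref{energy-est} with $\alpha=0$ (and with every later $\alpha>0$) requires the qualitative hypothesis $|Du|\in L^{p+2\alpha}\cap L^{2+2\alpha}$ on the larger cylinder; for $p<2$ already the case $\alpha=0$ needs $|Du|\in L^2_{\loc}$, which is not available from the zero-order Caccioppoli estimate — that only yields $|Du|\in L^p_{\loc}$ — and your proposal never establishes it. Second, even granting $L^2$, the Sobolev self-improvement you describe raises the exponent from $p_i=2+2\alpha_i$ to $p_{i+1}=p_i+\tfrac{2p_i}{N}-(2-p)$, which is a genuine gain only when $p_i>\tfrac{N(2-p)}{2}$. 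You correctly identify this threshold, but you offer no mechanism to reach it: in the sub-critical range $p\le\tfrac{2N}{N+2}$ the threshold is at least $p$ (and for $N\ge5$, $1<p\le 2-\tfrac4N$, it is at least $2$), so neither the Caccioppoli bound nor the stalled iteration can bridge the gap, and invoking the boundedness of $u$ at this point is circular — reaching that integrability level is precisely what the lemma is for.

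The paper exploits boundedness through a different device, and this is what makes the sub-critical case work: one writes $\iint |Du|^m\zeta=\iint D_\gamma[u-(u)_R(t)]\cdot D_\gamma u\,|Du|^{m-2}\zeta$ and integrates by parts, so that $|u-(u)_R(t)|\le\boldsymbol\omega$ produces a factor $\boldsymbol\omega$ while a second derivative $|D^2u|$ appears; the latter is controlled by Lemma~\ref{energy-est} with $\alpha=\tfrac12(m-p-1)$ and $\delta=0$. This yields a gain of $p-1$ in integrability per step, independent of $N$, starting from $L^2$ (itself obtained by the same integration-by-parts trick), and no Sobolev embedding is used at all in this lemma. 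The quantitative bound \eqref{Lq-est}, with the bracket raised to the power $m-p$ and the alternative $\epsilon^m\mu^m|Q_{R,S}|$, then comes from a single such step: the intermediate norms $\iint|Du|^{m-1}$, $\iint|Du|^{m-p+1}$, $\iint|Du|^{m+p-3}$ are interpolated between $\iint|Du|^p$ and $\iint|Du|^m$, the parameter $\epsilon$ enters through a dichotomy between $\epsilon^m\mu^m|Q_{R,S}|$ and the $L^m$-energy (not through a pointwise splitting of the domain), and Young's inequality together with Lemma~\ref{lem:tech} absorbs the $L^m$-term; such an explicit constant would not emerge from an iteration with constants accumulating over $\sim\log m$ Sobolev steps.
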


\begin{proof} 
We omit the base point $z_o$ from the notation for simplicity.
To start with, we show 
$$
    |Du|\in L^2_{\mathrm{loc}}(E_T).
$$
This can be achieved by adapting
the difference quotient technique as illustrated in the appendix of~\cite{Choe:1991}. We briefly sketch the argument by giving the formal computations. We consider two concentric cylinders $Q_{r_1,s_1}\subset Q_{r_2,s_2}$ with $r\le r_1<r_2\le R$ and $s\le s_1<s_2\le S$, and  define $\varrho:=\frac12 (r_1+r_2)$. Moreover, we let $\zeta\in C^1_0\big(B_{\varrho}, [0,1]\big) $ be a cut-off function in space such that $\zeta =1$ on $B_{r_1}$ and $|\nabla\zeta|\le \frac{2}{\varrho-r_1}=\frac{4}{r_2-r_1}$.
First, we note that integration by parts yields 
\begin{align}\label{Lp->L2}\nonumber
    \boldsymbol\phi_{r_1,s_1}
    &:=
    \iint_{Q_{r_1,s_1}}|Du|^2\,\dx\dt
    \le
    \iint_{Q_{\rho, s_1}}|Du|^2\zeta\,\dx\dt\\\nonumber
    &=
     \iint_{Q_{\rho, s_1}}D[u-(u)_R(t)]\cdot Du\,\zeta\,\dx\dt
    \\\nonumber
    &=
    -\iint_{Q_{\rho, s_1}}
    [u-(u)_R(t)]\cdot \Delta u\,\zeta\,\dx\dt
    -
    \iint_{Q_{\rho, s_1}}[u-(u)_R(t)]\cdot Du\nabla\zeta\,\dx\dt\\
    &\le  
    \sqrt{N}\boldsymbol \omega\,
    \iint_{Q_{\rho, s_1}}|D^2u|\,\dx\dt
    +
    \frac{4\boldsymbol \omega}{r_2-r_1}
    \iint_{Q_{\rho, s_1}}|Du|\,\dx\dt.
\end{align}

For the estimate of the first integral on the right-hand side, we apply H\"older's inequality with  exponents $2$, $\frac{2p}{2-p}$, and $\frac{p}{p-1}$
to obtain
\begin{align*}
    \iint_{Q_{\rho, s_1}}|D^2u|\,\dx\dt
    &=
    \iint_{Q_{\rho, s_1}}
    \big(\mu^2+|Du|^2\big)^{\frac{p-2}4}|D^2u|
    \big(\mu^2+|Du|^2\big)^{\frac{2-p}4}
    \,\dx\dt\\
    &\le
    \mathbf I_{\rho,s_1}^\frac12
    \bigg[
    \iint_{Q_{\rho, s_1}}
    \big(\mu^2+|Du|^2\big)^{\frac{p}2}
    \,\dx\dt\bigg]^{\frac{2-p}{2p}}|Q_{\rho, s_1}|^\frac{p-1}{p}\\
    &\le
    \mathbf I_{\rho,s_1}^\frac12
    \bigg[\underbrace{
    \iint_{Q_{R,S}}
    \big(\mu^2+|Du|^2\big)^{\frac{p}2}
    \,\dx\dt}_{=:\mathsf E}\bigg]^{\frac{2-p}{2p}}|Q_{R, S}|^\frac{p-1}{p},
\end{align*}
where
\begin{equation*}
    \mathbf I_{\rho,s_1}:=
    \iint_{Q_{\rho, s_1}}
    \big(\mu^2+|Du|^2\big)^{\frac{p-2}2}|D^2u|^2 \,\dx\dt.
\end{equation*}
To estimate $\mathbf I_{\rho,s_1}$ we apply the energy estimate from Lemma~\ref{energy-est}  with $\alpha=0$, $\delta=0$ and with $(r,s)$, $(R,S)$ replaced by $(\rho,s_1)$, $(r_2,s_2)$. This yields
\begin{align*}
    \mathbf I_{\rho,s_1}
    &\le
    \frac{C}{(r_2-r_1)^2}
    \underbrace{
    \iint_{Q_{r_2, s_2}}
    \big(\mu^2+|Du|^2\big)^{\frac{p-2}2}|Du|^2
    \,\dx\dt}_{\le\,\mathsf E}
    +
    \frac{C\, \boldsymbol\phi_{r_2,s_2}}{s_2-s_1}\\
    &\le
    \frac{C\,\mathsf E}{(r_2-r_1)^2}
    +
    \frac{C\, \boldsymbol\phi_{r_2,s_2}}{s_2-s_1}.
\end{align*}
We use this inequality to bound the right-hand side in the estimate of $|D^2u|$ and obtain
\begin{align*}
    \iint_{Q_{\rho, s_1}}|D^2u|\,\dx\dt
    &\le
    C\bigg[
    \frac{\mathsf E}{(r_2-r_1)^2}
    +
    \frac{\boldsymbol\phi_{r_2,s_2}}{s_2-s_1}
    \bigg]^\frac12\mathsf E^\frac{2-p}{2p}
    |Q_{R, S}|^{1-\frac{1}{p}}\\ 
    &\le
    C
    \bigg[
    \frac{\mathsf E^\frac1p}{r_2-r_1}
    +
    \frac{\boldsymbol\phi_{r_2,s_s}^\frac12
    \mathsf E^\frac{2-p}{2p}}{(s_2-s_1)^\frac12}
    \bigg]|Q_{R,S}|^{1-\frac{1}{p}}.
\end{align*}
To estimate the first term on the right-hand side of \eqref{Lp->L2}, we multiply the above inequality by $C\boldsymbol \omega$ and apply Young's inequality to the individual terms. In this way we get
\begin{align*}
   C \boldsymbol \omega
     \iint_{Q_{\rho, s_1}}|D^2u|\,\dx\dt
     &\le
     \tfrac12 \boldsymbol\phi_{r_2,s_2}
     +
     C
     \bigg[
     \frac{\boldsymbol \omega\mathsf E^\frac{1}{p}}{r_2-r_1}
     |Q_{R,S}|^{1-\frac{1}{p}} +
     \frac{\boldsymbol \omega^2\mathsf E^\frac{2-p}{p}}{s_2-s_1}|Q_{R,S}|^{2(1-\frac{1}{p})}
     \bigg]\\
     &\le 
     \tfrac12 \boldsymbol\phi_{r_2,s_2}
     +
     \frac{C \boldsymbol \omega|Q_{R,S}|}{r_2-r_1}
     \bigg[\biint_{Q_{R,S}}\big(
     \mu^2 +|Du|^2\big)^\frac{p}{2} \,\dx\dt\bigg]^\frac{1}{p} \\
     &\qquad\quad\quad\,\,\,\, +
     \frac{C\boldsymbol \omega^2|Q_{R,S}|}{s_2-s_1}\bigg[\biint_{Q_{R,S}}\big(
     \mu^2 +|Du|^2\big)^\frac{p}{2} \,\dx\dt\bigg]^\frac{2-p}{p}
     .
\end{align*}
Together with the estimate
\begin{equation*}
     \frac{4\boldsymbol \omega}{r_2-r_1}
    \iint_{Q_{\rho, s_1}}|Du|\,\dx\dt
    \le
    \frac{4\boldsymbol \omega|Q_{R,S}|}{r_2-r_1}
    \bigg[\biint_{Q_{R,S}}\big(
     \mu^2 +|Du|^2\big)^\frac{p}{2} \,\dx\dt\bigg]^\frac{1}{p},
\end{equation*}
this allows us to bound the right-hand side of~\eqref{Lp->L2}, so that
\begin{align*}
    \boldsymbol\phi_{r_1,s_1}
    &\le
     \tfrac12 \boldsymbol\phi_{r_2,s_2}
     +
     C\frac{\boldsymbol \omega^2|Q_{R,S}|}{s_2-s_1}
    \bigg[\biint_{Q_{R,S}}\big(
     \mu^2 +|Du|^2\big)^\frac{p}{2}\dx\dt\bigg]^\frac{2-p}{p} \\ 
     &\qquad\qquad\,\,\; +
     C\frac{\boldsymbol \omega  |Q_{R,S}|}{r_2-r_1}
     \bigg[\biint_{Q_{R,S}}\big(
     \mu^2 +|Du|^2\big)^\frac{p}{2}\dx\dt\bigg]^\frac{1}{p} .
\end{align*}
In view of the iteration Lemma \ref{lem:tech}, we can re-absorb the first term on the
right-hand side and obtain
\begin{align*}
    \iint_{Q_{r,s}}|Du|^2 \,\dx\dt
    &\le
     \frac{C \boldsymbol \omega^2|Q_{R,S}|}{S-s}\bigg[\biint_{Q_{R,S}}\big(
     \mu^2 +|Du|^2\big)^\frac{p}{2} \,\dx\dt\bigg]^\frac{2-p}{p}\\
     &\phantom{\le\,}
     +
     \frac{C\boldsymbol \omega|Q_{R,S}|}{R-r}
     \bigg[\biint_{Q_{R,S}}\big(
     \mu^2 +|Du|^2\big)^\frac{p}{2} \,\dx\dt\bigg]^\frac{1}{p}
     ,
\end{align*}
where the right-hand side is finite by assumption.  Therefore, we have shown (modulo a suitable difference quotient argument) that $|Du|\in L^2_{\mathrm{loc}}(E_T)$.

Now, let $m\ge p+1>2$. We start by assuming that
\begin{equation*}
    |Du|\in L^{m-p+1}_{\mathrm{loc}}(E_T),
\end{equation*}
which holds true for $m=p+1$ since $|Du|\in 
L^{2}_{\mathrm{loc}}(E_T)$. 

As in the first part of the proof, consider concentric cylinders $Q_{r,s}\subset Q_{R,S}$ with $0<r<R$ and $0<s<S$, and let  $\varrho:=\frac12 (R+r)$ and $\theta :=\frac12 (S+s)$. Moreover, let $\zeta\in C^1_0(B_{\varrho}, [0,1]) $ be a cut-off function in space such that $\zeta =1$ on $B_r$ and $|\nabla\zeta|\le \frac{2}{\varrho-r}=\frac{4}{R-r}$. 
Similarly to the first part of the proof, we calculate by integrating by parts (modulo the difference quotient technique)
\begin{align*}
    \iint_{Q_{\varrho,\theta}}&
    |Du|^m\zeta \,\dx\dt\\
    &= 
    \iint_{Q_{\varrho,\theta}} D_\gamma \big[u-u_R(t)\big]\cdot D_\gamma u
    |Du|^{m-2}\zeta \,\dx\dt \\
    &= 
    - \iint_{Q_{\varrho,\theta}}
    \underbrace{\big[u-u_R(t)\big]}_{|\cdot|\,\le\, \boldsymbol\omega}\cdot D_\gamma
    \big[  D_\gamma u |Du|^{m-2}\zeta\big] \,\dx\dt\\
    &\le 
    \boldsymbol\omega
    \iint_{Q_{\varrho,\theta}}
    \big|D_\gamma\big[D_\gamma u|Du|^{m-2}\big]\big| \,\dx\dt
    +
    \frac{4\boldsymbol\omega}{R-r}
    \iint_{Q_{\varrho,\theta}}
    |Du|^{m-1} \,\dx\dt.
\end{align*}
To estimate the first integral on the right-hand side, we observe that 
\begin{align*}
    \big| D_\gamma\big[D_\gamma u |Du|^{m-2}\big]\big|
    &=\Big|D_\gamma D_\gamma u |Du|^{m-2}
    +
    D_\gamma u D_\gamma |Du|^{m-2}\Big|\\
    &\le 
    |Du|^{m-2}\bigg| 
    D_\gamma D_\gamma u
    +(m-2)\frac{D_\gamma u D_\gamma D_\beta u\cdot D_\beta u}{|Du|^2}\bigg|\\
    &\le  
    |Du|^{m-2}
    \Big[\sqrt{N}|D^2u| + (m-2)|D^2u|\Big]\\
    &\le 
    \sqrt{N} (m-1)|Du|^{m-2} |D^2u|.
\end{align*}
Using H\"older's inequality we have
\begin{align}\label{est-I1-I2}\nonumber
    \iint_{Q_{\varrho,\theta}}&
    |Du|^{m-2} |D^2u|\,\dx\dt\\\nonumber
    &=
    \iint_{Q_{\varrho,\theta}}
    \big(\mu^2+|Du|^2\big)^\frac{p-2}4
    |Du|^\frac{m-p-1}{2}|D^2u|
    \big(\mu^2+|Du|^2\big)^\frac{2-p}4|Du|^\frac{m+p-3}2\,\dx\dt\\
    &\le
    \mathbf{I}_1^\frac12 \,
    \mathbf{I}_2^\frac12,
\end{align}
where we abbreviated
\begin{align*}
    \mathbf{I}_1
    &:=
    \iint_{Q_{\varrho,\theta}}
    \big(\mu^2+|Du|^2\big)^\frac{p-2}2
    |Du|^{m-p-1}|D^2u|^2 \,\dx\dt,
\end{align*}
and 
\begin{align*}
    \mathbf{I}_2
    &:=
    \iint_{Q_{\varrho,\theta}}
    \big(\mu^2+|Du|^2\big)^\frac{2-p}2|Du|^{m+p-3} \,\dx\dt.
\end{align*}
Note that, $m+p-3\ge 2(p-1)>0$. Therefore, we have $\mathbf{I}_2<\infty$, due to the assumption $|Du|\in L^{m-p+1}_{\mathrm{loc}}(E_T)$ and since $m-1\le m-p+1$. The first integral $\mathbf I_1$ can be controlled with the help of the energy inequality from Lemma~\ref{energy-est}. In fact, taking $\alpha =\tfrac12 (m-p-1)$ we have $p+2\alpha =m-1$, and $2+2\alpha =m-p+1$, so that the required integrability is fulfilled. Applying the energy inequality on $Q_{\rho,\theta}$, $Q_{R,S}$  and with $\delta=0$ we have
\begin{align}\label{est-I1}\nonumber
    \mathbf I_1
    &\le
    \frac{Cm}{(R-r)^2}
     \iint_{Q_{R,S}}\big(\mu^2 +|Du|^2\big)^\frac{p-2}2
    |Du|^{m-p+1}
    \,\dx\dt\\\nonumber
    &\phantom{\le\,}
    +
    \frac{C}{S-s}
     \iint_{Q_{R,S}}
    |Du|^{m-p+1}\,\dx\dt\\
    &\le
    \frac{Cm}{(R-r)^2}
    \iint_{Q_{R,S}}
    |Du|^{m-1}
    \,\dx\dt
    +
    \frac{C}{S-s}
     \iint_{Q_{R,S}}
    |Du|^{m-p+1}\,\dx\dt.
\end{align}
For the second integral, we estimate
\begin{align*}
    \mathbf I_2
    &\le \iint_{Q_{R,S}}
    \big(\mu^2+|Du|^2\big)^\frac{2-p}2|Du|^{m+p-3} \,\dx\dt\\
    &\le
    \mu^{2-p}
    \iint_{Q_{R,S}}|Du|^{m+p-3} \,\dx\dt
    +
    \iint_{Q_{R,S}}|Du|^{m-1} \,\dx\dt.
\end{align*}
Inserting this above and noting that $\zeta=1$ on $Q_{r,s}$ we arrive at
\begin{align}\label{start:quant}\nonumber
    \iint_{Q_{r,s}}
    &|Du|^m \,\dx\dt\\ \nonumber
    &\le 
    Cm
    \boldsymbol\omega
    \iint_{Q_{\varrho,\theta}}
    |Du|^{m-2}|D^2u| \,\dx\dt+
    \frac{4\boldsymbol\omega}{R-r}
    \iint_{Q_{\varrho,\theta}}
    |Du|^{m-1} \,\dx\dt\\ \nonumber
    &\le
    Cm
    \boldsymbol\omega
    \Bigg[
    \frac{m}{(R-r)^2}
    \iint_{Q_{R,S}}
    |Du|^{m-1}
    \,\dx\dt+
    \frac{1}{S-s}
     \iint_{Q_{R,S}}
    |Du|^{m-p+1}\,\dx\dt
    \Bigg]^\frac12\\ 
    &\qquad\qquad\qquad \cdot\Bigg[\mu^{2-p}
    \iint_{Q_{R,S}}|Du|^{m+p-3} \,\dx\dt
    +
    \iint_{Q_{R,S}}|Du|^{m-1} \,\dx\dt\Bigg]^\frac12\\ \nonumber
    &\phantom{\le\,}
    +
    \frac{4\boldsymbol\omega}{R-r}
    \iint_{Q_{R,S}}
    |Du|^{m-1} \,\dx\dt.
\end{align}
To obtain the last line we used the estimates \eqref{est-I1-I2} and \eqref{est-I1}. Observe that the right-hand side is finite by the assumption $|Du|\in L^{m-p+1}_{\mathrm{loc}}(E_T) \cap L^{m-1}_{\mathrm{loc}}(E_T)$. By means of an induction argument it is now easy to show that $|Du|\in L^m_{\rm loc} (E_T)$ for every $m>1$. Indeed, starting with 
$m=p+1$, we have  $m-1=p$ and $m-p+1=2$. Therefore, the assumption  $|Du|\in L^{p}(Q_{R,S}) \cap L^{2}(Q_{R,S})$ is fulfilled as already mentioned. This implies that $|Du|\in L^{p+1}(Q_{r,s})$, and by a covering argument we have $|Du|\in L^{p+1}_{\rm loc}(E_T)$. We now define $m_\ell := p+1+\ell (p-1)$. Then, $m_{\ell+1}-p+1 =m_{\ell}$ and $m_{\ell +1}-1=m_{\ell} +p-2\le m_{\ell} $. Therefore, the requirement $|Du|\in L^{m_{\ell+1}-p+1}(Q_{R,S}) \cap L^{m_{\ell+1}-1}(Q_{R,S})$ is fulfilled.  This allows us to conclude that $|Du|\in L^{m_{\ell+1}}_{\rm loc}(E_T)$. Since $m_\ell\uparrow\infty$, the assertion, i.e.~that $|Du|\in L^m_{\rm loc}(E_T)$ for any $m>1$, is proven, and it remains to establish the quantitative $L^m$-bound.

To obtain the quantitative $L^m$-estimate \eqref{Lq-est}, we further estimate various integrals on the right-hand side of \eqref{start:quant} by interpolating between $L^m$ and $L^p$ norms. Indeed, by splitting the exponent $m-1$ in the form 
\begin{equation*}
    m-1
    =
    \frac{m(m-p-1)}{m-p}+\frac{p}{m-p},
\end{equation*}
we apply H\"older's inequality with exponents $\frac{m-p}{m-p-1}$ and $m-p$ to obtain
\begin{align*}
    \iint_{Q_{R,S}}|Du|^{m-1}\,\dx\dt
    &=
    \iint_{Q_{R,S}}
    |Du|^\frac{m(m-p-1)}{m-p} |Du|^\frac{p}{m-p} \,\dx\dt \nonumber\\
    &\le
    \bigg[
    \iint_{Q_{R,S}}
    |Du|^{m} \,\dx\dt
    \bigg]^\frac{m-p-1}{m-p}
    \bigg[
    \iint_{Q_{R,S}}
    |Du|^{p} \,\dx\dt
    \bigg]^\frac{1}{m-p} \nonumber\\
    &=\mathfrak D_m^\frac{m-p-1}{m-p}\mathfrak D_p^\frac{1}{m-p},
\end{align*}
with the obvious meaning of $\mathfrak D_m$ and $\mathfrak D_p$.
Similarly, splitting
\begin{equation*}
    m-p+1
    =
    \frac{m(m-2p+1)}{m-p}+\frac{p(p-1)}{m-p},
\end{equation*}
we find using H\"older's inequality with exponents $\frac{m-p}{m-2p+1}$ and $\frac{m-p}{p-1}$ that
\begin{align*}
    \iint_{Q_{R,S}}|Du|^{m-p+1}\,\dx\dt
    &=
    \iint_{Q_{R,S}}
    |Du|^\frac{m(m-2p+1)}{m-p} |Du|^\frac{p(p-1)}{m-p} \,\dx\dt\\
    &\le
   \bigg[
    \iint_{Q_{R,S}}
    |Du|^{m} \,\dx\dt
    \bigg]^\frac{m-2p+1}{m-p}
    \bigg[
    \iint_{Q_{R,S}}
    |Du|^{p} \,\dx\dt
    \bigg]^\frac{p-1}{m-p}\\
    &=
    \mathfrak D_m^\frac{m-2p+1}{m-p}
    \mathfrak D_p^\frac{p-1}{m-p}.
\end{align*}
Moreover, since
\begin{equation*}
    \frac{(m+p-3)(m-p)-p}{m(m-p)}
    +
    \frac{1}{m-p}
    +
    \frac{2-p}{m}=1
\end{equation*}
we have 
\begin{align*}
    \iint_{Q_{R,S}}&
    |Du|^{m+p-3}\,\dx\dt\\
    &=
    \iint_{Q_{R,S}}
    |Du|^\frac{(m+p-3)(m-p)-p}{m-p}
    |Du|^\frac{p}{m-p}\,\dx\dt\\
    &\le
    \bigg[\iint_{Q_{R,S}}
    |Du|^m \,\dx\dt\bigg]^ \frac{(m+p-3)(m-p)-p}{m(m-p)}
    \bigg[\iint_{Q_{R,S}}
    |Du|^p\,\dx\dt\bigg]^\frac{1}{m-p}
    |Q_{R,S}|^\frac{2-p}{m} \\
    &=
    \mathfrak D_m^\frac{(m+p-3)(m-p)-p}{m(m-p)}
    \mathfrak D_p^\frac{1}{m-p}
    |Q_{R,S}|^\frac{2-p}{m}.
\end{align*}
This is the step in the proof where we need the assumption $m>3$. 
Without any loss of generality, we can assume that 
\begin{equation}\label{est:mu-Du^m}
    \epsilon^m\mu^m<\frac{1}{|Q_{R,S}|}\iint_{Q_{r,s}}|Du|^m\,\dx\dt
\end{equation}
holds true, because otherwise \eqref{Lq-est} trivially holds.
This allows us to estimate
\begin{align*}
    \mu^{2-p} \iint_{Q_{R,S}}
    |Du|^{m+p-3}\,\dx\dt 
    &\le
    \epsilon^{p-2}
    \mathfrak D_m^{\frac{(m+p-3)(m-p)-p}{m(m-p)}+\frac{2-p}{m}}
    \mathfrak D_p^\frac{1}{m-p}\\
    &=
    \epsilon^{p-2}
    \mathfrak D_m
    ^{\frac{m-p-1}{m-p}}
    \mathfrak D_p^\frac{1}{m-p}.
\end{align*}
Hence, substituting these estimates in \eqref{start:quant} and applying Young's inequality, we get 
\begin{align*}
    \iint_{Q_{r,s}}&
    |Du|^m \,\dx\dt\\
    &\le
    Cm \epsilon^\frac{p-2}2
    \boldsymbol\omega
    \Bigg[
    \frac{m\mathfrak D_m^\frac{m-p-1}{m-p}\mathfrak D_p^\frac{1}{m-p}}{(R-r)^2} 
    +
    \frac{\mathfrak D_m^\frac{m-2p+1}{m-p}
    \mathfrak D_p^\frac{p-1}{m-p}}{S-s}
    \Bigg]^\frac12\cdot
    \Big[
    \mathfrak D_m^\frac{m-p-1}{m-p}\mathfrak D_p^\frac{1}{m-p}
    \Big]^\frac12\\
    &\phantom{\le\,}
    +
    \frac{4\boldsymbol\omega}{R-r}
    \mathfrak D_m^\frac{m-p-1}{m-p}\mathfrak D_p^\frac{1}{m-p}\\
    &\le
    \frac{Cm^\frac32\epsilon^\frac{p-2}2\boldsymbol\omega }{R-r}\mathfrak D_m^\frac{m-p-1}{m-p}\mathfrak D_p^\frac{1}{m-p}
    +
    \frac{Cm\epsilon^\frac{p-2}2\boldsymbol\omega}{\sqrt{S-s}}
    \mathfrak D_m^\frac{2m-3p}{2(m-p)}\mathfrak D_p^\frac{p}{2(m-p)}\\
    &\le
    \tfrac12 \mathfrak D_m
    +C\bigg[
    \frac{m^\frac32\epsilon^\frac{p-2}2\boldsymbol\omega}{R-r}  
    +
    \Big(
    \frac{m\epsilon^\frac{p-2}2\boldsymbol\omega}{\sqrt{S-s}}
    \Big)^\frac{2}{p}\bigg]^{m-p}\mathfrak D_p\\
    &\le
    \tfrac12 \iint_{Q_{R,S}}|Du|^m\,\dx\dt\\
     &\phantom{\le\,}
    +C\epsilon^\frac{(p-2)(m-p)}{p}\bigg[
    \frac{m^\frac32\boldsymbol\omega}{R-r} 
    +
    \Big(
    \frac{m\boldsymbol\omega}{\sqrt{S-s}}
    \Big)^\frac{2}{p}\bigg]^{m-p}\iint_{Q_{R,S}}|Du|^p\,\dx\dt.
\end{align*}
We apply this inequality on the cylinders $Q_{\sigma,\tau}$, $Q_{\rho,\theta}$
with $r\le \sigma <\rho\le R$ and $s\le\tau<\theta\le S$. To this end, we must ensure that \eqref{est:mu-Du^m} is also fulfilled on these cylinders, i.e.~that
\begin{equation*}
    \epsilon^m\mu^m|Q_{\rho,\theta}| 
    < 
    \iint_{Q_{\sigma,\tau}}|Du|^m\,\dx\dt
\end{equation*}
holds true. However, this is implied by
\eqref{est:mu-Du^m}. 
Indeed, we have
\begin{align*}
    \epsilon^m\mu^m|Q_{\rho,\theta}| 
    &\le
    \epsilon^m
    \mu^m |Q_{R,S}| 
    < \iint_{Q_{r,s}}|Du|^m\,\dx\dt
    \le
    \iint_{Q_{\sigma,\tau}}|Du|^m\,\dx\dt.
\end{align*}
Therefore, we get
\begin{align*}
    \iint_{Q_{\sigma,\tau}}
    |Du|^m \,\dx\dt
    &\le
    \tfrac12 \iint_{Q_{\rho,\theta}}|Du|^m\,\dx\dt\\
    &\phantom{\le}+C\epsilon^\frac{(p-2)(m-p)}{p}\bigg[
    \frac{m^\frac32\boldsymbol\omega}{\rho-\sigma} 
    +
    \Big(
    \frac{m\boldsymbol\omega}{\sqrt{\theta-\tau}}
    \Big)^\frac{2}{p}\bigg]^{m-p}\iint_{Q_{\rho,\theta}}|Du|^p\,\dx\dt.
\end{align*}
The iteration Lemma \ref{lem:tech} implies
\begin{align*}
    \iint_{Q_{r,s}}
    |Du|^m \,\dx\dt
    &\le
    C\epsilon^\frac{(p-2)(m-p)}{p}\bigg[
    \frac{m^\frac32\boldsymbol\omega}{R-r} 
    +
    \Big(
    \frac{m\boldsymbol\omega}{\sqrt{S-s}}
    \Big)^\frac{2}{p}\bigg]^{m-p}\iint_{Q_{R,S}}|Du|^p\,\dx\dt.
\end{align*}
Together with \eqref{est:mu-Du^m}  this results in
\eqref{Lq-est} and concludes the proof.
\end{proof}

\subsection{Qualitative gradient bound in the sub-quadratic case}
The next result upgrades the previous higher integrability of the gradient and shows that locally bounded weak solutions have bounded gradient. Moser's iteration scheme is employed here.
\begin{proposition}\label{lem:L-infty-est-qual-p<2}
Let $\mu\in(0,1]$ and $1<p\le 2$. Then, for any locally bounded weak solution $u$ to the parabolic system \eqref{eq:diff-systems} in the sense of Definition \ref{def:weak-loc} with assumption \eqref{ass:b}, we have $$
    |Du|\in L^\infty_{\mathrm{loc}}(E_T).
$$

\end{proposition}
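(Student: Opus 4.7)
The plan is to run a Moser iteration for $w := (\mu^2 + |Du|^2)^{1/2}$, using as input the higher integrability $|Du| \in L^m_{\mathrm{loc}}(E_T)$ for every $m > 1$ provided by Lemma~\ref{lem:Lq-est}. This integrability ensures that the hypotheses of the second-order energy estimate in Lemma~\ref{energy-est} are satisfied for every $\alpha \ge 0$.

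First, I would fix $\alpha \ge 0$, set $q := p + 2\alpha$, and apply Lemma~\ref{energy-est} with $\delta = \mu$ on nested cylinders $Q_{r,s} \subset Q_{R,S} \Subset E_T$. Using $(\mu^2 + |Du|^2)^{(p-2)/2 + 1 + \alpha} = w^{q}$ and $(\mu^2 + |Du|^2)^{1+\alpha} = w^{q + 2 - p}$, this reads
\begin{align*}
\sup_\tau \int_{B_r\times\{\tau\}} w^{q + (2-p)}\,dx &+ \iint_{Q_{r,s}} w^{q-2}|D^2 u|^2\,dx\,dt \\
&\le C(1+\alpha)\bigg[\frac{\iint_{Q_{R,S}} w^{q}\,dx\,dt}{(R-r)^2} + \frac{\iint_{Q_{R,S}} w^{q+(2-p)}\,dx\,dt}{S-s}\bigg].
\end{align*}
Setting $v := w^{q/2}$, we have $v^2 = w^{q}$ and $|\nabla v|^2 \le \tfrac{q^2}{4} w^{q-2}|D^2u|^2$, so the second-derivative term on the left controls $\iint |\nabla v|^2$. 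The mismatch between the exponents $q$ and $q+2-p$ is reconciled by the elementary inequality $w^{p-2}\le \mu^{p-2}$ (valid since $w \ge \mu$ and $p \le 2$), which lets me compare $w^{q}$ and $w^{q+2-p}$ at the price of a $\mu$-dependent constant; this loss is harmless since the present conclusion is qualitative.

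Next I would apply the parabolic Gagliardo--Nirenberg embedding
\begin{equation*}
\iint (v\zeta)^{2\kappa}\,dx\,dt \le C\,\bigg(\sup_\tau \int (v\zeta)^2\,dx\bigg)^{2/N}\iint |\nabla(v\zeta)|^2\,dx\,dt, \qquad \kappa := 1 + \tfrac{2}{N},
\end{equation*}
to $v\zeta$, with a standard space-time cutoff $\zeta$ adapted to $Q_{r,s}\subset Q_{R,S}$. Combining with the energy estimate yields the single building block of the iteration,
\begin{equation*}
\iint_{Q_{r,s}} w^{q\kappa}\,dx\,dt \le C(\mu, N, p, C_o, C_1, C_2)\,(1+\alpha)^{C}\bigg[\frac{1}{(R-r)^{2}} + \frac{1}{S-s}\bigg]^{\kappa}\bigg(\iint_{Q_{R,S}} w^{q}\,dx\,dt\bigg)^{\kappa}.
\end{equation*}

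Finally, I would iterate this inequality along exponents $q_n := p\kappa^n\to\infty$ and nested cylinders $Q_n\searrow Q_\infty$ with dyadic shrinking of the radii. Since $\kappa > 1$, the standard bookkeeping for Moser iteration---taking $(q_n\kappa)^{-1}$-powers and summing the resulting convergent geometric series in $n$---shows that $\|w\|_{L^{q_n}(Q_n)}$ stays uniformly bounded as $n\to\infty$, whence $w\in L^\infty(Q_\infty)$ and thus $|Du|\in L^\infty_{\mathrm{loc}}(E_T)$. The main technical obstacle is precisely the exponent mismatch between $\sup_\tau \int w^{q+2-p}$ and $\sup_\tau \int w^{q}$ in the energy estimate; resolving it via the pointwise bound $w\ge \mu$ introduces an unavoidable $\mu$-dependence in the constants, which is exactly why quantitative $\mu$-independent gradient estimates are deferred to a separate, more delicate argument.
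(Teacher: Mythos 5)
Your overall strategy (Moser iteration for powers of $(\mu^2+|Du|^2)$ based on Lemma~\ref{energy-est}) is the same as the paper's, but the device you use to reconcile the exponent mismatch does not work, and this is a genuine gap rather than a presentational one. The inequality $w\ge\mu$ only lets you bound a \emph{lower} power of $w$ by a \emph{higher} one: $w^{a}\le \mu^{a-b}w^{b}$ for $a\le b$. This indeed handles the sup-term ($\sup\int w^{q}\zeta^2\le \mu^{p-2}\sup\int w^{q+2-p}\zeta^2$), but the right-hand side of the energy estimate also contains the time-derivative contribution $\tfrac{1}{S-s}\iint_{Q_{R,S}}w^{q+2-p}\,\dx\dt$, whose exponent $q+2-p$ is \emph{strictly larger} than $q$ when $p<2$. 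Bounding it by $C_\mu\iint w^{q}$ would require an upper bound on $w$, which is exactly what you are trying to prove; the degeneracy here lives at large $|Du|$, where $\mu$ gives no help. Consequently the claimed building block $\iint_{Q_{r,s}}w^{q\kappa}\le C_\mu(1+\alpha)^C[(R-r)^{-2}+(S-s)^{-1}]^{\kappa}\big(\iint_{Q_{R,S}}w^{q}\big)^{\kappa}$ does not follow from the stated argument, and the subsequent iteration along $q_n=p\kappa^n$ has no valid base inequality. (Were such a reverse-H\"older inequality true with constants independent of any bound on $u$, it would yield gradient bounds for arbitrary weak solutions in the sub-critical range, which is precisely what one cannot expect.)

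The paper resolves the mismatch differently, and $\mu$-independently: it keeps the \emph{higher} exponent $1+\alpha$ throughout, estimating the lower-power term by $(\mu^2+|Du|^2)^{\frac p2+\alpha}\le(\mu^2+|Du|^2)^{1+\alpha}+1$, and then uses the version of the parabolic Sobolev inequality \cite[Chapter I, Proposition 3.1]{DB} in which the time-slice norm carries the exponent $q_i=\frac{4+4\alpha_i}{p+2\alpha_i}\in[2,4]$ matching the energy estimate, rather than the standard embedding with slice exponent $2$. This produces the recursion $p_{i+1}=(p+2\alpha_i)\frac{N+q_i}{N}$, which only diverges geometrically if the starting exponent satisfies $p_o>\max\{\frac{N(2-p)}{2},2\}$ (so that $\boldsymbol\nu=N(p-2)+2p_o>0$). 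That is the point where Lemma~\ref{lem:Lq-est} — and hence the local boundedness of $u$ — enters in an essential way: it supplies $|Du|\in L^{p_o}_{\rm loc}$ for this large $p_o$ as the genuine starting level of the iteration, not merely the integrability needed to invoke Lemma~\ref{energy-est}, as your proposal suggests. If you wanted to salvage your route, you would have to treat the term $\iint w^{q+2-p}$ by interpolation, e.g.\ $w^{q+2-p}\le\epsilon w^{q\kappa}+C_\epsilon w^{q}$, which is only possible for $q\ge\frac{N(2-p)}{2}$ and requires an absorption argument using the finiteness of $\iint w^{q\kappa}$ from Lemma~\ref{lem:Lq-est}; in particular you would again have to start the iteration at a sufficiently large exponent rather than at $q_0=p$.
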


\begin{proof}
The starting point is the energy estimate from Lemma \ref{energy-est} applied with $\delta=\mu$ on standard cylinders $Q_R(z_o):=Q_{R,R^2}(z_o)$, i.e.~for $0<r<R$ such that $Q_R(z_o)\Subset E_T$. For simplicity we take $z_o=(0,0)$; we have
\begin{align}\label{energy-intrinsic}\nonumber
    \tfrac1{1+\alpha}\sup_{t\in (-r^2,0]}&
     \int_{B_r\times\{t\}} \big(\mu^2+|Du|^2\big)^{1+\alpha}\, \dx +
     \iint_{Q_{r}}\big(\mu^2 +|Du|^2\big)^{\frac{p-2}{2}+\alpha}
     |D^2u|^2\dx\dt\nonumber\\
     &\le \frac{C(1+\alpha)}{(R-r)^2}
     \iint_{Q_{R}}
     \big(\mu^2 +|Du|^2\big)^{\frac{p}{2}+\alpha}
     \,\dx\dt\nonumber\\
     &\phantom{\le\,}+\frac{C}{(R-r)^2}
     \iint_{Q_{R}}\big(\mu^2+|Du|^2\big)^{1+\alpha}\,\dx\dt.
\end{align}
Since $1<p\le 2$, the dominant term on the right-hand side is the integral with exponent $1+\alpha$, whereas the other term is controlled via
\begin{align*}
    \big(\mu^2 +|Du|^2\big)^{\frac{p}2+\alpha}
    &
    \le \big(\mu^2 +|Du|^2\big)^{1+\alpha} +1.
\end{align*}
Hence, by Kato's inequality, for any $\alpha\ge 0$ we 
have 
\begin{align*}\nonumber
    \tfrac1{1+\alpha}
    \sup_{t\in (-r^2,0]}
     \int_{B_r\times\{t\}}& \big(\mu^2+|Du|^2\big)^{1+\alpha}\, \dx+
     \iint_{Q_{r}}\big(\mu^2 +|Du|^2\big)^{\frac{p-2}{2}+\alpha}
     \big|\nabla |Du|\big|^2\dx\dt\\
     &\le \frac{C(1+\alpha)}{(R-r)^2}
     \iint_{Q_{R }}
     \Big[ \big(\mu^2+|Du|^2\big)^{1+\alpha}+1\Big]\,\dx\dt,
\end{align*}
where $C=C(p, C_o,C_1,C_2)$. Introduce
$$
    w:=  \big(\mu^2+|Du|^2\big)^\frac{p+2\alpha}{4}
$$
and rewrite the preceding estimate as
\begin{align}\label{est:energy-w}\nonumber
     \tfrac1{1+\alpha}
     \sup_{t\in (-r^2,0]}&
     \int_{B_r\times\{t\}} w^\frac{4+4\alpha}{p+2\alpha}\, \dx
     +
     \tfrac{1}{(p+2\alpha)^2}
     \iint_{Q_{r}}|\nabla w|^2
    \,\dx\dt\\
    &\le 
    \frac{C (1+\alpha)}{(R-r)^2}
     \iint_{Q_{R }}
     \big[w^\frac{4+4\alpha}{p+2\alpha} +1\big]\,\dx\dt.
\end{align}
The above inequality  serves as the starting point of the Moser iteration scheme. For $i\in\N_0$, we introduce the following abbreviations
\begin{equation}\label{Eq:k_n-p<2}
\left\{
	\begin{array}{c}
	\displaystyle \rho_i=\tfrac12 R+\frac{R}{2^{i+1}},\quad 
	\tilde{\rho}_i=\frac{\rho_i+\rho_{i+1}}{2},\\[6pt]
	\displaystyle B_{i}=B_{\rho_i},\quad
	Q_{i}=Q_{\varrho_i}
    =B_i\times (-\varrho_i^2,0],\\[6pt]
	\widetilde{B}_{i}=B_{\tilde{\rho}_i},\quad 
	\widetilde{Q}_i=Q_{\tilde\varrho_i}=\widetilde{B}_i\times(-\tilde\varrho_i^2,0],\\[6pt]
  	\kappa=1+\frac{2}N,\quad
  	\alpha_o=\tfrac12p_o-1,\quad 
	\alpha_{i+1}=\kappa\alpha_i+\frac2N-\frac{2-p}2,\\[6pt]
   	\displaystyle p_i=2+2\alpha_{i},\quad 
	q_i=\frac{4+4\alpha_i}{p+2\alpha_i},
	\end{array}
\right.
\end{equation}
where the exponent $p_o$ is chosen such that
$p_o>\max\big\{\frac{ N(2-p)}2, 2\big\}$.
For the sequences $\alpha_i$ and $p_i$ it can be directly deduced from the recursion formulas that
$$
    \alpha_i=\tfrac{\boldsymbol{\nu}}{4}\kappa^i-1+\tfrac{N(2-p)}4\quad\mbox{with}\quad \boldsymbol\nu:= 
    N(p-2)+2p_o>0
$$
and
\begin{equation}\label{p_i+1}
    p_{i+1}=(p+2\alpha_i)\tfrac{N+q_i}{N}
\end{equation}
hold true for any $i\in\N_0$. 
Note also that
\begin{equation*}
    p_i=\tfrac{\boldsymbol \nu}2\kappa^i+\tfrac{N(2-p)}{2}.
\end{equation*}
To proceed further  
let $\zeta_i\in C^\infty(\widetilde Q_i,[0,1])$ be a standard cut-off function that
vanishes on $\partial_{\mathrm{par}}\widetilde{Q}_i$ and equals one in $Q_{i+1}$,
such that $|\nabla\zeta_i |\le 2^{i+4}/R$. 
We apply the Sobolev imbedding \cite[Chapter I, Proposition~3.1]{DB} with $p=2$ and $m=q_i$, 
to obtain that
\begin{align}\label{Sobolev:p<2-unified}\nonumber
    \iint_{Q_{i+1}}&\big(\mu^2+|Du|^2\big)^\frac{p_{i+1}}{2}\,\dx\dt
    \le
    \iint_{\widetilde{Q}_i}(w\zeta_i)^{2\frac{N+q_i}N}\,\dx\dt\\\nonumber
    &\le C\iint_{\widetilde{Q}_i}|\nabla (w\zeta_i)|^{2}\,\dx\dt
    \bigg[\sup_{\tau\in(-\tilde\varrho_i^2,0]}
    \int_{\widetilde{B}_i\times\{\tau\}}(w\zeta_i)^{q_i}\,\dx\bigg]^{\frac{2}N
    }\\\nonumber
    &\le 
    C  \iint_{\widetilde{Q}_i}
    \big[ \zeta_i^2|\nabla w|^2 +w^2|\nabla \zeta_i|^2
    \big]\dx\dt
    \bigg[\sup_{\tau\in(-\tilde\varrho_i^2,0]}
    \int_{\widetilde{B}_i\times\{\tau\}}w^{q_i}\,\dx\bigg]^{\frac{2}N}
    \\
    &\le 
    C  \iint_{\widetilde{Q}_i}
    \bigg[ |\nabla w|^2 +\frac{2^{2i+8}w^2}{R^2}
    \bigg]\dx\dt
    \bigg[\sup_{\tau\in(-\tilde\varrho_i^2,0]}
    \int_{\widetilde{B}_i\times\{\tau\}}w^{q_i}\dx\bigg]^{\frac{2}N},
\end{align}
for a constant $C=C(N)$. In turn we used the properties of the cut-off function $\zeta_i$. The fact that the constant actually depends only on $N$ and $p$ can be seen as follows. Applying the Sobolev embedding from the first to second line yields a constant depending on $N$ and $q_i$. 
Since $q_i=2+\frac{2(2-p)}{p+2\alpha_i}\in [2,4]$ and because the constant $C_{\rm Sob}(N,q_i)$ from the Sobolev embedding depends continuously on $q_i$, the dependence on $q_i$ can be eliminated. 
To the right-hand side integrals of \eqref{Sobolev:p<2-unified} we apply the energy estimate~\eqref{est:energy-w} on $\widetilde{Q}_i\subset Q_i$ instead of $Q_r\subset Q_R$, and  with $\alpha_i$ in place of $\alpha$. This yields 
\begin{align*}
    \iint_{\widetilde{Q}_i}
    |\nabla w|^2 \dx\dt
    &\le C (p+2\alpha_i)^2(1+\alpha_i)
    \frac{2^{2i+6}}{R^2}
    \iint_{Q_i}\big[ w^{q_i}+1\big]\,\dx\dt\\
     &\le 
    \frac{C (1+\alpha_i)^32^{2i}}{R^2}
    \iint_{Q_i}\big[ w^{q_i}+1\big]\,\dx\dt
\end{align*}
and 
\begin{align*}
    \sup_{\tau\in(-\tilde\varrho_i^2,0]}
    \int_{\widetilde{B}_i\times\{\tau\}}w^{q_i}\,\dx
    &\le 
        \frac{C(1+\alpha_i)^2 2^{2i}}{R^2}
    \iint_{Q_i}\big[ w^{q_i}+1\big]\,\dx\dt.
\end{align*}
The term   $w^2$ in \eqref{Sobolev:p<2-unified} is estimated by means of 
$w^2\le  w^{q_i}+1$.
Using also $1+\alpha_i\le C(N,p_o)\kappa^i$,
we obtain
\begin{align*}
    \iint_{Q_{i+1}}
    \big(\mu^2+|Du|^2\big)^\frac{p_{i+1}}{2}\,\dx\dt
    &\le
    C (1+\alpha_i)^{3+\frac4N}
    \bigg[ \frac{2^{2i}}{R^2}
    \iint_{Q_i}\big[ w^{q_i}+1\big]\,\dx\dt\bigg]^\kappa\\
    &\le 
    C (4\kappa^3)^{i\kappa} \bigg[ \frac{1}{R^2}
    \iint_{Q_i}\big[ w^{q_i}+1\big]\,\dx\dt\bigg]^\kappa\\
    &= 
    C \boldsymbol b^{i\kappa} \bigg[ \frac{1}{R^2}
    \iint_{Q_i}\Big[ \big(\mu^2+|Du|^2\big)^\frac{p_i}2+1\Big]\,\dx\dt\bigg]^\kappa
\end{align*}
with $\boldsymbol b=\boldsymbol b(N)=4\kappa^3$ and $C=C(N,p,p_o, C_o,C_1,C_2)$. Taking into account  $|Q_i|\le 2^{N+2}|Q_{i+1}|$ and $|Q_i|^\frac2N\le \big(2^NR^{N+2}\big)^\frac2N=4R^{2\kappa}$ we can pass to mean values on both sides to deduce that
\begin{align*}
     \biint_{Q_{i+1}}
     \big(\mu^2+|Du|^2\big)^{\frac{p_{i+1}}{2}}\,\dx\dt
    &\le
    C \boldsymbol b^{i\kappa} \bigg[ 
    \biint_{Q_i}\Big[ \big(\mu^2+|Du|^2\big)^{\frac{p_i}{2}}+1\Big]\,\dx\dt\bigg]^\kappa
\end{align*}
for a constant $C=C(N,p,p_o, C_o,C_1,C_2)$. Adding $1$ to the left-hand side we have
\begin{align*}   
    \biint_{Q_{i+1}}\Big[\big(\mu^2+|Du|^2\big)^{\frac{p_{i+1}}{2}}+ 1\Big]\,\dx\dt
    &\le
    C \boldsymbol b^{i\kappa} \bigg[ 
    \biint_{Q_i}\Big[ \big(\mu^2+|Du|^2\big)^{\frac{p_i}{2}}+1\Big]\,\dx\dt\bigg]^\kappa.
\end{align*}
Using the abbreviation
\begin{equation*}
    \boldsymbol
    Y_i:=\left[\biint_{Q_i}
    \Big[ 
    \big(\mu^2+|Du|^2\big)^{\frac{p_i}{2}}+1
    \Big]\,\dx\dt
    \right]^{\frac1{p_i}},
\end{equation*}
the last inequality  can be put into the recursive inequality 
\begin{equation}\label{iteration-1}
    \boldsymbol Y_{i+1}^{p_{i+1}}\le \big( C \boldsymbol b^{i}
    \boldsymbol Y_i^{p_i}\big)^{\kappa}
    \qquad\forall\, i\in\N_0.
\end{equation}
Iterating \eqref{iteration-1}, we get 
\begin{align}\label{iteration-2-1}
    \boldsymbol Y_{i}^{p_i}
    &\le
    \prod_{j=1}^{i} C^{\kappa^{i-j+1}}
    \prod_{j=1}^{i} \boldsymbol b^{j\kappa^{i-j+1}}
    \boldsymbol Y_o^{p_o\kappa^{i}}
\end{align}
for every $i\in\N$. The right-hand side of \eqref{iteration-2-1} can be controlled by means of Lemma~\ref{lem:A}. In fact, we first take both sides to the power $\frac{1}{p_i}$. Then, we re-write the products in the form
\begin{align*}
     \prod_{j=1}^{i} C^\frac{\kappa^{i-j+1}}{p_i}&
      \prod_{j=1}^{i} \boldsymbol b^\frac{j\kappa^{i-j+1}}{p_i}
    \boldsymbol Y_o^\frac{p_o\kappa^{i}}{p_i}\\
     &=
      \Bigg[
      \prod_{j=1}^{i} C^\frac{\kappa^{i-j+1}}{\frac1{2}\boldsymbol{\nu}(\kappa^i-1)}
      \Bigg]^\frac{
      \frac12\boldsymbol{\nu}
      (\kappa^i-1)}{p_i}
    \Bigg[
    \prod_{j=1}^{i} \boldsymbol b^\frac{j\kappa^{i-j+1}}{\frac1{2}\boldsymbol{\nu}(\kappa^i-1)}
    \Bigg]^\frac{\frac12\boldsymbol{\nu}
    (\kappa^i-1)}{p_i}
     \big[\boldsymbol Y_o\big]^{p_o\frac{\kappa^i}{p_i}}.
\end{align*}
The exponents outside the brackets converge as $i\to\infty$, i.e.~we
have
\begin{align*}
    \lim_{i\to\infty}\frac{\frac12\boldsymbol{\nu}
    (\kappa^i-1)}{p_i}
    =
    \lim_{i\to\infty}\frac{\frac12\boldsymbol{\nu}
    (\kappa^i-1)}{\frac12\boldsymbol{\nu}\kappa^i+\tfrac{N(2-p)}{2}}
    =1
\end{align*}
as well as
\begin{align*}
    \lim_{i\to\infty}\frac{\kappa^i}{p_i}=\frac2{\boldsymbol{\nu}}.
\end{align*}
Consequently, by virtue of Lemma~\ref{lem:A} we get
\begin{align*}
    \limsup_{i\to\infty}\boldsymbol Y_{i}
    &\le
    \limsup_{i\to\infty}
    \prod_{j=1}^{i} 
    C^{\frac{\kappa^{i-j+1}}{\frac12\boldsymbol{\nu}(\kappa^i-1)}}
      \prod_{j=1}^{i} \boldsymbol b^{\frac{j\kappa^{i-j+1}}{\frac12\boldsymbol{\nu}(\kappa^i-1)}}
      \boldsymbol Y_o^{\frac{2p_o}{\boldsymbol{\nu}}}\\
    &\le
      C^{\frac{2\kappa}{\boldsymbol{\nu}(\kappa-1)}}
      \boldsymbol b^{\frac{2\kappa^2}{\boldsymbol{\nu}(\kappa-1)^2}}
      \boldsymbol Y_o^{\frac{2p_o}{\boldsymbol{\nu}}}\\
    &=
      C^{\frac{N+2}{\boldsymbol{\nu}}}\boldsymbol b^{\frac{(N+2)^2}{2\boldsymbol{\nu}}} 
      \boldsymbol Y_o^{\frac{2p_o}{\boldsymbol{\nu}}},
\end{align*}
whence we deduce
\begin{equation*}
    \sup_{Q_{\frac12 R}}|Du|
    \le
    C
    \bigg[
    \biint_{Q_{R}}\big[
    |Du|^{p_o}+1
    \big]\,\dx\dt\Bigg]^\frac2{\boldsymbol{\nu}}
\end{equation*}
with a constant $C=C(N,p,p_o, C_o,C_1,C_2)$.  By Lemma \ref{lem:Lq-est} we have $|Du|\in L^{p_o}_{\rm loc}(E_T)$. With a standard covering argument we conclude the proof.
\end{proof}

\subsection{Quantitative gradient bound
in the sub-quadratic case}
Now we are in a position to refine the previous result and derive a quantitative gradient bound. The argument exploits a De Giorgi-type iteration.
\begin{proposition}\label{lem:L-infty-quant-p<2}
Let $\mu\in(0,1]$, $1<p< 2$, and $r>1$ that satisfies
\begin{equation*}
    \boldsymbol\nu_r :=N(p-2)+2r>0.
\end{equation*}
Then there exists a  constant $C=C(N,p, r,C_o,C_1,C_2)$ such that  for any locally bounded weak solution $u$ to the parabolic system \eqref{eq:diff-systems} in the sense of Definition \ref{def:weak-loc} with assumption \eqref{ass:b} we have
\begin{align*}
    &\sup_{Q_{\sigma R,\sigma S}(z_o)} |Du|\\
    &\qquad 
    \le
    \max\Bigg\{
     \frac{C}{\eps^\frac{2(2-p)(N+2)}{\boldsymbol \nu_{r}}}
    \frac{\big(R^2/S\big)^\frac{N}{\boldsymbol\nu_{r}}}{(1-\sigma)^\frac{2(N+2)}{\boldsymbol \nu_{r}}} \bigg[\biint_{Q_{R,S}(z_o)}|Du|^{r}\, \dx\dt\bigg]^\frac{2}{\boldsymbol\nu_{r}}
    , \eps\Big(\frac{S}{R^2}
    \Big)^\frac{1}{2-p},\eps\mu
    \Bigg\}
\end{align*}
for every $Q_{R,S}(z_o)\Subset E_T$ with $R\in(0,1]$, $S>0$, every $\sigma \in (0,1)$, and every $\eps\in (0,1] $.
\end{proposition}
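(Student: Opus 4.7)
The plan is to carry out a De Giorgi iteration on level sets of $|Du|$, using the energy estimate from Lemma~\ref{energy-est} in combination with the parabolic Sobolev embedding along the same lines as in the proof of Proposition~\ref{lem:L-infty-est-qual-p<2}, but with truncations at increasing levels $k_i\uparrow k$. The role of the truncations is twofold: they allow one to bypass the requirement $p_o>2$ present in Proposition~\ref{lem:L-infty-est-qual-p<2} and to extend to arbitrary $r>\tfrac12N(2-p)$; and they localize each step of the iteration to the region where $|Du|$ is large, which is essential for extracting the explicit quantitative dependence on $\sigma$, $R$, $S$, $\mu$ and $\epsilon$.

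\textbf{Setup.} Fix a candidate level $k>0$, a sequence of nested cylinders $Q^i:=Q_{R_i,S_i}(z_o)$ with $R_i:=\sigma R+(1-\sigma)R/2^i$, $S_i:=\sigma S+(1-\sigma)S/2^i$ shrinking from $Q^0=Q_{R,S}(z_o)$ down to $Q_{\sigma R,\sigma S}(z_o)$, intermediate cylinders $\widetilde Q^i$, truncation levels $k_i:=k(1-2^{-i-1})$, and standard cut-off functions $\zeta_i$ vanishing on $\partial_{\mathrm{par}}\widetilde Q^i$ and equal to $1$ on $Q^{i+1}$. Apply Lemma~\ref{energy-est} on $\widetilde Q^i\subset Q^i$ with $\delta=\mu$ and $\alpha$ chosen so that $p+2\alpha=r$, and combine it with Kato's inequality and the parabolic Sobolev embedding as in \eqref{Sobolev:p<2-unified}, restricted to the super-level set $\{|Du|>k_i\}$ via the cut-off. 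This produces a reverse-H\"older-type gain on the quantities
\begin{equation*}
Y_i:=\biint_{Q^i}\big(|Du|^2-k_i^2\big)_+^{r/2}\,\dx\dt,
\end{equation*}
yielding a recursive inequality of the form
\begin{equation*}
Y_{i+1}
\le
\frac{C\,\boldsymbol b^{i}}{k^{\gamma}}\Bigg[\frac{1}{(1-\sigma)^{2}R^{2}}+\frac{1}{(1-\sigma)S}\,k^{2-p}\Bigg]^{\theta_1} Y_i^{1+\theta},
\end{equation*}
where $\theta=\tfrac{2}{N\boldsymbol\nu_r}\cdot(\text{positive combination of }r,N,p)$ is strictly positive precisely when $\boldsymbol\nu_r=N(p-2)+2r>0$. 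The fast geometric convergence lemma then forces $Y_\infty=0$, i.e. $|Du|\le k$ on $Q_{\sigma R,\sigma S}(z_o)$, provided $Y_0$ meets a smallness condition of the form $Y_0\le c\,k^{\gamma/\theta}(1-\sigma)^{\beta_1}R^{\beta_2}S^{\beta_3}$; solving this inequality for the smallest admissible $k$ produces the first branch of the $\max$ in the statement, with $(R^2/S)^{N/\boldsymbol\nu_r}$ emerging from the mismatch between the product geometry of $Q_{R,S}(z_o)$ and the intrinsic $p$-parabolic scaling.

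\textbf{The three alternatives.} The other two branches of the $\max$ are structural thresholds below which the iteration either trivializes or must be replaced by the trivial bound. If one already has $\sup|Du|\le\epsilon\mu$ or $\sup|Du|\le\epsilon(S/R^{2})^{1/(2-p)}$ on $Q_{\sigma R,\sigma S}(z_o)$ there is nothing to prove, so one may assume throughout the iteration that $k$ exceeds both thresholds. The threshold $\epsilon\mu$ is dictated by the degeneracy $(\mu^2+|Du|^2)^{(p-2)/2}$ in the coercivity of the energy estimate for $p<2$, and it is exactly what allows to replace $(\mu^2+|Du|^2)^{(p-2)/2}$ by $|Du|^{p-2}$ up to a factor $\epsilon^{p-2}$. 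The threshold $\epsilon(S/R^{2})^{1/(2-p)}$ makes the time term $(S-s)^{-1}$ in the energy estimate comparable to the spatial term $k^{2-p}/R^{2}$, ensuring that the recursion above can be rewritten in a form dominated by the spatial contribution.

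\textbf{Main obstacle.} The delicate point will be the exponent bookkeeping: one has to balance the $\alpha$-dependence of the constants in Lemma~\ref{energy-est} against the Sobolev gain of factor $1+\tfrac{2}{N}$ and against the power $r$ hidden in the truncation, in such a way that the resulting gain $\theta$ in the recursion is positive exactly under $\boldsymbol\nu_r>0$, and that the final exponent on $(1-\sigma)$, $R$, $S$, $\epsilon$ and the $L^r$-integral matches the form stated. Beyond that, the argument is a fairly standard De Giorgi iteration, whose qualitative input (local boundedness of $|Du|$) is already guaranteed by Proposition~\ref{lem:L-infty-est-qual-p<2}, so the iteration is only used to extract the quantitative dependence.
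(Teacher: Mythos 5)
There are two genuine gaps. First, your technical starting point cannot deliver the truncated energy inequality you need: Lemma~\ref{energy-est} contains no level-set truncation, and a space-time cut-off $\zeta_i$ cannot ``restrict'' it to $\{|Du|>k_i\}$, since the cut-off acts in $(x,t)$ and not in the $|Du|$-variable. To run a De Giorgi scheme one has to return to Proposition~\ref{prop:estD2u} and choose a weight of the form $\Phi(\tau)=\tau^{\frac{2-p}{2}}\big(\sqrt\tau-k\big)_+^{r}$, where the factor $\tau^{\frac{2-p}{2}}$ compensates the degenerate weight $(\mu^2+|Du|^2)^{\frac{p-2}{2}}$ on $\{|Du|>k\}$; this is exactly where the constraint $k\ge\tfrac12\epsilon\mu$, and hence the $\epsilon\mu$ branch, enters (that part of your outline is right in spirit). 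Second, and more seriously, the recursion you claim, with $k^{2-p}/[(1-\sigma)S]$ in the time contribution, is not derivable: the function $v=\int_0^{|Du|^2}\Phi(s)\,\ds$ appearing in the term $\iint v\,|\partial_t\zeta^2|$ of \eqref{est:estD2u} is bounded from above only by a constant times $|Du|^{r+4-p}\mathbf 1_{\{|Du|>k\}}$, i.e.~it carries an extra factor $|Du|^{2-p}$ beyond the power $r+2$; since $2-p>0$ and $|Du|>k$ on the relevant set, $k^{2-p}$ is a \emph{lower}, not an upper, bound for that factor. Consequently the iteration constant unavoidably involves $\big(\sup_{Q_{R,S}(z_o)}|Du|\big)^{2-p}$, and the fast geometric convergence lemma only yields a bound of $\sup_{Q_{\sigma R,\sigma S}(z_o)}|Du|$ by a power strictly less than one of $\sup_{Q_{R,S}(z_o)}|Du|$ multiplied by integral data, besides the two thresholds.

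Because of this, your plan is missing an entire stage: after the De Giorgi iteration one must run an interpolation argument on a family of nested cylinders between $Q_{\sigma R,\sigma S}(z_o)$ and $Q_{R,S}(z_o)$ and invoke the interpolation lemma \cite[Chapter I, Lemma 4.3]{DB} to absorb the sup-norm from the right-hand side. The absorption works precisely because the exponent of the sup-term equals $1-\frac{\boldsymbol\nu_r}{2(r+4-p)}$, i.e.~because $\boldsymbol\nu_r>0$; this also corrects your bookkeeping claim, since the De Giorgi gain in the recursion is the usual $\tfrac{2}{N+2}$, independent of $r$, so $\boldsymbol\nu_r>0$ is not what makes that gain positive, but what makes the final absorption (and the reduction of the integral exponent from $r+2$ to $r$ by extracting two powers of the sup) possible. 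Without this stage, solving your smallness condition ``for the smallest admissible $k$'' cannot give the stated estimate, because the admissible $k$ still depends on the unknown $\|Du\|_{L^\infty(Q_{R,S}(z_o))}$. Finally, the threshold $\epsilon(S/R^2)^{1/(2-p)}$ is used to compare $\sup_{Q_n}|Du|$ (not $k$) with it: if the sup lies below the threshold the conclusion is trivial, and otherwise $\tfrac1{R^2}\le\tfrac1S\big(\tfrac1\epsilon\sup|Du|\big)^{2-p}$ allows one to merge the space term into the sup-weighted time term.
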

\begin{proof}
The starting point of the proof of the quantitative gradient bound is the energy estimate \eqref{est:estD2u} from Proposition~\ref{prop:estD2u} in which we discard the non-negative term $\Phi'(|Du|^2)|Du|^2$ in the denominator of the first integral on the right-hand side. Let us apply this inequality with 
\begin{equation*}
    \Phi (\tau):= 
    \tau^\frac{2-p}2\big(\sqrt{\tau}-k\big)_+^{r},\quad \mbox{and $r>1$, $k>0$.}
\end{equation*}
The fact that $\Phi$ is neither bounded nor Lipschitz is immaterial, since by Proposition \ref{lem:L-infty-est-qual-p<2} we have $|Du|\in L^\infty_{\rm loc} (E_T)$ and we can replace $\Phi$ by $\Phi_m(\tau):=\Phi(\min\{\tau,m\})$ with
$m>\|Du\|_{L^\infty (Q_{R,S})}^2$.
 
Next, we aim to estimate $v$ defined in \eqref{def-v} with this choice of $\Phi$. Notice that we only need to consider the case $|Du|>k$ as $v=0$ in the opposite case.  Substituting $\sqrt{\tau}=s$ and noting that $p<2$ we estimate from below by
\begin{align*}
    v&
    =
    \int_0^{|Du|^2}\Phi(\tau)\,\d\tau
    =
    \int_{k^2}^{|Du|^2}
    \tau^\frac{2-p}2\big(\sqrt{\tau}-k\big)_+^{r}
    \,\dtau\\
    &=
    2\int_{k}^{|Du|} s^{2-p}(s-k)^{r}s\,\ds
    \ge 
    2k^{2-p}\int_{k}^{|Du|}(s-k)^{r+1}\,\ds\\
    &=
    \frac{2k^{2-p}}{r+2}\big(|Du|-k\big)^{r+2}.
\end{align*}
Similarly, we estimate from above by
\begin{align*}
    v
    &=
    2\int_{k}^{|Du|} s^{2-p}(s-k)^{r}s\,\ds
    \le
    2 \int_{k}^{|Du|} s^{r+3-p}\,\ds\\
    &\le
    \frac{2}{r+4-p}|Du|^{r+4-p}\le  \frac{2}{r+2}|Du|^{r+4-p}.
\end{align*}
Altogether, this results in
\begin{equation*}
    \frac{2k^{2-p}}{r+2}\big(|Du|-k\big)_+^{r+2}\le v\le \frac{2}{r+2}|Du|^{r+4-p}\mathbf 1_{\{|Du|>k\}}.
\end{equation*}

The second integral on the left-hand
side of \eqref{est:estD2u}, i.e.~the one containing $|D^2u|^2$, can be estimated from below, if we  require
$k\ge \frac12 \eps \mu$ with $\eps\in (0,1]$.
In this case we have  $|Du|>k\ge\frac12 \eps \mu$ on the set $\{|Du|>k\}$, which implies 
\begin{align*}
    &\big(\mu^2 +|Du|^2\big)^\frac{p-2}{2}
     |D^2u|^2 
     \Phi\big( |Du|^2\big)\\
     &\qquad=\mathbf 1_{\{|Du|>k\}}
     \big(\underbrace{\mu^2 +|Du|^2}_{\le 5\eps^{-2}|Du|^2}\big)^\frac{p-2}{2}
     |Du|^{2-p}\big(|Du|-k\big)^{r}
     |D^2u|^2 \\
     &\qquad 
     \ge 5^\frac{p-2}2\eps^{2-p}
     \mathbf 1_{\{|Du|>k\}} \big(|Du|-k\big)^{r} \big|\nabla |Du|\big|^2\\
     &\qquad
     \ge \frac{4\,\eps^{2-p}}{C(p)(r+2)^2}
     \Big|\nabla\big(|Du|-k\big)_+^\frac{r+2}2\Big|^2.
\end{align*}
In turn we  used Kato's inequality $|\nabla |Du||\le |D^2u|$. 

Finally, we consider the integrand of the first and second integrals appearing on the right-hand side of \eqref{est:estD2u}. Again we only have to focus on the set $\{|Du|>k\}$. Taking again into account that $p<2$, we obtain
\begin{align*}
    \big(\mu^2 &+|Du|^2\big)^\frac{p-2}{2}
    |Du|^2\Phi\big( |Du|^2\big)\\
    &
    =
    \mathbf 1_{\{|Du|>k\}}
    \underbrace{\big(\mu^2 +|Du|^2\big)^\frac{p-2}{2}}_{\le |Du|^{p-2}} |Du|^2
    |Du|^{2-p}\big(|Du|-k\big)^{r}\\
    & \le
    \mathbf 1_{\{|Du|>k\}}|Du|^{r+2}.
\end{align*}
Similarly, we have
\begin{align*}
    \big(\mu^2 &+|Du|^2\big)^\frac{p-2}{2}\Phi'(|Du|^2)|Du|^4\\
    &\le
    \mathbf 1_{\{|Du|>k\}}
    |Du|^{2}
    \big(|Du|-k\big)^{r-1}
    \Big[\big(1-\tfrac{p}2+\tfrac{r}2\big)|Du|
    - \big(1-\tfrac{p}2\big)k\Big]
    \\
    &
    \le\tfrac12 (r+1)\mathbf 1_{\{|Du|>k\}}|Du|^{r+2}.
\end{align*}
Inserting these inequalities into the energy inequality \eqref{est:estD2u} (taking $z_o=(0,0)$ for simplicity) results in
\begin{align}\label{energy:quant}
    k^{2-p}\sup_{\tau\in (-S,0]}&
     \int_{B_R\times\{\tau\}} \big[\big(|Du|-k\big)_+^\frac{r+2}2\zeta\big]^2\, \dx \nonumber\\
     &\phantom{\le\,}
     +
     \eps^{2-p}
    \iint_{Q_{R,S}}
    \big|\nabla\big[\big(|Du|-k\big)_+^\frac{r+2}{2}\zeta\big]\big|^2\,\dx\dt \nonumber\\
     &\le
     C\iint_{Q_{R,S}}
     \mathbf 1_{\{|Du|>k\}}|Du|^{r+2}\big(|\nabla\zeta|^2+\zeta^2\big)\,\dx\dt \nonumber\\
     &\phantom{\le\,}+
     C\iint_{Q_{R,S}}|Du|^{r+4-p}
     \mathbf 1_{\{ |Du|>k\}}\big|\partial_t\zeta^2\big|\,\dx\dt,
\end{align}
with a constant $C=C(p,r,C_o,C_1,C_2)$.
Note that the preceding energy inequality holds true for any general cylinder $Q_{R,S}$, for any $k\ge\frac12\eps\mu $, and any cut-off function $\zeta$ vanishing on the parabolic boundary $\partial_{\mathrm{par}}Q_{R,S}$. 

Now, for a fixed cylinder $Q_o:=Q_{R,S}$ we define shrinking families $Q_n:=
Q_{\rho_n,\theta_n}$ and $\widetilde Q_n:= Q_{\tilde\rho_n,\tilde\theta_n}$ of cylinders with the same base point, where
$\rho_n$, $\tilde\rho_n$, $\theta_n$, $\tilde \theta_n$ are defined according to
\begin{equation*}
    \rho_n:= \sigma R +\frac{1-\sigma}{2^n}R,
    \quad\
    \theta_n:= \sigma S+\frac{1-\sigma}{2^n}S
\end{equation*}
and 
\begin{equation*}
    \tilde\rho_n:=\tfrac12\big( \rho_n+\rho_{n+1}\big)=\sigma R+\frac{3(1-\sigma)}{2^{n+2}}R,
    \quad
    \tilde\theta_n:=\tfrac12\big( \theta_n+\theta_{n+1}\big)
    =
    \sigma S +\frac{3(1-\sigma)}{2^{n+2}}S.
\end{equation*}
These cylinders are arranged in such a way that $Q_{n+1}\subset \widetilde 
Q_n\subset Q_n$ for any $n\in\N_0$. The level $k$ in the energy estimate is fixed by an increasing sequence of levels 
\begin{equation*}
     k_n:= k-\frac{ k}{2^{n}},
\end{equation*}
where $ k\ge \epsilon\mu$ is a quantity to be determined later in the course of the proof. Note that $k_{n+1}>k_o=\frac12  k\ge \tfrac12\eps\mu$. 
Since the above energy inequality shall be applied on the cylinders $Q_n$, we need to specify the cut-off function $\zeta$ in such a way that it vanishes on the parabolic boundary of $Q_n$ on the one hand, and is identical to 1 on the intermediate cylinder $\widetilde Q_n$ on the other hand. In addition, we require that
$$
    |\nabla\zeta_n|\le \frac{2^{n+3}}{(1-\sigma)R}
    \quad\mbox{and}\quad
    |\partial_t\zeta_n|\le\frac{2^{n+3}}{(1-\sigma)S}.
$$
With respect to these choices, the energy estimate \eqref{energy:quant} for $Q_n$ with $k_{n+1}$ reads as follows
\begin{align}\label{est:en-k_(n+1)}\nonumber
     k^{2-p}\sup_{\tau\in (-\theta_n,0]}&
     \int_{B_n\times\{\tau\}}\big(|Du|-k_{n+1}\big)_+^{r+2}\zeta_n^2\, \dx
     \\\nonumber
     &\phantom{\le\,}
     +
     \eps^{2-p}
     \iint_{Q_n}
     \big|\nabla
     \big[\big(|Du|-k_{n+1}\big)_+^\frac{r+2}{2}\zeta_n\big]\big|^2\,\dx\dt\\\nonumber
     &\le
     \frac{C 2^{2n}}{(1-\sigma)^2R^2}
     \iint_{Q_n}
     \mathbf 1_{\{|Du|>k_{n+1}\}}|Du|^{r+2}\,\dx\dt\\
     &\phantom{\le\,}+
     \frac{C 2^{n}}{(1-\sigma)S}
     \iint_{Q_n}\mathbf 1_{\{|Du|>k_{n+1}\}}
     |Du|^{r+4-p}\,\dx\dt.
\end{align}
To shorten the notation introduce
\begin{equation*}
    \boldsymbol Y_n:=\iint_{Q_n}\big(|Du|-k_n\big)_+^{r+2}\,\dx\dt.
\end{equation*}
The integral $\boldsymbol Y_n$ can be easily estimated from below by reducing the domain of integration. Indeed, we have
\begin{align*}
    \iint_{Q_n} \big( |Du|-k_n\big)_+^{r+2}\,\dx\dt
    &\ge
    \iint_{\widetilde Q_n\cap\{|Du|>k_{n+1}\} } \big( |Du|-k_n\big)_+^{r+2}\,\dx\dt\\
    &\ge
    \big( k_{n+1}-k_n\big)^{r+2}
    \Big| \widetilde Q_n \cap \big\{ |Du|>k_{n+1}\big\}
   \Big|\\
   &=\frac{ k^{r+2}}{2^{(n+1)(r+2)}}\Big| \widetilde Q_n \cap \big\{ |Du|>k_{n+1}\big\}
   \Big|.
\end{align*}
This measure estimate  can be used to estimate $\boldsymbol Y_{n+1}$ from above. For this purpose we first increase in $\boldsymbol Y_{n+1}$ the domain of integration from $Q_{n+1}$ to $\widetilde Q_n$ and then apply H\"older's inequality with exponents $\frac{N+2}N$ and $\frac{N+2}2$. Subsequently, we use the fact that $\zeta_n=1$ on $\widetilde Q_n$ and the  measure estimate. In this way we get
\begin{align*}
    \boldsymbol Y_{n+1}
    &\le 
    \iint_{\widetilde Q_n} \Big[\big(
    |Du|-k_{n+1}\big)_+^\frac{r+2}{2} \zeta_n\Big]^2\, \dx\\
    &\le
    \bigg[ \iint_{\widetilde Q_n} \Big[\big(|Du|-k_{n+1}\big)_+^\frac{r+2}{2}\zeta_n\Big]^{2\frac{N+2}{N}}\dx\dt\bigg]^\frac{N}{N+2}
    \Big|\widetilde Q_n \cap \big\{ |Du|>k_{n+1}\big\}
   \Big|^\frac{2}{N+2}\\
   &\le
    \bigg[ \iint_{ Q_n} 
    \Big[\big(|Du|-k_{n+1}\big)_+^\frac{r+2}{2}\zeta_n
    \Big]^{2\frac{N+2}{N}}\dx\dt\bigg]^\frac{N}{N+2}
    \bigg[
    \frac{2^{(r+2)(n+1)}}{ k^{r+2}}\boldsymbol Y_n
    \bigg]^\frac{2}{N+2}.
\end{align*}
The first integral on the right-hand side can be controlled using Sobolev's inequality \cite[Chapter I, Proposition~3.1]{DB} with $m=p=2$ and $q=2\frac{N+2}{N}$. The application yields
\begin{align*}
    \iint_{ Q_n} &\Big[\big(|Du|-k_{n+1}\big)_+^\frac{r+2}{2} \zeta_n\Big]^{2\frac{N+2}{N}}\dx\dt
    \le  C\mathbf I\cdot \mathbf{II}^\frac{2}{N},
\end{align*}
where we abbreviated
\begin{align*}
    \mathbf I
    &:=
    \iint_{ Q_n}
    \Big|\nabla\big[ \big(|Du|-k_{n+1}\big)_+^\frac{r+2}{2}
    \zeta_n\big]\Big|^2\dx\dt
    \le
    \eps^{p-2}\big[
      \mbox{right-hand side of \eqref{est:en-k_(n+1)}}
      \big]
\end{align*}
and
\begin{align*}
    \mathbf{II}
    &:=
     \sup_{\tau\in(-\theta_n,0]}\int_{B_n\times\{\tau\}}
    \big[\big(|Du|-k_{n+1}\big)_+^\frac{r+2}{2} \zeta_n\big]^{2}\dx
    \le k^{p-2}\big[
      \mbox{right-hand side of \eqref{est:en-k_(n+1)}}
      \big].
\end{align*}
Inserting this above gives
\begin{align*}
      \iint_{ Q_n} \!\Big[\big(|Du|-k_{n+1}\big)_+^\frac{r+2}{2} \zeta_n\Big]^{2\frac{N+2}{N}}\!\dx\dt
      &\le
      C\eps^{p-2}  k^{(p-2)\frac2N}\big[
      \mbox{right-hand side of \eqref{est:en-k_(n+1)}}
      \big]^\frac{N+2}{N}.
\end{align*}
Therefore, we obtain
\begin{align*}
    \boldsymbol Y_{n+1}
   &\le
   C
    \Big[ \eps^{p-2} k^{(p-2)\frac2N}\big[
      \mbox{right-hand side of \eqref{est:en-k_(n+1)}}
      \big]^\frac{N+2}{N}\Big]^\frac{N}{N+2}
    \bigg[
    \frac{2^{(r+2)(n+1)}}{ k^{r+2}}\boldsymbol Y_n
    \bigg]^\frac{2}{N+2}\\
    &=
     C 
     \eps^\frac{N(p-2)}{N+2}
       k^{-\frac{2(r+4-p)}{N+2}} 2^\frac{2(r+2)(n+1)}{N+2}\boldsymbol Y_n^\frac{2}{N+2}
    \big[
      \mbox{right-hand side of \eqref{est:en-k_(n+1)}}
      \big]\\
    &\le 
    \frac{C 2^{n(r+4)}}{(1-\sigma)^2}
     \eps^\frac{N(p-2)}{N+2}
      k^{-\frac{2(r+4-p)}{N+2}}\boldsymbol Y_n^\frac{2}{N+2} \boldsymbol{E}_n,
\end{align*}
where we abbreviated
\begin{align*}
    \boldsymbol{E}_n
    &:=
    \frac{1}{R^2}
     \iint_{Q_n}
     \mathbf 1_{\{|Du|>k_{n+1}\}}|Du|^{r+2}\,\dx\dt
     +
     \frac{1}{S}
     \iint_{Q_n}\mathbf 1_{\{ |Du|>k_{n+1}\}}|Du|^{r+4-p}\,\dx\dt.
\end{align*}
Since $|Du|$ is locally bounded in $E_T$, we can further estimate
\begin{align*}
    \boldsymbol{E}_n
    &\le
    \bigg[\frac{1}{R^2}
    +
    \frac1S
    \Big(\sup_{Q_n} |Du|\Big)^{2-p}
    \bigg]
     \iint_{Q_n}\mathbf 1_{\{ |Du|>k_{n+1}\}}|Du|^{r+2}\,\dx\dt,
\end{align*}
so that only an integral with $|Du|^{r+2}$ as integrand remains on the right-hand side. This integral can be controlled by $\boldsymbol Y_n$. In fact, 
we shrink the domain of integration in $\boldsymbol Y_{n}$ from  $\{|Du|>k_n\}$ to 
$\{ |Du|>k_{n+1}\}$ and use 
$$
    k_n=k_{n+1}\frac{2^{n+1}-2}{2^{n+1}-1},
$$
to obtain
\begin{align*}
    \boldsymbol Y_{n}
    &\ge
    \iint_{Q_n}\big(|Du|-k_n\big)_+^{r+2}\mathbf 1_{\{ |Du|>k_{n+1}\}}\,\dx\dt\\
    &=
    \iint_{Q_n}\bigg[|Du|-k_{n+1}\frac{2^{n+1}-2}{2^{n+1}-1}\bigg]^{r+2}
    \mathbf 1_{\{ |Du|>k_{n+1}\}}\,\dx\dt\\
    &\ge
    \iint_{Q_n}|Du|^{r+2}\bigg[1-\frac{2^{n+1}-2}{2^{n+1}-1}\bigg]^{r+2}
    \mathbf 1_{\{ |Du|>k_{n+1}\}}\,\dx\dt\\
    &\ge
    \frac{1}{2^{(n+1)(r+2)}}\iint_{Q_n}|Du|^{r+2}
    \mathbf 1_{\{ |Du|>k_{n+1}\}}\,\dx\dt.
\end{align*}
Inserting this above we get
\begin{align*}
    \boldsymbol Y_{n+1}
     &\le
     \frac{C 2^{2n(r+4)}}{(1-\sigma)^2}
     \eps^\frac{N(p-2)}{N+2}
      k^{-\frac{2(r+4-p)}{N+2}}
    \bigg[
         \frac{1}{R^2}+\frac{1}{S} \Big(\sup_{Q_n} |Du|\Big)^{2-p} \bigg]
     \boldsymbol Y_n^{1+\frac{2}{N+2}}.
\end{align*}
Now, if for some $n\in\N_0$ we have
\begin{equation*}
    \Big(\sup_{Q_n} |Du|\Big)^{2-p}
    \le
    \epsilon^{2-p}\frac{S}{R^2} 
\end{equation*}
there is nothing more to prove, since 
\begin{equation}\label{est:Du-first-case}
    \sup_{Q_{\sigma R,\sigma S}} |Du|
    = 
    \sup_{Q_\infty} |Du|
    \le
    \sup_{Q_n} |Du|
    \le
    \eps \Big(\frac{S}{R^2}
    \Big)^\frac1{2-p}.
\end{equation}
Otherwise, for any $n\in \N_0$ we have 
\begin{equation*}
    \Big(\sup_{Q_n} |Du|\Big)^{2-p}
    \ge
    \epsilon^{2-p}\frac{S}{R^2}
    \quad\Longleftrightarrow\quad
    \frac{1}{R^2}
    \le
    \frac{1}{S}\Big(\tfrac1{\eps}\sup_{Q_n} |Du|\Big)^{2-p},
\end{equation*}
and from the above iterative inequality we obtain
\begin{align*}
    \boldsymbol Y_{n+1}
    &\le 
     \frac{C \boldsymbol b^{n}}{(1-\sigma)^2S}
     \eps^\frac{N(p-2)}{N+2}
      k^{-\frac{2(r+4-p)}{N+2}}
     \Big(\tfrac1{\eps}\sup_{Q_o} |Du|\Big)^{2-p}
     \boldsymbol Y_n^{1+\frac{2}{N+2}},
\end{align*}
where we abbreviated $\boldsymbol b= 4^{2(r+4)}$. At this point we apply the lemma on geometric convergence \cite[Chapter I, Lemma 4.1]{DB} with $C$ and $\alpha$ replaced by
$$
    \frac{C \boldsymbol }{(1-\sigma)^2S} \eps^\frac{N(p-2)}{N+2}
       k^{-\frac{2(r+4-p)}{N+2}}
     \Big(\tfrac1{\eps}\sup_{Q_o} |Du|\Big)^{2-p}
    \quad
    \mbox{and}
    \quad
    \tfrac2{N+2}.
$$
Then, if 
\begin{align*}
    \boldsymbol Y_o
    &=
    \iint_{Q_o}|Du|^{r+2}\,\dx\dt\\
    &\le
    \Bigg[
    \frac{C}{(1-\sigma)^2S}
    \eps^\frac{N(p-2)}{N+2}
     k^{-\frac{2(r+4-p)}{N+2}}
    \Big(
\tfrac1{\eps}\sup_{Q_o} |Du|\Big)^{2-p}
    \Bigg]^{-\frac{N+2}{2}}
    \boldsymbol b^{-\frac{(N+2)^2}{4}}\\
    &=
    C^{-1}\big((1-\sigma)^2S\big)^\frac{N+2}{2} 
    \eps^{-\frac{N(p-2)}2}\Big(\tfrac1{\eps}\sup_{Q_o} |Du|\Big)^{-\frac{(2-p)(N+2)}{2}}
     k^{r+4-p},
\end{align*}
we have $\boldsymbol Y_n\to 0$ as $n\to\infty$, which yields 
\[
\sup_{Q_{\sigma R,\sigma S}}|Du|\le k.
\]
The condition for $k$ encoded in this geometric convergence is 
\begin{equation*}
      k\ge \frac{C\eps^\frac{N(p-2)}{2(r+4-p)}}{[(1-\sigma)^2S]^\frac{N+2}{2(r+4-p)}}
    \Big( \tfrac1{\eps}\sup_{Q_o} |Du|\Big)^{\frac{(2-p)(N+2)}{2(r+4-p)}}
    \bigg[\iint_{Q_o}|Du|^{r+2}\,\dx\dt
    \bigg]^\frac{1}{r+4-p}.
\end{equation*}
Let us label the quantity on the right-hand side as $\boldsymbol{K}$.
Taking also into account the condition $  k\ge\epsilon\mu$ that we set up earlier, 
we end up with
\[
\sup_{Q_{\sigma R,\sigma S}}|Du|\le\max\{\boldsymbol{K}, \varep\mu\}.
\] 
At this point, 
we simplify the exponent of $\varep$ in the definition of $\boldsymbol{K}$ by estimating
\begin{align*}
    \frac{N(2-p)}{2(r+4-p)}+\frac{(2-p)(N+2)}{2(r+4-p)}
    &=
    \frac{(2-p)(2N+2)}{2(r+4-p)}
    \le
    \frac{2(2-p)(N+2)}{2(r+4-p)}
\end{align*}
and obtain
\begin{align*}
    &\sup_{Q_{\sigma R,\sigma S}}|Du|\\
    &\quad
    \le
    \bigg[  \frac{C\eps^{2(p-2)}}{(1-\sigma)^2S}\bigg]^\frac{N+2}{2(r+4-p)}
    \bigg[\iint_{Q_{R,S}}|Du|^{r+2}\dx\dt
    \bigg]^\frac{1}{r+4-p}
    \Big(\sup_{Q_{R,S}} |Du|\Big)^{\frac{(2-p)(N+2)}{2(r+4-p)}}\vee
    \epsilon\mu.
\end{align*}
Let us reduce the integral exponent from $r+2$ to $r$ while increasing the exponent of the sup-norm. Consequently, joining this with the alternative estimate \eqref{est:Du-first-case} we get
\begin{align*}
    &\sup_{Q_{\sigma R,\sigma S}}|Du|\\
    &\quad
    \le\max\Bigg\{
    \bigg[\frac{C\eps^{2(p-2)}}{(1-\sigma)^2S}\bigg]^\frac{N+2}{2(r+4-p)}
    \bigg[\iint_{Q_{R,S}}|Du|^{r}\dx\dt
    \bigg]^\frac{1}{r+4-p}
    \Big(\sup_{Q_{R,S}} |Du|\Big)^{\frac{(2-p)(N+2)+4}{2(r+4-p)}},\\
    &\qquad\qquad\quad\,\,\,
    \eps \Big(\frac{S}{R^2}\Big)^\frac1{2-p}, \eps\mu\Bigg\}.
\end{align*}

To derive the desired $L^\infty$-gradient bound we rest upon the above estimate and set up an interpolation argument. 
To this end, define a sequence of nested cylinders $\widehat Q_n=\widehat Q_{\rho_n,\theta_n}$ with $\rho_o=\sigma R$, $\theta_o=\sigma S$, and 
with
\begin{equation*}
    \rho_n:=\sigma R +(1-\sigma)R \sum_{j=1}^n2^{-j}
    \quad\mbox{and}\quad
    \theta_n:=\sigma S +(1-\sigma)S \sum_{j=1}^n2^{-j}
\end{equation*}
for $n\in\N_0$.
Observe that $\widehat Q_o=Q_{\sigma R,\sigma S}$ and $\widehat Q_\infty = Q_{R,S}$. Moreover, we define
$$
    \boldsymbol M_n
    :=
    \sup_{\widehat Q_n} |Du|.
$$
Now, we apply the above gradient bound on  two consecutive cylinders $\widehat Q_{n+1}\supset \widehat Q_n$ instead of
$Q_{R,S}\supset Q_{\sigma R, \sigma S}$. This means that $\tau\in (0,1)$ must be determined in such a way that $\tau \widehat Q_{n+1}=
\widehat Q_{n}$,
  i.e.~$\tau\theta_{n+1}=\theta_n$. This is the same as
\begin{equation*}
    (1-\tau)^2\theta_{n+1} 
    =
    \frac{(\theta_{n+1}-\theta_{n})^2}
    {\theta_{n+1}}
    = 
    \frac{(1-\sigma)^2S
    2^{-2(n+1)}}{1-(1-\sigma)2^{-(n+1)}},
\end{equation*}
which implies
\begin{equation*}
    \frac{1}{(1-\tau)^2\theta_{n+1}}
    =
    2^{2(n+1)}
    \frac{1-(1-\sigma)2^{-(n+1)}}{(1-\sigma)^2S}
    \le 
    \frac{2^{2(n+1)}}{(1-\sigma)^2S}.
\end{equation*}
Moreover, we have
\begin{equation*}
    \frac{\theta_{n+1}}{\rho_{n+1}^2}
    =
    \frac{S}{R^2}
    \Big[
    \sigma +(1-\sigma) \sum_{j=1}^n2^{-j}
    \Big]^{-1}
    \le
    \frac{2S}{R^2}.
\end{equation*}
This yields recursive inequalities of the type
\begin{align*}
    \boldsymbol M_n
    &
    \le
    \max\Bigg\{ 
    \bigg[  \frac{C2^{2(n+1)}\eps^{2(p-2)}}{(1-\sigma)^2S}\bigg]^\frac{N+2}{2(r+4-p)}
    \bigg[\iint_{Q_{R,S}}|Du|^{r}\dx\dt
    \bigg]^\frac{1}{r+4-p}
    \boldsymbol M_{n+1}^{\frac{(2-p)(N+2)+4}{2(r+4-p)}},\\
    &\qquad\qquad\,
    2^\frac{1}{2-p}\epsilon \Big(\frac{S}{R^2}\Big)^\frac{1}{2-p},\eps\mu\Bigg\}.
\end{align*}
Note that the factor $2^\frac{1}{2-p}$ in the second term can be avoided if we replace $\epsilon$ by $2^{-\frac{1}{2-p}}\epsilon$.

If for some $n\in\N_0$ either the second or the third term in the maximum dominates the first one, there is nothing to prove, because we trivially have
\begin{align*}
    \sup_{Q_{\sigma R,\sigma S}} |Du| 
    =
   \boldsymbol  M_o
    \le 
   \boldsymbol  M_{n}
    \le 
    \max\bigg\{ \epsilon
    \Big(\frac{S}{R^2}\Big)^{\frac{1}{2-p}}, \eps\mu\bigg\}.
\end{align*}
Otherwise, we have
\begin{align*}
    \boldsymbol M_n
    &
    \le 
    \underbrace{
    \Bigg[  \frac{C\eps^{2(p-2)}
    }{(1-\sigma)^2S} 
    \bigg[
    \iint_{Q_{R,S}}|Du|^{r}\dx\dt
    \bigg]^\frac2{N+2}
    \Bigg]^\frac{N+2}{2(r+4-p)}}_{=:\kappa}
    \big(\underbrace{2^\frac{N+2}{r+4-p }}_{=:\boldsymbol b}\big)^n
    \boldsymbol M_{n+1}^{\frac{(2-p)(N+2)+4}{2(r+4-p)}}
\end{align*} 
for any $n\in\N_0$. Let
\begin{align*}
    \boldsymbol \nu_r:= 2r+N(p-2)>0
    \quad
    \mbox{and}
    \quad
    \alpha:=\frac{\boldsymbol\nu_{r}}{2(r+4-p)}\in(0,1).
\end{align*}
Note that  the exponent of $\boldsymbol M_{n+1}$ is
\begin{align*}
    \frac{(2-p)(N+2)+4}{2(r+4-p)} 
    &
    =1-\frac{\boldsymbol\nu_{r}}{2(r+4-p)}
    =1-\alpha.
\end{align*}
With these abbreviations, the iterative inequalities have the form
$$
    \boldsymbol M_n
    \le 
    \kappa \boldsymbol b^n
    \boldsymbol M_{n+1}^{1-\alpha}.
$$
The Interpolation Lemma \cite[Chapter I, Lemma 4.3]{DB} yields
\begin{align*}
    \boldsymbol M_o
    &=
    \sup_{Q_{\sigma R,\sigma S}}|Du|\\
    &\le
    \Bigg[
    2 \bigg\{  \frac{C\eps^{2(p-2)}
    }{(1-\sigma)^2S} 
    \bigg[
    \iint_{Q_{R,S}}|Du|^{r}\dx\dt
    \bigg]^\frac2{N+2}\bigg\}^\frac{N+2}{2(r+4-p)} \boldsymbol b^{\frac{1}{\alpha}-1}
    \Bigg]^\frac{2(r+4-p)}{\boldsymbol\nu_{r}}\\
    &=
    \frac{C}{\eps^\frac{2(2-p)(N+2)}{\boldsymbol \nu_{r}}}\frac{1}{[(1-\sigma)^2S]^{\frac{N+2}{\boldsymbol\nu_{r}}}}
    \bigg[\iint_{Q_{R,S}}|Du|^{r}\, \dx\dt\bigg]^\frac{2}{\boldsymbol\nu_{r}}\\
    &=
    \frac{C}{\eps^\frac{2(2-p)(N+2)}{\boldsymbol \nu_{r}}}
    \frac{\big(R^2/S\big)^\frac{N}{\boldsymbol\nu_{r}}}{(1-\sigma)^\frac{2(N+2)}{\boldsymbol \nu_{r}}} \bigg[\biint_{Q_{R,S}}|Du|^{r}\, \dx\dt\bigg]^\frac{2}{\boldsymbol\nu_{r}}.
\end{align*}
Joining this with the first alternative, we obtain the claimed quantitative $L^\infty$-gradient bound.
\end{proof}

\begin{remark}\label{Rmk:p>p_*}\upshape
The preceding gradient bound corresponds to \cite[Chapter VIII, Theorem 5.2]{DB}.  To allow $r=p$, we need $\boldsymbol\nu_p=N(p-2)+2p>0$ which is equivalent to $p>\frac{2N}{N+2}$. In this case the gradient bound simplifies  to
\begin{align*}
    &\sup_{Q_{\sigma R,\sigma S}(z_o)} |Du|\\
    &\qquad\le
    \max\Bigg\{
    \Bigg[\frac{C}{\epsilon^{\frac{2(2-p)(N+2)}{2}}}
     \frac{\big(R^2/S\big)^\frac{N}{2}}{(1-\sigma)^{N+2}}
    \biint_{Q_{R,S}(z_o)} |Du|^p\,\dx\dt
    \Bigg]^\frac{d}{p}, \eps\Big(\frac{S}{R^2}
    \Big)^\frac{1}{2-p}, \eps\mu
    \Bigg\},
\end{align*}
where
$$
    d:=\frac{2p}{(N+2)p-2N}
$$
stands for the {\em scaling deficit}.
Note that $d$ blows up in the limit $p\downarrow \frac{2N}{N+2}$.
\end{remark}

\begin{remark}\label{Rmk:p=2}
\upshape A careful examination of the proof shows that when $p=2$, we can take $\eps=1$ and, quite obviously, $\mu=0$. Moreover, due to the natural homogeneity of the equation, we can also assume $S=R^2$. Hence, the scaling deficit $d\equiv1$, and the previous estimate reduces to 
\begin{equation*}
    \sup_{Q_{\sigma R,\sigma R^2}(z_o)} |Du|
    \le
    \Bigg[\frac{C}{(1-\sigma)^{N+2}}
    \biint_{Q_{R,R^2}(z_o)} |Du|^2\,\dx\dt
    \Bigg]^\frac{1}{2}.
\end{equation*}
\end{remark}

When $p\in(1,\frac{2N}{N+2}]$, a gradient bound cannot be estimated via the $L^p$-norm of $Du$ in general like in Remark~\ref{Rmk:p>p_*}. However, it is possible if we consider bounded solutions.
\begin{proposition}\label{lem:L-infty-est-quant-p<2}
Let $\mu\in(0,1]$ and $1<p< 2$. 
Then, for any locally bounded weak solution $u$ to the parabolic system \eqref{eq:diff-systems} in the sense of Definition \ref{def:weak-loc} with assumption \eqref{ass:b}, we have $|Du|\in L^\infty_{\mathrm{loc}}(E_T)$.
Moreover, there exist positive constants $\theta =\theta (N,p)$ and $C=C(N,p, C_o,C_1,C_2)$ such that for every cylinder $Q_{R,S}(z_o)\Subset E_T$ with $0<R\le1$ and $S>0$ we have
\begin{align}\label{est:L^infty-L^p}\nonumber
    \sup_{\frac12 Q_{ R, S}(z_o)} |Du|
    & 
    \le
    \max\Bigg\{
     \frac{C}{\eps^\theta}
    \Big(\frac{R^2}{S}\Big)^\frac{N}{4p} 
    \boldsymbol{\mathfrak T}^\frac{N(2-p)+2p}{4p}
    \bigg[\biint_{Q_{2R,2S}}
    |Du|^{p}\, \dx\dt\bigg]^\frac{1}{2p}
    ,\\
    &\phantom{ 
    \le
    \max\Bigg\{\,}
    \eps\Big(\frac{S}{R^2}
    \Big)^\frac{1}{2-p},  \epsilon\Big(\frac{R^2}{S}\Big)^\frac{N}{4p} \mu^\frac{N(2-p)+4p}{4p},\epsilon\mu
    \Bigg\},
\end{align}
for every $\epsilon\in(0,1]$, where
\begin{equation*}
        \boldsymbol{\mathfrak T}
    :=
    \frac{\boldsymbol\omega}{R} 
    +
    \Big(
    \frac{\boldsymbol\omega}{\sqrt{S}}
    \Big)^\frac{2}{p},
\end{equation*}
and
\begin{equation*}
    \boldsymbol\omega :=\sup_{t\in (t_o-2S,t_o]}
    \sup_{x\in B_{2R}(x_o)}\big|u(x,t)-(u)_{x_o,2R}(t)\big|.
\end{equation*}
\end{proposition}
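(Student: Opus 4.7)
The plan is to combine the quantitative gradient bound of Proposition~\ref{lem:L-infty-quant-p<2}, which controls $\sup|Du|$ by an $L^r$-norm of $|Du|$, with the interpolation inequality of Lemma~\ref{lem:Lq-est}, which raises integrability from $L^p$ to $L^r$ at the expense of the oscillation factor $\boldsymbol{\mathfrak T}^{r-p}$. Chaining these two estimates, with the intermediate exponent chosen so that the final power of the $L^p$-mean is exactly $\tfrac{1}{2p}$, will produce~\eqref{est:L^infty-L^p}. The qualitative part $|Du|\in L^\infty_{\mathrm{loc}}(E_T)$ is already supplied by Proposition~\ref{lem:L-infty-est-qual-p<2} and guarantees applicability of the preparatory results on any relatively compact sub-cylinder.

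We set
\begin{equation*}
r := 2p + \tfrac{N(2-p)}{2},
\end{equation*}
so that $\boldsymbol\nu_r = N(p-2)+2r = 4p>0$. This is the unique exponent forcing $\tfrac{2}{\boldsymbol\nu_r}=\tfrac{1}{2p}$, and consequently
$\tfrac{r-p}{2p}=\tfrac{N(2-p)+2p}{4p}$ together with $\tfrac{r}{2p}=\tfrac{N(2-p)+4p}{4p}$, i.e.\ precisely the exponents of $\boldsymbol{\mathfrak T}$ and of $\mu$ on the right-hand side of~\eqref{est:L^infty-L^p}. In the marginal range where this $r$ fails to exceed $3$ (possible only for small $N$ and $p$ close to $1$), we replace $r$ by a slightly larger $\tilde r>3$ and recover the correct exponents via Hölder interpolation of $\|Du\|_{L^{\tilde r}}$ between $\|Du\|_{L^p}$ and $\|Du\|_{L^\infty}$, the surplus factor of $\sup|Du|$ then being absorbed by Young's inequality.

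We apply Proposition~\ref{lem:L-infty-quant-p<2} on the pair $Q_{R/2,S/2}(z_o)\subset Q_{R,S}(z_o)$ with $\sigma=\tfrac12$ and a free parameter $\epsilon_1\in(0,1]$, which yields
\begin{equation*}
\sup_{\frac12 Q_{R,S}(z_o)}|Du|\le \max\Big\{C\epsilon_1^{-\theta_1}\big(\tfrac{R^2}{S}\big)^{\!N/(4p)}\big[\textstyle\biint_{Q_{R,S}}|Du|^r\,\dx\dt\big]^{1/(2p)},\,\epsilon_1\big(\tfrac{S}{R^2}\big)^{\!1/(2-p)},\,\epsilon_1\mu\Big\}
\end{equation*}
for some $\theta_1=\theta_1(N,p)>0$. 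We then apply Lemma~\ref{lem:Lq-est} on the nested pair $Q_{R,S}(z_o)\subset Q_{2R,2S}(z_o)$ with $m=r$ and a second free parameter $\epsilon_2\in(0,1]$: since the differences $R-r=R$ and $S-s=S$ in~\eqref{Lq-est} reduce the bracket there to a constant multiple of $\boldsymbol{\mathfrak T}$, we obtain
\begin{equation*}
\biint_{Q_{R,S}}|Du|^r\,\dx\dt\le \max\Big\{C\epsilon_2^{(p-2)(r-p)/p}\,\boldsymbol{\mathfrak T}^{r-p}\textstyle\biint_{Q_{2R,2S}}|Du|^p\,\dx\dt,\,\epsilon_2^r\mu^r\Big\}.
\end{equation*}
Raising to the power $1/(2p)$ and substituting into the previous inequality produces four alternatives. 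By coupling the parameters through $\epsilon_2 := \epsilon_1^K$ with $K=K(N,p)>0$ chosen so that $-\theta_1 + K\cdot r/(2p) = 1$, the $\epsilon_1$-exponent in the $\mu$-alternative from the chaining becomes exactly $1$, while the negative $\epsilon_1$-exponent in the principal term is merely enlarged in absolute value to some $\theta = \theta(N,p)>0$. Renaming $\epsilon_1$ as $\epsilon$ gives precisely the four terms in~\eqref{est:L^infty-L^p}.

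The principal obstacle is the careful bookkeeping of the $\epsilon$-exponents: one has to verify that a single power-law coupling $\epsilon_2=\epsilon_1^K$ simultaneously forces the emerging $\mu$-alternative to carry a positive (indeed unit) $\epsilon$-exponent and keeps the principal term's $\epsilon$-exponent controllably negative, the two constraints being compatible precisely because $(p-2)(r-p)<0$. A secondary technical nuisance is the edge case $r\le 3$, which is overcome by the elementary interpolation argument described above and introduces no new ideas.
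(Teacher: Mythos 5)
Your proposal is correct and matches the paper's proof essentially step for step: the same intermediate exponent $m=\tfrac{N(2-p)}{2}+2p$ (so $\boldsymbol\nu_m=4p$), Proposition~\ref{lem:L-infty-quant-p<2} with $\sigma=\tfrac12$, and Lemma~\ref{lem:Lq-est} on $Q_{R,S}\subset Q_{2R,2S}$, with the paper's choice of its second parameter $\delta$ in terms of $\epsilon$ playing exactly the role of your coupling $\epsilon_2=\epsilon_1^K$ (the paper also absorbs the constants into $\delta$ so the $\mu$-alternative appears without a prefactor, which your pure power coupling achieves only after a harmless rescaling of $\epsilon$). The only other difference is that the paper dispenses with your $r\le 3$ fallback by noting that $N\ge 2$ forces $m\ge p+2>3$.
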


\begin{proof}
In the gradient bound from Proposition \ref{lem:L-infty-quant-p<2}, we rename $r$ by $m$ and fix $\sigma =\frac12$. Moreover, we take $m:=\frac{N(2-p)}{2}+2p$ so that $\boldsymbol \nu_m=4p>0$.  
The abbreviation $m$ is retained at various points in the following calculation to keep some exponents short.
For the particular choice of $m$ we have $m>\max\big\{\frac{N(2-p)}{2},p+1\big\}$.  
Moreover, since $N\ge2$, we also have $m\ge 2+p>3$.
With these choices we get from Proposition \ref{lem:L-infty-quant-p<2} that
\begin{align}\label{est:before-L^m}\nonumber
    \sup_{\frac12 Q_{ R, S}}& |Du|\\
    & 
    \le
    \max\Bigg\{
     \frac{C_\infty}{\eps^{\frac{(2-p)(N+2)}{2p}}}
    \Big(\frac{R^2}{S}\Big)^\frac{N}{4p} \bigg[\biint_{Q_{R,S}}|Du|^{m}\, \dx\dt\bigg]^\frac{1}{2p}
    , \eps\Big(\frac{S}{R^2}
    \Big)^\frac{1}{2-p},\epsilon\mu
    \Bigg\}.
\end{align}
The  integral in the first entry of the maximum can be estimated by means of the $L^m$-estimate \eqref{Lq-est} from Lemma \ref{lem:Lq-est}. In fact, we have
\begin{align*}
    \bigg[\biint_{Q_{R,S}}
    |Du|^m \,\dx\dt\bigg]^\frac1{2p}
    \le
    C_m
    \max\Bigg\{
    \frac{\boldsymbol{\mathfrak T}^\frac{N(2-p)+2p}{4p}}{\delta^\frac{(2-p)(m-p)}{2p^2}}
    \bigg[
    \biint_{Q_{2R,2S}}
    |Du|^p\,\dx\dt \bigg]^\frac1{2p}
    ,
    (\delta\mu)^\frac{m}{2p}
    \Bigg\}
\end{align*}
for every $\delta\in(0,1]$. 
Now, we distinguish between two cases, depending on whether the maximum is  assumed by the former or latter entry. If the second entry is larger,
we choose $\delta$ in terms of $\eps$ such that 
\begin{align*}
    \frac{C_\infty C_m}{\eps^{\frac{(2-p)(N+2)}{2p}}}\,
    \delta^\frac{m}{2p}
    =
    \epsilon\quad\Longleftrightarrow\quad
    \delta=
    \frac{\eps^{\frac{N(2-p)+4}{m}}}{[C_\infty C_m]^\frac{2p}{m}}.
\end{align*}
This specifies the value of $\delta$ 
and leads to the estimate
\begin{align*}
    \frac{C_\infty}{\eps^\frac{(2-p)(N+2)}{2p}}&
    \Big(\frac{R^2}{S}\Big)^\frac{N}{4p} \bigg[\biint_{Q_{R,S}}|Du|^{m}\, \dx\dt\bigg]^\frac{1}{2p}\\
    &\le
    \frac{C_\infty C_m}{\eps^\frac{(2-p)(N+2)}{2p}}
    \Big(\frac{R^2}{S}\Big)^\frac{N}{4p}
    \delta^\frac{m}{2p}\mu^\frac{m}{2p} 
    \le 
    \epsilon\Big(\frac{R^2}{S}\Big)^\frac{N}{4p} \mu^\frac{N(2-p)+4p}{4p}.
\end{align*}
If the first entry is larger,
for the same value of $\delta$ we have
\begin{align*}
    \frac{C_\infty}{\eps^\frac{(2-p)(N+2)}{2p}}&
    \Big(\frac{R^2}{S}\Big)^\frac{N}{4p} \bigg[\biint_{Q_{R,S}}|Du|^{m}\, \dx\dt\bigg]^\frac{1}{2p}\\
    &\le
    \frac{C_\infty C_m}{\eps^\frac{(2-p)(N+2)}{2p}
    \delta^\frac{(2-p)(m-p)}{2p^2}
    }
    \Big(\frac{R^2}{S}\Big)^\frac{N}{4p} 
    \boldsymbol{\mathfrak T}^\frac{N(2-p)+2p}{4p}
    \bigg[\biint_{Q_{2R,2S}}
    |Du|^{p}\, \dx\dt\bigg]^\frac{1}{2p} \\
    &= 
    \frac{[C_\infty C_m]^{1+ \frac{(2-p)(m-p)}{mp}}}{\eps^\theta}
    \Big(\frac{R^2}{S}\Big)^\frac{N}{4p} 
    \boldsymbol{\mathfrak T}^\frac{N(2-p)+2p}{4p}
    \bigg[\biint_{Q_{2R,2S}}
    |Du|^{p}\, \dx\dt\bigg]^\frac{1}{2p}\\
    &\equiv
    \frac{C}{\eps^\theta}
    \Big(\frac{R^2}{S}\Big)^\frac{N}{4p} 
    \boldsymbol{\mathfrak T}^\frac{N(2-p)+2p}{4p}
    \bigg[\biint_{Q_{2R,2S}}
    |Du|^{p}\, \dx\dt\bigg]^\frac{1}{2p}.
\end{align*}
Here, $\theta
    :=
    \frac{(2-p)}{2p^2}[p(N+2)+\frac{(m-p)(N(2-p)+4)}{m}]$ effectively depends on $N$ and $p$.
Combining the two cases with 
\eqref{est:before-L^m} we get \eqref{est:L^infty-L^p}. This finishes the proof.
\end{proof}

Taking a more special geometry of cylinders, the previous result yields a gradient bound that will be useful in later applications.
\begin{corollary}\label{cor:L-infty-est-p<2}
Let $\mu\in(0,1]$, $1<p\le 2$, and $\lm\ge\mu$. Then, for any locally bounded weak solution $u$ to the parabolic system \eqref{eq:diff-systems} in the sense of Definition \ref{def:weak-loc} with assumption \eqref{ass:b}, we have $|Du|\in L^\infty_{\mathrm{loc}}(E_T)$.
Moreover, there exist positive constants $\theta =\theta (N,p)$ and $C=C(N,p, C_o,C_1,C_2)$ such that for every cylinder $Q_{2R}^{(\lm)}(z_o)\Subset E_T$ with $0<R\le1$ we have
\begin{align*}
    \sup_{ Q_{\frac12R}^{(\lm)}(z_o)}& |Du|\\
    & 
    \le
    \max\Bigg\{
     \frac{C\sqrt{\lm }}{\eps^\theta}
    \bigg[ \frac{\boldsymbol\omega}{\lm R} 
    +
    \Big(
    \frac{\boldsymbol\omega}{\lm R}
    \Big)^\frac{2}{p}\bigg]^\frac{N(2-p)+2p}{4p}
    \bigg[\biint_{Q_{2R}^{(\lm)}(z_o)}
    |Du|^{p}\, \dx\dt\bigg]^\frac{1}{2p}
    ,
    \eps\lm
    ,
    \epsilon\mu
    \Bigg\}.
\end{align*}
where 
\begin{equation*}
    \boldsymbol\omega :=\sup_{t\in (t_o-\lambda^{2-p}(2R)^2,t_o]}\,
    \sup_{x\in B_{2R}(x_o)}\big|u(x,t)-(u)_{x_o,2R}(t)\big|.
\end{equation*}
\end{corollary}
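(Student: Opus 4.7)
The strategy is to deduce the corollary from Proposition~\ref{lem:L-infty-est-quant-p<2} by converting the standard parabolic cylinders appearing there into intrinsic ones. The case $p=2$ is addressed first and separately via Remark~\ref{Rmk:p=2}: then $\lambda^{2-p}=1$, so $Q_\rho^{(\lambda)}=Q_{\rho,\rho^2}$, one may take $\varepsilon=1$, and the stated bound reduces to a direct consequence of the $L^2$–sup bound there (with $\mu=0$ term absent). From now on assume $1<p<2$, so that Proposition~\ref{lem:L-infty-est-quant-p<2} is applicable.

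The natural choice is to apply Proposition~\ref{lem:L-infty-est-quant-p<2} with parameters $R_\star:=R$ and $S_\star:=2\lambda^{2-p}R^2$. With this choice, the outer cylinder is
\[
   Q_{2R_\star,2S_\star}(z_o)=B_{2R}(x_o)\times(t_o-4\lambda^{2-p}R^2,t_o]=Q_{2R}^{(\lambda)}(z_o),
\]
so the oscillation $\boldsymbol\omega$ and the $L^p$ integral of $Du$ in~\eqref{est:L^infty-L^p} agree with those in the corollary; meanwhile, the inner cylinder $\tfrac12 Q_{R_\star,S_\star}(z_o)=Q_{R/2,\lambda^{2-p}R^2}(z_o)$ contains $Q_{R/2}^{(\lambda)}(z_o)$ since the latter has temporal extent $\lambda^{2-p}R^2/4$, so the sup is taken on a smaller set. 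Thus the corollary will follow once we check that each of the four terms on the right-hand side of~\eqref{est:L^infty-L^p} reduces to one of the three terms in the claim.

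The main computation is an exponent-matching argument. Using $R^2/S_\star=\tfrac12\lambda^{p-2}$ we have
\[
   (R^2/S_\star)^{N/(4p)}\le C\,\lambda^{N(p-2)/(4p)},
\]
and factoring $\lambda$ out of both summands of $\boldsymbol{\mathfrak T}$ (the second via $\lambda^{(p-2)/p}R^{-2/p}=\lambda\cdot(\lambda R)^{-2/p}$) gives
\[
   \boldsymbol{\mathfrak T}\le C\,\lambda\Bigl[\tfrac{\boldsymbol\omega}{\lambda R}+\bigl(\tfrac{\boldsymbol\omega}{\lambda R}\bigr)^{2/p}\Bigr].
\]
Multiplying these factors, the combined $\lambda$-exponent is
\[
   \tfrac{N(p-2)}{4p}+\tfrac{N(2-p)+2p}{4p}=\tfrac12,
\]
which produces the $\sqrt{\lambda}$ in the first alternative of the corollary. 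The second term $\varepsilon(S_\star/R^2)^{1/(2-p)}=\varepsilon\cdot(2\lambda^{2-p})^{1/(2-p)}$ becomes $\varepsilon\lambda$ after absorbing the numerical constant into a redefinition of $\varepsilon$ (or of $C$). For the third term $\varepsilon(R^2/S_\star)^{N/(4p)}\mu^{(N(2-p)+4p)/(4p)}$, we use the hypothesis $\mu\le\lambda$ to split $\mu^{(N(2-p)+4p)/(4p)}=\mu\cdot\mu^{N(2-p)/(4p)}\le \mu\cdot\lambda^{N(2-p)/(4p)}$; combined with $(R^2/S_\star)^{N/(4p)}\le C\lambda^{N(p-2)/(4p)}$, the $\lambda$-powers cancel and this term is bounded by $C\varepsilon\mu$, matching the fourth alternative $\varepsilon\mu$.

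Collecting the four alternatives and reabsorbing inessential constants into $\varepsilon$ and $C$ yields the claimed three-term maximum. The only mild subtlety is the constant arising from $(2\lambda^{2-p})^{1/(2-p)}$ in the second alternative, which depends on $p$ but is harmless because it may be absorbed by replacing $\varepsilon$ with a multiple of itself; all other steps are straightforward algebraic manipulations of exponents. The qualitative conclusion $|Du|\in L^\infty_{\mathrm{loc}}(E_T)$ is already provided by Proposition~\ref{lem:L-infty-est-qual-p<2}, so only the quantitative bound requires verification.
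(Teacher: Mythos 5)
For $1<p<2$ your argument is correct and essentially the paper's own proof: both consist in applying Proposition~\ref{lem:L-infty-est-quant-p<2} with the intrinsic choice of the time length (the paper takes $S=\lambda^{2-p}R^2$, you take $S_\star=2\lambda^{2-p}R^2$ so that the outer cylinder is exactly $Q_{2R}^{(\lambda)}(z_o)$), carrying out the same exponent bookkeeping $\tfrac{N(p-2)}{4p}+\tfrac{N(2-p)+2p}{4p}=\tfrac12$, and absorbing the $\mu$-term via $\mu\le\lambda$; the factor $2^{1/(2-p)}$ you pick up in the alternative $\eps(S_\star/R^2)^{1/(2-p)}$ is indeed harmless once you rerun the proposition with $\tilde\eps:=2^{-1/(2-p)}\eps$, as you indicate (and in fact the constant in your third alternative is then at most $1$, so it really is bounded by $\eps\mu$, not just $C\eps\mu$). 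The one place where you claim more than you prove is the endpoint $p=2$: Remark~\ref{Rmk:p=2} only gives $\sup|Du|\le C\big(\biint|Du|^2\,\dx\dt\big)^{1/2}$, which is not the multiplicative bound $C\eps^{-\theta}\sqrt{\boldsymbol\omega/R}\,\big(\biint|Du|^2\,\dx\dt\big)^{1/4}$ asserted in the corollary; to pass from one to the other you would still need the oscillation--energy interpolation of Lemma~\ref{lem:Lq-est} (whose statement does cover $p=2$), or to note that the proofs of Propositions~\ref{lem:L-infty-quant-p<2} and \ref{lem:L-infty-est-quant-p<2} extend verbatim to $p=2$, so ``direct consequence'' overstates it --- although, to be fair, the paper's proof is equally silent on $p=2$ and simply quotes \eqref{est:L^infty-L^p}, which is stated for $1<p<2$.
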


\begin{proof}
Recalling the definition of $\boldsymbol{\mathfrak T}$ from Proposition~\ref{lem:L-infty-est-quant-p<2} we have
\begin{align*}
    \Big(\frac{R^2}{S}\Big)^\frac{N}{4p} 
    \boldsymbol{\mathfrak T}^\frac{N(2-p)+2p}{4p}
    &=
    \Big(\frac{R^2}{S}\Big)^\frac{N}{4p} 
    \bigg[ \frac{\boldsymbol\omega}{R} 
    +
    \Big(
    \frac{\boldsymbol\omega}{\sqrt{S}}
    \Big)^\frac{2}{p}\bigg]^\frac{N(2-p)+2p}{4p}
    \\
    &=
    \lm^{\frac{N}{4p}(p-2)}\lm^\frac{N(2-p)+2p}{4p}
    \bigg[ \frac{\boldsymbol\omega}{\lm R} 
    +
    \Big(
    \frac{\boldsymbol\omega}{\lm R}
    \Big)^\frac{2}{p}\bigg]^\frac{N(2-p)+2p}{4p}\\
    &= 
    \sqrt{\lm}
    \bigg[ \frac{\boldsymbol\omega}{\lm R} 
    +
    \Big(
    \frac{\boldsymbol\omega}{\lm R}
    \Big)^\frac{2}{p}\bigg]^\frac{N(2-p)+2p}{4p}.
\end{align*}
Moreover, we have $(S/R^2)^\frac1{2-p}=\lm$, and 
\begin{align*}
    \Big(\frac{R^2}{S}\Big)^\frac{N}{4p} \mu^\frac{m}{2p}
    &=
    \lm^{\frac{N}{4p}(p-2)}\mu^{\frac{N}{4p}(2-p)+1}
    =
    \Big(\frac{\mu}{\lm}\Big)^{\frac{N}{4p}(2-p)}\mu\le \mu.
\end{align*}
Inserting this into \eqref{est:L^infty-L^p}, we obtain the claim.
\end{proof}

\subsection{Qualitative gradient bound in the super-quadratic case}
In the super-quadratic case $p\ge 2$ the assumption of local boundedness is superfluous for weak solutions, since they are automatically locally bounded. This has consequences for the $L^\infty$-gradient bound.  We start with the qualitative bound, which is stated as follows. 
\begin{proposition}\label{lem:L-infty-est-p>2-intrinsic}
Let $\mu\in(0,1]$ and $p\ge 2$. Then, for any  weak solution $u$ to the parabolic system \eqref{eq:diff-systems} in the sense of Definition \ref{def:weak-loc} with assumption \eqref{ass:b}, we have
\begin{equation*}
    |Du|\in L^\infty_{\mathrm{loc}}(E_T).
\end{equation*}

\end{proposition}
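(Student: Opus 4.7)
The strategy parallels that of Proposition~\ref{lem:L-infty-est-qual-p<2}, but with two decisive simplifications arising from the super-quadratic range. First, by definition of weak solution one already has $|Du|\in L^p_{\mathrm{loc}}(E_T)$, so no preparatory $L^m$-estimate analogous to Lemma~\ref{lem:Lq-est} is required, and Moser's iteration can be initialized directly at $p_o=p$. Second, in the energy inequality from Lemma~\ref{energy-est} applied with $\delta=\mu$ on standard cylinders $Q_R(z_o)=B_R(x_o)\times(t_o-R^2,t_o]\Subset E_T$, the exponent $\frac{p}{2}+\alpha$ now dominates $1+\alpha$, and since $\mu\le 1$ the elementary estimate
$$(\mu^2+|Du|^2)^{1+\alpha}\le(\mu^2+|Du|^2)^{\frac{p}{2}+\alpha}+1$$
allows the two right-hand side terms to be merged. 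This reverses the role played by the two exponents in the sub-quadratic proof.

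Setting $w:=(\mu^2+|Du|^2)^{(p+2\alpha)/4}$, so that $w^2=(\mu^2+|Du|^2)^{p/2+\alpha}$ and $|\nabla w|^2\le\bigl(\tfrac{p+2\alpha}{2}\bigr)^2(\mu^2+|Du|^2)^{(p-2)/2+\alpha}|D^2u|^2$, the energy estimate (combined with Kato's inequality) takes the form
$$\tfrac{1}{1+\alpha}\sup_{t\in(-r^2,0]}\int_{B_r\times\{t\}}w^{q}\,\dx+\tfrac{1}{(p+2\alpha)^2}\iint_{Q_r}|\nabla w|^2\,\dx\dt\le\frac{C(1+\alpha)}{(R-r)^2}\iint_{Q_R}(w^2+1)\,\dx\dt,$$
with $q:=4(1+\alpha)/(p+2\alpha)\in(0,2]$ for $p\ge 2$. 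From here Moser's iteration proceeds exactly as in the proof of Proposition~\ref{lem:L-infty-est-qual-p<2}, starting with $p_o:=p$ and $\alpha_o:=\tfrac{p}{2}-1\ge 0$, and with the same recursions $p_i=2+2\alpha_i$, $q_i=4(1+\alpha_i)/(p+2\alpha_i)$, $\alpha_{i+1}=\kappa\alpha_i+\tfrac{2}{N}-\tfrac{2-p}{2}$ as in~\eqref{Eq:k_n-p<2}. The parabolic Sobolev embedding \cite[Chapter I, Proposition~3.1]{DB} applied to $w\zeta_i$ with integrability parameter $q_i$, combined with the preceding energy estimate on nested cylinders, yields the recursion $\boldsymbol Y_{i+1}^{p_{i+1}}\le(C\boldsymbol b^i\,\boldsymbol Y_i^{p_i})^{\kappa}$ for $\boldsymbol Y_i:=\bigl[\biint_{Q_i}((\mu^2+|Du|^2)^{p_i/2}+1)\,\dx\dt\bigr]^{1/p_i}$, where $\kappa=1+\tfrac{2}{N}$ and $\boldsymbol b=\boldsymbol b(N)$.

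Iterating and summing the resulting products via Lemma~\ref{lem:A}, and using
$$\boldsymbol\nu:=N(p-2)+2p>0$$
(automatic since $p\ge 2>\tfrac{2N}{N+2}$), we pass to the limit $i\to\infty$ and obtain the Moser-type bound
$$\sup_{Q_{R/2}(z_o)}|Du|\le C\bigg[\biint_{Q_R(z_o)}(|Du|^{p}+1)\,\dx\dt\bigg]^{2/\boldsymbol\nu},$$
from which the qualitative conclusion $|Du|\in L^\infty_{\mathrm{loc}}(E_T)$ follows by a standard covering argument. The only genuine technical point — already handled in the sub-quadratic proof — is to eliminate the $q_i$-dependence of the Sobolev constant, which is achieved by observing that $q_i\in\bigl[p/(p-1),2\bigr]$, a compact subinterval of $(0,2]$, so that this dependence may be absorbed into a constant depending only on $N$ and $p$. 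No other obstacle is expected: the sign of $p-2$ enters only through the merging step above, and every subsequent computation mirrors the sub-quadratic case.
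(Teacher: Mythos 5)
Your overall plan (energy inequality from Lemma~\ref{energy-est} with $\delta=\mu$, merging the right-hand side via $(\mu^2+|Du|^2)^{1+\alpha}\le(\mu^2+|Du|^2)^{\frac p2+\alpha}+1$, then Moser iteration through the parabolic Sobolev embedding) is exactly the paper's, and your displayed energy estimate for $w=(\mu^2+|Du|^2)^{(p+2\alpha)/4}$ is correct. However, the exponent bookkeeping you transplant from the sub-quadratic case breaks the iteration when $p>2$. For $p\ge2$ the merged right-hand side is $w^2+1=(\mu^2+|Du|^2)^{\frac p2+\alpha_i}+1$, \emph{not} $(\mu^2+|Du|^2)^{1+\alpha_i}+1$. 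Hence, with your choice $p_i:=2+2\alpha_i$ and $\boldsymbol Y_i$ built from the exponent $p_i/2=1+\alpha_i$, the Sobolev--energy step does not yield $\boldsymbol Y_{i+1}^{p_{i+1}}\le(C\boldsymbol b^i\boldsymbol Y_i^{p_i})^\kappa$: what it controls the new integral by is $\biint_{Q_i}(\mu^2+|Du|^2)^{\frac{p+2\alpha_i}{2}}\,\dx\dt$, whose exponent is $p_i+(p-2)>p_i$, so it cannot be bounded by $\boldsymbol Y_i^{p_i}$. Equivalently, for the output of step $i$ (exponent $(p+2\alpha_i)\frac{N+q_i}{N}=p+2\alpha_i+\frac{4+4\alpha_i}{N}$) to serve as input for step $i+1$ (which needs exponent $p+2\alpha_{i+1}$), one must have $\alpha_{i+1}\le\kappa\alpha_i+\frac2N$; your recursion $\alpha_{i+1}=\kappa\alpha_i+\frac2N-\frac{2-p}{2}$ from \eqref{Eq:k_n-p<2} overshoots this by $\frac{p-2}{2}>0$ at every step, so the scheme does not close.

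The initialization has the same flaw: with $\alpha_o=\frac p2-1$, Lemma~\ref{energy-est} requires $|Du|\in L^{p+2\alpha_o}_{\loc}=L^{2p-2}_{\loc}$, which is strictly more than the $L^p_{\loc}$ integrability a weak solution provides when $p>2$. The repair is the one the paper makes: keep $w=(\mu^2+|Du|^2)^{(p+2\alpha_i)/4}$ but measure progress by $p_i:=p+2\alpha_i$, start at $\alpha_o=0$ (so the first step only needs $|Du|\in L^p_{\loc}\cap L^2_{\loc}$), and take the recursion $\alpha_{i+1}=\kappa\alpha_i+\frac2N$, i.e. $\alpha_i=\kappa^i-1$. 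With this bookkeeping the recursion $\boldsymbol Y_{i+1}^{p_{i+1}}\le(C\boldsymbol b^i\boldsymbol Y_i^{p_i})^\kappa$ does hold, and Lemma~\ref{lem:A} gives, since $p\kappa^i/p_i\to\frac p2$, the bound
\begin{equation*}
    \sup_{\frac12 Q_R}|Du|
    \le
    C\Bigg[\biint_{Q_R}\big[|Du|^p+1\big]\,\dx\dt\Bigg]^{\frac12},
\end{equation*}
with exponent $\tfrac12$ rather than your $\tfrac2{\boldsymbol\nu}$; the qualitative conclusion then follows by covering, as you say. Your observation about the compact range of $q_i$ and the Sobolev constant is fine and is not where the difficulty lies.
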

\begin{proof}
As in the case $1<p\le 2$ the starting point is the energy estimate from Lemma~\ref{energy-est} in the form \eqref{energy-intrinsic}. Since now $p\ge 2$, the term with exponent $\frac{p}2+\alpha$ is dominant on the right-hand side, whereas the other term is controlled by the elementary estimate
\begin{align*}
    \big(\mu^2 +|Du|^2\big)^{1+\alpha}
    &\le
    \big(\mu^2 +|Du|^2\big)^{\frac{p}2+\alpha} +1.
\end{align*}
Using this inequality in \eqref{energy-intrinsic} we obtain
\begin{align*}
    \tfrac1{1+\alpha}\sup_{t\in (-r^2,0]}&
     \int_{B_r\times\{t\}} \big(\mu^2+|Du|^2\big)^{1+\alpha}\, \dx +
     \iint_{Q_{r}}\big(\mu^2 +|Du|^2\big)^{\frac{p-2}{2}+\alpha}
     |D^2u|^2\dx\dt\\
     &\qquad\qquad\qquad\qquad\le
     \frac{C(1+\alpha)}{(R-r)^2}
     \iint_{Q_{R}}
     \Big[ \big(\mu^2+|Du|^2\big)^{\frac{p}2+\alpha}+1\Big]\,\dx\dt
\end{align*}
for any $0<r<R$ such that $Q_R\Subset E_T$. 
As in the sub-quadratic case introduce 
$$
    w=\big(\mu^2+|Du|^2\big)^{\frac{p+2\alpha}4} 
$$
and rewrite the above estimate as
\begin{align}\label{energy-est-super}\nonumber
     \tfrac1{1+\alpha}\sup_{t\in (-r^2,0]}&
     \int_{B_r\times\{t\}} w^\frac{4+4\alpha}{p+2\alpha}\, \dx+
     \tfrac{1}{(p+2\alpha)^2}
     \iint_{Q_{r}}|\nabla w|^2
    \,\dx\dt\\
    &\le 
     \frac{C(1+\alpha)}
     {(R-r)^2}
     \iint_{Q_{R}}
     \big[w^2 +1\big]\,\dx\dt.
\end{align}
The symbolism $\varrho_i$, $\tilde\varrho_i$, $B_i$, $Q_i$, $\widetilde B_i$, $\widetilde Q_i$, and $\kappa$ from \eqref{Eq:k_n-p<2} is retained. However, the value of $\alpha_o$, the recursion formula for $\alpha_i$, the definition of $p_i$ and $q_i$, must be adapted to the case $p\ge 2$.  To this end, we define
\begin{equation*}
    \left\{
    \begin{array}{c}
    \alpha_o=0,\quad
    \alpha_{i+1}
    =\kappa\alpha_i+\frac2N,\\[6pt]
    p_i=p+2\alpha_i,\quad\displaystyle q_i=\frac{4+4\alpha_i}{p+2\alpha_i},
    \end{array}
    \right.
\end{equation*}
for $i\in\N_0$.
The definition of $\alpha_i$ immediately yields $\alpha_i=\kappa^i-1$. Moreover, we have \eqref{p_i+1} at hand, namely, $p_{i+1}=(p+2\alpha_i)\tfrac{N+q_i}{N}$. To continue, we choose the standard cut-off  function $\zeta\equiv \zeta_i\in C^\infty(\widetilde Q_i,[0,1])$ such that $\zeta_i$ 
vanishes on the parabolic boundary $\partial_{\mathrm{par}}\widetilde{Q}_i$, $\zeta_i$ is identically equal to one on $Q_{i+1}$, and the gradient satisfies $|\nabla\zeta_i |\le 2^{i+4}/R$. 
An application of the Sobolev embedding \cite[Chapter I, Proposition~3.1]{DB} with $p=2$ and $m=q_i$ gives that
\begin{align}\label{Sobolev:p>2-intrinsic}\nonumber
    \iint_{Q_{i+1}}&\big(\mu^2+|Du|^2\big)^\frac{p_{i+1}}{2}\,\dx\dt
    \le
    \iint_{\widetilde{Q}_i}(w\zeta_i)^{2\frac{N+q_i}N}\,\dx\dt\\\nonumber
    &\le C_{\rm Sob}(N,q_i)\iint_{\widetilde{Q}_i}|\nabla (w\zeta_i)|^{2}\,\dx\dt
    \bigg[\sup_{\tau\in(-\tilde\varrho_i^2,0]}
    \int_{\widetilde{B}_i\times\{\tau\}}(w\zeta_i)^{q_i}\,\dx\bigg]^{\frac{2}N
    }\\\nonumber
    &\le 
    C  \iint_{\widetilde{Q}_i}
    \Big[ \zeta_i^2|\nabla w|^2 +w^2|\nabla \zeta_i|^2
    \Big]\dx\dt
    \bigg[\sup_{\tau\in(-\tilde\varrho_i^2,0]}
    \int_{\widetilde{B}_i\times\{\tau\}}w^{q_i}\,\dx\bigg]^{\frac{2}N}
    \\
    &\le 
    C  \iint_{\widetilde{Q}_i}
    \bigg[ |\nabla w|^2 +\frac{2^{2i+8}w^2}{R^2}
    \bigg]\dx\dt    
    \bigg[\sup_{\tau\in(-\tilde\varrho_i^2,0]}
    \int_{\widetilde{B}_i\times\{\tau\}}w^{q_i}\dx\bigg]^{\frac{2}N}
\end{align}
for a constant $C=C(N,p)$.
In the application of the Sobolev inequality, as indicated, there is a dependence of the constant on $q_i$. However, $q_i$ only takes values in the interval $[ \frac4p,2]$. Therefore,  the continuous dependence of the Sobolev constant on $q_i$ eventually yields a dependence on $p$. To estimate the right-hand side integrals in the last displayed inequality, we use the energy estimate~\eqref{energy-est-super} applied on $\widetilde Q_i\subset Q_i$. In fact, we have
\begin{align*}
     \iint_{\widetilde Q_i}|\nabla w|^2
    \,\dx\dt
     &\le 
    C
     \frac{(p+2\alpha_i)^32^{2i+6}}{R^2}
     \iint_{Q_i}
     \big[w^2 +1\big]\,\dx\dt
\end{align*}
and
\begin{align*}
     \sup_{t\in (-\tilde\varrho_i^2,0]}
     \int_{\widetilde B_i\times\{t\}} w^{q_i}\, \dx
    &\le C
     \frac{(p+2\alpha_i)^22^{2i+6}}{R^2}
     \iint_{Q_i}
     \big[w^2 +1\big]\,\dx\dt.
\end{align*}
Inserting this into \eqref{Sobolev:p>2-intrinsic} yields
\begin{align*}
    \iint_{Q_{i+1}}&\big(\mu^2+|Du|^2\big)^\frac{p_{i+1}}{2}\,\dx\dt\\
    &\le
    C(p+2\alpha_i)^{3+\frac{4}{N}}
    \Bigg[
    \frac{2^{2i}}{R^2}
    \iint_{Q_i}
     \Big[\big(\mu^2+|Du|^2\big)^\frac{p_i}{2} +1\Big]\,\dx\dt
    \Bigg]^\kappa. 
\end{align*}
With the help of
\begin{align*}
    (p+2\alpha_i)^{3+\frac4N}2^{2i\kappa}
    &
    \le
    \big[p(1+\alpha_i)\big]^{1+2\kappa}2^{2i\kappa}
    =\big[ p\kappa^i\big] ^{1+2\kappa}2^{2i\kappa}
    \le
    p^{1+2\kappa} (4\kappa^3)^{i\kappa},
\end{align*}
this can be simplified, i.e.~we have
\begin{align*}
    \iint_{Q_{i+1}}\big(\mu^2+|Du|^2\big)^\frac{p_{i+1}}{2}\,\dx\dt
    \le
    C
    \Bigg[
    \frac{\boldsymbol b^i}{R^2}
    \iint_{Q_i}
     \Big[\big(\mu^2+|Du|^2\big)^\frac{p_i}{2} +1\Big]\,\dx\dt
    \Bigg]^\kappa
\end{align*}
with $\boldsymbol b =\boldsymbol b(N)=4\kappa^3$ and $C=C(N,p,C_o,C_1,C_2)$.  Divide both sides by $|Q_{i+1}|$ to get
\begin{align*}
    \biint_{Q_{i+1}}& \big(\mu^2+|Du|^2\big)^\frac{p_{i+1}}{2}\,\dx\dt\\
    &\le
     C\boldsymbol b^{i\kappa}
     \underbrace{\frac{|Q_i|^\kappa}{R^{2\kappa}|Q_{i+1}|}}_{\le 2^{N+2}}
    \Bigg[
    \biint_{Q_i}
     \Big[\big(\mu^2+|Du|^2\big)^\frac{p_i}{2} +1\Big]\,\dx\dt
    \Bigg]^\kappa\\
    &\le
     C\boldsymbol b^{i\kappa}
    \Bigg[
    \biint_{Q_i}
     \Big[\big(\mu^2+|Du|^2\big)^\frac{p_i}{2} +1\Big]\,\dx\dt
    \Bigg]^\kappa.
\end{align*}
Therefore, adding $1$ to the left-hand side we get
\begin{align*}
    \boldsymbol Y_{i+1}^{p_{i+1}}&
    :=
    \biint_{Q_{i+1}}\Big[ \big(\mu^2+|Du|^2\big)^\frac{p_{i+1}}{2}+1\Big]\,\dx\dt\\
    &\le
     C\boldsymbol b^{i\kappa}
    \Bigg[
    \underbrace{
    \biint_{Q_i}
     \Big[\big(\mu^2+|Du|^2\big)^\frac{p_i}{2} +1\Big]\,\dx\dt}_{=\boldsymbol Y_i^{p_i}}
    \Bigg]^\kappa
    \equiv\big( C\boldsymbol b^i\boldsymbol Y_i^{p_i}\big)^\kappa.
\end{align*}
Iterating this inequality yields 
\begin{align*}
    \boldsymbol Y_{i}^{p_i}
    &\le
    \prod_{j=1}^{i} C^{\kappa^{i-j+1}}
    \prod_{j=1}^{i} \boldsymbol b^{j\kappa^{i-j+1}}
    \boldsymbol Y_o^{p\kappa^{i}},
\end{align*}
so that
\begin{align*}
    \boldsymbol Y_{i}
    &\le
    \Bigg[ \prod_{j=1}^{i} C^\frac{\kappa^{i-j+1}}{2(\kappa^i-1)}\Bigg]^\frac{2(\kappa^i-1)} {p_i}
    \Bigg[\prod_{j=1}^{i} \boldsymbol b^\frac{j\kappa^{i-j+1}}{2(\kappa^i-1)}\Bigg]^\frac{2(\kappa^i-1)} {p_i}
    \boldsymbol Y_o^\frac{p\kappa^{i}}{p_i}.
\end{align*}
Now, Lemma~\ref{lem:A}, and
\begin{equation*}
    \lim_{i\to\infty}\frac{2 (\kappa^i-1)}{p_i}=1
    \quad\mbox{and}\quad
     \lim_{i\to\infty}\frac{p\kappa^i}{p_i}= \frac{p}{2}
\end{equation*}
allow us to pass to the limit $i\to\infty$ in the preceding inequality to conclude that
\begin{align*}
    \limsup_{i\to\infty}\boldsymbol Y_{i}
    &\le
    C^{\frac{N+2}{4}}\boldsymbol b^\frac{(N+2)^2}{8}\boldsymbol Y_o^\frac{p}{2}.
\end{align*}
This immediately implies
\begin{align*}
    \sup_{\frac12 Q_{R}}|Du|
    &\le
    C\Bigg[
     \biint_{Q_{R}}
     \big[|Du|^p+1\big]\,\dx\dt
   \Bigg]^\frac{1}{2}
\end{align*}
for a constant  $C=C(N,p,C_o,C_1,C_2)$. A covering argument then shows $|Du|\in L^{\infty}_{\rm loc}(E_T)$. This proves the claim.
\end{proof}

\subsection{Quantitative gradient bound in the super-quadratic case}
In this section we derive an improved quantitative $L^\infty$-gradient bound for weak solutions to parabolic $p$-Laplacian systems with Lipschitz continuous coefficients.

\begin{proposition}\label{lem:L-infty-est-p>2-intrinsic-improved}
Let $\mu\in(0,1]$ and $p> 2$. 
There exists a constant $C$ depending on $N,p,C_o,C_1,C_2$ such that for any  weak solution $u$ to the parabolic system \eqref{eq:diff-systems} in the sense of Definition \ref{def:weak-loc} with assumption \eqref{ass:b}, any $\sigma\in (0,1)$, any $\epsilon\in(0,1]$, and  any cylinder $Q_{R,S}\equiv Q_{R,S}(z_o)\Subset E_T$ with $R\in(0,1]$ and $S>0$ we have the quantitative $L^\infty$-gradient bound 
\begin{align*}
    \sup_{Q_{\sigma R,\sigma S}}|Du|
    \le
    \max\Bigg\{
   \frac{C}{\eps^{\frac{p(N+2)}{4}}}
    \bigg[
    \frac{ S/R^2 }{(1-\sigma)^{N+2}}\biint_{Q_{R,S}}
    |Du|^p\dx\dt
    \bigg]^\frac{1}2, \epsilon\Big(\frac{R^2}{S}\Big)^\frac1{p-2}, \eps\mu\Bigg\}.
\end{align*} 
\end{proposition}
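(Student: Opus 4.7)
The proof mirrors the structure of Proposition~\ref{lem:L-infty-quant-p<2}, adapted to the super-quadratic regime $p>2$. A crucial simplification is that, by Proposition~\ref{lem:L-infty-est-p>2-intrinsic}, the gradient is already locally bounded without any extra assumption, so no preliminary higher-integrability step (as in Lemma~\ref{lem:Lq-est}) is needed. The starting point is the energy inequality from Proposition~\ref{prop:estD2u} applied with $\Phi(\tau):=(\sqrt{\tau}-k)_+^{r}$ for the choice $r:=p-2$, bounded-Lipschitz truncated and passed to the limit using the qualitative $L^\infty$-bound on $|Du|$. Restricting to levels $k\ge\varepsilon\mu$ provides, on $\{|Du|>k\}$, the two-sided control $|Du|^{p-2}\le(\mu^2+|Du|^2)^{(p-2)/2}\le C_\varepsilon|Du|^{p-2}$. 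Combining Kato's inequality with the elementary bounds $\tfrac{2}{p}(|Du|-k)_+^{p}\le v\le \tfrac{2}{p}|Du|^{p}\mathbf 1_{\{|Du|>k\}}$ for $v=\int_0^{|Du|^2}\Phi(s)\,ds$, and the standard estimates for $\Phi^2|Du|^2/(\Phi+\Phi'|Du|^2)$ and $\Phi'|Du|^4$, one obtains a De Giorgi-type energy inequality of the form
\begin{align*}
    \sup_\tau\int(|Du|-k)_+^{p}\zeta^2\,dx&+k^{p-2}\iint\bigl|\nabla\bigl[(|Du|-k)_+^{p/2}\zeta\bigr]\bigr|^2\,dx\,dt\\
    &\le C_\varepsilon\iint_{\{|Du|>k\}}\!\!|Du|^{2p-2}(\zeta^2+|\nabla\zeta|^2)\,dx\,dt+C\iint_{\{|Du|>k\}}\!\!|Du|^{p}|\partial_t\zeta^2|\,dx\,dt
\end{align*}
for cut-offs $\zeta$ vanishing on $\partial_{\mathrm{par}}Q_{R,S}$.

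The De Giorgi iteration is then set up on shrinking cylinders $Q_n$ interpolating between $Q_{R,S}$ and $Q_{\sigma R,\sigma S}$, with levels $k_n=k(1-2^{-n})$ (with $k\ge\varepsilon\mu$) and standard cut-offs $\zeta_n$. Setting $\boldsymbol Y_n:=\iint_{Q_n}(|Du|-k_n)_+^{p}$, Sobolev's embedding applied to $(|Du|-k_{n+1})_+^{p/2}\zeta_n$, combined with the above energy inequality and the reduction $|Du|^{2p-2}\le(\sup_{Q_n}|Du|)^{p-2}|Du|^{p}$, produces a recursive inequality
\begin{equation*}
    \boldsymbol Y_{n+1}\le Ck^{-\beta}\boldsymbol b^n\biggl[\frac{(\sup_{Q_o}|Du|)^{p-2}}{((1-\sigma)R)^2}+\frac{1}{(1-\sigma)S}\biggr]\boldsymbol Y_n^{1+2/(N+2)},\qquad \beta:=\tfrac{N(p-2)+2p}{N+2}.
\end{equation*}
A dichotomy based on whether $(\sup_{Q_n}|Du|)^{p-2}\le\varepsilon^{p-2}R^2/S$ directly yields the second entry of the maximum in that case; otherwise the bracketed factor reduces to $C\varepsilon^{-(p-2)}(\sup_{Q_o}|Du|)^{p-2}/((1-\sigma)R)^2$, and the geometric convergence lemma~\cite[Chapter~I, Lemma~4.1]{DB} provides $\boldsymbol Y_n\to 0$ under the threshold condition
\begin{equation*}
    k\ge C\varepsilon^{-(p-2)(N+2)/M}\,\frac{(\sup_{Q_o}|Du|)^{(p-2)(N+2)/M}\bigl[\iint_{Q_{R,S}}|Du|^{p}\bigr]^{2/M}}{\bigl[(1-\sigma)^{2}R^{2}\bigr]^{(N+2)/M}},\qquad M:=(N+2)p-2N.
\end{equation*}

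To absorb the factor $(\sup_{Q_o}|Du|)^{(p-2)(N+2)/M}$ on the right-hand side, one chains $\sup_{Q_{\sigma R,\sigma S}}|Du|$ through a geometric family of cylinders $\widehat Q_n$ interpolating between $Q_{\sigma R,\sigma S}$ and $Q_{R,S}$, and applies the interpolation lemma~\cite[Chapter~I, Lemma~4.3]{DB} with self-improvement exponent $\alpha:=4/M\in(0,1)$ (note $M>(p-2)(N+2)$ since $p>2$). Raising the resulting bound to the power $M/4$ makes the integrand exponent equal to $p$ and the outer power equal to $1/2$, while conversion to averages via $|Q_{R,S}|\sim R^N S$ produces the factor $(S/R^2)^{1/2}/(1-\sigma)^{(N+2)/2}$ appearing in the statement. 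The $\varepsilon$-exponent emerging from the computation is $(p-2)(N+2)/4$, which is bounded above by $p(N+2)/4$ for $\varepsilon\in(0,1]$, matching the claim, and the condition $k\ge\varepsilon\mu$ accounts for the third entry of the maximum. The main technical hurdle is the careful bookkeeping of exponents through the iteration and interpolation, ensuring that the choice $r=p-2$ (which forces $r+2=p$) delivers simultaneously the integrand power $p$ and the outer exponent $1/2$ in the final estimate.
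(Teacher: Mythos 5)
Your overall scheme (De Giorgi iteration on truncations of $|Du|$, the dichotomy on $(\sup_{Q_n}|Du|)^{p-2}$ versus $\varepsilon^{p-2}R^2/S$, and the final interpolation over nested cylinders) is the same as the paper's, but your key choice $\Phi(\tau)=(\sqrt{\tau}-k)_+^{r}$ with $r=p-2$ creates a genuine gap in the range $2<p<3$. The energy estimate of Proposition~\ref{prop:estD2u} — which here must be used with $x$-dependent coefficients, $C_2>0$ — contains the term $\iint(\mu^2+|Du|^2)^{\frac{p-2}{2}}\Phi'(|Du|^2)|Du|^4\zeta^2$, and with your $\Phi$ this integrand behaves like $|Du|^{p+1}\big(|Du|-k\big)_+^{p-3}$. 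For $2<p<3$ the exponent $p-3$ is negative, so this term is singular as $|Du|\downarrow k$: it is \emph{not} bounded by $|Du|^{2p-2}\mathbf 1_{\{|Du|>k\}}$ (indeed $(|Du|-k)_+^{p-3}\ge|Du|^{p-3}$ on the superlevel set), it need not even be finite, and it cannot be controlled by the quantities $\boldsymbol Y_n$ built from positive powers of $(|Du|-k_n)_+$. In the same range $\Phi$ is not Lipschitz at $\sqrt{\tau}=k$, so Proposition~\ref{prop:estD2u} is not applicable to it even after the truncation $\Phi(\min\{\tau,m\})$, which only repairs the behaviour at the top. This is exactly the obstruction the paper points out ("this term prevents us from choosing $\Phi(\tau)=(\sqrt{\tau}-k)_+^{p-2}$ as in the case of constant coefficients"); your choice would be fine for constant or purely time-dependent coefficients, or for $p\ge3$, but not for the setting of the proposition.

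The paper's way around it is to take $\Phi(\tau)=(\sqrt{\tau}-k)_+^{p}$, which is Lipschitz after truncation for every $p>2$ and makes both right-hand side terms controllable by $|Du|^{2p}+\mu^p|Du|^p$ on $\{|Du|>k\}$. The iteration then runs at homogeneity $p+2$, i.e.\ with $\boldsymbol Y_n=\iint_{Q_n}(|Du|-k_n)_+^{p+2}$, and the integrability exponent is reduced from $p+2$ to $p$ only at the very end via $\iint|Du|^{p+2}\le(\sup|Du|)^2\iint|Du|^p$; this raises the exponent of $\sup|Du|$ in the threshold for $k$ but still leaves the interpolation exponent $\alpha=\frac{4}{\boldsymbol\nu+4}\in(0,1)$ (with $\boldsymbol\nu=N(p-2)+2p$), and the interpolation lemma then produces exactly the exponents $\tfrac12$, $\tfrac{N+2}{4}$ and $\varepsilon^{-\frac{p(N+2)}{4}}$ in the statement. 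Your exponent bookkeeping downstream of the energy inequality (the power $k^{-\frac{N(p-2)+2p}{N+2}}$, $M=(N+2)p-2N$, $\alpha=4/M$, and the conversion to mean values giving $(S/R^2)^{1/2}(1-\sigma)^{-(N+2)/2}$) is consistent with this scheme at your homogeneity, and would go through if the energy inequality you start from were available; also note that the $\varepsilon$-power actually picks up an extra contribution from absorbing the $\mu$-terms via $k\ge\varepsilon\mu$, so it is $\varepsilon^{-\frac{p(N+2)}{4}}$ rather than $\varepsilon^{-\frac{(p-2)(N+2)}{4}}$, which still matches the claim. To repair the proof you should either restrict your choice $r=p-2$ to $p\ge3$ and treat $2<p<3$ separately, or simply adopt the paper's choice $\Phi(\tau)=(\sqrt{\tau}-k)_+^{p}$ throughout.
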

\noindent If we take $S\equiv \lambda^{2-p}R^2$ we have the following straightforward consequence.
\begin{corollary}\label{cor:L-infty-est-p>2-intrinsic-improved}
   In cylinders of the form 
$
    Q_R^{(\lambda)}(z_o)= B_R(x_o)\times \big( t_o-\lambda^{2-p}R^2, t_o\big]
$
with $R\in(0,1]$, the gradient bound reads as
\begin{align*}
    \sup_{Q_{\sigma R}^{(\lambda)}(z_o)}|Du|
    &
    \le
    \max\Bigg\{
    \frac{C}{\eps^{\frac{p(N+2)}{4}}}\bigg[ 
    \frac{\lambda^{2-p}}{
    (1-\sigma)^{N+2}}
    \biint_{Q_{R}^{(\lambda)}(z_o)}
    |Du|^p\dx\dt
    \bigg]^\frac1{2}, \epsilon\lambda, \eps\mu\Bigg\}.
\end{align*}  
\end{corollary}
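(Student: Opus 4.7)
My plan is to adapt the De Giorgi iteration of Proposition~\ref{lem:L-infty-quant-p<2} (the sub-quadratic case) to the super-quadratic regime, using the qualitative gradient bound of Proposition~\ref{lem:L-infty-est-p>2-intrinsic} to justify all formal manipulations. First, I would apply the energy estimate of Proposition~\ref{prop:estD2u} with a bounded Lipschitz truncation of $\Phi(\tau)=(\sqrt{\tau}-k)_+^{p-2}$ (with a minor modification when $p\in(2,3)$ to preserve the Lipschitz property) at a level $k\ge\tfrac12\varepsilon\mu$. On the set $\{|Du|>k\}$, the elementary inequalities $k^{p-2}\le (\mu^2+|Du|^2)^{(p-2)/2}\le C\varepsilon^{2-p}|Du|^{p-2}$ (valid since $\mu\le 2k/\varepsilon\le 2|Du|/\varepsilon$) yield, for concentric cylinders $Q_{r,s}\subset Q_{R,S}$ and a cut-off $\zeta$ vanishing on $\partial_{\mathrm{par}}Q_{R,S}$, the energy inequality
\begin{align*}
    k\sup_t\int_{B_r}(|Du|-k)_+^{p-1}\zeta^2\,\dx &+ k^{p-2}\iint_{Q_{r,s}}\big|\nabla\big[(|Du|-k)_+^{p/2}\zeta\big]\big|^2\,\dx\dt\\
    &\le \frac{C\varepsilon^{2-p}}{(R-r)^2}\iint_{Q_{R,S}\cap\{|Du|>k\}}|Du|^{2p-2}\,\dx\dt\\
    &\quad + \frac{C}{S-s}\iint_{Q_{R,S}\cap\{|Du|>k\}}|Du|^p\,\dx\dt.
\end{align*}

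Next, I would set up the De Giorgi iteration on shrinking cylinders $Q_n=Q_{\rho_n,\theta_n}$ with $\rho_n=\sigma R+(1-\sigma)R\, 2^{-n}$, $\theta_n=\sigma S+(1-\sigma)S\, 2^{-n}$ and ascending levels $k_n=k(1-2^{-n})$ (with $k\ge\varepsilon\mu$ to be chosen), abbreviating $\boldsymbol Y_n:=\iint_{Q_n}(|Du|-k_n)_+^{p}\,\dx\dt$ and $M:=\sup_{Q_{R,S}}|Du|$. The elementary bounds $|Du|\le 2^{n+1}(|Du|-k_n)_+$ on $\{|Du|>k_{n+1}\}\cap Q_n$ and $|Du|^{2p-2}\le M^{p-2}|Du|^p$ control the right-hand side of the energy inequality on $Q_n$ at level $k_{n+1}$ by $\tfrac{C\boldsymbol b^n \boldsymbol A\,\boldsymbol Y_n}{(1-\sigma)^2}$, with $\boldsymbol A:=\varepsilon^{2-p}M^{p-2}/R^2 + 1/S$ and $\boldsymbol b=\boldsymbol b(N,p)$. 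Parabolic Sobolev applied to $w_{n+1}:=(|Du|-k_{n+1})_+^{p/2}\zeta_n$, together with the key bound $\sup_t\int w_{n+1}^2\,\dx\le M\sup_t\int(|Du|-k_{n+1})_+^{p-1}\zeta_n^2\,\dx$ (which bridges the mismatch between the sup-exponent $p-1$ and the gradient-exponent $p$), and H\"older combined with the measure estimate $|Q_n\cap\{|Du|>k_{n+1}\}|\le (2^{n+1}/k)^p\boldsymbol Y_n$, then yields the De Giorgi-type recursion
\begin{equation*}
    \boldsymbol Y_{n+1}\le \frac{C\boldsymbol b^n M^{2/(N+2)}\boldsymbol A}{(1-\sigma)^2 k^\gamma}\boldsymbol Y_n^{1+2/(N+2)}, \qquad \gamma=\gamma(N,p).
\end{equation*}

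The geometric convergence lemma \cite[Chapter I, Lemma 4.1]{DB} then forces $\boldsymbol Y_n\to 0$, and hence $\sup_{Q_{\sigma R,\sigma S}}|Du|\le k$, whenever $k$ exceeds a concrete expression in $M$, $\boldsymbol A$, $\boldsymbol Y_0\le\iint_{Q_{R,S}}|Du|^p\,\dx\dt$ and $(1-\sigma)$. Separating off the easy alternatives $M\le \varepsilon(R^2/S)^{1/(p-2)}$ (which directly yields the middle term in the claimed maximum) and $k=\varepsilon\mu$ (the lowest tier), in the complementary case one has $\boldsymbol A\le 2\varepsilon^{2-p}M^{p-2}/R^2$, and one obtains a preliminary bound of the form $\sup_{Q_{\sigma R,\sigma S}}|Du|\le CM^\theta[\cdots]$ with an exponent $\theta<1$. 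A final application of the Interpolation Lemma \cite[Chapter I, Lemma 4.3]{DB} on a dyadic sequence of cylinders interpolating between $Q_{\sigma R,\sigma S}$ and $Q_{R,S}$ absorbs the $M^\theta$ factor and produces the stated quantitative estimate, with exponent $1/2$ on the $L^p$-norm of $|Du|$ and exponent $-p(N+2)/4$ on $\varepsilon$. The principal obstacle is precisely this sup/gradient exponent mismatch, which is absent in the sub-quadratic proof and is what forces both the $M/k$ factor in the Sobolev step and the indispensable final interpolation.
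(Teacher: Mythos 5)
Your overall scheme (De Giorgi iteration on superlevel sets of $|Du|$, geometric convergence, splitting off the alternatives $\eps(R^2/S)^{1/(p-2)}$ and $\eps\mu$, and a final interpolation to absorb the power of $M=\sup|Du|$) is the same architecture the paper uses to prove Proposition~\ref{lem:L-infty-est-p>2-intrinsic-improved}, of which the present corollary is the one-line specialization $S=\lambda^{2-p}R^2$; indeed the shortest correct proof of the statement is simply to substitute $S=\lambda^{2-p}R^2$ into that proposition. However, your derivation of the starting energy inequality has a genuine gap. The energy estimate of Proposition~\ref{prop:estD2u}, valid for the differentiable coefficients \eqref{ass:b} covered by the corollary, carries on its right-hand side the term $C\iint(\mu^2+|Du|^2)^{\frac{p-2}2}\Phi'(|Du|^2)|Du|^4\zeta^2\,\dx\dt$, which does not appear at all in the energy inequality you write down. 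With your choice $\Phi(\tau)=(\sqrt{\tau}-k)_+^{p-2}$ this term behaves like $(|Du|-k)_+^{p-3}|Du|^{3}$ on $\{|Du|>k\}$: for $p\ge 3$ it can be folded into your $|Du|^{2p-2}$ term, but for $2<p<3$ the exponent $p-3$ is negative, the integrand is singular across the level set $\{|Du|=k\}$, and it cannot be dominated by $|Du|^{2p-2}\mathbf 1_{\{|Du|>k\}}$ or $|Du|^{p}\mathbf 1_{\{|Du|>k\}}$; it is not even clear that the integral is finite. Your parenthetical ``minor modification when $p\in(2,3)$ to preserve the Lipschitz property'' only addresses the admissibility of $\Phi$ as a test function, not this right-hand side term, and since it comes multiplied by $\zeta^2$ (not $|\nabla\zeta|^2$) it cannot be absorbed. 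This is precisely the obstruction the paper points out (``this term prevents us from choosing $\Phi(\tau)=(\sqrt{\tau}-k)_+^{p-2}$ as in the case of constant coefficients''), and it is why the paper instead takes $\Phi(\tau)=(\sqrt{\tau}-k)_+^{p}$, leading to De Giorgi quantities built on $(|Du|-k)_+^{p+2}$ and right-hand side powers $|Du|^{2p}+\mu^p|Du|^p$ and $|Du|^{p+2}$; with that choice the rest of your plan (Sobolev, measure estimate, geometric convergence, interpolation) goes through and yields the stated exponents.

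Apart from this, the remaining steps you sketch are sound: the bridge $\sup_t\int w_{n+1}^2\,\dx\le M\sup_t\int(|Du|-k_{n+1})_+^{p-1}\zeta_n^2\,\dx$ is legitimate, your recursion has the right structure, and tracking your exponents gives an $\eps$-power no worse than $\eps^{-\frac{(p-2)(N+2)}{4}}$, which is stronger than (hence consistent with) the claimed $\eps^{-\frac{p(N+2)}{4}}$. To make the argument complete you must either repair the energy-estimate step for $2<p<3$ (e.g., by adopting the paper's truncation, or by decoupling the level in $\Phi$ from the De Giorgi level so that $(|Du|-k_n)_+^{p-3}$ is evaluated away from its singularity on $\{|Du|>k_{n+1}\}$, at the cost of re-deriving the quantitative dependence on $k$), or simply invoke Proposition~\ref{lem:L-infty-est-p>2-intrinsic-improved} and set $S=\lambda^{2-p}R^2$.
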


\begin{proof}[{\bf Proof of Proposition~\ref{lem:L-infty-est-p>2-intrinsic-improved}}]
Writing the energy inequality from Proposition~\ref{prop:estD2u} with $z_o=(0,0)$, we have
\begin{align*}
    \sup_{\tau\in (-S,0]} & \int_{B_{R}\times\{\tau\}}v\zeta^2\,\dx +
      \iint_{Q_{R,S}}
      \underbrace{\big(\mu^2 +|Du|^2\big)^\frac{p-2}2 |D^2u|^2}_{\ge |Du|^{p-2}|D^2u|^2}\Phi(|Du|^2) 
      \zeta^2 \,\dx\dt\\
        &\qquad\quad\le
      C \iint_{Q_{R,S}}
     \underbrace{\big(\mu^2 +|Du|^2\big)^\frac{p-2}2|Du|^2}_{\le C(p)(|Du|^p+\mu^p)}
     \Phi(|Du|^2)
      \big(\zeta^2+|\nabla\zeta|^2\big) \,\dx\dt\\   
      &\qquad\quad\phantom{\le\,}+
      C\iint_{Q_{R,S}}
      \underbrace{\big(\mu^2 +|Du|^2\big)^\frac{p-2}2|Du|^2}_{\le C(p)(|Du|^p+\mu^p)} \Phi'(|Du|^2)|Du|^2
      \zeta^2\,\dx\dt\\
    &\qquad\quad\phantom{\le\,}+
      C\iint_{Q_{R,S}}v\,\big|\partial_t\zeta^2\big|\,\dx\dt.
\end{align*}
The main difference to the case of constant or only time-dependent coefficients is the integral on the right-hand side containing the derivative $\Phi'$. This term prevents us from choosing $
\Phi(\tau):=\big( \sqrt{\tau}-k\big)_+^{p-2}
$ as in the case of constant coefficients, cf.~\cite[p.~234, Lemma~4.2]{DB}. 
For this reason, we choose 
\begin{equation*}
    \Phi(\tau):=\big( \sqrt{\tau}-k\big)_+^{p}.
\end{equation*}
The fact that $\Phi$ is neither bounded nor Lipschitz is immaterial, since we have shown $|Du|$ is locally bounded.
For $|Du|\ge k$, otherwise there is nothing to compute since in this case $v=0$, we have
\begin{align*}
    v
    &
    =
    \int_{k^2}^{|Du|^2}\big( \sqrt{\tau}-k\big)^{p}\,\d\tau
    =
    2\int_{k}^{|Du|} ( s-k)^{p}s\,\ds\\
    &\ge
    2\int_{k}^{|Du|} ( s-k)^{p+1}\,\ds
    =
    \frac{2}{p+2}\big(|Du|-k\big)^{p+2}.
\end{align*}
Similarly, we obtain the bound from above. In fact, we have
\begin{align*}
    v
    &=
    2\int_{k}^{|Du|} ( s-k)^{p}s\,\ds
    \le
    2\int_{k}^{|Du|} s^{p+1}\,\ds
    \le
    \frac{2}{p+2}|Du|^{p+2}.
\end{align*}
Hence,
\begin{equation*}
     \frac{2}{p+2}\big(|Du|-k\big)_+^{p+2}
     \le
     v
     \le
     \frac{2}{p+2}|Du|^{p+2}
     \mathbf 1_{\{|Du|>k\}}.
\end{equation*}
Next, we estimate the second term on the left-hand side from below. 
Namely,
\begin{align*}
    |Du|^{p-2}
     |D^2u|^2\big(|Du|-k\big)_+^{p}
     &=
     \mathbf 1_{\{|Du|>k\}}
     \underbrace{|Du|^{p-2}}_{\ge k^{p-2}}
     |D^2u|^2\big(|Du|-k\big)^{p}\\
     &\ge
     k^{p-2}
     \big(|Du|-k\big)_+^{p}
     |D^2u|^2.
\end{align*}
Due to Kato's inequality, the right-hand side of the last display can be further estimated from below if we observe that
\begin{align*}
    \Big|\nabla \big(|Du|-k\big)_+^\frac{p+2}{2}\Big|^2
    &\le
    \tfrac{1}4 (p+2)^2 \big(|Du|-k\big)_+^{p}
    \big|\nabla |Du|\big|^2\\
    &\le
    \tfrac{1}4 (p+2)^2  \big(|Du|-k\big)_+^{p}|D^2u|^2.
\end{align*}
The last two estimates combined result in
\begin{align*}
    |Du|^{p-2}&
     |D^2u|^2\big(|Du|-k\big)_+^{p}
     \ge\frac{4k^{p-2}}{(p+2)^2}
    \Big|\nabla \big(|Du|-k\big)_+^\frac{p+2}{2}\Big|^2.
\end{align*}
It remains to consider the first and second terms on the right-hand side of the energy estimate. The first one can be estimated  by
\begin{align*}
    \big(|Du|^p+\mu^p\big)\Phi\big(|Du|^2\big)
    &=
     \big(|Du|^p+\mu^p\big)
     \big( |Du|-k\big)_+^{p}\\
     &\le
     \mathbf 1_{\{|Du|>k\}}\big(|Du|^{2p}+ \mu^p|Du|^{p}\big),
\end{align*}
while the second one can be bounded by
\begin{align*}
    \big(|Du|^p+\mu^p\big)\Phi'
    \big(|Du|^2\big)|Du|^2
    &=
     \tfrac12 p\big(|Du|^p+\mu^p\big)
     \big( |Du|-k\big)_+^{p-1}|Du|\\
     &\le
     \mathbf 1_{\{|Du|>k\}}
     \big(|Du|^{2p}+ \mu^p|Du|^{p}\big).
\end{align*}
Using the estimates of the individual terms in the energy inequality, we eventually obtain
\begin{align*}
    \sup_{\tau\in (-S,0]}&
     \int_{B_R\times\{\tau\}} \big(|Du|-k\big)_+^{p+2} \zeta^2\, \dx
     +
     k^{p-2}
     \iint_{Q_{R,S}}
    \Big|\nabla \big(|Du|-k\big)_+^\frac{p+2}{2}\Big|^2\zeta^2\,\dx\dt\\
     &\le C
     \iint_{Q_{R,S}}
     \mathbf 1_{\{|Du|>k\}}\big(
     |Du|^{2p}+\mu^p|Du|^{p}\big)\big(
     |\nabla\zeta|^2+\zeta^2\big)\,\dx\dt\\
     &\phantom{\le\,}+C
     \iint_{Q_{R,S}}|Du|^{p+2}\mathbf 1_{\{|Du|>k\}}\big|\partial_t\zeta^2\big|\,\dx\dt
\end{align*}
with a constant $C=C(p,C_o,C_1,C_2)$.
Note that the preceding energy inequality holds true for any general cylinder $Q_{R,S}\subset E_T$, for any $k>0$, and any cut-off function $\zeta$ vanishing on the parabolic boundary $\partial_{\mathrm{par}}Q_{R,S}$. Now,  we define shrinking families of cylinders $Q_n:=
Q_{\rho_n,\theta_n}$ and $\widetilde Q_n:= Q_{\tilde\rho_n,\tilde\theta_n}$, where
$\rho_n$, $\tilde\rho_n$, $\theta_n$, $\tilde \theta_n$ are defined according to
\begin{equation*}
    \rho_n:= \sigma R +\frac{1-\sigma}{2^n}R,
    \quad\
    \theta_n:= \sigma S +\frac{1-\sigma}{2^n}S
\end{equation*}
and 
\begin{equation*}
    \tilde\rho_n:=\tfrac12\big( \rho_n+\rho_{n+1}\big)=\sigma R +\frac{3(1-\sigma)}{2^{n+2}}R,
    \quad
    \tilde\theta_n:=\tfrac12\big( \theta_n+\theta_{n+1}\big)
    =
    \sigma S +\frac{3(1-\sigma)}{2^{n+2}}S.
\end{equation*}
These cylinders are arranged in such a way that $Q_{n+1}\subset \widetilde 
Q_n\subset Q_n$ for any $n\in\N_0$. The level $k$ in the energy estimate is fixed by an increasing sequence of levels
\begin{equation*}
    k_n:=k-\frac{k}{2^n},
\end{equation*}
where $k>0$ is a number to be determined in a universal way later in the course of the proof. Since the above energy inequality shall be applied on the cylinders $Q_n$, we specify the cutoff function 
$\zeta \equiv \zeta_n$ in such a way that it vanishes on the parabolic boundary of $Q_n$ on the one hand, and is identical to 1 on the intermediate cylinder $\widetilde Q_n$ on the other hand. Moreover, we require that
$$
    |\nabla\zeta_n|\le \frac{2^{n+3}}{(1-\sigma)R},
    \quad\mbox{and}\quad
    |\partial_t\zeta_n|\le\frac{2^{n+3}}{(1-\sigma)S}.
$$
With respect to these choices, the energy inequality for $Q_n$ reads as follows
\begin{align}
\label{est:en-DG-p>2}\nonumber
        \sup_{\tau\in (-\theta_n,0]}&
     \int_{B_n\times\{\tau\}} \Big[\big(|Du|-k_{n+1}\big)_+^\frac{p+2}2 \zeta_n\Big]^2\, \dx\\\nonumber
     &\phantom{\le\,}
     +
     \underbrace{k_{n+1}^{p-2}}_{\ge (k/2)^{p-2}}
     \iint_{Q_n}
    \Big|\nabla\big[ \big(|Du|-k_{n+1}\big)_+^\frac{p+2}2\zeta_n\big]\Big|^2\dx\dt\\\nonumber
    &\le \frac{C2^{2n}}{(1-\sigma)^2R^2}
     \iint_{Q_n}
     \mathbf 1_{\{|Du|>k_{n+1}\}}\big( |Du|^{2p}+\mu^p|Du|^{p}\big)\,\dx\dt
     \\
     &\phantom{\le\,}+\frac{C2^n}{(1-\sigma)S}
     \iint_{Q_{_n}}\mathbf 1_{\{|Du|>k_{n+1}\}}
     |Du|^{p+2}\,\dx\dt.
\end{align}
To shorten the notation we introduce 
\begin{equation*}
    \boldsymbol Y_n:=\iint_{Q_n} \big( |Du|-k_n\big)_+^{p+2}\,\dx\dt.
\end{equation*}
The integral $\boldsymbol Y_n$ can be easily estimated from below by reducing the domain of integration. Indeed, we have
\begin{align*}
    \boldsymbol Y_n
    &\ge
    \iint_{Q_n\cap\{|Du|>k_{n+1}\} } \big( |Du|-k_n\big)_+^{p+2}\,\dx\dt\\
    &\ge
    \big( k_{n+1}-k_n\big)^{p+2}
    \Big| Q_n \cap \big\{ |Du|>k_{n+1}\big\}
   \Big|\\
   &=\frac{k^{p+2}}{2^{(n+1){(p+2)}}}\Big| Q_n \cap \big\{ |Du|>k_{n+1}\big\}
   \Big|.
\end{align*}
Next, we estimate $\boldsymbol Y_{n+1}$ from above. For this purpose we first increase in $\boldsymbol Y_{n+1}$ the domain of integration from $Q_{n+1}$ to $\widetilde Q_n$ and then apply H\"older's inequality with exponents $\frac{N+2}N$ and $\frac{N+2}2$. Subsequently, we use the fact that $\zeta_n=1$ on $\widetilde Q_n$ and the last displayed  measure estimate. In this way we obtain 
\begin{align*}
    \boldsymbol Y_{n+1}
    &\le 
    \iint_{\widetilde Q_n} \Big[\big(|Du|-k_{n+1}\big)_+^\frac{p+2}2 \zeta_n\Big]^2\, \dx\dt\\
    &\le
    \bigg[ \iint_{\widetilde Q_n} \Big[\big(|Du|-k_{n+1}\big)_+^\frac{p+2}2 \zeta_n\Big]^{2\frac{N+2}
    {N}}\dx\dt\bigg]^\frac{N}{N+2}
    \Big|\widetilde Q_n \cap \big\{ |Du|>k_{n+1}\big\}
   \Big|^\frac{2}{N+2}\\
   &\le
    \bigg[ \iint_{ Q_n} \Big[\big(|Du|-k_{n+1}\big)_+^\frac{p+2}2 \zeta_n\Big]^{2\frac{N+2}{N}}\dx\dt\bigg]^\frac{N}{N+2}
    \bigg[
    \frac{2^{(n+1)(p+2)}}{k^{p+2}}\boldsymbol Y_n
    \bigg]^\frac{2}{N+2}.
\end{align*}
The first integral on the right-hand side can be controlled using Sobolev's inequality \cite[Chapter I, Proposition~3.1]{DB} with $p=2$ and $m=2$. The application yields
\begin{align*}
    \iint_{Q_n} &\Big[\big(|Du|-k_{n+1}\big)_+^\frac{p+2}2 \zeta_n\Big]^{2\frac{N+2}{N}}\dx\dt
    \\
    &\le
    C
    \iint_{ Q_n}
    \Big|\nabla\big[ \big(|Du|-k_{n+1}\big)_+^\frac{p+2}2\zeta_n\big]\Big|^2\dx\dt\\
    &\phantom{\le\,C}\cdot
    \bigg[
    \sup_{\tau\in (-\theta_n,0]}\int_{B_n\times\{\tau\}}
    \Big[\big(|Du|-k_{n+1}\big)_+^\frac{p+2}2 \zeta_n\Big]^{2}\dx
    \bigg]^\frac{2}{N}\\
    &\le
     Ck^{-(p-2)}
     \big[
     \mbox{right-hand side of \eqref{est:en-DG-p>2}}
     \big]^{\frac{N+2}{N}}.
\end{align*}
To obtain the last line, we used \eqref{est:en-DG-p>2}. Inserting this inequality in the estimate of $\boldsymbol Y_{n+1}$ we obtain
\begin{align*}
    \boldsymbol Y_{n+1}
    &\le
    \frac{C 2^{n(p+4)}}{(1-\sigma)^2} k^{-\frac{N(p-2)+2(p+2)}{N+2}}
    \boldsymbol Y_{n}^\frac{2}{N+2}\\
    &\quad\cdot
    \bigg[
    \frac{1}{R^2}
     \iint_{Q_n}
     \mathbf 1_{\{|Du|>k_{n+1}\}}
     \big(|Du|^{2p}+\mu^p|Du|^{p}\big)\,\dx\dt\\
&\phantom{\qquad\qquad\qquad\cdot\bigg[\,}+\frac{1}{S}
     \iint_{Q_{_n}}\mathbf 1_{
     \{|Du|>k_{n+1}\}
     }
     |Du|^{p+2}
     \,\dx\dt
    \bigg].
\end{align*}
To derive a homogeneous estimate we reduce the power of $|Du|$ in the first integral from $2p$ to $p+2$ and estimate a part of
$|Du|^{2p}$ by its supremum. More precisely, we have
\begin{align*}
    \iint_{Q_n}&
     \mathbf 1_{\{|Du|>k_{n+1}\}}
     \big(|Du|^{2p}+\mu^p|Du|^{p}\big)\,\dx\dt \\
     &\le
     \Big( 1+\frac{\mu^p}{k^p}\Big)
     \iint_{Q_n} 
     \mathbf 1_{\{|Du|>k_{n+1}\}}
     |Du|^{2p} \,\dx\dt \\
     &\le
      C\bigg(\sup_{Q_n} |Du|\bigg)^{p-2}\Big( 1+\frac{\mu^p}{k^p}\Big)
     \iint_{Q_n}
     \mathbf 1_{\{|Du|>k_{n+1}\}}|Du|^{p+2}\,\dx\dt.
\end{align*}
To obtain the second-to-last line we used that $|Du|>k_{n+1}\ge \frac12 k$ on the domain of integration. 
Furthermore, reducing the domain of integration from $Q_n$ to the superlevel set $Q_n\cap\{|Du|>k_{n+1}\}$ and using the relation between $k_n$ and $k_{n+1}$, i.e.
$$
    k_n=k_{n+1}\frac{2^{n+1}-2}{2^{n+1}-1},
$$
we obtain
\begin{align*}
    \boldsymbol Y_{n}
    &\ge
    \iint_{Q_n}\big(|Du|-k_n\big)_+^{p+2}\mathbf 1_{\{ |Du|>k_{n+1}\}} \,\dx\dt\\
    &=
    \iint_{Q_n}\bigg[|Du|-k_{n+1}\frac{2^{n+1}-2}{2^{n+1}-1}\bigg]^{p+2}\mathbf 1_{\{ |Du|>k_{n+1}\}} \,\dx\dt\\
    &\ge
    \iint_{Q_n}|Du|^{p+2}\bigg[1-\frac{2^{n+1}-2}{2^{n+1}-1}\bigg]^{p+2}\mathbf 1_{\{ |Du|>k_{n+1}\}} \,\dx\dt\\
    &\ge
    \frac{1}{2^{(n+1)(p+2)}}\iint_{Q_n}|Du|^{p+2}
    \mathbf 1_{\{ |Du|>k_{n+1}\}} \,\dx\dt,
\end{align*}
so that
\begin{align*}
    \iint_{Q_n}&
     \mathbf 1_{\{|Du|>k_{n+1}\}}|Du|^{p+2}\,\dx\dt
     \le
     2^{(n+1)(p+2)}
     \boldsymbol Y_{n}.
\end{align*}
Inserting this inequality above we obtain
\begin{align*}
    \iint_{Q_n}
     \mathbf 1_{\{|Du|>k_{n+1}\}}&
     \big(|Du|^{2p}+\mu^p|Du|^{p}\big)\,\dx\dt\\
     &\le
      C\bigg(\sup_{Q_n} |Du|\bigg)^{p-2}\bigg( 1+\frac{\mu^p}{k^p}\bigg)
    2^{(n+1)(p+2)}\boldsymbol Y_n.
\end{align*}
However, this leads us to the recursive inequality
\begin{align*}
    \boldsymbol Y_{n+1}
    &\le
    \frac{C 2^{2n(p+3)}}{(1-\sigma)^2} k^{-\frac{N(p-2)+2(p+2)}{N+2}}
    \boldsymbol Y_{n}^{1+\frac{2}{N+2}}
    \bigg[
    \frac{1}{R^2}\bigg(\sup_{Q_n} |Du|\bigg)^{p-2}
    \bigg( 1+\frac{\mu^p}{k^p}\bigg)
    +\frac1{S}
    \bigg].
\end{align*}
Here, we require that $k\ge \eps\mu$. Now, if for some $n\in \N_0$
\begin{align*}
    \bigg(\tfrac1{\eps}\sup_{Q_n} |Du|\bigg)^{p-2}
    &\le
    \frac{R^2}{S},
\end{align*}
the claimed $L^\infty$-gradient bound follows since $Q_{\sigma R,\sigma S}\subset Q_n$ which will yield
\begin{align}\label{choice-k:trivial-case}
    \sup_{Q_{\sigma R,\sigma S}}|Du|
    \le
    \sup_{Q_{n}}|Du|
    \le 
    \epsilon \Big( \frac{R^2}{S}\Big)^\frac{1}{p-2}
    .
\end{align}
Otherwise, for any $n\in\N_0$ the recursive inequality simplifies to
\begin{align*}
    \boldsymbol Y_{n+1}
    &\le
    \frac{C \boldsymbol b^{n}}{(1-\sigma)^2R^2\eps^p} k^{-\frac{N(p-2)+2(p+2)}{N+2}}\bigg(\sup_{Q_n} |Du|\bigg)^{p-2}
    \boldsymbol Y_{n}^{1+\frac{2}{N+2}}\\
    &\le
    \frac{C \boldsymbol b^{n}}{(1-\sigma)^2R^2\eps^p} k^{-\frac{N(p-2)+2(p+2)}{N+2}}\bigg(\sup_{Q_{R,S}} |Du|\bigg)^{p-2}
    \boldsymbol Y_{n}^{1+\frac{2}{N+2}}
\end{align*} 
with $\boldsymbol b:=4^{p+3}$. 
At this stage, we
apply the lemma of geometric convergence \cite[Chapter I, Lemma 4.1]{DB} with $C,\alpha$ replaced by 
$$
    \frac{C \boldsymbol }{(1-\sigma)^2R^2\eps^p} k^{-\frac{N(p-2)+2(p+2)}{N+2}}\bigg(\sup_{Q_{R,S}} |Du|\bigg)^{p-2},
    \quad
    \mbox{and}
    \quad
    \tfrac2{N+2},
$$
to conclude that $\boldsymbol Y_n\to 0$ as $n\to\infty$ provided that
\begin{align*}
    \boldsymbol Y_o
    &\le
    \bigg[
    \frac{C  }{(1-\sigma)^2R^2\eps^p} k^{-\frac{N(p-2)+2(p+2)}{N+2}}\bigg(\sup_{Q_{R,S}} |Du|\bigg)^{p-2}
    \bigg]^{-\frac{N+2}{2}}
    \boldsymbol b^{-\frac{(N+2)^2}{4}}.
\end{align*}
We resolve this inequality for $k$, and obtain that $k$ must satisfy
\begin{align*}
    k
    &\ge
    \bigg[\frac{C \boldsymbol b^\frac{N+2}2 }{(1-\sigma)^2R^2\eps^p}\bigg]^\frac{N+2}{N(p-2)+2(p+2)}
    \bigg(\sup_{Q_{
    R,S}} |Du|
    \bigg)^\frac{(N+2)(p-2)}{N(p-2)+2(p+2)}\\
    &\qquad\qquad\qquad\qquad\qquad\qquad\cdot
    \bigg[\iint_{Q_{R,S}}
    |Du|^{p+2}\dx\dt
    \bigg]^\frac{2}{N(p-2)+2(p+2)}.
\end{align*}
Apparently, the integrability exponent in the integral on the right-hand side can be reduced by estimating a part of the integrand by the $L^\infty$-norm of $|Du|$. This leads to a stronger condition on $k$. The use of
\begin{align*}
    \iint_{Q_{R,S}}
    |Du|^{p+2}\dx\dt
    &\le 
    \bigg(\sup_{Q_{R,S}}|Du|\bigg)^{2}
    \iint_{Q_{R,S}}
    |Du|^{p}\dx\dt
\end{align*}
leads to the requirement
\begin{align*}
    k
    &\ge
    \bigg[\frac{C \boldsymbol b^\frac{N+2}2 }{(1-\sigma)^2R^2\eps^p}\bigg]^\frac{N+2}{N(p-2)+2(p+2)}
    \bigg(\sup_{Q_{R,S}} |Du|
    \bigg)^\frac{N(p-2)+2p}{N(p-2)+2(p+2)}\\
    &\qquad\qquad\qquad\qquad\qquad\qquad\cdot
    \bigg[\iint_{Q_{R,S}}
    |Du|^{p}\dx\dt
    \bigg]^\frac{2}{N(p-2)+2(p+2)}.
\end{align*}
Let us name the right-hand side as $\boldsymbol{K}$ and
choose $k$ as $\max\{ \boldsymbol K,\epsilon\mu\}$. Then, 
from $\boldsymbol Y_\infty=0$ we obtain that
\begin{align*}
    \sup_{Q_{\sigma R,\sigma S}} |Du|
    &\le\max\{ \boldsymbol K,\epsilon\mu\}.
\end{align*}
In combination with \eqref{choice-k:trivial-case}, which holds in the other case, this results in
\begin{align*}
    \sup_{Q_{\sigma R,\sigma S}} |Du|
    &\le\max\bigg\{ \boldsymbol K, \epsilon \Big( \frac{R^2}{S}\Big)^\frac{1}{p-2},\eps\mu\bigg\}.
\end{align*}

To set up an interpolation argument we first re-write the last display in a more compact form using the abbreviations
$$
    \boldsymbol Y_o=\iint_{Q_{R,S}}
    |Du|^{p}\dx\dt,
    \quad\mbox{and}\quad
    \boldsymbol{\nu} :=N(p-2)+2p.
$$  
Namely, we have
\begin{align*}
       \sup_{Q_{\sigma R,\sigma S}} |Du|
       \le\max\Bigg\{
            \bigg[\frac{C}{(1-\sigma)^2R^2 \epsilon^{p}}\bigg]^\frac{N+2}{\boldsymbol{\nu}+4}
     \bigg(\sup_{Q_{R,S}} |Du|
    \bigg)^\frac{\boldsymbol{\nu}}{\boldsymbol{\nu}+4} \boldsymbol Y_o^\frac{2}{\boldsymbol{\nu}+4},
    \epsilon \Big( \frac{R^2}{S}\Big)^\frac{1}{p-2},\epsilon\mu
    \Bigg\}.
\end{align*}
Next, we define a sequence of nested cylinders $\widehat Q_n=\widehat Q_{\rho_n,\theta_n}$ with $\rho_o=\sigma R$, with $\theta_o=\sigma S$, and 
for $n\in\N$ with
\begin{equation*}
    \rho_n:=\sigma R +(1-\sigma)R \sum_{j=1}^n2^{-j}
    \quad\mbox{and}\quad
    \theta_n:=\sigma S +(1-\sigma)S \sum_{j=1}^n2^{-j}.
\end{equation*}
Observe that $\widehat Q_o=Q_{\sigma R,\sigma S}$ and $\widehat Q_\infty = Q_{R,S}$. Moreover, we define
$$
    \boldsymbol M_n
    :=
    \sup_{\widehat Q_n} |Du|
$$
and apply the above gradient bound  on  two consecutive cylinders $\widehat Q_{n+1}\supset \widehat Q_n$ instead of
$Q_{R,S}\supset Q_{\sigma R, \sigma S}$.
Moreover, we use the facts $(\rho_{n+1}-\rho_n)^2=(1-\sigma)^2R^22^{-2(n+1)}$
and
\begin{equation*}
    \frac{\rho_{n+1}^2}{\theta_{n+1}}
    =
    \frac{R^2}{S}
    \Big[
    \sigma +(1-\sigma) \sum_{j=1}^n2^{-j}
    \Big]
    \le
    \frac{R^2}{S}.
\end{equation*}
This results in recursive inequalities of the type
\begin{align*}
    \boldsymbol M_n
    &\le
    \max\Bigg\{
            \bigg[\frac{C }{(1-\sigma)^2R^2\epsilon^{p}}\bigg]^\frac{N+2}{\boldsymbol{\nu}+4}
     2^{\frac{2(n+1)(N+2)}{\boldsymbol{\nu}+4}}\boldsymbol M_{n+1}^\frac{\boldsymbol{\nu}}{\boldsymbol{\nu}+4} \boldsymbol Y_o^\frac{2}{\boldsymbol{\nu}+4},
    \epsilon \Big( \frac{R^2}{S}\Big)^\frac{1}{p-2},\eps\mu
    \Bigg\}.
\end{align*}
If for some $n\in\N_0$ either the second or the third entry in the maximum dominates the first one, there is nothing to prove, since we trivially have
\begin{align*}
    \sup_{Q_{\sigma R,\sigma S}} |Du| 
    =
   \boldsymbol  M_o
    \le 
   \boldsymbol  M_{n}
    \le 
    \max\bigg\{ \epsilon
    \Big(\frac{R^2}{S}\Big)^{\frac{1}{p-2}}, \eps\mu\bigg\}.
\end{align*}
Otherwise, the first term in the maximum dominates for all $n\in\N_0$ the second and third entry, so that for any $n\in\N_0$ we have
\begin{align*}
    \boldsymbol M_n
    &\le
    \bigg[\frac{C }{(1-\sigma)^2R^2\eps^{p}}\bigg]^\frac{N+2}{\boldsymbol{\nu}+4}\boldsymbol Y_o^\frac{2}{\boldsymbol{\nu}+4}
     \big(\underbrace{4^{\frac{N+2}{\boldsymbol{\nu}+4}}}_{=:\boldsymbol b}\big)^{n}\boldsymbol M_{n+1}^{1-\frac{4}{\boldsymbol{\nu}+4}} .
\end{align*}
To the preceding recursive inequalities we apply the Interpolation Lemma \cite[Chapter I, Lemma 4.3]{DB} with $\alpha$, $C$ replaced by
\begin{equation*}
    \frac{4}{\boldsymbol{\nu}+4},
    \qquad
    \bigg[\frac{C \epsilon^{-p} }{(1-\sigma)^2R^2}\bigg]^\frac{N+2}{\boldsymbol{\nu}+4}\boldsymbol Y_o^\frac{2}{\boldsymbol{\nu}+4}.
\end{equation*}
The application yields
\begin{align*}
    \boldsymbol M_o
    &\le
    \Bigg[ 2   \bigg[\frac{C}{(1-\sigma)^2R^2 \eps^{p} }\bigg]^\frac{N+2}{\boldsymbol{\nu}+4}\boldsymbol Y_o^\frac{2}{\boldsymbol{\nu}+4}\boldsymbol b^{\frac{\boldsymbol{\nu}}{4}}      \Bigg]^\frac{\boldsymbol{\nu}+4}{4}\\
    &=
    \frac{C}{\eps^{\frac{p(N+2)}{4}}}\frac{1}{[(1-\sigma)R]^\frac{N+2}{2}}
    \bigg[ \iint_{Q_{R,S}}
    |Du|^{p}\dx\dt\bigg]^\frac12\\
    &=
    \frac{C}{\eps^{\frac{p(N+2)}{4}}}
    \frac{ \sqrt{S/R^2} }{(1-\sigma)^\frac{N+2}2}
    \bigg[\biint_{Q_{R,S}}
    |Du|^p\dx\dt
    \bigg]^\frac12.
\end{align*}
Taking into account the bound from the trivial case, we conclude that claimed gradient bound.
\end{proof}

\begin{remark}\upshape
With an interpolation argument as in the previous proof, the integrability exponent $p$ can be further reduced. In fact, for any $q\in(p-2,p)$, we have
\begin{align*}
    &\sup_{Q_{\sigma R,\sigma S}(z_o)}|Du|\\
    &\quad
    \le
    \max\Bigg\{
     \frac{C}{\eps^{\frac{p(N+2)}{2(q+2-p)}}}
    \bigg[
    \frac{ S/R^2 }{(1-\sigma)^{N+2}}
    \biint_{Q_{R,S}(z_o)}
    |Du|^{q}\dx\dt
    \bigg]^\frac{1}{q+2-p}, \epsilon\Big(\frac{R^2}{S}\Big)^\frac1{p-2}, \eps\mu\Bigg\}.
\end{align*}
\end{remark}

\begin{remark}
\upshape Exactly as already pointed out in Remark~\ref{Rmk:p=2},
when $p=2$, we can take $\eps=1$, $\mu=0$, $S=R^2$. Hence, the previous estimate reduces to 
\begin{equation*}
    \sup_{Q_{\sigma R,\sigma R^2}(z_o)} |Du|
    \le
    \Bigg[\frac{C}{(1-\sigma)^{N+2}}
    \biint_{Q_{R,R^2}(z_o)} |Du|^q\,\dx\dt
    \Bigg]^\frac{1}{q},
\end{equation*}
for any $q\in(0,2]$.
\end{remark}

\section{\texorpdfstring{$p$-}{p-}Laplace systems with measurable coefficients}\label{S:4}
This section mainly provides some comparison estimates that are necessary to prove the Schauder-type estimates. In contrast to the previous section, we consider systems of $p$-Laplace-type with coefficients that are merely measurable, and we include the case $\mu=0$. More precisely, we study weak solutions to the parabolic system~\eqref{p-laplace-intro} under the assumptions~\eqref{prop-a-intro}$_1$. The assumption~\eqref{prop-a-intro}$_2$, i.e.~the H\"older continuity with respect to the spatial variable, is only used in Lemma~\ref{lem:comp-2}, when we show a comparison estimate in preparation for the proof of Schauder estimates.

\subsection{Zero order energy estimates}
Weak solutions to the parabolic system \eqref{p-laplace-intro} satisfy a basic energy estimate,  which follows in a straightforward way by testing the system  against $(u-\xi)\z^p$,
 with $\xi\in\R^k$ and a suitable cut-off function $\z$, and modulo a Steklov averaging procedure. 

\begin{proposition}\label{prop:energy-est-zero-order}
Assume that $p>1$, $\mu\in [0,1]$, and that the coefficient $a$ satisfies
\eqref{prop-a-intro}$_1$. Then there exists a constant $C=C(p,C_o,C_1)$ such that
for any weak solution $u$ to the parabolic system \eqref{p-laplace-intro} in the sense of Definition \ref{def:weak-loc}, for any $\xi\in\R^k$, and any pair of cylinders $Q_{r,s}(z_o)
\subset Q_{R,S}(z_o)\subset E_T$ with  $0<s<S$,
$0<r<R$, we have the energy estimate
\begin{align*}
    \sup_{\tau\in (t_o-s,t_o]} \int_{B_r(x_o)\times\{\tau\}}&|u-\xi|^2 \,\dx
    +
    \iint_{Q_{r,s}(z_o)}\big(\mu^2+|Du|^2\big)^\frac{p}2 \,\dx\dt\\
    &\le
    C\iint_{Q_{R,S}(z_o)}\bigg[ \frac{|u-\xi|^p}{(R-r)^p}+\frac{|u-\xi|^2}{S-s}
    +\mu^p\bigg] 
    \,\dx\dt.
  \end{align*}
\end{proposition}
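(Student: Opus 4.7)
\textbf{Plan for the proof of Proposition~\ref{prop:energy-est-zero-order}.}
My plan is the standard testing argument for the parabolic $p$-Laplace system, carried out rigorously through a Steklov-average regularisation. Fix a smooth cut-off $\zeta(x,t)=\zeta_1(x)\zeta_2(t)$ with $\zeta_1\in C_0^\infty(B_R(x_o),[0,1])$ and $\zeta_2\in C^\infty((t_o-S,t_o],[0,1])$ such that $\zeta\equiv1$ on $Q_{r,s}(z_o)$, $\zeta_2(t_o-S)=0$, and
$|\nabla\zeta|\le C/(R-r)$, $|\partial_t\zeta|\le C/(S-s)$. Using forward Steklov averages $[\cdot]_h$ (Lemma~\ref{lm:Stek}) one rewrites Definition~\ref{def:weak-loc} as the pointwise-in-time identity
\begin{equation*}
    \int_{E}\Big[\partial_t[u]_h\cdot\varphi+\big[a\big(\mu^2+|Du|^2\big)^{\frac{p-2}{2}}Du\big]_h\cdot D\varphi\Big]\dx=0,
\end{equation*}
valid for a.e.~$t\in(t_o-S,t_o-h)$ and every $\varphi\in W_0^{1,p}(B_R(x_o),\R^k)$.

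Next I would insert the admissible choice $\varphi=\big([u]_h(\cdot,t)-\xi\big)\zeta^p(\cdot,t)$ and integrate over $t\in(t_o-S,\tau)$ for an arbitrary $\tau\in(t_o-s,t_o]$. The parabolic term yields, via integration by parts in time and the vanishing of $\zeta_2$ at $t_o-S$,
\begin{equation*}
   \tfrac12\int_{B_R\times\{\tau\}}\!|[u]_h-\xi|^2\zeta^p\dx
   -\tfrac{p}{2}\int_{t_o-S}^{\tau}\!\!\int_{B_R}\!|[u]_h-\xi|^2\zeta^{p-1}\partial_t\zeta\,\dx\dt.
\end{equation*}
Letting $h\downarrow0$ with the help of Lemma~\ref{lm:Stek} and passing to the limit in the elliptic term (where strong $L^p$ convergence of $[Du]_h$ gives the limit in the non-linear part), the divergence term splits as
\begin{equation*}
  \iint a\big(\mu^2+|Du|^2\big)^{\frac{p-2}{2}}|Du|^2\zeta^p\dx\dt
  +p\iint a\big(\mu^2+|Du|^2\big)^{\frac{p-2}{2}}Du\cdot\nabla\zeta\,(u-\xi)\zeta^{p-1}\dx\dt.
\end{equation*}

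The coercive estimate I need is the elementary bound
\begin{equation*}
    \big(\mu^2+|Du|^2\big)^{\frac{p-2}{2}}|Du|^2
    \ge \tfrac12\big(\mu^2+|Du|^2\big)^{\frac{p}{2}}-C(p)\mu^p,
\end{equation*}
which for $1<p\le 2$ is direct (since $\mu^2(\mu^2+|Du|^2)^{(p-2)/2}\le\mu^p$), and for $p>2$ follows from Young's inequality applied to $\mu^2|Du|^{p-2}$. Combined with $a\ge C_o$, the main left-hand side with $(\mu^2+|Du|^2)^{p/2}\zeta^p$ emerges, modulo an extra $\mu^p$ contribution. For the cross term I would use $|a|\le C_1$ together with $(\mu^2+|Du|^2)^{(p-2)/2}|Du|\le(\mu^2+|Du|^2)^{(p-1)/2}$ and Young's inequality with exponents $\tfrac{p}{p-1}$ and $p$:
\begin{equation*}
   C_1\big(\mu^2+|Du|^2\big)^{\frac{p-1}{2}}|u-\xi||\nabla\zeta|\zeta^{p-1}
   \le\epsilon\big(\mu^2+|Du|^2\big)^{\frac{p}{2}}\zeta^p
   +C(\epsilon,p,C_1)|u-\xi|^p|\nabla\zeta|^p.
\end{equation*}

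Choosing $\epsilon$ small enough to absorb the resulting $(\mu^2+|Du|^2)^{p/2}\zeta^p$ term into the left-hand side, and using the cut-off bounds $|\nabla\zeta|^p\le C(R-r)^{-p}$ and $|\partial_t\zeta|\le C(S-s)^{-1}$, one arrives at
\begin{equation*}
   \tfrac12\!\int_{B_R\times\{\tau\}}\!\!|u-\xi|^2\zeta^p\dx
   +\tfrac{C_o}{4}\!\iint_{Q_{r,s}}\!\big(\mu^2+|Du|^2\big)^{\frac{p}{2}}\dx\dt
   \le C\!\iint_{Q_{R,S}}\!\Big[\tfrac{|u-\xi|^p}{(R-r)^p}+\tfrac{|u-\xi|^2}{S-s}+\mu^p\Big]\dx\dt.
\end{equation*}
Taking the supremum over $\tau\in(t_o-s,t_o]$ on the left-hand side (using that the right-hand side is independent of $\tau$) and replacing $\zeta^p$ by its lower bound $1$ on $Q_{r,s}$ yields the claim. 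The only delicate step is the passage to the limit $h\downarrow0$ in the non-linear elliptic term, for which local boundedness of $|Du|^{p-1}$ in $L^{p/(p-1)}$ together with the convergence provided by Lemma~\ref{lm:Stek} is enough; the time integration-by-parts and supremum extraction are standard. I do not foresee any further real obstacle.
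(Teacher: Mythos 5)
Your proposal is correct and coincides with the paper's own argument: the paper proves this proposition exactly by testing against $(u-\xi)\zeta^p$ with a cut-off vanishing on the parabolic boundary, carried out via Steklov averages, with the same coercivity bound and Young-type absorption of the cross term. The only cosmetic point is that the sup over $\tau$ and the full energy term on $Q_{r,s}$ should formally be obtained by two choices of $\tau$ (arbitrary $\tau$ for the sup term, $\tau=t_o$ for the gradient term) and then added, which is standard and does not affect the argument.
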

\begin{remark}\label{rem:energy-est-zero-order}\upshape
For cylinders $Q_{r}^{(\lambda)}(z_o)\subset Q_{R}^{(\lambda)}(z_o)\subset E_T$ with $\tfrac12 R\le r<R$, after passing to mean values the energy inequality of Proposition~\ref{prop:energy-est-zero-order}
takes the form
\begin{align*}
    \frac{\lambda^{p-2}}{R^2}\sup_{\tau\in (t_o-\lambda^{2-p}r^2,t_o]}
    &
      \mint_{B_{r}(x_o)\times\{\tau\}}|u-\xi|^2\,\dx+
      \biint_{Q_{r}^{(\lambda)}(z_o)}
      \big(\mu^2+|Du|^2\big)^{\frac{p}2}\,\dx\dt\\ 
    &\le
      C
      \biint_{Q_{R}^{(\lambda)}(z_o)}\bigg[
      \frac{|u-\xi|^p}{(R-r)^p} +\lambda^{p-2}\frac{|u-\xi|^2}{(R-r)^2} +\mu^p\bigg]
 \,\dx\dt,
  \end{align*}
where now the constant $C=C(N,p,C_o,C_1)$ additionally depends on the dimension $N$.
\end{remark}

\subsection{Comparison principle}
Needless to say, a comparison principle does not hold for systems in general. However, it does hold when the system has certain special structures.
\begin{lemma}\label{lem:comp-0}
Let $p>1$ and $\mu\in [0,1]$. Let $u$
be a weak solution to
\begin{equation*}
         \partial_t v-\Div\Big( a(x_o,t)\big(\mu^2+|Dv|^2\big)^\frac{p-2}2Dv\Big)=0
    \quad\mbox{in $Q_{R,S}(z_o)$}
\end{equation*}
 in the sense of Definition \ref{def:weak-loc}, and $\xi,\zeta\in\R^k$. Suppose that $v^i\le \xi^i$ on $\partial_\mathrm{par}Q_{R,S}(z_o)$
for any $i\in\{1,\dots,k\}$. Then $v^i\le \xi^i$ a.e. in $Q_{R,S}(z_o)$. Moreover, $v^i\ge \zeta^i$ on $\partial_\mathrm{par}Q_{R,S}(z_o)$
for any $i\in\{1,\dots,k\}$ implies $v^i\ge \zeta^i$ a.e. in $Q_{R,S}(z_o)$.
\end{lemma}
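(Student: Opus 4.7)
The crucial structural feature is that the coefficient $a(x_o,t)$ depends only on time; hence the system decouples into $k$ scalar equations that are linked only through the nonnegative scalar factor $(\mu^2+|Dv|^2)^{(p-2)/2}$. This enables a scalar-type truncation argument to be carried out componentwise, mimicking the classical weak maximum principle.

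First, I would isolate the $i$-th component equation by choosing test functions of the form $\zeta=\phi\,e_i$ in Definition~\ref{def:weak-loc}, where $e_i$ is the $i$-th standard basis vector of $\R^k$ and $\phi$ is a scalar test function. This yields
\begin{equation*}
    \iint_{Q_{R,S}(z_o)}\Big[v^i\,\partial_t\phi - a(x_o,t)\bigl(\mu^2+|Dv|^2\bigr)^{\frac{p-2}{2}}Dv^i\cdot D\phi\Big]\,\dx\dt = 0
\end{equation*}
for every admissible scalar $\phi$. Since $\partial_tv^i$ is not available as a Sobolev derivative, the plan is to pass to Steklov averages in time via Lemma~\ref{lm:Stek} and test with $\phi=([v^i]_h-\xi^i)_+$, integrated over the cylindrical slab $B_R(x_o)\times(t_o-S,\tau)$ for an arbitrary $\tau\in(t_o-S,t_o]$. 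The lateral hypothesis $v^i\le\xi^i$ on $\partial B_R(x_o)\times(t_o-S,t_o]$, preserved by temporal averaging, guarantees $\phi(\cdot,t)\in W^{1,p}_0(B_R(x_o))$ for a.e.\ $t$.

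Exploiting the chain-rule identity $\partial_t[v^i]_h\cdot([v^i]_h-\xi^i)_+=\tfrac12\partial_t\bigl(([v^i]_h-\xi^i)_+\bigr)^2$, the initial-data assumption $v^i(\cdot,t_o-S)\le\xi^i$ interpreted via the continuity $v\in C([t_o-S,t_o];L^2)$, and letting $h\downarrow 0$ through Lemma~\ref{lm:Stek}, I would then arrive at
\begin{equation*}
    \tfrac12\int_{B_R(x_o)}(v^i-\xi^i)_+^2(\cdot,\tau)\,\dx + \int_{t_o-S}^{\tau}\!\!\int_{B_R(x_o)} a(x_o,t)\bigl(\mu^2+|Dv|^2\bigr)^{\frac{p-2}{2}}|Dv^i|^2\mathbf{1}_{\{v^i>\xi^i\}}\,\dx\dt = 0.
\end{equation*}
Since $a(x_o,t)\ge C_o>0$ and the scalar factor is nonnegative, both terms on the left are nonnegative, and the first therefore forces $v^i\le\xi^i$ a.e.\ in $Q_{R,S}(z_o)$. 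The reverse inequality $v^i\ge\zeta^i$ follows by applying the same reasoning to $-v$, which solves the same system since $|D(-v)|^2=|Dv|^2$, or equivalently by testing with $-([v^i]_h-\zeta^i)_-$.

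The principal technical obstacle is the rigorous execution of the Steklov testing procedure: verifying that $([v^i]_h-\xi^i)_+$ lies in $W^{1,p}_0(B_R(x_o))$ pointwise in $t$, that the perfect-derivative identity above holds a.e., and that the limit passages $h\downarrow 0$ in each integral are justified. This requires Lemma~\ref{lm:Stek} combined with the integrabilities $Dv\in L^p$ and $a(\mu^2+|Dv|^2)^{(p-2)/2}Dv\in L^{p'}$ (note $|Dv|^{p-2}|Dv^i|^2\le|Dv|^p\in L^1$, so the singular case $\mu=0$, $1<p<2$ causes no integrability issue). Beyond this standard bookkeeping, the $x$-independence of the coefficient eliminates all coupling between components and no further analytic difficulty arises.
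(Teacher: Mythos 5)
Your proof is correct and takes essentially the same route as the paper: there the system is tested once with the vector-valued truncation $\mathbf 1_{\{t\le\tau\}}(v-\xi)_+$, which is exactly the sum over $i$ of your componentwise tests with $([v^i]_h-\xi^i)_+$, and both arguments hinge on the same sign observation $Dv\cdot D(v-\xi)_+=\sum_i\big|D(v^i-\xi^i)_+\big|^2\ge0$ combined with $a(x_o,t)\ge 0$ and the vanishing of the initial boundary term. The Steklov-average/mollification bookkeeping you flag is the same technicality the paper leaves implicit in its formal computation with $\partial_t(v-\xi)$.
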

On the lateral boundary $\partial B_R(x_o)\times (t_o-S,t_o)$  the condition $v^i\le \xi^i$ has to be understood in the sense that $ (v^i -\xi^i)_+\in L^p\big(t_o-S,t_o;W^{1,p}_0(B_R(x_o))\big) $, while at the initial time $t_o-S$ it is to be understood in terms of the pointwise condition $ v^i(\cdot, t_o-S)\le \xi^i$ a.e.~on $B_R(x_o)$.

\begin{proof}
We only consider the case where $v^i\le\xi^i$ on $\partial_\mathrm{par}Q_{R,S}(z_o)$. The other one is similar.
Fix  a time $\tau\in (t_o-S,t_o)$ and choose in the weak formulation of the parabolic system for $v$  the testing function
\begin{equation*}
    \varphi(x,t)=\mathbf 1_{\{t\le\tau\}}(v(x,t)- \xi)_+,
\end{equation*}
where
\begin{equation*}
    (v- \xi)_+
    =
    \big( (v^1-\xi^1)_+,\dots ,(v^k-\xi^k)_+\big).
\end{equation*}
 This leads to
$$
    \iint_{B_R(x_o)\times (t_o-S,\tau)}  
    \partial_t(v-\xi)\cdot (v- \xi)_+\,\dx\dt=-\mathbf{II},
$$
where the right-hand side is given by
\begin{align*}
    \mathbf{II}
    &:=
     \iint_{B_R(x_o)\times (t_o-S,\tau)} a(x_o,t) \mathsf A\,\dx\dt,
\end{align*}
with $\mathsf A$ defined by
\begin{align*}
    \mathsf A
     &:=
    \big(\mu^2+|Dv|^2\big)^\frac{p-2}2 Dv
    \cdot 
   D(v- \xi)_+.
\end{align*}
Observe that $\mathsf A\ge 0$,
and therefore the integral $\mathbf{II}\ge 0$ is non-negative. This allows us to conclude that
\begin{align*}
	\tfrac12 \int_{B_R(x_o)\times\{\tau\}} &|(v- \xi )_+|^2 \,\dx 
    -
    \underbrace{
    \tfrac12 \int_{B_R(x_o)\times\{t_o-S\}} |(v- \xi )_+|^2 \,\dx}_{=0}\\
    &=
    \tfrac12 \iint_{B_R(x_o)\times (t_o-S,\tau)} \partial_t|(v- \xi)_+|^2 \,\dx\dt \\
	&=
	\iint_{B_R(x_o)\times (t_o-\tau,\tau)} \partial_t(v- \xi)\cdot (v- \xi)_+ \,\dx\dt \\
	&=
	-\mathbf{II} 
    \le 
    0. 
\end{align*}
Here we used that $(v-\xi)_+(\cdot,  t_o-S)=0$ in $B_R(x_o)$.
Hence, $v^i(\cdot,\tau)\le  \xi^i$ a.e.~in $B_R(x_o)$ for any $i\in\{1,\dots,k\}$. Since $\tau\in (t_o-S,t_o)$ was arbitrary, this shows $v^i\le  \xi^i$ a.e.~in $Q_{R,S}(z_o)$ for any $i\in\{1,\dots,k\}$. This proves the claim.
\end{proof}

\begin{lemma}\label{lem:comp-1}
Let $p>1$, $\mu\in [0,1]$, and $u$ a bounded weak solution to the parabolic $p$-Laplace-type system \eqref{p-laplace-intro} in $E_T$. Let $Q_{R,S}(z_o)\subset E_T$ and $v$
be the unique weak solution to the Cauchy-Dirichlet problem
\begin{equation*}
\left\{
\begin{array}{cl}
         \partial_t v-\Div\Big( a(x_o,t)\big(\mu^2+|Dv|^2\big)^\frac{p-2}2Dv\Big)=0,&
    \mbox{in $Q_{R,S}(z_o)$}\\[7pt]
    v=u,&\mbox{on $\partial_{\mathrm{par}}Q_{R,S}(z_o)$}
\end{array}
\right.
\end{equation*}
in the sense of Definition \ref{def:weak}.
Then,
\begin{equation*}
    \osc_{Q_{R,S}(z_o)} v\le \sqrt{k}\osc_{Q_{R,S}(z_o)}u,
\end{equation*}
where the oscillation has to be understood with respect to the euclidean norm.
\end{lemma}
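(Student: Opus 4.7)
The plan is to reduce the vector-valued oscillation bound to a componentwise application of the comparison principle from Lemma~\ref{lem:comp-0}, exploiting that the latter allows constant comparison vectors and that the frozen system preserves pointwise bounds on each component.

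First, for each $i\in\{1,\dots,k\}$ set
\begin{equation*}
  \xi^i:=\sup_{Q_{R,S}(z_o)} u^i,\qquad \zeta^i:=\inf_{Q_{R,S}(z_o)} u^i,
\end{equation*}
which are finite since $u$ is bounded. Because $v=u$ on $\partial_{\mathrm{par}}Q_{R,S}(z_o)$ in the trace sense, we have $v^i\le\xi^i$ and $v^i\ge\zeta^i$ on $\partial_{\mathrm{par}}Q_{R,S}(z_o)$ for every $i$. Applying Lemma~\ref{lem:comp-0} to $v$ with these constant vectors $\xi,\zeta\in\R^k$ yields
\begin{equation*}
  \zeta^i\le v^i(x,t)\le \xi^i\qquad\text{for a.e. }(x,t)\in Q_{R,S}(z_o)
\end{equation*}
and every $i$. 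In particular, $\osc_{Q_{R,S}(z_o)} v^i\le \xi^i-\zeta^i=\osc_{Q_{R,S}(z_o)}u^i$ for each $i$.

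It remains to pass to the Euclidean oscillation. For any two points $z_1,z_2\in Q_{R,S}(z_o)$ we estimate componentwise
\begin{equation*}
  |v(z_1)-v(z_2)|^2
  =\sum_{i=1}^{k}|v^i(z_1)-v^i(z_2)|^2
  \le\sum_{i=1}^k \big(\osc_{Q_{R,S}(z_o)} v^i\big)^2
  \le\sum_{i=1}^k \big(\osc_{Q_{R,S}(z_o)} u^i\big)^2.
\end{equation*}
Since $|u^i(\tilde z_1)-u^i(\tilde z_2)|\le |u(\tilde z_1)-u(\tilde z_2)|\le \osc_{Q_{R,S}(z_o)} u$ for any $\tilde z_1,\tilde z_2$, each term $\osc_{Q_{R,S}(z_o)} u^i$ is bounded by $\osc_{Q_{R,S}(z_o)} u$, so the sum is at most $k(\osc_{Q_{R,S}(z_o)} u)^2$. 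Taking square roots and the supremum over $z_1,z_2$ gives the claimed inequality.

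The only point that deserves any care is the applicability of Lemma~\ref{lem:comp-0} with the constant vectors $\xi$ and $\zeta$; here the boundedness of $u$ and the matching boundary values $v=u$ on $\partial_{\mathrm{par}} Q_{R,S}(z_o)$ provide exactly the hypotheses needed, so no further work is required.
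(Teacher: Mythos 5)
Your proposal is correct and follows essentially the same route as the paper: apply the comparison principle of Lemma~\ref{lem:comp-0} componentwise with the constant vectors $\xi^i=\sup u^i$, $\zeta^i=\inf u^i$, and then sum the componentwise oscillations to arrive at the factor $\sqrt{k}$. No gaps.
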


\begin{proof}
Abbreviate $Q:=Q_{R,S}(z_o)$ and 
apply the previous comparison Lemma with $\xi^i:=\sup_{Q}u^i$ respectively $\zeta^i:=\inf_{Q}u^i$. We
deduce
\begin{align*}
    \sup_Q v^i\le\sup_{Q}u^i
\qquad\mbox{and}\qquad
    \inf_Q v^i\ge \inf_{Q}u^i
\qquad\mbox{for }i=1,\ldots,k,
\end{align*}
so that for a.e. $(x,t),(\tilde x,\tilde t)\in Q$ we have
\begin{align*}
   -\osc_Q u^i=\inf_{Q}u^i -\sup_{Q}u^i\le  v^i(x,t)-v^i(\tilde x,\tilde t)\le \sup_{Q}u^i-\inf_{Q}u^i=\osc_Q u^i.
\end{align*}
Therefore, we have
\begin{align*}
    |v (x,t)-v(\tilde x,\tilde t)|^2
    &=
    \sum_{i=1}^k |v^i (x,t)-v^i(\tilde x,\tilde t)|^2\le
     \sum_{i=1}^k \big(\osc_Q u^i\big)^2\le k \big( \osc_Qu\big)^2
\end{align*}
for a.e. $(x,t), (\tilde x,\tilde t)\in Q$. This implies the claim.
\end{proof}
\begin{remark}
\upshape The previous result has been proved for cylinders whose cross-sections are balls, but any kind of set besides balls can do here. 
\end{remark}
\subsection{Comparison estimates}

Let $p>1$, $\mu\in (0,1]$ and let
\begin{equation*}
    v\in L^p\big(0,T;W^{1,p}(E,\R^k)\big)\cap C\big([0,T];L^2(E,\R^k)\big)
\end{equation*}
be a weak solution to the parabolic $p$-Laplace-type system \eqref{p-laplace-intro},
where the coefficients $a(x,t)$ satisfy the assumptions \eqref{prop-a-intro}$_1$. 
Let
\begin{equation*}
    w\in L^p\big(0,T;W^{1,p}(E,\R^k)\big)\cap C\big([0,T];L^2(E,\R^k)\big)
\end{equation*}
solve
\begin{equation}\label{EqB*}
    \left\{
    \begin{array}{cl}
        \partial_tw-\Div
         \Big( b(x,t) \big(\mu^2+|Dw|^2\big)^\frac{p-2}2 Dw\Big)=0,
         &\mbox{in $E_T$,}\\[6pt]
         w=v, &\mbox{on $\partial_{\mathrm{par}} E_T$}.
    \end{array}
    \right.
\end{equation}
The coefficients $b\in L^\infty (E_T)$ are assumed to satisfy
\begin{equation}\label{ass:a-b}
    b(x,t)\ge C_o>0
\end{equation}
for a.e.~$(x,t)\in E_T$. The following comparison estimate is the main result of this section.

\begin{lemma}\label{lem:comp-2-mu>0}
Let $v$ be a weak solution to the parabolic system \eqref{p-laplace-intro} in the sense of Definition \ref{def:weak-loc} and $w$ be a weak solution to the Cauchy-Dirichlet problem \eqref{EqB*} with \eqref{ass:a-b} in the sense of Definition \ref{def:weak}.
Then, we have the comparison estimate
\begin{align}\label{comparison:p<2}
  \iint_{E_T}&|Dv-Dw|^p\,\dx\dt
  \le
  C \big[ \mathsf A^p + \mathsf A^\frac{p}{p-1} \big]
    \iint_{E_T}\big(\mu^2+|Dv|^2\big)^{\frac{p}2}\,\dx\dt,
\end{align}
where we used the abbreviation $\mathsf A:=\| a-b\|_{L^\infty (E_T)}$. In the super-quadratic case
$p\ge 2$ we have the stronger comparison estimate
\begin{align}\label{comparison:p>2}
  \iint_{E_T}&|Dv-Dw|^p\,\dx\dt
  \le
  C \mathsf A^\frac{p}{p-1} 
    \iint_{E_T}\big(\mu^2+|Dv|^2\big)^{\frac{p}2}\,\dx\dt.
\end{align}
Furthermore, for any $p>1$ we have the energy inequality
\begin{align}\label{EAbsch1}
    \iint_{E_T}&\big(\mu^2+|Dw|^2\big)^{\frac{p}{2}}\,\dx\dt 
    \le C
    \big[ 1+ \mathsf A^p + \mathsf A^\frac{p}{p-1} \big]
      \iint_{E_T}\big(\mu^2+|Dv|^2\big)^{\frac{p}2}\,\dx\dt.
\end{align}
In all estimates, the constant $C $ depends only on $p$  and $C_o$. 
\end{lemma}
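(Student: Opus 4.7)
My plan is to subtract the weak formulations of \eqref{p-laplace-intro} and \eqref{EqB*} and test with $v-w$; this is admissible since $v-w$ vanishes on $\partial_{\mathrm{par}}E_T$, with the time derivative handled rigorously through Steklov averaging (Lemma~\ref{lm:Stek}) and a subsequent limit. The parabolic term then produces the non-negative boundary contribution $\tfrac12\int_E|v-w|^2(\cdot,T)\,\dx$, which I simply drop. For the flux difference I rely on the algebraic splitting
\begin{equation*}
    a\,\mathbf F(Dv)-b\,\mathbf F(Dw)
    =
    b\bigl[\mathbf F(Dv)-\mathbf F(Dw)\bigr]+(a-b)\,\mathbf F(Dv),
\end{equation*}
where $\mathbf F(\xi):=(\mu^2+|\xi|^2)^{\frac{p-2}2}\xi$. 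Lemma~\ref{AFGM} combined with $b\ge C_o$ renders the first summand coercive in the $V$-functional, while the second is bounded pointwise by $\mathsf A\,(\mu^2+|Dv|^2)^{\frac{p-1}2}|Dv-Dw|$.

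Introducing $E:=\iint_{E_T}(\mu^2+|Dv|^2)^{p/2}\dx\dt$, $F:=\iint_{E_T}(\mu^2+|Dw|^2)^{p/2}\dx\dt$, and $X:=\iint_{E_T}|Dv-Dw|^p\dx\dt$, a H\"older inequality with exponents $\tfrac{p}{p-1},p$ on the error term then yields the intermediate bound
\begin{equation*}
    \iint_{E_T}(\mu^2+|Dv|^2+|Dw|^2)^{\frac{p-2}2}|Dv-Dw|^2\dx\dt
    \le C\mathsf A\,E^{\frac{p-1}{p}}X^{\frac{1}{p}}.
\end{equation*}
For $p\ge 2$, the elementary estimate $|Dv-Dw|^2\le 2(\mu^2+|Dv|^2+|Dw|^2)$ converts the left-hand side into $c(p)X$, immediately giving $X\le C\mathsf A^{\frac{p}{p-1}}E$ and thus \eqref{comparison:p>2}. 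For $1<p<2$, I instead apply the pointwise H\"older interpolation (exponents $\tfrac2p,\tfrac2{2-p}$)
\begin{equation*}
    X\le\Bigl[\iint_{E_T}(\mu^2+|Dv|^2+|Dw|^2)^{\frac{p-2}2}|Dv-Dw|^2\dx\dt\Bigr]^{\frac p2}(E+F)^{\frac{2-p}2}
\end{equation*}
and combine with the intermediate bound above to obtain $X\le C\mathsf A^p E^{p-1}(E+F)^{2-p}$.

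To close the sub-quadratic case I invoke the triangle-type bound $F\le C(E+X)$ (from $(\mu^2+|Dw|^2)^{p/2}\le C[(\mu^2+|Dv|^2)^{p/2}+|Dv-Dw|^p]$), insert $(E+X)^{2-p}\le E^{2-p}+X^{2-p}$, and then apply a weighted Young inequality with conjugate exponents $\tfrac1{2-p},\tfrac1{p-1}$ to the factor $E^{p-1}X^{2-p}$: choosing the weight proportional to $\mathsf A^{-p(2-p)}$ allows absorption of $\tfrac12 X$ into the left and produces a remainder term proportional to $\mathsf A^{\frac{p}{p-1}}E$, giving exactly \eqref{comparison:p<2}. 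The energy bound \eqref{EAbsch1} is then a direct consequence of $F\le C(E+X)$ combined with the comparison estimate just proved. The main obstacle is precisely the sub-quadratic regime: the natural $V$-coercivity does not directly control $X$, the interpolation brings in the a~priori unknown $F$ on the right, and the two must be decoupled via the weighted Young absorption---in contrast, \eqref{comparison:p>2} falls out immediately from the testing identity.
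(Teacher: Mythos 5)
Your proposal is correct and follows the paper's own proof essentially step for step: the same subtraction of the Steklov formulations with a time cut-off, the same splitting $a\,\mathbf F(Dv)-b\,\mathbf F(Dw)=b[\mathbf F(Dv)-\mathbf F(Dw)]+(a-b)\mathbf F(Dv)$ combined with Lemma~\ref{AFGM} and H\"older, the same immediate conclusion for $p\ge2$, the same $\frac2p,\frac2{2-p}$ interpolation and Young absorption for $1<p<2$ (your detour through $F$ and $F\le C(E+X)$ is equivalent to the paper's direct use of $|Dw|\le|Dv|+|Dv-Dw|$), and the same derivation of \eqref{EAbsch1}. The only quibble is the parenthetical choice of weight ``proportional to $\mathsf A^{-p(2-p)}$'' in the Young step: as stated it would neither absorb $\tfrac12 X$ uniformly in $\mathsf A$ nor produce the exponent $\mathsf A^{p/(p-1)}$; the correct (and standard) choice is a weight depending only on $p$, which gives exactly the absorption and remainder you describe, so the argument is unaffected.
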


\begin{proof}
Writing down the Steklov formulation of the parabolic systems for $v$ and $w$ on $Q$
and subtracting from one another, we obtain
\begin{align*}
    0=& -\iint_{E_T}
    [v-w]_h\cdot \partial_t\zeta\,\dx\dt \\
    &+
    \iint_{E_T}
    \Big[a\big(\mu^2+|Dv|^2\big)^\frac{p-2}2Dv
    -
    b\big(\mu^2+|Dw|^2\big)^\frac{p-2}2Dw
    \Big]_h\cdot D\zeta\,\dx\dt
\end{align*}
for any 
$$
    \zeta\in  W^{1,2}\big(0,T; L^{2}(E,\R^k)\big)
    \cap L^p \big(0,T; W^{1,p}_{0}(E,\R^k)\big)
$$
such that $\zeta (\cdot,0)=0=\zeta(\cdot, T)$. We now choose $\zeta=[v-w]_h\psi_\epsilon$. Here the Lipschitz function $\psi_\epsilon\colon  [0,T]\to\R$
is chosen to be equal to $1$ in $[t_1,t_2]\subset(0,T)$, vanishes outside 
$[t_1-\epsilon, t_2+\epsilon]$ with $0<\epsilon<\min\big\{t_1, T-t_2\big\}$, and is otherwise linearly interpolated. In this context, $[v-w]_h$ is the Steklov average of $v-w$ and $0<h<T-(t_2+\eps)$. In the above Steklov formulation 
there are two terms to be treated.  We start with the term containing the time derivative. This term is calculated as follows
\begin{align*}
  -\iint_{E_T}&
  [v-w]_h\cdot \partial_t\big([
  v-w]_{h}\psi_\epsilon\big)\,\dx\dt \\
  &=
  -\tfrac12\iint_{E_T}\partial_t\big|[v-w]_{h}\big|^2\psi_{\epsilon}\,\dx\dt -
  \iint_{E_T}\big|[v-w]_{h}\big|^2 \partial _t\psi_\epsilon \,\dx\dt\\
  &=
  -\tfrac12 \iint_{E_T}\big|[v-w]_{h}\big|^2 \partial _t\psi_\epsilon\,\dx\dt.
\end{align*}
Using Lemma~\ref{lm:Stek} (i) we obtain 
$$ 
-\lim_{h\downarrow 0}
\tfrac12 \iint_{E_T}\big|[v-w]_{h}\big|^2 \partial _t\psi_\epsilon\,\dx\dt
=
-\tfrac12 \iint_{E_T}|v-w|^2 \partial _t\psi_\epsilon\,\dx\dt.
$$
Here we pass to the limit $\epsilon\downarrow0$ and obtain 
\begin{align*}
    -\lim_{\eps\downarrow 0}
    \lim_{h\downarrow 0}&
    \tfrac12 \iint_{E_T}\big|[v-w]_{h}\big|^2 \partial _t\psi_\epsilon\,\dx\dt\\
    &=
    \tfrac12\int_{E\times\{t_2\}} |v-w|^2\,\dx -\tfrac12\int_{E\times\{t_1\}} |v-w|^2\,\dx.
\end{align*}
The first integral yields a non-negative contribution and can be discarded, while the second integral converges to $0$ at $t_1\downarrow0$ due to the initial condition in the Cauchy-Dirichlet problem \eqref{EqB*}. The remaining term in the above Steklov
formulation converges to
\[
    \iint_{E\times(t_1,t_2)}
    \Big[a\big(\mu^2+|Dv|^2\big)^\frac{p-2}2Dv
    -
    b\big(\mu^2+|Dw|^2\big)^\frac{p-2}2Dw
    \Big]\cdot D(v-w)
    \,\dx\dt
\]
if we first let $h\downarrow0 $ and then $\epsilon\downarrow0$. The limit $h\downarrow 0$ can be justified with Lemma~\ref{lm:Stek} (ii).  Consequently, after letting $t_1\downarrow0$ and $t_2\uparrow T$, we have
\begin{align*}
  \nonumber\iint_{E_T}
  &
  b\Big[\big(\mu^2+|Dv|^2\big)^\frac{p-2}2Dv
  -
  \big(\mu^2+|Dw|^2\big)^\frac{p-2}2Dw\Big]\cdot (Dv-Dw)\,\dx\dt\\
  &\le
    \iint_{E_T} 
  (b-a)\big(\mu^2+|Dv|^2\big)^\frac{p-2}2Dv\cdot
 (Dv-Dw)\,\dx\dt.
\end{align*}
Using the monotonicity of the vector-field $\xi\mapsto \big(\mu^2+|\xi|^2\big)^\frac{p-2}2\xi$ stated in Lemma~\ref{AFGM}, and the bound from below for the coefficients $b$ from \eqref{ass:a-b} (which is $C_o$), the
left-hand side can be bounded from below, while the right-hand side is estimated from above using 
H\"older's inequality. This way we get
\begin{align}\label{pre-comparison}\nonumber
    \tfrac{C_o}{C(p)}\mathbf I
    &:=
    \tfrac{C_o}{C(p)}
    \iint_{E_T}
    \big(\mu^2+|Dv|^2+|Dw|^2
    \big)^{\frac{p-2}{2}}|Dv-Dw|^2\,\dx\dt\\
    &\,\le
    \mathsf A
    \bigg[
    \underbrace{\iint_{E_T}\big(\mu^2+|Dv|^2\big)^{\frac{p}{2}}\dx\dt}_{:=\mathsf E}\bigg]^{1-\frac1{p}}
    \bigg[\iint_{E_T}|Dv-Dw|^p\,\dx\dt\bigg]^{\frac1p}.
\end{align}
In the case $p\ge2$ this immediately implies \eqref{comparison:p>2}. Indeed, we have
\begin{align*}
    \iint_{E_T}
    |Dv&-Dw|^p\,\dx\dt
    \le 
    2^\frac{p-2}2\mathbf I\le
    \tfrac{C(p)}{C_o}
    \mathsf A
    \mathsf E^{1-\frac1{p}}
    \bigg[\iint_{Q}|Dv-Dw|^p\,\dx\dt\bigg]^{\frac1p},
\end{align*}
which yields
\begin{align*}
    \iint_{E_T}|Dv-Dw|^p\,\dx\dt
  &\le
  C 
  \mathsf A^\frac{p}{p-1}
 \mathsf E
\end{align*}
for a constant $C =C (p,C_o)$. In the case $1<p<2$, however, we apply first H\"older's inequality with exponents $\frac2p$ and $\frac2{2-p}$ and then \eqref{pre-comparison} to obtain 
\begin{align*}
  \iint_{E_T}&|Dv-Dw|^p\, \dx\dt\\
  &\le
    \mathbf I^{\frac{p}2}
    \bigg[\iint_{E_T}\big(\mu^2+|Dv|^2+|Dw-Dv+Dv|^2\big)^{\frac{p}2}\, \dx\dt\bigg]^{\frac{2-p}{2}}\\
  &\le
  C 
  \mathsf A^\frac{p}{2}
   \mathsf E^{\frac{p-1}2}
    \bigg[\iint_{E_T}|Dv-Dw|^p\,\dx\dt\bigg]^{\frac12}
    \bigg[\iint_{E_T}|Dv-Dw|^p\, \dx\dt
    +\mathsf E\bigg]^{\frac{2-p}{2}},
\end{align*}
which is the same as
\begin{align*}
    \iint_{E_T}|Dv&-Dw|^p\, \dx\dt
    \le
    C 
   \mathsf A^p
   \mathsf E^{p-1}
    \bigg[\iint_{E_T}|Dv-Dw|^p\, \dx\dt
    +\mathsf E\bigg]^{2-p}.
\end{align*}
Applying Young's inequality with exponents $\frac1{p-1}$ and $\frac1{2-p}$ we get
\begin{align*}
   \iint_{E_T}&|Dv-Dw|^p\, \dx\dt
   \le
    \tfrac12 \iint_{E_T}|Dv-Dw|^p\, \dx\dt+
    C \big[ 
    \mathsf A^p
    +
    \mathsf A^\frac{p}{p-1}
    \big]
    \mathsf E.
\end{align*}
After re-absorbing the integral of $|Dv-Dw|^p$ to the left-hand side, we
obtain the asserted comparison estimate~\eqref{comparison:p<2}
also in the sub-quadratic case. The 
energy estimate~\eqref{EAbsch1} follows from
\begin{align*}
  \iint_{E_T}&\big(\mu^2+|Dw|^2\big)^\frac{p}2\,\dx\dt\\
  &\le
  C(p)\iint_{E_T}\big(\mu^2+|Dv|^2\big)^\frac{p}2\,\dx\dt
  +
  C(p)\iint_{E_T}|Dv-Dw|^p\,\dx\dt\\
  &\le
  C(p,C_o)
  \big[ 1+\mathsf A^p +\mathsf A^\frac{p}{p-1}\big]
  \iint_{E_T}\big(\mu^2+|Dv|^2\big)^\frac{p}2\,\dx\dt.
\end{align*}
This completes
the proof of the lemma.  
\end{proof}

Here, we record an application of the last lemma; it will be useful when dealing with the $p$-Laplace system \eqref{p-laplace-intro} that has H\"older continuous coefficients as in \eqref{prop-a-intro}. The comparison function $w$ is then taken as the unique weak solution to the Cauchy-Dirichlet problem 
on some cylinder $Q_{R,S}(z_o)$ with frozen coefficients $a(x_o,t)$.
That is, we denote by 
\begin{equation*}
    w\in L^p\big(t_o-S,t_o;W^{1,p}(B_R(x_o),\R^k)\big)
    \cap 
    C\big([t_o-S,t_o];L^2(B_R(x_o),\R^k)\big)
\end{equation*}
the unique  weak solution to the Cauchy-Dirichlet problem
\begin{equation}\label{EqB}
    \left\{
    \begin{array}{cl}
        \partial_tw-\Div
         \Big( a(x_o,t)\big(\mu^2+|Dw|^2\big)^\frac{p-2}2 Dw\Big)=0,
         &\mbox{in $Q_{R,S}(z_o)$,}\\[6pt]
         w=u, &\mbox{on $\partial_{\mathrm{par}} Q_{R,S}(z_o)$.}
    \end{array}
    \right.
\end{equation}
In this situation we have the following result.
\begin{lemma}\label{lem:comp-2}
Let $u$ be a weak solution to the parabolic system \eqref{p-laplace-intro} in the sense of Definition \ref{def:weak-loc} with assumption \eqref{prop-a-intro}
and $w$ be the unique weak solution to the Cauchy-Dirichlet problem \eqref{EqB} on some cylinder $Q_{R,S}(z_o)\subset E_T$ with $R\le 1$ in the sense of Definition \ref{def:weak}.
Then, we have the comparison estimate
\begin{align*}
  \iint_{Q_{R,S}(z_o)}&|Du-Dw|^p\,\dx\dt
  \le
  C R^{\alpha_\ast p}
    \iint_{Q_{R,S}(z_o)}\big(\mu^2+|Du|^2\big)^{\frac{p}2}\,\dx\dt.
\end{align*}
Here, the exponent $\alpha_\ast$ is given by
\begin{equation}\label{alpha_ast}
    \alpha_\ast:=
    \left\{
    \begin{array}{cl}
    \alpha,&\mbox{if $1<p<2$,}\\[5pt]
    \tfrac{\alpha}{p-1},&\mbox{if $p\ge 2$.}
    \end{array}
    \right.
\end{equation}
Furthermore,  we have the energy inequality
\begin{align*}
    \iint_{Q_{R,S}(z_o)}&\big(\mu^2+|Dw|^2\big)^{\frac{p}{2}}\,\dx\dt 
    \le C
      \iint_{Q_{R,S}(z_o)}\big(\mu^2+|Du|^2\big)^{\frac{p}2}\,\dx\dt.
\end{align*}
In all estimates, the constant $C $ depends only on $p$, $C_o$ and $C_1$.
\end{lemma}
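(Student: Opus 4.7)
The proof is a direct application of Lemma~\ref{lem:comp-2-mu>0} with the frozen coefficient $b(x,t):=a(x_o,t)$. This $b$ satisfies \eqref{ass:a-b} trivially (since $b(\cdot,t)\equiv a(x_o,t)\ge C_o$), so the lemma applies on the sub-cylinder $Q_{R,S}(z_o)$ in place of $E_T$, with $u$ playing the role of $v$ and the function $w$ from \eqref{EqB} playing the role of its counterpart. The key quantity in Lemma~\ref{lem:comp-2-mu>0} becomes
\[
\mathsf{A}=\|a-b\|_{L^\infty(Q_{R,S}(z_o))}=\sup_{(x,t)\in Q_{R,S}(z_o)}|a(x,t)-a(x_o,t)|\le C_1 R^\alpha,
\]
by the H\"older assumption \eqref{prop-a-intro}$_2$.

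To extract the exponent $\alpha_\ast$, I would split into two cases. When $p\ge 2$, the sharper estimate \eqref{comparison:p>2} gives
\[
\iint_{Q_{R,S}(z_o)}|Du-Dw|^p\,\dx\dt\le C\,\mathsf{A}^{p/(p-1)}\iint_{Q_{R,S}(z_o)}\bigl(\mu^2+|Du|^2\bigr)^{p/2}\,\dx\dt,
\]
and $\mathsf{A}^{p/(p-1)}\le C_1^{p/(p-1)} R^{\alpha p/(p-1)}=C\,R^{\alpha_\ast p}$ with $\alpha_\ast=\alpha/(p-1)$. When $1<p<2$, I would instead apply \eqref{comparison:p<2}; since $R\le 1$ and $p/(p-1)>p$, the bound $R^{\alpha p/(p-1)}\le R^{\alpha p}$ gives
\[
\mathsf{A}^p+\mathsf{A}^{p/(p-1)}\le \bigl(C_1^p+C_1^{p/(p-1)}\bigr)R^{\alpha p}=C\,R^{\alpha_\ast p},
\]
with $\alpha_\ast=\alpha$. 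Combining the two cases produces the asserted comparison estimate with constant depending only on $p,C_o,C_1$, and the choice of $\alpha_\ast$ in \eqref{alpha_ast} is exactly what is needed to absorb both terms of \eqref{comparison:p<2}/\eqref{comparison:p>2} into a single power of $R$.

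The energy bound for $Dw$ is then immediate from \eqref{EAbsch1}: the prefactor $1+\mathsf{A}^p+\mathsf{A}^{p/(p-1)}$ is bounded by a constant depending only on $p,C_o,C_1$, since $\mathsf{A}\le C_1$.

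The only subtle point is the case $\mu=0$, which lies outside the hypothesis of Lemma~\ref{lem:comp-2-mu>0}. I would handle this by a standard approximation argument: for $\mu_n\downarrow 0$ replace $\mu$ by $\mu_n$ in the Cauchy-Dirichlet problem \eqref{EqB}, denote by $w_n$ the unique solution, apply the estimates above (which are uniform in $\mu_n$), and pass to the limit using the monotonicity of the vector field together with the resulting uniform $L^p(0,T;W^{1,p})$ bounds. No further obstacle arises; the proof is essentially bookkeeping for the H\"older dependence of the coefficient on $x$ and the conversion of $\mathsf{A}$-powers into powers of $R$.
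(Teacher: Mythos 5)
Your proof is correct and follows essentially the same route as the paper: apply Lemma~\ref{lem:comp-2-mu>0} with the frozen coefficient $b(x,t):=a(x_o,t)$, bound $\mathsf A\le C_1R^\alpha$ by \eqref{prop-a-intro}$_2$, and absorb $\mathsf A^p+\mathsf A^{p/(p-1)}$ into $CR^{\alpha_\ast p}$ (using $R\le1$), which also gives the energy bound from \eqref{EAbsch1}. The extra approximation argument for $\mu=0$ is harmless but not present in (or needed by) the paper's proof, whose estimates do not use $\mu>0$ in any essential way.
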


\begin{proof} We apply Lemma \ref{lem:comp-2-mu>0} with $b(x,t):=a(x_o,t)$.
Here, we only need to determine the pre-factor in the estimates in this specific situation. Since
\begin{align*}
    \|a -b\|_{L^\infty({Q_{R,S}(z_o)})}
    \le C_1R^\alpha,
\end{align*}
we have
\begin{align*}
    \|a -b\|_{L^\infty({Q_{R,S}(z_o)})}^p +\|a -b\|_{L^\infty({Q_{R,S}(z_o)})}^\frac{p}{p-1}
    \le
    C_1^p R^{\alpha p}+ C_1^\frac{p}{p-1}R^\frac{\alpha p}{p-1}
    \le 
    CR^{\alpha_\ast p}\le C.
\end{align*}
The claimed inequalities immediately follow.
\end{proof}

At a later stage, we will need a version of the comparison estimate when $v$ is a solution to the degenerate parabolic $p$-Laplace system \eqref{p-laplace-intro} with $\mu=0$ and the comparison mapping is a solution to the non-degenerate $p$-Laplace system \eqref{EqB*} with $\mu\in (0,1]$. 
More precisely let
\begin{equation*}
    v\in L^p\big(0,T;W^{1,p}(E,\R^k)\big)\cap 
    C\big([0,T];L^2(E,\R^k)\big)
\end{equation*}
be a weak solution to the parabolic $p$-Laplace-type system
\begin{equation}\label{EqA:mu=0}
    \partial_tv-\Div \big(a(x,t) |Dv|^{p-2} Dv\big)=0\quad  \mbox{in $E_T$,}
\end{equation}
where the coefficient $a(x,t)$ satisfies the assumption \eqref{prop-a-intro}$_1$, and let
\begin{equation*}
    w\in L^p\big(0,T;W^{1,p}(E,\R^k)\big)\cap C\big([0,T];L^2(E,\R^k)\big)
\end{equation*}
be the unique solution to the Cauchy-Dirichlet problem \eqref{EqB*} in $E_T$ with $\mu\in (0,1]$ and with coefficients $b$ satisfying \eqref{ass:a-b}.

\begin{lemma}\label{lem:comp-2-mu=0}
Let $v$ be a weak solution to the parabolic system \eqref{EqA:mu=0} in the sense of Definition \ref{def:weak-loc} with coefficient $a$ satisfying \eqref{prop-a-intro}$_1$,
and let $w$ be the unique weak solution to the Cauchy-Dirichlet problem \eqref{EqB*} with  \eqref{ass:a-b} in the sense of Definition \ref{def:weak}.
Then, we have the comparison estimate
\begin{align*}
  \iint_{E_T}|Dv-Dw|^p\,\dx\dt
  \le
  C \big[ (\mathsf A +\mu)^p + (\mathsf A+\mu)^\frac{p}{p-1} \big]
    \iint_{E_T}\big(1+|Dv|^2\big)^{\frac{p}2}\,\dx\dt,
\end{align*}
where we used the abbreviation $\mathsf A:=\| a-b\|_{L^\infty (E_T)}$. In the super-quadratic case
$p\ge 2$ the inequality holds without the term $(\mathsf A+\mu)^p$, i.e.~we have
\begin{align*}
  \iint_{E_T}|Dv-Dw|^p\,\dx\dt
  \le
  C (\mathsf A+\mu)^\frac{p}{p-1} 
    \iint_{E_T}\big(1+|Dv|^2\big)^{\frac{p}2}\,\dx\dt.
\end{align*}
Furthermore, for any $p>1$ we have the energy inequality
\begin{align*}
    \iint_{E_T} & \big(\mu^2+|Dw|^2\big)^{\frac{p}{2}}\,\dx\dt \\
    &\le 
    C \Big[ 1+ (\mathsf A+\mu)^p + (\mathsf A+\mu)^\frac{p}{p-1} \Big]
      \iint_{E_T}\big(1+|Dv|^2\big)^{\frac{p}2}\,\dx\dt.
\end{align*}
In all estimates, the constant $C$ depends  on $p,C_o$  and $C_1$. 
\end{lemma}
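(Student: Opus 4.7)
The plan is to follow the argument of Lemma \ref{lem:comp-2-mu>0} essentially verbatim, adding one new pointwise ingredient to handle the fact that $v$ solves a system with $\mu=0$ while $w$ solves one with $\mu>0$. I will write $F_\mu(\xi):=(\mu^2+|\xi|^2)^{(p-2)/2}\xi$, $\mathsf A:=\|a-b\|_{L^\infty(E_T)}$, and $\mathsf E':=\iint_{E_T}(1+|Dv|^2)^{p/2}\,\dx\dt$. First, I would subtract the Steklov-averaged weak formulations of \eqref{EqA:mu=0} and \eqref{EqB*} and test with $[v-w]_h\psi_\eps$ exactly as in Lemma \ref{lem:comp-2-mu>0}. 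Since $v=w$ on $\partial_{\mathrm{par}}E_T$, the time-derivative part produces a non-negative boundary term that is discarded, leaving, after passage to the limits $h\downarrow0$ and $\eps\downarrow0$,
\[
\iint_{E_T}b\bigl[F_\mu(Dv)-F_\mu(Dw)\bigr]\cdot D(v-w)\,\dx\dt\le\iint_{E_T}\bigl[bF_\mu(Dv)-aF_0(Dv)\bigr]\cdot D(v-w)\,\dx\dt.
\]

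By Lemma \ref{AFGM} together with $b\ge C_o$, the left-hand side is bounded below by a constant multiple of
$\mathbf I:=\iint_{E_T}(\mu^2+|Dv|^2+|Dw|^2)^{(p-2)/2}|Dv-Dw|^2\,\dx\dt$, as in Lemma \ref{lem:comp-2-mu>0}. The new work is on the right-hand side, which I split as
\[
bF_\mu(Dv)-aF_0(Dv)=(b-a)F_\mu(Dv)+a\bigl[F_\mu(Dv)-F_0(Dv)\bigr].
\]
The first summand contributes the familiar $\mathsf A$-term, controlled after H\"older by $C\mathsf A(\mathsf E')^{(p-1)/p}\|D(v-w)\|_{L^p(E_T)}$, using $|F_\mu(Dv)|\le(1+|Dv|^2)^{(p-1)/2}$ (since $\mu\le1$). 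For the second summand I plan to establish the pointwise estimate
\[
\bigl|F_\mu(\xi)-F_0(\xi)\bigr|\le C(p)\,\mu\,(\mu^2+|\xi|^2)^{(p-2)/2}\qquad\text{for all $\xi\in\R^{kN}$, $\mu\in(0,1]$, $p>1$},
\]
by a short two-case analysis: when $|\xi|\ge\mu$, a Taylor expansion of $t\mapsto(t+|\xi|^2)^{(p-2)/2}$ on $[0,\mu^2]$ gives the bound $C\mu^2|\xi|^{p-3}$, which is absorbed into the claimed right-hand side using $\mu\le|\xi|$; when $|\xi|<\mu$, one estimates $|F_\mu(\xi)|$ and $|F_0(\xi)|$ directly by $C\mu^{p-1}$ and observes that $\mu(\mu^2+|\xi|^2)^{(p-2)/2}\ge c(p)\mu^{p-1}$ in that regime. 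Applying H\"older with exponents $(p',p)$ and using $|E_T|\le\mathsf E'$ yields an estimate of the form $(\mathsf A+\mu^{\sigma})(\mathsf E')^{(p-1)/p}\|D(v-w)\|_{L^p}$ with $\sigma=1$ for $p\ge2$ and $\sigma=p-1$ for $1<p<2$; in either case one controls this by $(\mathsf A+\mu)$ up to harmless absorption.

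With the key bound on $\mathbf I$ in hand, the remaining bookkeeping is identical to Lemma \ref{lem:comp-2-mu>0}. In the super-quadratic range $p\ge2$, the elementary inequality $\iint|Dv-Dw|^p\le 2^{(p-2)/2}\mathbf I$ together with absorption of $\|D(v-w)\|_{L^p}$ directly yields the sharp bound $C(\mathsf A+\mu)^{p/(p-1)}\mathsf E'$. In the sub-quadratic range $1<p<2$, I would use the H\"older interpolation
\[
\iint|Dv-Dw|^p\le\mathbf I^{p/2}\Bigl[\iint(\mu^2+|Dv|^2+|Dw|^2)^{p/2}\Bigr]^{(2-p)/2},
\]
estimate the second factor by $C\mathsf E'+C\iint|Dv-Dw|^p$ via the triangle inequality, and finish by Young's inequality with exponents $1/(2-p),\,1/(p-1)$ followed by re-absorption. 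The two cases in the re-absorption (depending on whether $(\mathsf A+\mu)^p$ exceeds $1$) produce precisely the two terms $(\mathsf A+\mu)^p$ and $(\mathsf A+\mu)^{p/(p-1)}$ in the final estimate. The energy inequality for $Dw$ is then an immediate consequence of $(\mu^2+|Dw|^2)^{p/2}\le C(p)\bigl((1+|Dv|^2)^{p/2}+|Dv-Dw|^p\bigr)$ together with the comparison estimate.

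The main obstacle is the uniform pointwise bound on $F_\mu(\xi)-F_0(\xi)$: it must be sharp enough that after integration one recovers the optimal power of $\mu$ matching the stated factor $(\mathsf A+\mu)$, in the sub-quadratic case as well as the super-quadratic one. The two-case split $|\xi|\gtrless\mu$ is the point where a weaker or non-uniform estimate would propagate through H\"older and Young to give a suboptimal $\mu$-exponent; once the pointwise inequality with a single power of $\mu$ and the right geometric factor $(\mu^2+|\xi|^2)^{(p-2)/2}$ is secured, everything else reduces to the argument of Lemma \ref{lem:comp-2-mu>0}.
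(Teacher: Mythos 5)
Your proposal follows the paper's own proof essentially verbatim: the same Steklov testing and decomposition $bF_\mu(Dv)-aF_0(Dv)=(b-a)F_\mu(Dv)+a\big[F_\mu(Dv)-F_0(Dv)\big]$, the same pointwise bound (the paper writes it as $\big|(\mu^2+|Dv|^2)^{\frac{p-2}2}-|Dv|^{p-2}\big|\le C(p)\,\mu\,(\mu^2+|Dv|^2)^{\frac{p-3}2}$, which after multiplication by $|Dv|$ is exactly your estimate), the same H\"older step producing the factor $\mu$ for $p\ge2$ and $\mu^{p-1}$ for $1<p<2$, and the same interpolation--Young bookkeeping borrowed from Lemma~\ref{lem:comp-2-mu>0}. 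Your closing remark that the sub-quadratic factor $\mathsf A+\mu^{p-1}$ is ``controlled by $\mathsf A+\mu$'' is the same shortcut the paper itself takes at that point, so the attempt is faithful to the paper's argument.
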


\begin{proof} 
The reasoning proceeds largely along the lines of the proof of the comparison Lemma~\ref{lem:comp-2-mu>0}. In principle, we only have to replace $\big(\mu^2+|Dv|^2\big)^\frac{p-2}2Dv$ by $|Dv|^{p-2}Dv$ in the argument. This leads to 
\begin{align*}
  \iint_{E_T}
  &
  b\Big[\big(\mu^2+|Dv|^2\big)^\frac{p-2}2Dv
  -
  \big(\mu^2+|Dw|^2\big)^\frac{p-2}2Dw\Big]\cdot (Dv-Dw)\,\dx\dt\\
  &\le
\iint_{E_T}
  \Big[ 
  b\big(\mu^2+|Dv|^2\big)^\frac{p-2}2Dv- a|Dv|^{p-2}Dv\Big]\cdot
 (Dv-Dw)\,\dx\dt\\
 &=
 \iint_{E_T}
 (b-a) \big(\mu^2+|Dv|^2\big)^\frac{p-2}2Dv\cdot
 (Dv-Dw)\,\dx\dt+\mathbf{II},
\end{align*}
where we abbreviated
\begin{align*}
 \mathbf{II}
 &:=\iint_{E_T} a
 \Big[
 \big(\mu^2+|Dv|^2\big)^\frac{p-2}2 -|Dv|^{p-2}
 \Big]Dv \cdot
 (Dv-Dw)\,\dx\dt.
\end{align*}
Taking into account that
\begin{align*}
	\Big| \big(\mu^2+|Dv|^2\big)^\frac{p-2}2 -|Dv|^{p-2}\Big|
    &\le
	C(p) \mu \big(\mu^2 +|Dv|^2\big)^\frac{p-3}2,
\end{align*}
and using H\"older's inequality we obtain
\begin{align*}
    |\mathbf{II}|
    &\le 
    C(p)C_1\mu \iint_{E_T} \big(\mu^2 +|Dv|^2\big)^\frac{p-3}{2}|Dv||Dv-Dw|\, \dx\dt\\
    &\le 
    C(p)C_1\mu \iint_{E_T} \big(\mu^2 +|Dv|^2\big)^\frac{p-2}{2}|Dv-Dw|\, \dx\dt\\
    &\le
    C(p)C_1\mu \bigg[ 
     \iint_{E_T} \big(\mu^2 +|Dv|^2\big)^{\frac{p-2}{2}\frac{p}{p-1}}\, \dx\dt
    \bigg]^{1-\frac1p}\bigg[\iint_{E_T} |Dv-Dw|^p\, \dx\dt\bigg]^\frac1p.
\end{align*}
In the case $p\ge 2$ we have $0\le \frac{p-2}{p-1}\le 1$, so that 
\begin{align*}
    |\mathbf{II}|
    &\le
    C(p)C_1\mu \bigg[ 
     \iint_{E_T} \big(1 +|Dv|^2\big)^{\frac{p}{2}}\, \dx\dt
    \bigg]^{1-\frac{1}{p}}\bigg[\iint_{E_T} |Dv-Dw|^p\, \dx\dt\bigg]^\frac1p ,
\end{align*}
while in the case $1<p<2$ we have $\big(\mu^2 +|Dv|^2\big)^{\frac{p-2}{2}}\le\mu^{p-2}$, so that
\begin{align*}
    |\mathbf{II}|
    &\le
    C(p)C_1\mu^{p-1} |E_T|^{1-\frac1p}
    \bigg[\iint_{E_T} |Dv-Dw|^p\, \dx\dt\bigg]^\frac1p \\
    &\le
    C(p)C_1\mu^{p-1} \bigg[ 
     \iint_{E_T} \big(1 +|Dv|^2\big)^{\frac{p}{2}}\, \dx\dt
    \bigg]^{1-\frac{1}{p}}\bigg[\iint_{E_T} |Dv-Dw|^p\, \dx\dt\bigg]^\frac1p.
\end{align*}
Together with the estimates for the left-hand side and the first term on the right-hand side from the proof of Lemma \ref{lem:comp-2-mu>0}, we get in the super-quadratic case $p\ge 2$ 
that
\begin{align*}
    \iint_{E_T}
    |Dv&-Dw|^p \,\dx\dt
    \le 
    C
    \big(\mathsf A +\mu\big)^\frac{p}{p-1}
    \iint_{E_T}\big(1 +|Dv|^2\big)^\frac{p}{2}\, \dx\dt
\end{align*}
for a constant $C=C(p,C_o,C_1)$, while for exponents $1<p<2$, the arguments from the proof of Lemma \ref{lem:comp-2-mu>0} and the estimate for $\mathbf{II}$ yield
\begin{align*}
    \iint_{E_T}
    |Dv&-Dw|^p \,\dx\dt
    \le 
    C
    \Big[\big(\mathsf A +\mu\big)^\frac{p}{p-1} +(\mathsf A+\mu)^p\Big]
    \iint_{E_T}\big(1 +|Dv|^2\big)^\frac{p}{2}\, \dx\dt,
\end{align*}
again with a constant $C=C(p,C_o,C_1)$.
The energy inequality is obtained by combining
\begin{align*}
  \iint_{E_T}&\big(\mu^2+|Dw|^2\big)^\frac{p}2 \,\dx\dt\\
  &\le
  C(p)\iint_{E_T}\big(\mu^2+|Dv|^2\big)^\frac{p}2 \,\dx\dt
  +
  C(p)\iint_{E_T}|Dv-Dw|^p\,\dx\dt
\end{align*}
with  the comparison estimate.
\end{proof}

\subsection{Gluing Lemma and oscillation estimates for Lipschitz solutions}
The first lemma of this section serves to compare slice-wise mean values at different times. Such a result is often called {\em Gluing Lemma}. In the formulation of the lemma, $\eta$ stands for a non-negative function in $C^\infty_0(B_1)$ with $\|\eta\|_{L^1(B_1)} =1$, whereas $\eta_R(x):=R^{-N}\eta\big(\frac{x}{R}\big)$ is equal to $1$ on $B_R$ and retains a unit integral.
\begin{lemma}\label{lm:gluing}
Let $p>1$, $\mu\in [0,1]$, and $u$ be a weak solution to the parabolic  system~\eqref{p-laplace-intro} with \eqref{prop-a-intro}$_1$ in the sense of Definition \ref{def:weak-loc}. Assume that $|Du|\in L^\infty_{\rm loc}(E_T)$. Then, for every cylinder $Q=B_R(x_o)\times (\tau_1,\tau_2]\Subset E_T$ and any $\tau_1<t_1<t_2<\tau_2$ we have
\begin{equation*}
    \bigg|
    \int_{B_R(x_o)}\big[ u(\cdot,t_2) - u(\cdot,t_1)\big]\eta_R(\cdot-x_o)\,\dx\bigg|
    \le
    C\frac{t_2-t_1}{R}\big( \mu^2 +\|Du\|_{L^\infty(Q)}^2\big)^\frac{p-1}2
\end{equation*}
for a constant $C=C(N,C_1)$. 
\end{lemma}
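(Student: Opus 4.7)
The strategy is to test the weak formulation with a product cut-off of the form $\zeta(x,t)=\eta_R(x-x_o)\psi(t)$, where $\psi$ is chosen as a Lipschitz approximation of $\mathbf 1_{(t_1,t_2)}$, so that the time-derivative contribution produces exactly the difference of slice-wise averages against $\eta_R$, while the spatial divergence contribution is absorbed into the $L^\infty$-bound for $Du$ and the size of $\nabla\eta_R$.

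\textbf{Step 1 (Setup via Steklov averages).} To justify the manipulation rigorously, I would first pass to Steklov averages. Writing the equation in Steklov form yields, for a.e. $t\in(\tau_1,\tau_2-h)$,
\begin{equation*}
  \int_{B_R(x_o)}\partial_t[u]_h(\cdot,t)\cdot\varphi\,\dx
  +\int_{B_R(x_o)}\Big[a\big(\mu^2+|Du|^2\big)^\frac{p-2}2Du\Big]_h\cdot D\varphi\,\dx=0
\end{equation*}
for every $\varphi\in W^{1,p}_0(B_R(x_o),\R^k)$. The test function $\varphi=\eta_R(\cdot-x_o)$ is admissible since $\eta_R\in C^\infty_0(B_R)$. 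Integrating this identity from $t_1$ to $t_2$ gives
\begin{equation*}
    \int_{B_R(x_o)}\big[[u]_h(\cdot,t_2)-[u]_h(\cdot,t_1)\big]\eta_R(\cdot-x_o)\,\dx
    =-\int_{t_1}^{t_2}\!\int_{B_R(x_o)}\Big[a\big(\mu^2+|Du|^2\big)^\frac{p-2}2Du\Big]_h\cdot\nabla\eta_R\,\dx\dt.
\end{equation*}

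\textbf{Step 2 (Pass to the limit).} Using Lemma~\ref{lm:Stek}(i),(iii) (noting $u\in C([0,T];L^2)$) and the fact that the flux $a(\mu^2+|Du|^2)^{(p-2)/2}Du$ is in $L^1_{\mathrm{loc}}(E_T)$, sending $h\downarrow 0$ yields the identity
\begin{equation*}
    \int_{B_R(x_o)}\big[u(\cdot,t_2)-u(\cdot,t_1)\big]\eta_R(\cdot-x_o)\,\dx
    =-\int_{t_1}^{t_2}\!\int_{B_R(x_o)}a\big(\mu^2+|Du|^2\big)^\frac{p-2}2Du\cdot\nabla\eta_R(\cdot-x_o)\,\dx\dt.
\end{equation*}

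\textbf{Step 3 (Pointwise estimate of the spatial integral).} On the right-hand side, I use $0\le a\le C_1$, the elementary bound
\begin{equation*}
   \big|\big(\mu^2+|Du|^2\big)^\frac{p-2}2Du\big|\le \big(\mu^2+|Du|^2\big)^\frac{p-1}2\le \big(\mu^2+\|Du\|_{L^\infty(Q)}^2\big)^\frac{p-1}2
\end{equation*}
valid on $Q$, together with the scaling $\nabla\eta_R(x)=R^{-N-1}(\nabla\eta)(x/R)$, which yields
\begin{equation*}
    \int_{B_R(x_o)}|\nabla\eta_R(x-x_o)|\,\dx=\frac{1}{R}\int_{B_1}|\nabla\eta|\,\dy=:\frac{C(N)}{R}.
\end{equation*}
Combining these bounds, the right-hand side is controlled by
\begin{equation*}
   C_1\big(\mu^2+\|Du\|_{L^\infty(Q)}^2\big)^\frac{p-1}2\cdot\frac{C(N)}{R}\cdot(t_2-t_1),
\end{equation*}
which is the asserted estimate with $C=C(N,C_1)$.

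\textbf{Main obstacle.} The argument is essentially a standard Steklov-averaging computation, so the only technical care is in justifying the limit $h\downarrow 0$ at the specific times $t_1,t_2$ (handled by the continuity property in Lemma~\ref{lm:Stek}(iii)); no deeper ingredient is required beyond the $L^\infty$-bound on $Du$ that is assumed.
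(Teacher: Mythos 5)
Your proof is correct and follows essentially the same route as the paper: test the system against $\eta_R$ (componentwise) over the time interval $(t_1,t_2)$, then bound the flux using $a\le C_1$, the pointwise inequality $\big|(\mu^2+|Du|^2)^{\frac{p-2}2}Du\big|\le(\mu^2+|Du|^2)^{\frac{p-1}2}$, and the scaling $\int_{B_R}|\nabla\eta_R|\,\dx\le C(N)/R$. The only difference is the device for handling the time derivative (Steklov averages and Lemma~\ref{lm:Stek} in your version versus a Lipschitz-in-time cutoff $\xi_\epsilon$ with $\epsilon\downarrow0$ in the paper), which is immaterial.
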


\begin{proof} 
In the proof we assume $x_o=0$.
We define  $\xi_\epsilon=\xi_\epsilon(t)$ to be $1$ in $[t_1,t_2]$,
$0$ outside $[t_1-\epsilon,t_2+\epsilon]$, and linearly interpolated otherwise; here, we assume $\epsilon\in \big(0,\min\{  t_1-\tau_1, \tau_2-t_2\}\big)$.
Then we test the weak form of \eqref{p-laplace-intro} for a fixed index $i\in\{ 1,\dots ,k\}$ with
$$
    \varphi(x,t):=\xi_\epsilon (t)\eta_R(x)\mathrm e_i,
$$
where $\mathrm e_i$ denotes the $i$-th canonical basis vector in $\R^k$. In the limit $\epsilon\downarrow 0$ we obtain
\begin{align*}
   \int_{B_R}& \big[ u(\cdot,t_2) -u(\cdot,t_1)]\cdot \mathrm e_i
   \eta_R\,\dx\\
   &=
   \iint_{B_R\times (t_1,t_2)}a\, \big(\mu^2 +|Du|^2\big)^\frac{p-2}2Du
   \cdot(\mathrm e_i\otimes \nabla \eta_R)\,\dx\dt.
\end{align*}
We multiply the preceding identity by $\mathrm e_i$ and sum over $i=1,\dots ,k$. This yields
\begin{align*}
   \int_{B_R}& \big[ u(\cdot,t_2) -u(\cdot,t_1)]
   \eta_R\,\dx
   =
   \iint_{B_R\times (t_1,t_2)}a\,\big(\mu^2 +|Du|^2\big)^\frac{p-2}2Du\cdot
   \nabla \eta_R\,\dx\dt.
\end{align*}
Recalling the upper bound for the coefficients $a$ from \eqref{prop-a-intro}$_1$, we obtain
\begin{align*}
   \bigg|\int_{B_R} \big[ u(\cdot,t_2) -u(\cdot,t_1)]
   \eta_R\,\dx\bigg|
   &\le
   \frac{C(N)C_1}{R^{N+1}}\iint_{B_R\times (t_1,t_2)}\big(\mu^2 +|Du|^2\big)^\frac{p-1}2\,\dx\dt\\
   &\le 
   C\frac{t_2-t_1}{R}\big( \mu^2 +\|Du\|_{L^\infty(Q)}^2\big)^\frac{p-1}2,
\end{align*}
proving the asserted gluing estimate.
\end{proof}

Next, we collect two \textit{a priori} oscillation estimates of weak solutions whose spatial gradient is assumed to be locally bounded.
\begin{lemma}\label{lem:osc-all-p}
Let $p>1$, $\mu\in [0,1]$, and $u$ be a weak solution to the parabolic $p$-Laplace system~\eqref{p-laplace-intro} with \eqref{prop-a-intro}$_1$ in the sense of Definition \ref{def:weak-loc}. Assume that $|Du|\in L^\infty_{\rm loc}(E_T)$. Then, for every cylinder $Q=B_R(x_o)\times (\tau_1,\tau_2]\Subset E_T$  we have
\begin{equation*}
    \osc_{Q} u
    \le
    4R\| Du\|_{L^\infty (Q)}
    +
     C\frac{\tau_2-\tau_1}{R}\big( \mu^2 +\|Du\|_{L^\infty(Q)}^2\big)^\frac{p-1}2
\end{equation*}
for a constant $C=C(N,C_1)$.
\end{lemma}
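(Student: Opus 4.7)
My plan is to prove this oscillation estimate by a triangle-inequality decomposition that separates the spatial and temporal variations, with the gluing lemma (Lemma~\ref{lm:gluing}) handling the temporal part and the uniform gradient bound handling the spatial parts. Fix two points $(x_1,t_1),(x_2,t_2)\in Q$ and introduce the weighted slice-wise mean
\begin{equation*}
    I(t):=\int_{B_R(x_o)}u(\cdot,t)\,\eta_R(\cdot-x_o)\,\dx,
\end{equation*}
where $\eta_R$ is the weight from the statement of Lemma~\ref{lm:gluing} (a probability density supported in $B_R(x_o)$). Then I would write
\begin{equation*}
    u(x_1,t_1)-u(x_2,t_2)
    =\bigl[u(x_1,t_1)-I(t_1)\bigr]
    +\bigl[I(t_1)-I(t_2)\bigr]
    +\bigl[I(t_2)-u(x_2,t_2)\bigr].
\end{equation*}

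The first and third brackets are purely spatial and can be controlled by the $L^\infty$-bound on $|Du|$. Indeed, since $\eta_R(\cdot-x_o)$ is a probability density on $B_R(x_o)$, for $i\in\{1,2\}$ a mean-value-type estimate gives
\begin{equation*}
    \bigl|u(x_i,t_i)-I(t_i)\bigr|
    =\Bigl|\int_{B_R(x_o)}\bigl[u(x_i,t_i)-u(y,t_i)\bigr]\eta_R(y-x_o)\,\dy\Bigr|
    \le 2R\,\|Du\|_{L^\infty(Q)},
\end{equation*}
using $|x_i-y|\le 2R$ on the support of $\eta_R(\cdot-x_o)$. (Here I implicitly work with the continuous representative of $u$, which exists because $|Du|\in L^\infty_{\rm loc}$ together with the forthcoming time-continuity estimate from Lemma~\ref{lm:gluing} yields uniform continuity on compact subsets of $E_T$.)

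For the middle bracket, which is the only term that mixes the parabolic character of the equation, I invoke Lemma~\ref{lm:gluing} directly with the pair $(t_1,t_2)$: it yields
\begin{equation*}
    \bigl|I(t_1)-I(t_2)\bigr|
    \le C\,\frac{|t_2-t_1|}{R}\bigl(\mu^2+\|Du\|_{L^\infty(Q)}^2\bigr)^{\frac{p-1}{2}}
    \le C\,\frac{\tau_2-\tau_1}{R}\bigl(\mu^2+\|Du\|_{L^\infty(Q)}^2\bigr)^{\frac{p-1}{2}},
\end{equation*}
where I used $|t_2-t_1|\le\tau_2-\tau_1$ since both times lie in $(\tau_1,\tau_2]$. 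Combining the three bounds via the triangle inequality and taking the supremum over $(x_1,t_1),(x_2,t_2)\in Q$ yields the claimed estimate, with the constant depending only on $N$ and $C_1$ (inherited from Lemma~\ref{lm:gluing}). There is no real obstacle here beyond invoking the gluing lemma correctly; the only mild subtlety is making sense of pointwise values of $u$, which is justified by the Lipschitz-in-space/H\"older-in-time regularity produced by the hypothesis $|Du|\in L^\infty_{\rm loc}(E_T)$ and the gluing estimate itself.
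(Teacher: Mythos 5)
Your proposal is correct and follows essentially the same route as the paper: split $u(x_1,t_1)-u(x_2,t_2)$ through the weighted slice-wise mean $\int_{B_R}u(\cdot,t_i)\eta_R\,\dx$, bound the two spatial terms by $2R\|Du\|_{L^\infty(Q)}$ each, and control the difference of means via the Gluing Lemma~\ref{lm:gluing}. The only cosmetic difference is that the paper simply works with a.e.\ points $(x_1,t_1),(x_2,t_2)\in Q$ (which suffices for the essential oscillation) rather than passing to a continuous representative.
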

\begin{proof} Applying the Gluing Lemma \ref{lm:gluing} we obtain for a.e.~$(x_1,t_1), (x_2,t_2)\in Q$  that
\begin{align*}
    | u(x_1,t_1)&-u(x_2,t_2)|\\
    &\le 
    \int_{B_R}\underbrace{| u(x_1,t_1)-u(x,t_1)|}_{\le 2R\| Du\|_{L^\infty (Q)}} \eta_R(x)\,\dx
    +
    \int_{B_R}\underbrace{| u(x,t_2)-u(x_2,t_2)|}_{\le 2R\| Du\|_{L^\infty (Q)}}\eta_R(x)\,\dx\\
    &\phantom{=\,}
    +
    \bigg| 
    \int_{B_R}[ u(x,t_1)-u(x,t_2)]\eta_R(x)\,\dx\bigg|\\
    &\le
    4R\| Du\|_{L^\infty (Q)}
    +
     C\frac{\tau_2-\tau_1}{R}\big( \mu^2 +\|Du\|_{L^\infty(Q)}^2\big)^\frac{p-1}2.
\end{align*}
This proves the asserted oscillation estimate.
\end{proof}

\begin{lemma}\label{lem:osc-p<2}
Let $1<p\le 2$, $\mu\in [0,1]$, and $u$ be a weak solution to the parabolic system~\eqref{p-laplace-intro}  with \eqref{prop-a-intro}$_1$ in the sense of Definition \ref{def:weak-loc}. Assume that $|Du|\in L^\infty_{\rm loc}(E_T)$. Then, for every cylinder $Q_R^{(\lambda)}(z_o)\Subset E_T$  we have
\begin{equation*}
    \osc_{Q_R^{(\lambda)}(z_o)} u
    \le
    CR\Big( \|Du\|_{L^\infty(Q_R^{(\lambda)}(z_o))}+\mu +\lambda\Big)
\end{equation*}
with $C=C(N,C_1)$.
\end{lemma}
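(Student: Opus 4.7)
The estimate should be a direct consequence of the general oscillation estimate from Lemma~\ref{lem:osc-all-p} applied to the intrinsic cylinder $Q_R^{(\lambda)}(z_o)$, whose time height is $\tau_2-\tau_1=\lambda^{2-p}R^2$. Writing $M:=\|Du\|_{L^\infty(Q_R^{(\lambda)}(z_o))}$ for brevity, Lemma~\ref{lem:osc-all-p} yields
\begin{equation*}
    \osc_{Q_R^{(\lambda)}(z_o)} u
    \le 4R M + C\,\frac{\lambda^{2-p}R^2}{R}\bigl(\mu^2+M^2\bigr)^{(p-1)/2}
    = 4RM + CR\,\lambda^{2-p}\bigl(\mu^2+M^2\bigr)^{(p-1)/2}.
\end{equation*}
The task is therefore to show that in the sub-quadratic regime $1<p\le 2$, the second term is controlled by $CR(M+\mu+\lambda)$.

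For the elementary inequality, I would use the fact that $p-1\in(0,1]$, so that $(\mu^2+M^2)^{(p-1)/2}\le (\mu+M)^{p-1}$, which reduces the task to bounding $\lambda^{2-p}(\mu+M)^{p-1}$. Since both exponents $2-p$ and $p-1$ are non-negative and sum to $1$, a simple case distinction suffices: if $\mu+M\le\lambda$, then
\begin{equation*}
    \lambda^{2-p}(\mu+M)^{p-1}\le \lambda^{2-p}\lambda^{p-1}=\lambda,
\end{equation*}
while if $\mu+M>\lambda$, then $\lambda^{2-p}\le(\mu+M)^{2-p}$ and hence
\begin{equation*}
    \lambda^{2-p}(\mu+M)^{p-1}\le (\mu+M)^{2-p}(\mu+M)^{p-1}=\mu+M.
\end{equation*}
In either case one has $\lambda^{2-p}(\mu+M)^{p-1}\le\mu+M+\lambda$.

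Combining the two displays yields
\begin{equation*}
    \osc_{Q_R^{(\lambda)}(z_o)} u\le 4RM + CR(\mu+M+\lambda)\le CR(M+\mu+\lambda),
\end{equation*}
with a constant $C$ depending only on $N$ and $C_1$, as in Lemma~\ref{lem:osc-all-p}. There is no substantial obstacle here: the proof is essentially an unpacking of the intrinsic time-scale $\lambda^{2-p}R^2$ together with the trivial interpolation between $\lambda$ and $\mu+M$ afforded by the condition $p\le 2$. Note that this argument would break down in the super-quadratic case, where $2-p<0$ would prevent the first case of the dichotomy from going through.
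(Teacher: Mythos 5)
Your proposal is correct and follows essentially the same route as the paper: apply Lemma~\ref{lem:osc-all-p} on $Q_R^{(\lambda)}(z_o)$ so that the time height is $\lambda^{2-p}R^2$, and then absorb the factor $\lambda^{2-p}(\mu^2+M^2)^{(p-1)/2}$ by a dichotomy between $\lambda$ and the gradient quantity (the paper compares $\lambda$ with $(\mu^2+M^2)^{1/2}$ rather than with $\mu+M$, which is the same argument up to constants).
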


\begin{proof} We apply the oscillation Lemma \ref{lem:osc-all-p} with
$Q=Q_R^{(\lambda)}(z_o)$. Consequently for a.e.~$(x_1,t_1), (x_2,t_2)\in Q_R^{(\lambda)}(z_o)$ we have
\begin{align*}
    | u(x_1,t_1)&-u(x_2,t_2)|\\
    &\le
    4R\| Du\|_{L^\infty (Q_R^{(\lambda)}(z_o))}
    +
     C\lambda^{2-p}R\Big( \mu^2 +\|Du\|_{L^\infty(Q_R^{(\lambda)}(z_o))}^2\Big)^\frac{p-1}2.
\end{align*}

To estimate the second term on the right-hand side, we distinguish two cases. First, let $\lambda< \big( \mu^2 +\|Du\|_{L^\infty(Q_R^{(\lambda)}(z_o))}^2\big)^\frac12$ be satisfied. 
Then, we have
\begin{align*}
    | u(x_1,t_1)-u(x_2,t_2)|
    &\le
    4R\| Du\|_{L^\infty (Q_R^{(\lambda)}(z_o))}
    +
     CR\Big( \mu^2 +\|Du\|_{L^\infty(Q_R^{(\lambda)}(z_o))}^2\Big)^\frac{1}2\\
     &\le 
     C R \Big( \mu+\|Du\|_{L^\infty(Q_R^{(\lambda)}(z_o))}\Big).
\end{align*}
In the other case, i.e.~when $\lambda \ge \big( \mu^2 +\|Du\|_{L^\infty(Q_R^{(\lambda)}(z_o))}^2\big)^\frac12$ holds, we get
  \begin{align*}
    | u(x_1,t_1)-u(x_2,t_2)|
    &\le
     C R \Big( \|Du\|_{L^\infty(Q_R^{(\lambda)}(z_o))}+\lambda \Big).
\end{align*}
Merging both cases yields the result.
\end{proof}
\begin{remark}\label{rem:osc}\upshape
In the course of the proof of the Schauder estimates, we always assume 
$A\lambda\ge \mu$ for some $A\ge1$. Therefore, we can limit ourselves to state the estimate
in this case, i.e.
\begin{equation*}
    \osc_{Q_R^{(\lambda)}(z_o)} u
    \le
    CR\Big( \|Du\|_{L^\infty(Q_R^{(\lambda)}(z_o))}+\lambda\Big)
\end{equation*}
with $C=C(N,C_1,A)$.
\end{remark}

\section{Schauder-type estimates}\label{sec:Schauder-for-Lipschitz}
In this section we prove Schauder estimates for bounded weak solutions to the parabolic $p$-Laplace system \eqref{p-laplace-intro} with Hölder continuous coefficients $a(x,t)$ satisfying assumptions \eqref{prop-a-intro} with $\mu\in(0,1]$. We restrict our considerations
to {\it bounded} weak solutions 
$$
u\in L^{\infty}(E_T, \R^k)\cap L^p\big(0,T;W^{1,p}(E,\R^k)\big)\cap C\big([0,T];L^2(E,\R^k)\big).
$$ 
For the moment, we assume additionally that
\begin{equation}\label{ass:Du}
       |Du|\in L^\infty_{\mathrm{loc}}(E_T).
\end{equation}
Assumption \eqref{ass:Du} will be removed in Section \ref{S:6}.

The {\bf general geometric setup} is as follows. With $\varrho:= \tfrac14\min\{1,
\mathrm{dist}_{\mathrm{par}}(\mathsf {K},\partial_\mathrm{par} E_T)\}$ and $\tilde z\in \mathsf {K}$, we  consider standard cylinders
\begin{equation}\label{def:cylinders}
    Q_\varrho (\tilde z)\subset Q_{R_1}(\tilde z)\subset Q_{R_2}(\tilde z)\subset  Q_{2\varrho} (\tilde z)\Subset E_T
\end{equation}
with $\varrho\le R_1<R_2\le 2\varrho \le 1$ and a parameter $\lambda\ge\frac{\mu}{A}$ such that
\begin{equation}
  \label{def-lambda}
  \Big(\mu^2+\|Du\|_{L^\infty(Q_{R_2}(\tilde z))}^2\Big)^{\frac12}
  \le A\lambda,
\end{equation}
where the number $A\ge 1$ will be chosen in terms of given data.
With these quantities we define
\begin{equation}\label{choice-R_o}
  R_o:=\tfrac12\min\big\{1,\lambda^{\frac{p-2}{2}}\big\}(R_2-R_1).
\end{equation}
Then, for any $z_o\in  Q_{R_1}(\tilde z)$  and any $R\le 2R_o$ the intrinsic cylinders $Q_R^{(\lambda)}(z_o)$ are contained in $Q_{R_2}(\tilde z)$. 
For a parameter $\kappa\in(0,1)$ to be specified later, and a radius
$r\in(0,\frac18 R_o)$ we define
\begin{equation}\label{choice-R}
  R:=\Big(\frac{8r}{R_o}\Big)^\kappa R_o
  \quad \Longleftrightarrow \quad
  r=\tfrac18\Big(\frac{R}{R_o}\Big)^{\frac1\kappa-1} R.
\end{equation}
With this particular specification of $R$ we note that $R<R_o$ and $r<\frac18 R$. Moreover, we have
\begin{equation}\label{est:radii}
    \frac{R}{R_o}
    =
    8^\kappa\Big(\frac{r}{R_o}\Big)^\kappa
    \quad\mbox{and}\quad
    \frac{R}{r}
    =
    8^\kappa
    \Big( \frac{R_o}{r}\Big)^{1-\kappa}.
\end{equation}
The choice of the intermediate radius $R$ together with the selection of $\kappa$ facilitates an interpolation argument in Section \ref{S:Campanato} in order to derive a Campanato-type estimate.

\subsection{Freezing coefficients} Let $z_o=(x_o,t_o)\in  Q_{R_1}(\tilde z)$, $r\in(0,\frac18 R_o)$ and $R$ defined according to \eqref{choice-R}. In the following we consider the unique weak solution $w$
to the Cauchy-Dirichlet problem
\begin{equation} \label{comparison-problem}
    \left\{
    \begin{array}{cl}
        \partial_tw-\Div
         \Big( a(x_o,t)\big(\mu^2+|Dw|^2\big)^\frac{p-2}2 Dw\Big)=0
         &\mbox{in $Q_{R}^{(\lambda)}(z_o)$,}\\[6pt]
         w=u &\mbox{on $\partial_{\mathrm{par}} Q_{R}^{(\lambda)}(z_o)$,}
    \end{array}
    \right.
\end{equation}
where $\mu\in[0,1]$, in the sense of Definition \ref{def:weak}.

In view of the $C^{1,\alpha}$-regularity theory by DiBenedetto and Friedman \cite{DB-1d, DiBenedetto-Friedman, DiBenedetto-Friedman2, DiBenedetto-Friedman3} the auxiliary function $w$ satisfies good {\it a priori} estimates which we adapt and present next. Essentially, the result states that if the inclusion $Q_{2\mathfrak{R}}^{(\lambda)}(z_o)\Subset Q_{R}^{(\lambda)}(z_o)$ and the intrinsic relation \eqref{intrinsic} hold, then the Campanato-type estimate \eqref{campanato} holds.

\begin{proposition}\label{prop:apriori}
Let $p>1$, $\mu\in[0,1]$ and $\mathfrak{A}\ge 1$, and suppose that
assumptions~\eqref{prop-a-intro}$_1$ are satisfied. 
There exist constants 
$C >0$ and $\beta\in(0,1)$, depending
only on the data $N,p, C_o, C_1$, and $\mathfrak{A}$, such that whenever $w$
is a weak solution to the parabolic system~\eqref{comparison-problem}$_1$ in the sense of Definition \ref{def:weak-loc}, if 
$Q_{2\mathfrak{R}}^{(\lambda)}(z_o)\Subset Q_{R}^{(\lambda)}(z_o)$ with $\lm>0$ and
$\mathfrak{R}\in(0,\frac12 R)$ 
satisfies
\begin{equation}\label{intrinsic}
    \sup_{Q_{2\mathfrak{R}}^{(\lambda)}(z_o)}\big(\mu^2+|Dw|^2\big)^\frac12\le \mathfrak{A}\lambda,
\end{equation}
then we have  
\begin{align}\label{campanato}
	\biint_{Q_\tau^{(\lambda)}(z_o)}\big|Dw-(Dw)_{z_o,\tau}^{(\lambda)}\big|^p \,\dx\dt
	\le
	C \Big(\frac{\tau}{\mathfrak{R}}\Big)^{\beta p}\lambda_\mu^{p}
    \qquad\mbox{for all $\tau\in(0,\mathfrak{R}]$},
\end{align}
where $\lambda_\mu=\sqrt{\mathfrak{A}^2\lambda^2-\mu^2}$, and $(Dw)_{z_o,\tau}^{(\lambda)}$ is the integral average of $Dw$ on
$Q_\tau^{(\lambda)}(z_o)$.
\end{proposition}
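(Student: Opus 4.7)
The plan is to reduce to the classical $C^{1,\beta}$-regularity theory of DiBenedetto--Friedman for the frozen-coefficient parabolic $p$-Laplace system (as presented e.g.\ in \cite[Chapter IX]{DB}) via an intrinsic rescaling, and then to pass from a pointwise H\"older estimate to the Campanato-type bound \eqref{campanato} by integration. Since the coefficient $a(x_o,t)$ depends only on time, the system \eqref{comparison-problem}$_1$ is spatially homogeneous, so the full force of the DiBenedetto--Friedman theory is available; the task is really to recast its conclusion as a decay estimate on the family of intrinsic cylinders $Q_\tau^{(\lambda)}(z_o)$ with \emph{fixed} parameter $\lambda$ for all $\tau\in(0,\mathfrak{R}]$.

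First I would rescale: set
\begin{equation*}
    \tilde w(y,s) := \frac{w(x_o+\mathfrak{R}y,\, t_o+\lambda^{2-p}\mathfrak{R}^2 s)}{\mathfrak{A}\lambda\mathfrak{R}},\qquad (y,s)\in Q_1:=B_1\times(-1,0].
\end{equation*}
A direct computation shows that $\tilde w$ is a weak solution on $Q_1$ of a parabolic system of the same form as \eqref{comparison-problem}$_1$, with time-dependent coefficient $\tilde a(s):=a(x_o,t_o+\lambda^{2-p}\mathfrak{R}^2 s)$ still satisfying \eqref{prop-a-intro}$_1$, and with rescaled parameter $\tilde\mu:=\mu/(\mathfrak{A}\lambda)\in[0,1]$. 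The intrinsic assumption \eqref{intrinsic} becomes the uniform bound $\bigl(\tilde\mu^2+|D\tilde w|^2\bigr)^{1/2}\le 1$ on $Q_1$.

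Second, since $|D\tilde w|$ is now uniformly bounded and the system has bounded, measurable-in-time, Lipschitz-in-space (in fact space-independent) coefficients, the DiBenedetto--Friedman $C^{1,\beta}$-theory applies and yields a H\"older exponent $\beta=\beta(N,p,C_o,C_1,\mathfrak{A})\in(0,1)$ together with the quantitative oscillation decay
\begin{equation*}
    \osc_{Q_\sigma(0)} D\tilde w \le C\sigma^\beta\sqrt{1-\tilde\mu^2}\qquad\text{for all }\sigma\in(0,\tfrac12].
\end{equation*}
The key point is the factor $\sqrt{1-\tilde\mu^2}$: when $\tilde\mu$ is close to $1$, the bound $|D\tilde w|^2+\tilde\mu^2\le 1$ forces $|D\tilde w|$ to be small, and the system degenerates to an essentially linear one, so the actual oscillation of $D\tilde w$ is correspondingly small. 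This refinement is precisely the content of the improved version of the Campanato estimate proved in \cite{BDLS-Tolksdorf}, which I would invoke here.

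Finally, unscaling gives, for every $\tau\in(0,\mathfrak{R}]$ and every $z_1,z_2\in Q_\tau^{(\lambda)}(z_o)$,
\begin{equation*}
    |Dw(z_1)-Dw(z_2)|\le C\lambda_\mu\Bigl(\tfrac{\mathrm{d}_{\mathrm{par}}^{(\lambda)}(z_1,z_2)}{\mathfrak{R}}\Bigr)^\beta \le C\lambda_\mu\Bigl(\tfrac{\tau}{\mathfrak{R}}\Bigr)^\beta,
\end{equation*}
using $\mathfrak{A}\lambda\sqrt{1-\tilde\mu^2}=\lambda_\mu$. The Campanato estimate \eqref{campanato} follows by the elementary bound
\begin{equation*}
    \biint_{Q_\tau^{(\lambda)}(z_o)}\bigl|Dw-(Dw)_{z_o,\tau}^{(\lambda)}\bigr|^p\,\dx\dt\le\bigl(\osc_{Q_\tau^{(\lambda)}(z_o)} Dw\bigr)^p.
\end{equation*}
The main obstacle is ensuring that the H\"older exponent $\beta$ and the constant $C$ depend only on $N,p,C_o,C_1,\mathfrak{A}$ after the intrinsic rescaling; this uniformity is nontrivial because the DiBenedetto--Friedman argument mixes oscillation reductions in the "degenerate" and "non-degenerate" alternatives, and preserving the factor $\sqrt{1-\tilde\mu^2}$ (rather than just $1$) on the right-hand side requires the careful treatment carried out in \cite{BDLS-Tolksdorf}.
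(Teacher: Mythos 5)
Your proposal is correct and follows essentially the same route as the paper: after an $\mathfrak{A}$-normalization (in your case combined with a rescaling to the unit cylinder), both arguments reduce the claim to the improved Campanato/oscillation decay of \cite{BDLS-Tolksdorf} for the frozen, time-dependent-coefficient system, with the identity $\mathfrak{A}\lambda\sqrt{1-\tilde\mu^2}=\lambda_\mu$ producing the stated right-hand side, and with the space-independence of $a(x_o,\cdot)$ removing any smallness restriction on $\mathfrak{R}$. One minor bookkeeping point: your rescaled system actually carries the coefficient $\mathfrak{A}^{p-2}a(x_o,\cdot)$ rather than $a(x_o,\cdot)$, so the ellipticity bounds become $\mathfrak{A}^{p-2}C_o$, $\mathfrak{A}^{p-2}C_1$ --- harmless because the constants may depend on $\mathfrak{A}$, and precisely what the paper records when applying \cite[Theorem~1.3]{BDLS-Tolksdorf} to $v=\mathfrak{A}^{-1}w$.
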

\begin{proof}
In view of \eqref{intrinsic} we may assume $\lambda>\frac{\mu}{\mathfrak{A}}$, since otherwise $Dw\equiv 0$ on $Q_{2\rho}^{(\lambda)}(z_o)$ and the result is trivial. 
In the following we abbreviate $b(t):=a(x_o,t)$. We introduce the auxiliary function 
$$
    v
    :=
    \mathfrak{A}^{-1} w
    \quad\mbox{and}\quad
    \tilde\mu:=
    \mathfrak{A}^{-1}\mu.
$$
Then, $v$ is a weak solution to the parabolic system 
\begin{equation*}
    \partial_t v
    =
    \Div\Big( b(t)\mathfrak{A}^{p-2}\big(\tilde\mu^2+|Dv|^2\big)^\frac{p-2}2Dv\Big)=0
    \quad\mbox{in $E_T$.}
\end{equation*}
Hence, \cite[Theorem~1.3]{BDLS-Tolksdorf} is applicable to $v$ after replacing $C_o,C_1$ by $\mathfrak{A}^{p-2}C_o, \mathfrak{A}^{p-2}C_1$ and $\mu$ by $\tilde\mu$.  
The proof of \cite[Theorem~1.3]{BDLS-Tolksdorf} is presented in \cite[Section~5.2]{BDLS-Tolksdorf}; the estimates are stated for cylinders of the form
\begin{equation*}
    \widetilde Q_\tau^{(\lambda)}(z_o)
    :=
    B_\tau(x_o)\times\big(t_o-(\tilde\mu^2+\lambda^2)^{\frac{2-p}{2}}\tau^2,t_o\big].
\end{equation*}
In particular, with $\lambda_{\tilde\mu}:=\sqrt{\lambda^2-\tilde\mu^2}>0$ we have 
\begin{equation}\label{two-cylinders}
    \widetilde Q_\tau^{(\lambda_{\tilde\mu})}(z_o)
    =
    Q_\tau^{(\lambda)}(z_o)
\end{equation}
for every $\tau\in(0,2\mathfrak{R}]$,
and assumption \eqref{intrinsic} can be rewritten in the form
\begin{equation*}
     \sup_{\widetilde Q_{2\mathfrak{R}}^{(\lambda_{\tilde\mu})}(z_o)}|Dv|\le\lambda_{\tilde\mu}.
\end{equation*}
This corresponds to the sup-estimate in \textit{Step~1} on p.~27 of \cite{BDLS-Tolksdorf}.
Then, we can repeat the
arguments in \textit{Steps~2 -- 4} on pp.~27 -- 29 and arrive at \cite[Eqn. (5.16)]{BDLS-Tolksdorf} with $\lambda_{\tilde\mu}$
in place of $\lambda$. Since the coefficients $b(t)$ are independent of $x$, the upper bound $\rho_o$ on the radius $\mathfrak{R}$ can be avoided in the present situation. 
Therefore, with the Lebesgue representative $\Gamma_{z_o}$ of $Du(z_o)$, 
we obtain the Campanato-type estimate 
\begin{equation*}
	\biint_{\widetilde Q_\tau^{(\lambda_{\tilde\mu})}(z_o)}|Dv-\Gamma_{z_o}|^p \,\dx\dt
	\le
	C \Big(\frac{\tau}{\mathfrak{R}}\Big)^{\beta p} 
	\lambda_{\tilde\mu}^{p}
        \le
	C \Big(\frac{\tau}{\mathfrak{R}}\Big)^{\beta p} 
	\lambda^{p}
\end{equation*}
for every $\tau\in(0,\mathfrak{R}]$, with constants
$C >0$ and $\beta\in(0,1)$ depending on
$N,p,C_o,C_1,\mathfrak{A}$. In view of~\eqref{two-cylinders},
this implies 
\begin{align*}
  \biint_{Q_\tau^{(\lambda)}(z_o)}\big|Dv-(Dv)_{z_o,\tau}^{(\lambda)}\big|^p \,\dx\dt
  \le
    C (p)\biint_{Q_\tau^{(\lambda)}(z_o)}|Dv-\Gamma_{z_o}|^p \,\dx\dt
  \le
    C \Big(\frac{\tau}{\mathfrak{R}}\Big)^{\beta p}\lambda^{p}.
\end{align*}
Recalling that $w=\mathfrak{A}v$, this yields the claim~\eqref{campanato}.
\end{proof}

In the case $p=2$ the intrinsic coupling \eqref{intrinsic} is superfluous, whereas the Campanato-type estimate \eqref{campanato} improves to be 
\[
\biint_{Q_\tau(z_o)}\big|Dw-(Dw)_{z_o,\tau}\big|^2 \,\dx\dt
	\le
	C \Big(\frac{\tau}{\mathfrak{R}}\Big)^{2\beta}\biint_{Q_{\mathfrak R}(z_o)} 
    |Dw|^2 \,\dx\dt.
\]
The right-hand integral can be estimated by the integral of $|Du|^2$ instead of $|Dw|^2$ thanks to the energy inequality in Lemma~\ref{lem:comp-2}, while the comparison estimate in the same lemma yields
\begin{align*}
  \iint_{Q_{\mathfrak{R}}(z_o)}&|Du-Dw|^2\,\dx\dt
  \le
  C \mathfrak{R}^{\alpha p}
    \iint_{Q_{\mathfrak{R}}(z_o)}|Du|^2 \,\dx\dt.
\end{align*}
Then, it is not hard to combine them and obtain that
\begin{align*}
    \biint_{Q_\tau(z_o)}&\big|Du-(Du)_{z_o,\tau}\big|^2 \,\dx\dt \\
    &\le C\Big(\frac{\tau}{\mathfrak{R}}\Big)^{2\beta}\biint_{Q_{\mathfrak R}(z_o)} 
    |Du|^2 \,\dx\dt+C \mathfrak{R}^{2\alpha}
    \iint_{Q_{\mathfrak{R}}(z_o)}|Du|^2 \,\dx\dt.
\end{align*}
Interpolating the radii $\tau$ and $\mathfrak{R}$ will give a Campanato-type estimate for $Du$.

Even though our program follows this spirit, yet the non-linearity of the system brings essential difficulties. In general, we cannot replace $\lm_\mu$ in \eqref{campanato} with integrals of $Dw$ or $Du$.
The following proposition, however, gives an ``interpolative" remedy for this difficulty.

\begin{proposition}\label{prop:camp-w}
Let $w$ be the unique weak solution to the Cauchy-Dirichlet problem \eqref{comparison-problem} in the sense of Definition \ref{def:weak}, and assume that \eqref{def-lambda} is satisfied for some $A\ge 1$. Then, there exist  exponents $\theta_2=\theta_2(N,p)>0$ and $\beta\in (0,1)$, and a constant $C>0$ such that for any $\epsilon\in (0,1]$ and any $\tau\in(0, \tfrac18R]$  the Campanato-type estimate
\begin{align*}
     \biint_{Q_\tau^{(\lambda)}(z_o) }&\big|Dw -(Dw)_{z_o,\tau}^{(\lambda)}\big|^p\,\dx\dt\\
     &\le
     C\Big(\frac{\tau}{R}\Big)^{\beta }
     \max\Bigg\{
     \epsilon^{-\theta_2}
    \biint_{Q_{R}^{(\lambda)}(z_o)} 
    \big(\mu^2+|Du|^2\big)^\frac{p}2\,\dx\dt\,,\,
    \epsilon^{p-1}\lambda^p
   \Bigg\},
\end{align*}
holds true. The constants $\beta$ and $C$ depend only on $N,p,C_o,C_1,A$ and in the case $1<p<2$ also on $k$.
\end{proposition}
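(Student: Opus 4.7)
The plan is to combine Proposition~\ref{prop:apriori}, which gives Campanato-type decay under an intrinsic sup-bound on $Dw$, with the quantitative $L^\infty$-gradient bounds of Corollary~\ref{cor:L-infty-est-p<2} (for $1<p\le 2$) and Corollary~\ref{cor:L-infty-est-p>2-intrinsic-improved} (for $p\ge 2$). These corollaries apply directly to $w$ with $C_2=0$, since the coefficient $a(x_o,t)$ in~\eqref{comparison-problem} depends only on $t$.

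\textbf{Sup-bound and Young-type interpolation.} First I would apply the appropriate corollary to $w$ on $Q_R^{(\lambda)}(z_o)$, obtaining a sup-bound on $Q_{R/4}^{(\lambda)}(z_o)$. In the sub-quadratic range, the oscillation $\boldsymbol\omega$ appearing in Corollary~\ref{cor:L-infty-est-p<2} is controlled via Lemma~\ref{lem:comp-1} (giving $\osc w\le\sqrt{k}\,\osc u$) together with Lemma~\ref{lem:osc-p<2} and~\eqref{def-lambda}, producing $\boldsymbol\omega\le CR\lambda$. The $L^p$-integral of $Dw$ is transferred to $Du$ via the energy inequality of Lemma~\ref{lem:comp-2}. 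Abbreviating
\begin{equation*}
\mathcal{I}:=\biint_{Q_R^{(\lambda)}(z_o)}(\mu^2+|Du|^2)^{p/2}\,\dx\dt
\end{equation*}
and absorbing the $\epsilon\mu$-term into $\epsilon\lambda$ via $\mu\le A\lambda$, one obtains $M:=\sup_{Q_{R/4}^{(\lambda)}(z_o)}|Dw|\le C\max\{\epsilon^{-\theta_1}\lambda^{1/2}\mathcal{I}^{1/(2p)},\,\epsilon\lambda\}$ in the sub-quadratic case and $M\le C\max\{\epsilon^{-\theta_1}\lambda^{(2-p)/2}\mathcal{I}^{1/2},\,\epsilon\lambda\}$ in the super-quadratic case. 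Raising to the $p$-th power and then applying Young's inequality to $\lambda^{p/2}\mathcal{I}^{1/2}$ with conjugate exponents $(2,2)$ and parameter $\delta=\epsilon^{\theta_1 p+p-1}$ (sub-quadratic case), respectively the side bound $\mathcal{I}^{(p-2)/2}\le C(A)\lambda^{p(p-2)/2}$ that follows from~\eqref{def-lambda} (super-quadratic case), yields
\begin{equation*}
   M^p \le C\max\bigl\{\epsilon^{-\theta_2}\mathcal{I},\ \epsilon^{p-1}\lambda^p\bigr\}
\end{equation*}
for some $\theta_2=\theta_2(N,p)$.

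\textbf{Invoking the Campanato decay.} Next I would set $\mathfrak{A}\lambda:=\max\{\lambda,(\mu^2+M^2)^{1/2}\}$, so that $\sup_{Q_{R/4}^{(\lambda)}(z_o)}(\mu^2+|Dw|^2)^{1/2}\le\mathfrak{A}\lambda$. Applying the sup-bound from the previous step with $\epsilon=1$, together with $\mathcal{I}\le C(A)\lambda^p$, gives the crude estimate $M\le C(A)\lambda$, whence $\mathfrak{A}$ is universally bounded by a constant depending only on $A$, and the constants $C,\beta$ in Proposition~\ref{prop:apriori} depend only on $N,p,C_o,C_1$ and $A$. Applying the proposition with $\mathfrak{R}=R/8$ then yields for every $\tau\in(0,R/8]$
\begin{equation*}
   \biint_{Q_\tau^{(\lambda)}(z_o)}\bigl|Dw-(Dw)_{z_o,\tau}^{(\lambda)}\bigr|^p\,\dx\dt \le C\Bigl(\frac{\tau}{R}\Bigr)^{\beta p}\lambda_\mu^p.
\end{equation*}
Combining the bound $\lambda_\mu^p\le(\mathfrak{A}\lambda)^p\le C(M^p+\lambda^p)$ with the previous estimate for $M^p$ and using $(\tau/R)^{\beta p}\le(\tau/R)^\beta$ (valid since $p>1$ and $\tau\le R$), one reaches the target after absorbing the residual $\lambda^p$-term into the $\epsilon^{p-1}\lambda^p$-entry of the max.

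\textbf{Main obstacle.} The principal technical step is the calibration of Young's inequality in the interpolation, so that the two terms of the resulting max carry precisely the factors $\epsilon^{-\theta_2}$ and $\epsilon^{p-1}$ demanded by the statement, with no spurious cross contributions. A secondary subtlety lies in the absorption of the residual $\lambda^p$-term arising from the bound $\lambda_\mu^p\le C(M^p+\lambda^p)$ into the $\epsilon^{p-1}\lambda^p$-entry of the max: this is precisely the reason the statement contains the factor $\epsilon^{p-1}$ rather than the sharper $\epsilon^p$ that one might naively expect from the inequality $M\le C\epsilon\lambda$ in the trivial case.
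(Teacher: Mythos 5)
Your first step is sound and essentially reproduces the paper's: the interpolated sup-bound for $Dw$ on $Q_{R/4}^{(\lambda)}(z_o)$ via Corollaries~\ref{cor:L-infty-est-p<2} and \ref{cor:L-infty-est-p>2-intrinsic-improved} (with the oscillation controlled through Lemma~\ref{lem:comp-1}, Remark~\ref{rem:osc} and \eqref{def-lambda}, and the energy transferred through Lemma~\ref{lem:comp-2}), and your derivation of $M^p\le C\max\{\epsilon^{-\theta_2}\mathcal I,\epsilon^{p-1}\lambda^p\}$ with $\mathcal I:=\biint_{Q_R^{(\lambda)}(z_o)}(\mu^2+|Du|^2)^{p/2}\,\dx\dt$ is correct. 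The gap is in the last step. Proposition~\ref{prop:apriori} returns the right-hand side $C(\tau/R)^{\beta p}\lambda_\mu^p$, and with your choice $\mathfrak A\lambda=\max\{\lambda,(\mu^2+M^2)^{1/2}\}$ one still has $\lambda_\mu\ge\sqrt{\lambda^2-\mu^2}$, so $\lambda_\mu$ is of size $\lambda$ whenever $\mu\ll\lambda$, no matter how small $M$ is. Consequently your bound $\lambda_\mu^p\le C(M^p+\lambda^p)$ leaves an $\epsilon$-free term $C(\tau/R)^\beta\lambda^p$, and the announced ``absorption'' is impossible: for small $\epsilon$ this term is not dominated by $C'\epsilon^{p-1}\lambda^p$, nor by $\epsilon^{-\theta_2}\mathcal I$, since \eqref{def-lambda} gives only the upper bound $\mathcal I\le(A\lambda)^p$ and $\mathcal I$ may be far smaller than $\lambda^p$ (take $Du$ and $\mu$ negligible while $\lambda$ is of order one). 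The $\epsilon$-structure of the max is precisely the point of the proposition (it is what later allows the $\lambda$-term to be reabsorbed in the proof of the gradient bound), so it cannot be given away at this stage.

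What is missing is the power-reduction device the paper uses before invoking the decay estimate: write $|Dw-(Dw)_{z_o,\tau}^{(\lambda)}|^p\le\big(2\|Dw\|_{L^\infty(Q_{R/4}^{(\lambda)}(z_o))}\big)^{p-1}\,|Dw-(Dw)_{z_o,\tau}^{(\lambda)}|$ and apply Jensen's inequality, so that the Campanato quantity is bounded by $C\|Dw\|_{L^\infty}^{p-1}$ times the $\tfrac1p$-th power of itself. Inserting the decay $C(\tau/R)^{\beta p}\lambda^p$ only inside that $\tfrac1p$-th power contributes a single factor $(\tau/R)^{\beta}\lambda$, while the remaining $p-1$ powers are channelled through your $\epsilon$-interpolated sup bound: in the alternative $\|Dw\|_{L^\infty}\le C\epsilon\lambda$ this yields exactly $\epsilon^{p-1}\lambda^p$, and in the other alternative the case condition gives $\lambda<\epsilon^{-2(\theta+1)}\mathcal I^{1/p}$, which converts the resulting $\lambda^{\frac{p+1}2}\epsilon^{-\theta(p-1)}\mathcal I^{\frac{p-1}{2p}}$ into $\epsilon^{-\theta_2}\mathcal I$. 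Without this step (or an equivalent one) your argument does not yield the stated estimate.
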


\begin{proof}
Prior to estimating $Dw$, we recall two comparison estimates to be used. In fact, Lemma \ref{lem:comp-1} gives the oscillation estimate
\begin{equation}\label{est:osc-w}
    \osc_{Q_{R}^{(\lambda)}(z_o)} w
    \le 
    \sqrt{k}
    \osc_{Q_{R}^{(\lambda)}(z_o)} u
    =:
    \sqrt{k}\,
    \boldsymbol\omega_{z_o,R}^{(\lambda)},
\end{equation}
whereas Lemma \ref{lem:comp-2} gives the {\it energy inequality}
\begin{align}\label{energy-lem:comp-2}
    \iint_{Q_{R}^{(\lambda)}(z_o)}
    \big(\mu^2+|Dw|^2\big)^\frac{p}2\,\dx\dt
    \le
    C \iint_{Q_{R}^{(\lambda)}(z_o)}
    \big(\mu^2+|Du|^2\big)^\frac{p}2\,\dx\dt
\end{align}
with $C=C(p,C_o,C_1)$. 

Next, we aim to bound $Dw$ via the $L^p$-norm of $Du$ interpolating with $\lm$. The gradient bounds from Section \ref{S:grad-bound} are at our disposal as $a(x_o,t)$ depends only on $t$.   
Due to the different forms of quantitative $L^\infty$-gradient estimates we distinguish between two cases. In the {\bf sub-quadratic case} $1<p<2$, Corollary \ref{cor:L-infty-est-p<2} and the oscillation estimate \eqref{est:osc-w} yield 
\begin{align}\label{sup-bound-Dw}\nonumber
    \sup_{Q_{\frac14 R}^{(\lambda)}(z_o)}& |Dw|\\
    &\le 
    \frac{C\lambda^\frac12}{\epsilon^\theta}
    \bigg[ 
        \frac{\boldsymbol\omega_{z_o,R}^{(\lambda)}}{\lambda R} +
        \Big(\frac{\boldsymbol\omega_{z_o,R}^{(\lambda)}}{\lambda R}\Big)^\frac{2}{p}
    \bigg]^\frac{N(2-p)+2p}{4p}
    \bigg[
    \biint_{Q_{R}^{(\lambda)}(z_o)} 
    |Dw|^p\,\dx\dt
    \bigg]^\frac1{2p}\vee\eps\lambda
\end{align}
for every $\epsilon\in (0,1]$, since $\lambda\ge\frac{\mu}{A}$ as a consequence of \eqref{def-lambda}. 
The constant $C$ depends only on $N,k,p,C_o, C_1$ and $A$, while $\theta$ on $N$ and $p$. Here, $k$ enters $C$ via \eqref{est:osc-w}. 
Now we claim  $\boldsymbol\omega_{z_o,R}^{(\lm)}
    \le C\lm R$.
Indeed, by \eqref{def-lambda} and $Q_{ R}^{(\lambda)}(z_o)\subset Q_{R_2}(\tilde z)$ we have
\begin{equation*}
      A\lambda
      \ge 
      \Big(\mu^2+\|Du\|_{L^\infty(Q_{R_2}(\tilde z))}^2\Big)^{\frac12}
      \ge
    \Big(\mu^2+\|Du\|_{L^\infty(Q_{ R}^{(\lambda)}(z_o))}^2\Big)^{\frac12}.
\end{equation*}
Therefore, from Remark \ref{rem:osc} we infer
\begin{equation*}
    \boldsymbol\omega_{z_o,R}^{(\lm)}
    \le 
    C(N,C_1,A)R
    \Big( \underbrace{\|Du\|_{L^\infty(Q_{ R}^{(\lambda)}(z_o))}}_{\le\, A\lambda}+\lambda\Big)
    \le
    C(N,C_1,A)R\lambda.
\end{equation*}
Using this  as well as  \eqref{energy-lem:comp-2} in \eqref{sup-bound-Dw}, we have shown in the case $1<p<2$ that 
\begin{align}\label{sup-bound-Dw-p<2}
    \sup_{Q_{\frac14 R}^{(\lambda)}(z_o)} |Dw|
    &\le 
    \frac{C\lambda^\frac12}{\epsilon^\theta}
    \bigg[
    \biint_{Q_{R}^{(\lambda)}(z_o)} 
    \big(\mu^2+|Du|^2\big)^\frac{p}2\,\dx\dt
    \bigg]^\frac1{2p}\vee \eps\lambda
\end{align}
for every $\epsilon\in (0,1]$ with  constants $C=C(N,k,p,C_o,C_1,A)$ and $\theta=\theta (N,p)>0$.

Next, we shall derive an similar estimate as \eqref{sup-bound-Dw-p<2} in the {\bf super-quadratic case $p\ge 2$}. For this purpose we apply Corollary \ref{cor:L-infty-est-p>2-intrinsic-improved} with $C_2=0$ and $\sigma=\frac14$ on $Q_{R}^{(\lambda)}(z_o)$ and then the energy estimate \eqref{energy-lem:comp-2}.  In the resulting inequality we reduce the exponent of the energy integral of $u$ from $\frac12$ to $\frac{1}{2p}$ using condition \eqref{def-lambda}. In this way we get
\begin{align}\label{est:sup-Dw-p>2}\nonumber
     \sup_{Q_{\frac14 R}^{(\lambda)}(z_o)} |Dw|
    &\le 
    \frac{C}{\epsilon^\theta}
    \bigg[
    \lambda^{2-p}
    \biint_{Q_{R}^{(\lambda)}(z_o)} 
    |Dw|^p\,\dx\dt
    \bigg]^\frac1{2}\vee\eps\lm\\\nonumber
    &\le 
    \frac{C\lambda^\frac{2-p}2}{\epsilon^\theta}
    \bigg[
    \biint_{Q_{R}^{(\lambda)}(z_o)} 
    \big(\mu^2+|Du|^2\big)^\frac{p}2\,\dx\dt
    \bigg]^\frac1{2}\vee\eps\lm\\\nonumber
    &=
    \frac{C\lambda^\frac{1}2}{\epsilon^\theta}
    \bigg[
    \biint_{Q_{R}^{(\lambda)}(z_o)} 
    \big(\mu^2+|Du|^2\big)^\frac{p}2\,\dx\dt
    \bigg]^\frac1{2p}\\\nonumber
    &\qquad\qquad\cdot
    \lambda^\frac{1-p}2
    \bigg[
    \underbrace{
    \biint_{Q_{R}^{(\lambda)}(z_o)} 
    \big(\mu^2+|Du|^2\big)^\frac{p}2\,\dx\dt
    }_{\le\, (A\lambda)^p}
\bigg]^{\frac{p-1}{2p}}\vee\eps\lm\\
    &\le
    \frac{C\lambda^\frac{1}2}{\epsilon^\theta}
    \bigg[
    \biint_{Q_{R}^{(\lambda)}(z_o)} 
    \big(\mu^2+|Du|^2\big)^\frac{p}2\,\dx\dt
    \bigg]^\frac1{2p}\vee\eps\lm,
\end{align}
for every $\epsilon\in (0,1]$, with a constant $C=C(N,p,C_o,C_1,A)$ and $\theta=\theta (N,p)> 0$. 

Inequalities \eqref{sup-bound-Dw-p<2} and \eqref{est:sup-Dw-p>2} have the same form and they also play a central role in the following. First, we use them with $\epsilon =1$ together with condition \eqref{def-lambda} to ensure that $w$ satisfies an intrinsic coupling on $Q_{\frac14 R}^{(\lambda)}(z_o)$. Indeed, we have
\begin{align*} 
    \sup_{Q_{\frac14 R}^{(\lambda)}(z_o)} |Dw|
    &\le
    C\lambda^\frac12  \bigg[
    \underbrace{
    \biint_{Q_{R}^{(\lambda)}(z_o)} 
    \big(\mu^2+|Du|^2\big)^\frac{p}2\,\dx\dt}_{\le\,(A\lambda)^p}
    \bigg]^\frac1{2p}\vee\lm
    \le C\lm,
\end{align*}
so that
\begin{equation}\label{sub-bound-Dw-lambda}
     \sup_{Q_{\frac14 R}^{(\lambda)}(z_o)} \big(\mu^2+|Dw|^2\big)^\frac12
     \le 
     C\lm.
\end{equation}
Therefore, the requirement of the {\it a priori} estimate from
Proposition \ref{prop:apriori} is fulfilled for $w$ on $Q_{\frac14R}^{(\lambda)}(z_o)$ with $C$ in place of $\mathfrak{A}$. Replacing 
$\mathfrak{R}$ by $\frac18 R$ in Proposition \ref{prop:apriori}, we obtain for any $0<\tau\le\frac18R$ 
\begin{equation}\label{pre-Campanato-Dw}
     \biint_{Q_\tau^{(\lambda)}(z_o) }\big|Dw -(Dw)_{z_o,\tau}^{(\lambda)}\big|^p\,\dx\dt
     \le
     C\Big(\frac{\tau}{R}\Big)^{\beta p}\lambda^p
\end{equation}
for some exponent $\beta=\beta (N,p,C_o,C_1,A)\in (0,1)$. In the sub-quadratic case $\beta$ depends additionally on $k$, entering via $C$ in \eqref{sub-bound-Dw-lambda}. Let us further estimate \eqref{pre-Campanato-Dw} by the gradient bounds in \eqref{sup-bound-Dw-p<2}, resp. \eqref{est:sup-Dw-p>2}:
\begin{align*}
     \biint_{Q_\tau^{(\lambda)}(z_o) }&\big|Dw -(Dw)_{z_o,\tau}^{(\lambda)}\big|^p\,\dx\dt\\
     &\le
     C(p)\|Dw\|^{p-1}_{L^\infty(Q_{\frac14R}^{(\lambda)}(z_o))}
     \bigg[
     \underbrace{
     \biint_{Q_\tau^{(\lambda)}(z_o) }\big|Dw -(Dw)_{z_o,\tau}^{(\lambda)}\big|^p\,\dx\dt}_{\le C(\tau/R)^{\beta p}\lambda^p}
     \bigg]^\frac{1}{p}\\
     &\le
     C\Bigg[
    \frac{\lambda^\frac{1}2}{\epsilon^\theta}
    \bigg[
    \biint_{Q_{R}^{(\lambda)}(z_o)} 
    \big(\mu^2+|Du|^2\big)^\frac{p}2\,\dx\dt
    \bigg]^\frac1{2p}\vee\eps\lm
     \Bigg]^{p-1} \Big(\frac{\tau}{R}\Big)^{\beta }\lambda\\
     &\le
     C\Big(\frac{\tau}{R}\Big)^{\beta }
     \Bigg[
     \frac{\lambda^\frac{p+1}{2}}{\epsilon^{\theta (p-1)}}
     \bigg[
    \biint_{Q_{R}^{(\lambda)}(z_o)} 
    \big(\mu^2+|Du|^2\big)^\frac{p}2\,\dx\dt
    \bigg]^\frac{p-1}{2p} \vee 
    \eps^{p-1}\lm^p
     \Bigg].
\end{align*}
Although this procedure reduces the exponent from $\beta p$ to $\beta$, it interpolates between the energy of $u$ and $\lambda$.

If the second entry in the maximum dominates the first one, the claimed Campanato-type estimate is obvious. Therefore, we can consider the other case:
\begin{align*}
    \frac{\lambda^\frac{p+1}{2}}{\epsilon^{\theta (p-1)}}
     \bigg[
    \biint_{Q_{R}^{(\lambda)}(z_o)} 
    \big(\mu^2+|Du|^2\big)^\frac{p}2\,\dx\dt
    \bigg]^\frac{p-1}{2p}> \eps^{p-1}\lm^p,
\end{align*}
which is the same as
\begin{align*}
    \lm
    <
       \frac{1}{\epsilon^{2(\theta+1)}}
     \bigg[
    \biint_{Q_{R}^{(\lambda)}(z_o)} 
    \big(\mu^2+|Du|^2\big)^\frac{p}2\,\dx\dt
    \bigg]^\frac{1}{p}
    =:
    \frac{\mathsf E^\frac1p}{\epsilon^{2(\theta+1)}},
\end{align*}
where the meaning of $\mathsf E$ is obvious.
Use this bound of $\lm$ to estimate the first entry by
\begin{align*}
       \frac{\lambda^\frac{p+1}{2}}{\epsilon^{\theta (p-1)}}&
     \bigg[
    \biint_{Q_{R}^{(\lambda)}(z_o)} 
    \big(\mu^2+|Du|^2\big)^\frac{p}2\,\dx\dt
    \bigg]^\frac{p-1}{2p}\\
    &<
    \bigg[
    \frac{\mathsf E^\frac1p}{\epsilon^{2(\theta+1)}}
    \bigg]^\frac{p+1}{2} \cdot 
    \frac{\mathsf E^\frac{p-1}{2p}}{\epsilon^{\theta (p-1)}}
    =
    \frac{1}{\eps^{2\theta p+p+1}}
    \biint_{Q_{R}^{(\lambda)}(z_o)} 
    \big(\mu^2+|Du|^2\big)^\frac{p}2\,\dx\dt.
\end{align*}
Joining the two cases we obtain for any $\tau\in(0, \tfrac18R]$ that
\begin{align*}
     \biint_{Q_\tau^{(\lambda)}(z_o) }&\big|Dw -(Dw)_{z_o,\tau}^{(\lambda)}\big|^p\,\dx\dt\\
     &\le
     C\Big(\frac{\tau}{R}\Big)^{\beta }
     \max\Bigg\{
     \epsilon^{-\theta_2}
    \biint_{Q_{R}^{(\lambda)}(z_o)} 
    \big(\mu^2+|Du|^2\big)^\frac{p}2\,\dx\dt \,,\, \epsilon^{p-1}\lambda^p
   \Bigg\},
\end{align*}
where $\theta_2(N,p):= 2\theta p +p+1$. 
\end{proof}

\subsection{A Campanato-type estimate} \label{S:Campanato}
Having the Campanato-type estimate for $Dw$ at hand, the goal of this section is to derive a Campanato-type estimate for $Du$. In order for that, recall $r\in(0,\frac18 R_o)$ and the intermediate radius $R$ so-defined in \eqref{choice-R}. First we write down the mean oscillation of $Du$ and use the quasi-minimality of the mean value to estimate. Then we invoke the comparison estimate from Lemma~\ref{lem:comp-2}, that is,
\begin{align}\nonumber
    \iint_{Q_{R}^{(\lambda)}(z_o)}|Du-Dw|^p\,\dx\dt
    &\le C\, R^{\alpha_\ast p}
    \iint_{Q_{R}^{(\lambda)}(z_o)}
    \big(\mu^2+|Du|^2\big)^\frac{p}2\,\dx\dt,
\end{align}
where $\alpha_\ast$ is defined in \eqref{alpha_ast}, 
as well as the Campanato-type estimate from Proposition~\ref{prop:camp-w} for $Dw$. As such we obtain a preliminary version as follows:
\begin{align}\label{campanato-u}\nonumber
     \biint_{Q_r^{(\lambda)}(z_o) }&\big|Du -(Du)_{z_o,r}^{(\lambda)}\big|^p\,\dx\dt
     \le
     C(p) \biint_{Q_r^{(\lambda)}(z_o) }\big|Du -(Dw)_{z_o,r}^{(\lambda)}\big|^p\,\dx\dt\\\nonumber
     &\le
     C(p)\Big(\frac{R}{r}\Big)^{N+2}
     \biint_{Q_R^{(\lambda)}(z_o) } |Du -Dw|^p\,\dx\dt\\\nonumber
     &\qquad\qquad\qquad\qquad
    +
     C(p)\biint_{Q_r^{(\lambda)}(z_o) }\big|Dw -(Dw)_{z_o,r}^{(\lambda)}\big|^p\,\dx\dt\\\nonumber
     &\le
     C\Big(\frac{R}{r}\Big)^{N+2}R^{\alpha_\ast p}
     \mathsf E_{z_o,R}^{(\lambda)} +
     C\Big(\frac{r}{R}\Big)^{\beta }
     \max
     \Big\{
     \epsilon^{-\theta_2}
    \mathsf E_{z_o,R}^{(\lambda)} \,,\, \epsilon^{p-1}\lambda^p
     \Big\}\\
     &=:\mathbf{I}+\mathbf{II},
\end{align}
where we abbreviated 
$$
    \mathsf E_{z_o,R}^{(\lambda)}
    :=
    \biint_{Q_R^{(\lambda)}(z_o)}\big(\mu^2+|Du|^2\big)^\frac{p}2\,\dx\dt.
$$
Note that the Campanato-type estimate from Proposition~\ref{prop:camp-w} is applicable since $r<\frac18 R$ by the choice of $R$ in~\eqref{choice-R}. The constant $C$ in the term $\mathbf I$ depends only on $p,C_o,C_1$, while the one in $\mathbf{II}$ depends additionally on $N$ and $A$, and in the sub-quadratic case also on $k$. The H\"older exponent $\alpha_\ast$ was defined in \eqref{alpha_ast}, while $\beta$ is the H\"older exponent from \eqref{pre-Campanato-Dw}.

The radius $R$ so-defined in \eqref{choice-R} is intermediate. To proceed further we need to estimate the integral
$\mathsf E_{z_o,R}^{(\lambda)}$ in terms of 
$\mathsf E_{z_o,R_o}^{(\lambda)}$. If we simply enlarge the domain from $Q_{R}^{(\lambda)}(z_o)$ to $Q_{R_o}^{(\lambda)}(z_o)$ in the integral, we would get a factor $(R_o/R)^{N+2}\ge 1$ whose exponent is too large to be absorbed later.
For this reason we perform an interpolation argument and split
\begin{equation*}
\mathsf E_{z_o,R}^{(\lambda)}=\big(\mathsf E_{z_o,R}^{(\lambda)}\big)^{1-\delta}\big(\mathsf E_{z_o,R}^{(\lambda)}\big)^\delta,
\end{equation*}
with some $\delta\in (0,1)$ to be chosen later, and estimate the first term with the help of condition \eqref{def-lambda}.
Thereby we take advantage of the fact that $Q_{R}^{(\lambda)}(z_o)$ is contained in $Q_{R_2}(\tilde z)$. In the second term, we enlarge the domain of integration. In this way we get
\begin{align*}
    \mathsf E_{z_o,R}^{(\lambda)}
    &\le
    (A\lambda)^{p(1-\delta)}\Big(\frac{R_o}{R}\Big)^{\delta (N+2)} \big(\mathsf E_{z_o,R_o}^{(\lambda)}\big)^\delta.
\end{align*}
Assuming that $\alpha_\ast p -\delta (N+2)\ge 0$, and using \eqref{est:radii} we have
\begin{align*}
    \mathbf I
    &\le
    C \Big( \frac{R}{r}\Big)^{N+2}
    \Big( \frac{R_o}{R}\Big)^{\delta (N+2)}R^{\alpha_\ast p}
    (A\lambda)^{p(1-\delta )} 
    \big(\mathsf E_{z_o,R_o}^{(\lambda)}\big)^\delta\\
    &=
    C \Big( \frac{R}{r}\Big)^{N+2} \Big(\frac{R}{R_o}\Big)^{\alpha_\ast p-\delta (N+2)} R_o^{\alpha_\ast p}(A\lambda)^{p(1-\delta )} 
    \big(\mathsf E_{z_o,R_o}^{(\lambda)}\big)^\delta\\
    &\le
    C\Big( \frac{r}{R_o}\Big)^{\alpha_\ast p\kappa- (1+\kappa \delta -\kappa)(N+2)}R_o^{\alpha_\ast p}
    (A\lambda)^{p(1-\delta )} 
    \big(\mathsf E_{z_o,R_o}^{(\lambda)}\big)^\delta\\
    &\le
    C\Big( \frac{r}{R_o}\Big)^{\alpha_\ast p\kappa- (1+\kappa \delta -\kappa)(N+2)}(A\lambda)^{p(1-\delta )} 
    \big(\mathsf E_{z_o,R_o}^{(\lambda)}\big)^\delta.
\end{align*}
To obtain the last line  we used $R_o\le\frac12 R_2\le\frac12$, which holds true  by the choice of $R_o$ in \eqref{choice-R_o}.
For the estimate of $\mathbf{II}$ we use $R\le R_o$ and again \eqref{est:radii} to get
\begin{align*}
    \mathbf{II}
    &\le
    C\Big(\frac{r}{R}\Big)^{\beta }
     \max\bigg\{
     \Big(\frac{R_o}{R}\Big)^{\delta (N+2)}
     \frac{\lambda^{p(1-\delta)}}{\epsilon^{\theta_2}}
    \big(\mathsf E_{z_o,R_o}^{(\lambda)}
    \big)^\delta,\epsilon^{p-1}\lambda^p
     \bigg\}\\
     &=
     C 
     \Big(\frac{r}{R_o}\Big)^{\beta }
    \Big(\frac{R}{R_o}\Big)^{-\beta -\delta (N+2)}
    \max\bigg\{
     \frac{\lambda^{p(1-\delta)}}{\epsilon^{\theta_2}} 
    \big(\mathsf E_{z_o,R_o}^{(\lambda)}
    \big)^\delta,
    \underbrace{
    \Big(\frac{R}{R_o}\Big)^{\delta (N+2)}}_{\le 1}
     \epsilon^{p-1}\lambda^p
     \bigg\}\\
     &\le
     C \Big(\frac{r}{R_o}\Big)^{\beta -\kappa\beta -\kappa\delta (N+2) }
        \max\bigg\{
     \frac{\lambda^{p(1-\delta)}}{\epsilon^{\theta_2}}
    \big(\mathsf E_{z_o,R_o}^{(\lambda)}
    \big)^\delta,
     \epsilon^{p-1}\lambda^p
     \bigg\}.
\end{align*}
The choice of
\begin{equation*}
    \kappa:=\frac{N+2+\beta}{N+2+\beta+\alpha_\ast p}\in (0,1)
\end{equation*}
ensures that the exponents of $r/R_o$  in the estimates for $\mathbf I$ and $\mathbf{II}$ coincide; the common exponent is
\begin{equation*}
    \frac{\alpha_\ast\beta p}{N+2+\beta +\alpha_\ast p}-\delta\frac{(N+2)(N+2+\beta)}{N+2+\beta +\alpha_\ast p}.
\end{equation*}
Therefore, it is natural to choose $\delta$ in the form
\begin{equation*}
    \delta:=\frac{\alpha_\ast\beta p}{2(N+2)(N+2+\beta)}\in (0,1).
\end{equation*}
Note that the condition $\delta\le \frac{\alpha_\ast p}{N+2}$
is satisfied by this  choice.
In this way, the exponents of $r/R_o$ on the right-hand side of the estimates of $\mathbf I$ and $\mathbf{II}$ are equal to 
$\alpha_op$, where
\begin{equation}\label{def:alpha_o}
    \alpha_o:=\frac{\alpha_\ast\beta }{2(N+2+\beta +\alpha_\ast p)}
    \in (0,1).
\end{equation}
Now we use the estimates of $\mathbf I$ and $\mathbf{II}$ in \eqref{campanato-u} to obtain
for every $r\in (0,\frac18 R_o)$ that
\begin{align*}
    \biint_{Q_r^{(\lambda)}(z_o) }\big|Du -(Du)_{z_o,r}^{(\lambda)}\big|^p\,\dx\dt
     &\le
    C \Big(\frac{r}{R_o}\Big)^{\alpha_op }
        \max\bigg\{
     \frac{\lambda^{p(1-\delta)}}{\epsilon^{\theta_2}}
    \big(\mathsf E_{z_o,R_o}^{(\lambda)}
    \big)^\delta,
    \epsilon^{p-1}\lambda^p
     \bigg\}
\end{align*}
for a constant $C=C(N,p,C_o,C_1,A)$. In the sub-quadratic case the constant $C$ additionally depends on $k$. If the second entry in the maximum dominates the first, we have
\begin{align*}
    \biint_{Q_r^{(\lambda)}(z_o) }\big|Du -(Du)_{z_o,r}^{(\lambda)}\big|^p\,\dx\dt
     &\le
    C \Big(\frac{r}{R_o}\Big)^{\alpha_op }
    \epsilon^{p-1}\lambda^p.
\end{align*}
In the opposite case, i.e.~if
\begin{equation*}
    \frac{\lambda^{p(1-\delta)}}{\epsilon^{\theta_2}}
    \big(\mathsf E_{z_o,R_o}^{(\lambda)}
    \big)^\delta
    >
    \epsilon^{p-1}\lambda^p
    \quad\Longleftrightarrow\quad
    \lm<\frac{\big(\mathsf E_{z_o,R_o}^{(\lambda)}
    \big)^\frac1p}{\eps^\frac{\theta_2+p-1}{p\delta}},
\end{equation*}
we obtain for the first entry in the maximum
\begin{align*}
    \frac{\lambda^{p(1-\delta)}}{\epsilon^{\theta_2}}
    \big(\mathsf E_{z_o,R_o}^{(\lambda)}
    \big)^\delta
    &<
    \frac{1}{\epsilon^{\theta_2}}
    \bigg[
    \frac{\big(\mathsf E_{z_o,R_o}^{(\lambda)}
    \big)^\frac1p}{\eps^\frac{\theta_2+p-1}{p\delta}}
    \bigg]^{p(1-\delta)}\big(\mathsf E_{z_o,R_o}^{(\lambda)}
    \big)^\delta
    \equiv
    \frac{\mathsf E_{z_o,R_o}^{(\lambda)}}{\eps^{\theta_3}},
\end{align*}
where $\theta_3:= \frac{\theta_2+(p-1)(1-\delta)}{\delta}$. Note that 
$\theta_3=\theta_3(N,p,C_o,C_1,\alpha)$. In the 
sub-quadratic case $\theta_3$ depends additionally on $k$. Joining the two cases we obtain
\begin{align*}
    \biint_{Q_r^{(\lambda)}(z_o) }\big|Du -(Du)_{z_o,r}^{(\lambda)}\big|^p\,\dx\dt
     &\le
    C \Big(\frac{r}{R_o}\Big)^{\alpha_op }
        \max\Big\{
     \epsilon^{-\theta_3}
    \mathsf E_{z_o,R_o}^{(\lambda)}
    ,
    \epsilon^{p-1}\lambda^p
     \Big\}.
\end{align*}
The preceding inequality holds for any center $z_o\in Q_{R_1}(\tilde z)$ and for any radius $r\in(0,\frac18 R_o)$. For radii $r\in [\frac18 R_o,R_o]$ it still holds as 
\begin{align*}
    \biint_{Q_r^{(\lambda)}(z_o) }&\big|Du -(Du)_{z_o,r}^{(\lambda)}\big|^p\,\dx\dt
     \le
    C(p) \biint_{Q_r^{(\lambda)}(z_o) }|Du|^p\,\dx\dt\\
    &\le
    C(p) \Big( \frac{r}{R_o}\Big)^{\alpha_op}\Big( \frac{R_o}{r}\Big)^{N+2+\alpha_op}
    \biint_{Q_{R_o}^{(\lambda)}(z_o) }|Du|^p\,\dx\dt\\
    &\le
    C(p)8^{N+2+\alpha_o p} \Big( \frac{r}{R_o}\Big)^{\alpha_op}
    \biint_{Q_{R_o}^{(\lambda)}(z_o) }|Du|^p\,\dx\dt\\
    &\le
     C(N,p) \Big( \frac{r}{R_o}\Big)^{\alpha_op} \mathsf E_{z_o,R_o}^{(\lambda)}.
\end{align*}
Together the last two inequalities imply for any 
$z_o\in Q_{R_1}(\tilde z)$, any $r\in (0,R_o]$,
and any $\epsilon\in (0,1]$ that
\begin{align}\label{Campanato-u-1}
       \biint_{Q_r^{(\lambda)}(z_o) }&\big|Du -(Du)_{z_o,r}^{(\lambda)}\big|^p\,\dx\dt
     \le
    C \Big(\frac{r}{R_o}\Big)^{\alpha_op }
      \max  \Big\{
     \epsilon^{-\theta_3}
    \mathsf E_{z_o,R_o}^{(\lambda)},
    \epsilon^{p-1}\lambda^p
     \Big\}.
\end{align}
Note that $C$ depends on $N,p,C_o,C_1,A$, while $\alpha_o\in (0,1)$ and $\theta_3>0$ additionally depend on $\alpha$.  Moreover, in the sub-quadratic case $1<p<2$ all constants depend also on $k$. Choosing $\epsilon =1$ in \eqref{Campanato-u-1} we obtain for any $ z_o\in Q_{R_1}(\tilde z)$ the {\bf Campanato-type estimate}
\begin{align}\label{Campanato-u-2}
       \biint_{Q_r^{(\lambda)}(z_o) }&\big|Du -(Du)_{z_o,r}^{(\lambda)}\big|^p\,\dx\dt
     \le
    C \Big(\frac{r}{R_o}\Big)^{\alpha_op }\lambda^p
    \quad\forall\, r\in (0,R_o].
\end{align}
Here we exploited  the fact that $Q_{R_o}^{(\lambda)}(z_o)\subset Q_{R_2}(\tilde z)$ and condition \eqref{def-lambda} in order to estimate the integral $\mathsf E_{z_o,R_o}^{(\lambda)}$ in terms of $\lambda^p$.

\subsection{Quantitative local gradient bound} \label{sec:gradient-bound}
Based on the Campanato-type estimates \eqref{Campanato-u-1} and \eqref{Campanato-u-2} and still assuming that $|Du|$ is locally bounded, in this section we show an explicit quantitative $L^\infty$-gradient estimate in terms of the oscillation. 

\begin{proposition}\label{prop:quant-bound-for-Du}
Let $p>1$, $\mu\in (0,1]$, and $u$ be a bounded weak solution to the parabolic system \eqref{p-laplace-intro} with \eqref{prop-a-intro} in the sense of Definition \ref{def:weak-loc}. Assume \eqref{ass:Du} holds true. Then there exists a constant $C=C(N,p,C_o,C_1,\alpha,k)$  such that
\begin{align*}
    \sup_{Q_{\varrho}(\tilde z)}|Du|
    &\le
    C\bigg[\frac{\osc_{Q_{2\varrho}(\tilde z)}u}{\varrho}+\Big(
     \frac{\osc_{Q_{2\varrho}(\tilde z)}u}{\varrho}\Big)^\frac{2}{p}+\mu
    \bigg],
\end{align*}
for any $Q_{2\varrho}(\tilde z)\Subset E_T$ with $2\varrho\le 1$.
\end{proposition}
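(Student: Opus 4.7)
The plan is to derive a pointwise bound on $Du$ by combining the Campanato-type estimate \eqref{Campanato-u-2} with an integration-by-parts identity that controls the mean value of $Du$ by the spatial oscillation of $u$, and then to close the argument by iterating on concentric cylinders.

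First I will set $R_1=\varrho$, $R_2=2\varrho$ in the framework~\eqref{def:cylinders}, and choose $\lambda\ge\mu/A$ to be the minimal value satisfying \eqref{def-lambda}, so that $A\lambda=\big(\mu^2+\|Du\|_{L^\infty(Q_{2\varrho}(\tilde z))}^2\big)^{1/2}$, which is finite by~\eqref{ass:Du}. Then $R_o=\tfrac12\min\{1,\lambda^{(p-2)/2}\}\varrho$. At any Lebesgue point $z_o\in Q_\varrho(\tilde z)$ of $Du$, applying \eqref{Campanato-u-2} on the nested dyadic intrinsic cylinders $Q_{R_o 2^{-j}}^{(\lambda)}(z_o)$ (all contained in $Q_{R_2}(\tilde z)$ by the choice of $R_o$), passing from the $L^p$-oscillation on each scale to differences of consecutive mean values via Jensen's inequality, and summing the geometric series $\sum_{j\ge 0}2^{-j\alpha_o}$, yields
\begin{equation*}
    |Du(z_o)-(Du)_{z_o,R_o}^{(\lambda)}|\le C\lambda.
\end{equation*}
For the mean value itself, integration by parts in the spatial variable at a fixed time slice, after subtracting the slice-wise mean of $u$, gives
\begin{equation*}
    \Big|\bint_{B_{R_o}(x_o)}Du(\cdot,\tau)\,\dx\Big|\le\frac{C}{R_o}\osc_{B_{R_o}(x_o)}u(\cdot,\tau),
\end{equation*}
and averaging in time produces $|(Du)_{z_o,R_o}^{(\lambda)}|\le C\omega/R_o$, where $\omega:=\osc_{Q_{2\varrho}(\tilde z)}u$. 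Combining the two bounds,
\begin{equation*}
    |Du(z_o)|\le C\lambda+\frac{C\omega}{\min\{1,\lambda^{(p-2)/2}\}\varrho}\qquad\text{for a.e. }z_o\in Q_\varrho(\tilde z).
\end{equation*}

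Next, a case distinction on whether $\lambda\gtrless 1$, combined with Young's inequality (with exponents $(2/p,\,2/(2-p))$ in the sub-quadratic range, and its dual in the super-quadratic range), absorbs the nonlinear factor $\min\{1,\lambda^{(p-2)/2}\}^{-1}$, producing, after taking $\esssup$ over $Q_\varrho(\tilde z)$,
\begin{equation*}
    \sup_{Q_\varrho(\tilde z)}|Du|\le C\bigg[\frac{\omega}{\varrho}+\Big(\frac{\omega}{\varrho}\Big)^{2/p}\bigg]+C\lambda.
\end{equation*}
It remains to control $\lambda$ itself. Since $\lambda$ is defined through $\|Du\|_{L^\infty(Q_{2\varrho})}$ while the previous bound controls only $\|Du\|_{L^\infty(Q_\varrho)}$, I will iterate the argument on the shrinking family $\varrho_k:=\varrho(1+2^{-k})\searrow\varrho$, with $M_k:=\|Du\|_{L^\infty(Q_{\varrho_k}(\tilde z))}$ and $A\lambda_k:=(\mu^2+M_k^2)^{1/2}$. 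Applying the pointwise bound on consecutive pairs $(\varrho_{k+1},\varrho_k)$ (where $\varrho_k-\varrho_{k+1}=\varrho\cdot 2^{-k-1}$) leads to a recursive inequality of the form
\begin{equation*}
    \lambda_{k+1}\le\frac{C}{A}\lambda_k+\frac{C\cdot 2^k}{A}\bigg[\frac{\omega}{\varrho}+\Big(\frac{\omega}{\varrho}\Big)^{2/p}+\mu\bigg],
\end{equation*}
whose resolution via Lemma~\ref{lem:tech} yields $\lambda\le C\big[\omega/\varrho+(\omega/\varrho)^{2/p}+\mu\big]$.

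The main obstacle is the $A$-dependence of the Campanato constant $C$, which arises from the intrinsic rescaling $v=\mathfrak{A}^{-1}w$ in Proposition~\ref{prop:apriori} and obstructs the naive attempt to make $C/A$ small by taking $A$ large. To overcome this I will replace \eqref{Campanato-u-2} by the $\epsilon$-flexible version \eqref{Campanato-u-1}: after telescoping one obtains a bound of the form $|Du(z_o)|\le C\omega/R_o+C\max\{\epsilon^{-\theta_3/p}\mathsf E^{1/p},\epsilon^{(p-1)/p}\lambda\}$, and controlling $\mathsf E$ via the zero-order energy estimate in Proposition~\ref{prop:energy-est-zero-order} expresses it in terms of $\omega/R_o$, $\lambda^{(p-2)/p}(\omega/R_o)^{2/p}$, and $\mu$. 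Choosing $\epsilon$ sufficiently small then makes the coefficient of $\lambda$ less than, say, $A/2$, at the price of a larger but data-dependent constant multiplying the oscillation terms, so that the iteration closes and yields the desired quantitative bound.
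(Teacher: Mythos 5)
Your overall architecture follows the paper's (Campanato estimate, telescoping to the Lebesgue representative of $Du$, a bound for the mean of $Du$, absorption over radii), and several pieces are sound: the telescoping that gives $|Du(z_o)-(Du)^{(\lambda)}_{z_o,R_o}|\le C\lambda$, the divergence-theorem bound $\big|\bint_{B_{R_o}(x_o)}Du(\cdot,\tau)\,\dx\big|\le C\,\mathrm{osc}/R_o$ (a nice substitute for the paper's use of the zero-order energy estimate at that point), the sub-quadratic Young step, and your own diagnosis that the first-pass recursion $\lambda_{k+1}\le (C/A)\lambda_k+\dots$ is circular because the Campanato constant depends on $A$. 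The genuine gap is the super-quadratic case. For $p>2$ the factor $\min\{1,\lambda^{(p-2)/2}\}^{-1}=\lambda^{-(p-2)/2}$ (when $\lambda\le1$) is a \emph{negative} power of $\lambda$, and there is no ``dual Young'' inequality of the form $\lambda^{-(p-2)/2}\,\omega/\varrho\le\delta\lambda+C_\delta(\omega/\varrho)^{2/p}$: letting $\lambda\downarrow0$ with $\omega/\varrho$ fixed shows such an estimate is false. The paper removes this factor by first interpolating $\sup_{Q_{R_1}}|Du|\le\lambda^{(p-2)/p}\big(\sup_{Q_{R_1}}|Du|\big)^{2/p}$, whose positive power of $\lambda$ exactly cancels $\big(\lambda^{-(p-2)/2}\big)^{2/p}$; an alternative in your setting would be to interpolate your two bounds for the slice-wise mean (namely $\le C\lambda$, via Remark~\ref{rem:osc} and \eqref{def-lambda}, and $\le C\omega/R_o$) to obtain $C\lambda^{(p-2)/p}(\omega/(R_2-R_1))^{2/p}$ and then apply Young. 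Neither idea appears in your proposal, and the one move you do name for $p>2$ is invalid.

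A second gap appears in your repaired version: once you switch to \eqref{Campanato-u-1}, the quantity $\mathsf E_{z_o,R_o}^{(\lambda)}$ re-enters and must be controlled by Proposition~\ref{prop:energy-est-zero-order}, which produces the term $\lambda^{p-2}\big(\osc_{Q^{(\lambda)}_{2R_o}(z_o)}u/R_o\big)^2$. For $1<p<2$ this again carries a negative power of $\lambda$ and is \emph{not} of the form $(\text{coefficient})\cdot\lambda$, so ``choosing $\epsilon$ small'' does not shrink it; the paper disposes of it using the intrinsic oscillation estimate $\osc_{Q^{(\lambda)}_{2R_o}(z_o)}u\le CR_o\lambda$ (Remark~\ref{rem:osc}, which requires \eqref{def-lambda} on that cylinder), yielding $\lambda^{p-2}(\osc/R_o)^2\le C(\osc/R_o)^p$ — a step absent from your sketch. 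Finally, a minor point: your hand-rolled dyadic iteration has an inhomogeneous term growing like $2^{k\max\{1,2/p\}}$, so it only closes if the absorption factor is below $2^{-\max\{1,2/p\}}$; it is cleaner to keep $\lambda$ attached to the outer radius $R_2$ and invoke Lemma~\ref{lem:tech} over $\varrho\le R_1<R_2\le2\varrho$, as the paper does. With the intrinsic oscillation estimate and the $p\ge2$ interpolation supplied, your scheme essentially reduces to the paper's proof.
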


\begin{proof}
The setup  is the same as described in \eqref{def:cylinders}. In particular, $Q_{2\rho}(\tilde z)\Subset E_T$ denotes a standard cylinder with radius $2\varrho\le 1$ and $R_1,R_2$ are radii such that $\rho\le R_1<R_2\le 2\rho$.  
Consider $z_o\in Q_{R_1}(\tilde z)$ and $\lambda\ge \frac{\mu}{A}$ satisfying \eqref{def-lambda} for some $A\ge 1$. 
This choice allows us to apply the Campanato-type estimates \eqref{Campanato-u-1} based on which we first show that the means $(Du)_{z_o,r}^{(\lambda)}$ are convergent in the limit $r\downarrow 0$. To this aim we consider a dyadic sequence of radii $r_i:=2^{-i}R_o$, with $i\in\N_0$, and show that the means $(Du)_{z_o,r_i}^{(\lambda)}$ form a Cauchy sequence. Thus, let $j<\ell$ be in $\N_0$. Using~\eqref{Campanato-u-1} with $r_i$ for $i\in\{ j,\dots,\ell-1\}$ instead of $r$, we have
\begin{align}\label{Campanato-r_i}
    \biint_{Q_{r_{i}}^{(\lambda)}(z_o)} \big| Du-(Du)_{z_o,r_i}^{(\lambda)}\big|^p\,\dx\dt
    &\le
    C\Big(\frac{r_i}{R_o}\Big)^{\alpha_op}
     \max\Big\{
     \epsilon^{-\theta_3}
     \mathsf E_{z_o,R_o}^{(\lambda)},
     \eps^{p-1}\lm^p
    \Big\},
\end{align}
so that 
\begin{align}\label{Cauchy-sequence}\nonumber
    \big| (Du)_{z_o,r_\ell}^{(\lambda)}-(Du)_{z_o,r_j}^{(\lambda)}\big|
    &\le
    \sum_{i=j}^{\ell-1}
    \big| (Du)_{z_o,r_{i+1}}^{(\lambda)}-(Du)_{z_o,r_i}^{(\lambda)}\big|\\\nonumber
    &\le
    \sum_{i=j}^{\ell-1}\biint_{Q_{r_{i+1}}^{(\lambda)}(z_o)} \big| Du-(Du)_{z_o,r_i}^{(\lambda)}\big|\,\dx\dt\\\nonumber
    &\le 2^{N+2}\sum_{i=j}^{\ell-1}
    \bigg[
    \biint_{Q_{r_{i}}^{(\lambda)}(z_o)} \big| Du-(Du)_{z_o,r_i}^{(\lambda)}\big|^p\,\dx\dt
    \bigg]^\frac1p\\\nonumber
    &\le
    C 
    \max\Big\{
     \epsilon^{-\frac{\theta_3}p}
     \big(\mathsf E_{z_o,R_o}^{(\lambda)}\big)^\frac1p,
     \eps^\frac{p-1}p\lm
    \Big\} \sum_{i=j}^{\ell-1}\Big(\frac{r_i}{R_o}\Big)^{\alpha_o }\\
    &\le
    C \frac{2^{-j\alpha_o}}{1-2^{-\alpha_o }}
    \max\Big\{
     \epsilon^{-\frac{\theta_3}p}
     \big(\mathsf E_{z_o,R_o}^{(\lambda)}\big)^\frac1p,
     \eps^\frac{p-1}p\lm
    \Big\}.
\end{align}
Recall that $C$ depends on $N,p,C_o,C_1,A$, while $\alpha_o\in (0,1)$ and $\theta_3>0$ additionally depend on $\alpha$.
Since
the right-hand side converges to $0$ as $j\to\infty$, we conclude that the sequence of mean values $ \big( (Du)_{z_o, r_i}^{(\lambda)}\big)_{i\in \N_0}$ is a Cauchy sequence. Therefore, the limit 
\begin{equation*}
    \Gamma_{z_o}:=\lim_{i\to\infty} (Du)_{z_o, r_i}^{(\lambda)}\in \R^{kN}
\end{equation*}
exists. This allows us to pass to the limit $\ell\to\infty$ in \eqref{Cauchy-sequence} with the result
\begin{align}\label{discrete-Lebesgue}
    \big| \Gamma_{z_o}-(Du)_{z_o,r_j}^{(\lambda)}\big|
    &\le
    \frac{C}{1-2^{-\alpha_o }}\Big(\frac{r_j}{R_o}\Big)^{\alpha_o }
    \max\Big\{
     \epsilon^{-\frac{\theta_3}p}
     \big(\mathsf E_{z_o,R_o}^{(\lambda)}\big)^\frac1p,
     \eps^\frac{p-1}p\lm
    \Big\}.
\end{align}
The last inequality true for the sequence $r_j$ actually admits a ``continuous" version for all radii $r\in (0,R_o]$.
In fact, given $r\in (0,R_o]$, we choose $j\in\N_0$ with $r_{j+1}<r\le r_j$. In view of \eqref{discrete-Lebesgue} and \eqref{Campanato-r_i} we obtain
\begin{align}\nonumber\label{cont-Lebesgue}
    \big| \Gamma_{z_o}-(Du)_{z_o,r}^{(\lambda)}\big|
    &\le
    \big| \Gamma_{z_o}-(Du)_{z_o,r_j}^{(\lambda)}\big|
    +
    \big| (Du)_{z_o,r_j}^{(\lambda)}-(Du)_{z_o,r}^{(\lambda)}\big|\\\nonumber
    &\le
    \big| \Gamma_{z_o}-(Du)_{z_o,r_j}^{(\lambda)}\big|
    + 
    2^{N+2}
     \bigg[\biint_{Q_{r_j}^{(\lambda)}(z_o)}\big| Du- (Du)_{z_o,r_j}^{(\lambda)}\big|^p\,\dx\dt\bigg]^{\frac{1}{p}}\\
    &\le 
    C \Big(\frac{r}{R_o}\Big)^{\alpha_o }
    \max\Big\{
     \epsilon^{-\frac{\theta_3}p}
     \big(\mathsf E_{z_o,R_o}^{(\lambda)}\big)^\frac1p,
     \eps^\frac{p-1}p\lm
    \Big\}.
\end{align}
This shows 
\begin{equation*}
    \Gamma_{z_o}=\lim_{r\downarrow 0} (Du)_{z_o,r}^{(\lambda)},
\end{equation*}
which means that $\Gamma_{z_o}$ is the Lebesgue representative of $Du$ in $z_o$. {Estimate \eqref{cont-Lebesgue} essentially yields the $C^{1,\al}$-regularity of $u$ which will be elaborated in the next section. Here, we first exploit it to obtain the claimed gradient bound. Indeed, we choose
\begin{equation}\label{lambda-gradient-bound}
    \lambda
    :=
    \Big(\mu^2+\|Du\|_{L^\infty(Q_{R_2}(\tilde z) )}^2\Big)^{\frac12},
\end{equation}
so that \eqref{def-lambda} is  satisfied with $A=1$.}
Choosing $r=R_o$ in \eqref{cont-Lebesgue}, we get
\begin{align}\nonumber\label{est:Gamma_z_o}
    \big|\Gamma_{z_o}\big|
    &\le
    \big| (Du)_{z_o,R_o}^{(\lambda)}\big|
    +
    \big|\Gamma_{z_o}-(Du)_{z_o,R_o}^{(\lambda)}\big|\\\nonumber
    &\le
    \biint_{Q_{R_o}^{(\lambda)}(z_o)}|Du|\,\dx\dt
    +
    C\max\Big\{
     \epsilon^{-\frac{\theta_3}p}
     \big(\mathsf E_{z_o,R_o}^{(\lambda)}\big)^\frac1p,
     \eps^\frac{p-1}p\lm
    \Big\}\\
    &\le
    C\max\Big\{
     \epsilon^{-\frac{\theta_3}p}
     \big(\mathsf E_{z_o,R_o}^{(\lambda)}\big)^\frac1p,
     \eps^\frac{p-1}p\lm
    \Big\}.
\end{align}
We estimate the integral  on the right-hand side of \eqref{est:Gamma_z_o} using the energy inequality from Proposition~\ref{prop:energy-est-zero-order} in the form given in Remark~\ref{rem:energy-est-zero-order} 
on $Q_{R_o}^{(\lambda)}(z_o)\subset Q_{2R_o}^{(\lambda)}(z_o)$ with $\xi= (u)_{z_o,2R_o}^{(\lambda)}$. In this way we get
\begin{align}\label{energy-bound-Ro}\nonumber
      \mathsf E_{z_o,R_o}^{(\lambda)}
      &\le
      C
      \biint_{Q_{2R_o}^{(\lambda)}(z_o)}\Bigg[
      \frac{\big|u-(u)_{z_o,2R_o}^{(\lambda)}\big|^p}{(2R_o)^p} +
      \lambda^{p-2}\frac{\big|u-(u)_{z_o,2R_o}^{(\lambda)}\big|^2}{(2R_o)^2} +\mu^p\Bigg]
 \,\dx\dt\\
    &\le 
    C\Bigg[
    \bigg(\frac{\osc_{Q_{2R_o}^{(\lambda)}(z_o)}u}{2R_o}\bigg)^p
    +
    \lambda^{p-2}
    \bigg(\frac{\osc_{Q_{2R_o}^{(\lambda)}(z_o)}u}{2R_o}\bigg)^2
    +
    \mu^p
    \Bigg]
\end{align}
with $C=C(N,p,C_o,C_1)$. 

We now distinguish between the sub- and super-quadratic case. First consider the case $1<p<2$.  Since $Q_{2R_o}^{(\lambda)}(z_o)\subset Q_{R_2}(\tilde z)$, the choice of $\lambda$ from  \eqref{lambda-gradient-bound} yields
\begin{equation*}
      \lambda 
      =
      \Big(\mu^2+\|Du\|_{L^\infty(Q_{R_2}(\tilde z))}^2\Big)^{\frac12}
      \ge
    \Big(\mu^2+\|Du\|_{L^\infty(Q_{ 2R_o}^{(\lambda)}(z_o))}^2\Big)^{\frac12}.
\end{equation*}
This allows us to apply Remark \ref{rem:osc} and conclude that
\begin{align*}
    \osc_{Q_{ 2R_o}^{(\lambda)}(z_o)} u
    &\le 2CR_o\Big( \underbrace{\| Du\|_{L^\infty( Q_{ 2R_o}^{(\lambda)}(z_o))}}_{\le\, \lambda}+\lambda\Big)
    \le 
    4CR_o\lambda,
\end{align*}
where $C=C(N,C_1)$.
Using this inequality in \eqref{energy-bound-Ro} to bound the second term on the right-hand side yields
\begin{align*}
    \mathsf E_{z_o,R_o}^{(\lambda)}
    &\le
    C
    \Bigg[
    \bigg(\frac{\osc_{Q_{2R_o}^{(\lambda)}(z_o)}u}{2R_o}\bigg)^p
    +
    \mu^p
    \Bigg].
\end{align*}
In the super-quadratic case $p\ge 2$ we observe by Young's
inequality that
\begin{align*}
    X^p+\lm^{p-2}X^2
    &\le \tilde\eps^{p-1}\lm^p+ 2\tilde\eps^{-\frac12 (p-1)(p-2)}X^p,    
\end{align*}
where we denoted $X=\osc_{Q_{2R_o}^{(\lambda)}(z_o)}u/2R_o$ for short and $\tilde\eps\in(0,1]$. This observation in \eqref{energy-bound-Ro} yields
\begin{align}\label{energy-Ro}
      \mathsf E_{z_o,R_o}^{(\lambda)}
    &\le 
    C \tilde \epsilon^{p-1}\lambda^p
    +
    C\tilde\epsilon^{-\frac12(p-1)(p-2)} 
    \bigg(\frac{\osc_{Q_{2R_o}^{(\lambda)}(z_o)}u}{2R_o}\bigg)^p
    +C\mu^p.
\end{align}
Overall we have shown that \eqref{energy-Ro}
holds true for any $p>1$ with a constant $C$ depending only on $N,p,C_o$, and $C_1$, and with $\frac12 (p-1)(p-2)$ replaced by $\frac12 (p-1)(p-2)_+$. 

Using this inequality in \eqref{est:Gamma_z_o} we obtain
\begin{align*}
     |\Gamma_{z_o}|
     &\le 
     C\Bigg[
     \Big( \epsilon^\frac{p-1}{p}+
     \eps^{-\frac{\theta_3}{p}}\tilde\eps^\frac{p-1}{p}\Big)\lm+
     \eps^{-\frac{\theta_3}{p}}\bigg(
     \tilde\eps^{-\frac{(p-1)(p-2)_+}{2p}}
     \frac{\osc_{Q_{2R_o}^{(\lambda)}(z_o)}u}{2R_o}
    +
    \mu\bigg)\Bigg].
\end{align*}
Choosing $\tilde\eps :=\eps^{1+\frac{\theta_3}{p-1}}$ we get
\begin{align*}
     |\Gamma_{z_o}|
     &\le 
     C\Bigg[
     \epsilon^\frac{p-1}{p}\lambda
     +
     \eps^{-\theta_4}
     \bigg(
     \frac{\osc_{Q_{2R_o}^{(\lambda)}(z_o)}u}{2R_o}
    +
    \mu\bigg)\Bigg].
\end{align*}
with a positive exponent $\theta_4$ depending on $N,p,C_o,C_1,\alpha$, and in the sub-quadratic case $p<2$ also on $k$. The preceding inequality holds true for any $z_o\in Q_{R_1}(\tilde z)$. The oscillation term on the right-hand side can be estimated from above using the set inclusion $Q_{2R_o}^{(\lambda)}(z_o)\subset Q_{R_2}(\tilde z)\subset Q_{2\varrho}(\tilde z)$. The latter applies because $R_2\le 2\varrho$. Using the abbreviation $\boldsymbol\omega :=\osc_{Q_{2\varrho}(\tilde z)} u$, and recalling that $\Gamma_{z_o}$ is the Lebesgue representative of $Du$ at $z_o$
we have for almost every $z_o\in Q_{R_1}(\tilde z)$ that
\begin{align*}
    |Du(z_o)|
     &\le 
     C\bigg[
     \epsilon^\frac{p-1}{p}\lambda + \epsilon^{-\theta_4}
     \Big(
     \frac{\boldsymbol\omega}{2R_o}
    +
    \mu\Big)\bigg].
\end{align*}
Recalling the definition of $R_o$ from \eqref{choice-R_o} and passing to the supremum over $Q_{R_1}(\tilde z)$ we get
\begin{align}\label{sup-1}
    \sup_{Q_{R_1}(\tilde z)}|Du|
    &\le
     C\bigg[
     \epsilon^\frac{p-1}{p}\lambda + \epsilon^{-\theta_4}\big(1+\lambda^\frac{2-p}{2}\big)
     \frac{\boldsymbol\omega}{R_2-R_1}+\epsilon^{-\theta_4}\mu\bigg].
\end{align}

Based the above estimate, we now perform an interpolation argument which differs slightly depending on $p$.
In the case $1<p<2$ the exponent $\frac{2-p}{2}$ is non-negative, so that by Young's inequality with exponents $\frac{2}{2-p}$ and $\frac{2}{p}$ we can conclude
\begin{align*}
    \epsilon^{-\theta_4}\lambda^\frac{2-p}{2}\frac{\boldsymbol\omega}{R_2-R_1}
    &\le
    \epsilon^\frac{p-1}{p}\lambda +\underbrace{\epsilon^{-\frac2p(\theta_4+\frac{(p-1)(2-p)}{2p})}}_{=:\,\epsilon^{-\theta_5}}
    \Big(
    \frac{\boldsymbol\omega}{R_2-R_1}\Big)^\frac{2}{p}.
\end{align*}
Together with the choice of $\lambda$ from \eqref{lambda-gradient-bound} this leads to
\begin{align*}
    \sup_{Q_{R_1}(\tilde z)}|Du|
    &\le
     C\bigg[
     \epsilon^\frac{p-1}{p}
     \lambda
     +
     \epsilon^{-\theta_4}\frac{\boldsymbol\omega}{R_2-R_1}
     + \epsilon^{-\theta_5}
     \Big(
     \frac{\boldsymbol\omega}{R_2-R_1}\Big)^\frac{2}{p}+\epsilon^{-\theta_4}\mu\bigg]\\
     &\le
     C
     \epsilon^\frac{p-1}{p}
     \sup_{Q_{R_2}(\tilde z)}\big(\mu^2+|Du|^2\big)^\frac12\\
     &\phantom{\le\,}
     +
     C\bigg[
     \epsilon^{-\theta_4}\frac{\boldsymbol\omega}{R_2-R_1}
     +
     \epsilon^{-\theta_5}
     \Big(
     \frac{\boldsymbol\omega}{R_2-R_1}\Big)^\frac{2}{p}+\epsilon^{-\theta_4}\mu\bigg]\\
     &\le
     C
     \epsilon^\frac{p-1}{p}
     \sup_{Q_{R_2}(\tilde z)}|Du|
     +C\bigg[
     \epsilon^{-\theta_4}\frac{\boldsymbol\omega}{R_2-R_1}
     + \epsilon^{-\theta_5}
     \Big(
     \frac{\boldsymbol\omega}{R_2-R_1}\Big)^\frac{2}{p}+
     \epsilon^{-\theta_4}\mu\bigg].
\end{align*}
Here, we choose $C\epsilon^\frac{p-1}{p}=\frac12$. This fixes $\epsilon $ in dependence on $N,p,C_o,C_1,k$ and yields for any $\varrho\le R_1<R_2\le 2\varrho$ that
\begin{align*}
    \sup_{Q_{R_1}(\tilde z)}|Du|
    &\le
    \tfrac12 \sup_{Q_{R_2}(\tilde z)}|Du|
    +
    C\bigg[\frac{\boldsymbol\omega}{R_2-R_1}+\Big(
     \frac{\boldsymbol\omega}{R_2-R_1}\Big)^\frac{2}{p}+\mu
    \bigg],
\end{align*}
with a constant $C=C(N,p,C_o,C_1,\alpha,k)$. At this point we can apply Lemma \ref{lem:tech} and get the {\bf quantitative local gradient bound}
\begin{align}\label{local-gradient-bound}
    \sup_{Q_{\varrho}(\tilde z)}|Du|
    &\le
    C\bigg[\frac{\boldsymbol\omega}{\varrho}+\Big(
     \frac{\boldsymbol\omega}{\varrho}\Big)^\frac{2}{p}+\mu
    \bigg].
\end{align}
In the super-quadratic case $p\ge 2$ the exponent $\frac{2-p}{2}$ is negative. Therefore, we have to argue in a different way. Using  \eqref{lambda-gradient-bound} we first decrease the exponent in the sup-term from $1$ to $\frac{2}{p}$, gaining a factor $\lambda^\frac{p-2}{2}$. Subsequently, we apply \eqref{sup-1}. The additional power of $\lambda$ then compensates the negative  power $\lambda$ in \eqref{sup-1}. The precise argument is as follows
\begin{align*}
     \sup_{Q_{R_1}(\tilde z)}|Du|
     &\le
     \lambda^\frac{p-2}{p} \Big(\sup_{Q_{R_1}(\tilde z)}\big|Du\big|\Big)^\frac{2}{p}\\
     &\le
     C\lambda^\frac{p-2}{p}
     \bigg[
     \epsilon^\frac{p-1}{p}\lambda + \epsilon^{-\theta_4}\big(1+\lambda^\frac{2-p}{2}\big)
     \frac{\boldsymbol\omega}{R_2-R_1}+\epsilon^{-\theta_4}\mu\bigg]^\frac{2}{p}\\
     &\le
     C\big(\epsilon^\frac{p-1}{p}\big)^\frac{2}{p}\lambda
     +
     C\epsilon^{-\frac{2\theta_4}{p}} \big(\lambda^\frac{p-2}{p}+1\big)
     \Big(\frac{\boldsymbol\omega}{R_2-R_1}\Big)^\frac{2}{p}
     +
     C \lambda^\frac{p-2}{p} \epsilon^{-\frac{2\theta_4}{p}}\mu^\frac{2}{p}.
\end{align*}
This time we choose $\varepsilon$ to satisfy $C\big(\epsilon^\frac{p-1}{p}\big)^\frac{2}{p}=\frac14$ and obtain
\begin{align*}
     \sup_{Q_{R_1}(\tilde z)}|Du|
     &\le
     \tfrac14 \lambda
     +
     C \big(\lambda^\frac{p-2}{p}+1\big)
     \Big(\frac{\boldsymbol\omega}{R_2-R_1}\Big)^\frac{2}{p}
     +
     C \lambda^\frac{p-2}{p}\mu^\frac{2}{p}.
\end{align*}
Applying Young's inequality with exponents $\frac{p}{p-2}$ and $\frac{p}{2}$  to those terms containing  $\lambda^\frac{p-2}{p}$, we obtain for all radii $\varrho\le R_1<R_2\le 2\varrho$ that
\begin{align*}
     \sup_{Q_{R_1}(\tilde z)}|Du|
     &\le
     \tfrac12 \lambda
     +
     C \bigg[
     \frac{\boldsymbol\omega}{R_2-R_1}+ \Big(\frac{\boldsymbol\omega}{R_2-R_1}\Big)^\frac{2}{p} +\mu
     \bigg]\\
     &
     \le
      \tfrac12 \sup_{Q_{R_2}(\tilde z)}|Du|
    +
    C\bigg[\frac{\boldsymbol\omega}{R_2-R_1}+\Big(
     \frac{\boldsymbol\omega}{R_2-R_1}\Big)^\frac{2}{p}+\mu
    \bigg].
\end{align*}
 As in the sub-quadratic case, we recall the choice of $\lambda$ from \eqref{lambda-gradient-bound} and apply the iteration Lemma \ref{lem:tech} which yields the gradient bound  \eqref{local-gradient-bound}. In the super-quadratic case, the constant $C$ depends on $N,p,C_o,C_1$ and $\alpha$. This completes the proof.
\end{proof}

The following result gives a version of the previous gradient estimate over a compact subset of $E_T$.
\begin{corollary}\label{cor:global-gradient-bound}
Let $p>1$, $\mu\in (0,1]$, and $u$ be a bounded weak solution to the parabolic system \eqref{p-laplace-intro} with \eqref{prop-a-intro}  in the sense of Definition \ref{def:weak-loc}. Assume  \eqref{ass:Du} holds true. 
Then there exists a constant $C=C(N,p,C_o,C_1,\alpha,k)$ such that  for any compact subset $\mathsf{K}\subset E_T$ 
with $\rho:= \tfrac14\min\{1,\mathrm{dist}_{\mathrm{par}}(\mathsf{K},\partial_\mathrm{par} E_T)\}$, we have
the quantitative local gradient bound
\begin{equation*}
    \sup_{\mathsf{K}} |Du|
    \le
     C\bigg[\frac{\osc_{E_T}u}{\varrho}+\Big(
     \frac{\osc_{E_T}u}{\varrho}\Big)^\frac{2}{p}+\mu
    \bigg].
\end{equation*}
\end{corollary}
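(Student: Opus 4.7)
The plan is to deduce the global gradient bound on $\mathsf{K}$ from the local version in Proposition~\ref{prop:quant-bound-for-Du} via a pointwise/covering argument. Fix an arbitrary point $\tilde z\in\mathsf{K}$ and set $\varrho:=\tfrac14\min\{1,\mathrm{dist}_{\mathrm{par}}(\mathsf{K},\partial_\mathrm{par} E_T)\}$. I would first verify that $Q_{2\varrho}(\tilde z)\Subset E_T$ with $2\varrho\le 1$. Indeed, by the choice of $\varrho$ we have $2\varrho\le \tfrac12\le 1$, and for any $z\in Q_{2\varrho}(\tilde z)$ the parabolic distance $\mathrm{d}_\mathrm{par}(z,\tilde z)\le 2\varrho+\sqrt{4\varrho^2}=4\varrho\le\mathrm{dist}_{\mathrm{par}}(\mathsf{K},\partial_\mathrm{par} E_T)$, so that $z\in E_T$.

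Now the hypotheses of Proposition~\ref{prop:quant-bound-for-Du} are met at $\tilde z$, which yields
\begin{equation*}
    \sup_{Q_{\varrho}(\tilde z)}|Du|
    \le
    C\bigg[\frac{\osc_{Q_{2\varrho}(\tilde z)}u}{\varrho}+\Big(\frac{\osc_{Q_{2\varrho}(\tilde z)}u}{\varrho}\Big)^{\tfrac{2}{p}}+\mu\bigg],
\end{equation*}
with $C=C(N,p,C_o,C_1,\alpha,k)$. Since $Q_{2\varrho}(\tilde z)\subset E_T$, one trivially has $\osc_{Q_{2\varrho}(\tilde z)}u\le \osc_{E_T}u$, so the right-hand side is dominated by
\begin{equation*}
    C\bigg[\frac{\osc_{E_T}u}{\varrho}+\Big(\frac{\osc_{E_T}u}{\varrho}\Big)^{\tfrac{2}{p}}+\mu\bigg],
\end{equation*}
which is independent of $\tilde z$.

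Finally, since $\tilde z\in Q_\varrho(\tilde z)$ for every $\tilde z\in\mathsf{K}$, taking the supremum over $\tilde z\in\mathsf{K}$ on the left-hand side—equivalently, covering $\mathsf{K}$ by the family $\{Q_\varrho(\tilde z)\}_{\tilde z\in\mathsf{K}}$—yields
\begin{equation*}
    \sup_{\mathsf{K}}|Du|\le \sup_{\tilde z\in\mathsf{K}}\sup_{Q_{\varrho}(\tilde z)}|Du|
    \le
    C\bigg[\frac{\osc_{E_T}u}{\varrho}+\Big(\frac{\osc_{E_T}u}{\varrho}\Big)^{\tfrac{2}{p}}+\mu\bigg],
\end{equation*}
which is the claimed estimate. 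No real obstacle is expected here: the entire content lies in Proposition~\ref{prop:quant-bound-for-Du}, and this corollary merely performs the standard reduction from interior cylinders to an arbitrary compact subset by checking that the parabolic distance condition guarantees $Q_{2\varrho}(\tilde z)\Subset E_T$ uniformly in $\tilde z\in\mathsf{K}$.
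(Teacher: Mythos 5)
Your proposal is correct and follows essentially the same route as the paper: the paper also notes that $Q_{2\varrho}(\tilde z)\Subset E_T$ for every $\tilde z\in\mathsf{K}$, applies Proposition~\ref{prop:quant-bound-for-Du} on each such cylinder, bounds $\osc_{Q_{2\varrho}(\tilde z)}u$ by $\osc_{E_T}u$, and concludes by covering $\mathsf{K}$ with the cylinders $Q_\varrho(\tilde z)$. The only cosmetic difference is that the paper phrases the covering via the set $\mathsf{U}:=\bigcup_{\tilde z\in\mathsf{K}}Q_\varrho(\tilde z)$, which is equivalent to your pointwise supremum over $\tilde z\in\mathsf{K}$.
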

\begin{proof}
Recall that
\begin{equation*}
    \dist_{\mathrm{par}} (z_1,z_2):=|x_1-x_2|+\sqrt{|t_1-t_2|}.
\end{equation*}
Then, for any $\tilde z\in\mathsf{K}$ we have $Q_{2\varrho}(\tilde z)= B_{2\varrho}(\tilde x)\times( \tilde t-(2\varrho)^2,\tilde t]\Subset E_T$. Let
\begin{equation}\label{def:U}
    \mathsf{U}:=\bigcup_{\tilde z\in \mathsf{K}} Q_{\varrho}(\tilde z).
\end{equation}
Then, the local gradient estimate from Proposition~\ref{prop:quant-bound-for-Du} applied on $Q_{2\varrho}(\tilde z)$ implies that
\begin{equation*}
    \sup_{\mathsf{U}} \big(\mu^2+|Du|^2\big)^\frac12
    \le
     C\bigg[\frac{\osc_{E_T}u}{\varrho}+\Big(
     \frac{\osc_{E_T}u}{\varrho}\Big)^\frac{2}{p}+\mu
    \bigg]
\end{equation*}
with $C=C(N,p,C_o,C_1,\alpha,k)$. Since $\mathsf{K}\subset\mathsf{U}$ we get the asserted inequality.
\end{proof}

\begin{remark}\upshape \label{rem:super-critical}
In the \textbf{super-critical case} $p>\frac{2N}{N+2}$, we can
replace~\eqref{energy-Ro} by an estimate that is independent
of the oscillation of $u$. In fact, in view of the definition of
$R_o$ in~\eqref{choice-R_o} and $Q_R^{(\lambda)}(z_o)\subset Q_{R_2}(\tilde z)$, we obtain 
\begin{align}\label{alternative-energy-bound}\nonumber
   \mathsf E_{z_o,R_o}^{(\lambda)}
     &\le
       \frac{R_2^{N+2}}{R_o^{N+2}\lambda^{2-p}}
       \underbrace{\biint_{Q_{R_2}(\tilde z)}\big(\mu^2+|Du|^2\big)^{\frac{p}{2}}\, \dx\dt}_{=:\,\mathsf E_{\tilde z,R_2}}\\
     &=
       \Big(\frac{2R_2}{R_2-R_1}\Big)^{N+2}
       \max\Big\{\lambda^{p-2},\lambda^{\frac{N(2-p)}{2}}\Big\}
    \mathsf E_{\tilde z,R_2}.
\end{align}
By the choice of $\lambda$ in \eqref{lambda-gradient-bound}, the integrand on the left-hand side is bounded from above by $\lambda^p$. In the case $\frac{2N}{N+2}<p<2$, we use this fact to estimate 
\begin{align*}
     \mathsf E_{z_o,R_o}^{(\lambda)}
     &\le
     \lambda^{p(1-d_2)}\big(\mathsf E_{z_o,R_o}^{(\lambda)}\big)^{d_2}\\\nonumber
     &\le
     \Big(\frac{2R_2}{R_2-R_1}\Big)^{d_2(N+2)}
     \max\Big\{1,\lambda^{\frac{p(2-p)(N+2)}{4}}\Big\}
     \big(\mathsf E_{\tilde z,R_2}\big)^{d_2}\\\nonumber
     &\le
     \eps\lambda^p
     +
     \Big(\frac{2R_2}{R_2-R_1}\Big)^{d_2(N+2)}
     \big(\mathsf E_{\tilde z,R_2}\big)^{d_2}
    +
     C_\eps\Big(\frac{2R_2}{R_2-R_1}\Big)^{d_1(N+2)}
    \big(\mathsf E_{\tilde z,R_2}\big)^{d_1}
\end{align*}
for every $\epsilon\in(0,1)$, where 
\[
    d_1:=\frac{2p}{p(N+2)-2N}\quad\mbox{and}\quad d_2:=\tfrac{p}2
\] 
denote the \textbf{scaling deficits}. 
For the last estimate, we applied Young's inequality with exponents 
$\frac{4}{(2-p)(N+2)}$ and $\frac{4}{p(N+2)-2N}=\frac{d_1}{d_2}$. Note that $C_\eps= \eps^{-\frac{(2-p)(N+2)}{p(N+2)-2N}}$.

For exponents in the super-quadratic range  $p\ge2$, we argue similarly.
In fact, we have 
\begin{align*}
     \mathsf {E}_{z_o,R_o}^{(\lambda)}
     &\le
     \lambda^{p(1-d_1)}\big(\mathsf E_{z_o,R_o}^{(\lambda)}\big)^{d_1}\\\nonumber
     &\le
     \Big(\frac{2R_2}{R_2-R_1}\Big)^{d_1(N+2)}
     \max\big\{\lambda^{p-2d_1},1\big\}
     \big(\mathsf \mathsf E_{\tilde z,R_2}\big)^{d_1}\\\nonumber
     &\le
     \eps\lambda^p
     +
     C_\eps\Big(\frac{2R_2}{R_2-R_1}\Big)^{d_2(N+2)}
    \big(\mathsf E_{\tilde z,R_2}\big)^{d_2}+
     \Big(\frac{2R_2}{R_2-R_1}\Big)^{d_1(N+2)}
     \big(\mathsf E_{\tilde z,R_2}\big)^{d_1},
\end{align*}
for every $\epsilon\in(0,1)$. 
In the last line, we applied Young's inequality with exponents 
$\frac{p}{p-2d_1}$ and $\frac{p}{2d_1}$. Here we have $C_\eps= \eps^{-\frac14 (p-2)(N+2)}$.

Proceeding similarly as for the derivation of~\eqref{local-gradient-bound}, but replacing \eqref{energy-Ro} by the preceding inequality, 
now leads to a gradient sup-bound that is independent of the oscillation of $u$. 
More precisely, for every $p>\frac{2N}{N+2}$ we obtain  
\begin{equation*}
    \sup_{Q_{\rho}(\tilde z)}|Du|
    \le
    C \sum_{k=1}^{2} \bigg[\biint_{Q_{2\rho}(\tilde z) }(\mu^2+|Du|^2)^{\frac{p}{2}}\dx\dt\bigg]^{\frac{d_k}{p}},
\end{equation*}
with a constant $C =C (N,p,C_o,C_1,\alpha)$ and the scaling deficits $d_1$ and $d_2$.
For exponents in the sub-critical range $1<p\le\frac{2N}{N+2}$, however, the above procedure is
not applicable, since then the exponent $\frac{N(2-p)}{2}$ of $\lambda$ that
appears in~\eqref{alternative-energy-bound} is not smaller than
$p$. Therefore, it seems to be unavoidable that the right-hand side
of estimate~\eqref{local-gradient-bound} depends on the oscillation of
$u$ in the sub-critical range. \hfill$\Box$
\end{remark}

\subsection{H\"older continuity of the gradient} \label{sec:Hoelder} Having established the quantitative $L^\infty$-gradient bound for locally bounded weak solutions to the parabolic system \eqref{p-laplace-intro} under the qualitative assumption $|Du|\in L^\infty_{\rm loc}(E_T)$, we will show in this section (under the same assumptions) that a quantitative gradient H\"older estimate also holds. 
The main result is as follows.

\begin{proposition}\label{prop:h-grad}
Let $p>1$  and $\mu\in (0,1]$. Then there is a constant $C$ and a H\"older exponent $\alpha_o\in (0,1)$, both depending on $N,p,C_o,C_1,\alpha$ and $k$, so that whenever $u$ is a bounded weak solution to the parabolic system \eqref{p-laplace-intro} with \eqref{prop-a-intro}  in the sense of Definition \ref{def:weak-loc},  such that the assumption \eqref{ass:Du} holds true, then for any compact subset $\mathsf{K}\subset E_T$  with $\rho:= \tfrac14\min\{1,\mathrm{dist}_{\mathrm{par}}(\mathsf{K},\partial_\mathrm{par} E_T)\}$ we have
 \begin{align*}
    | Du(x_1,t_1)-Du(x_2,t_2)|
    &\le 
    C \lambda \Bigg[ \frac{|x_1-x_2|+\sqrt{\lambda^{p-2}|t_1-t_2|}}{\min\{1,\lambda^\frac{p-2}2\}\varrho}\Bigg]^{\alpha_o } 
\end{align*}
for any $(x_1,t_1),(x_2,t_2)\in\mathsf{K}$, where 
\begin{equation*}
     \lambda:=\frac{\osc_{E_T}u}{\rho}
    +
    \Big(\frac{\osc_{E_T}u}{\rho}\Big)^{\frac{2}{p}}
    +\mu.
\end{equation*}
\end{proposition}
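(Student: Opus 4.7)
The plan is to combine the pointwise Lebesgue representative estimate~\eqref{cont-Lebesgue} obtained in Section~\ref{sec:gradient-bound} with a triangle-inequality argument through a common backward intrinsic cylinder. First, I apply Corollary~\ref{cor:global-gradient-bound} on a slightly enlarged compact set $\mathsf K' \supset \mathsf K$ (chosen so that $\mathrm{dist}_\mathrm{par}(\mathsf K',\partial_\mathrm{par} E_T) \ge 2\varrho$) to obtain $\|Du\|_{L^\infty(\mathsf K')} \le C\lambda$, with $\lambda$ exactly as in the proposition statement. This verifies~\eqref{def-lambda} uniformly on all cylinders $Q_{R_2}(\tilde z) \subset E_T$ with $R_2 = 2\varrho$ and $\tilde z$ close to $\mathsf K$, with a structural constant $A = A(N,p,C_o,C_1,\alpha,k)$. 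Consequently, the framework of Section~\ref{sec:Schauder-for-Lipschitz} applies with $R_1 = \varrho$, $R_2 = 2\varrho$, giving $R_o = \tfrac12\min\{1,\lambda^{(p-2)/2}\}\varrho$ together with the Campanato-type estimate~\eqref{Campanato-u-2} and the pointwise bound
\begin{equation*}
  \bigl|Du(z_o) - (Du)_{z_o,r}^{(\lambda)}\bigr|
  \le
  C\bigl(r/R_o\bigr)^{\alpha_o}\lambda
\end{equation*}
from~\eqref{cont-Lebesgue}, valid for every $z_o \in Q_\varrho(\tilde z)$ and $r \in (0,R_o]$, after identifying $Du(z_o)$ with its Lebesgue representative $\Gamma_{z_o}$.

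Given $z_1 = (x_1,t_1)$ and $z_2 = (x_2,t_2)$ in $\mathsf K$, set $r := \mathrm{d}_\mathrm{par}^{(\lambda)}(z_1,z_2) = |x_1-x_2| + \sqrt{\lambda^{p-2}|t_1-t_2|}$. If $r \ge R_o$, the conclusion is immediate from the $L^\infty$-bound, since the bracket on the right-hand side of the claim is then at least of order one. If $r < R_o$, I build a common backward intrinsic cylinder $Q_\sigma^{(\lambda)}(z_*)$ of size $\sigma := 4r$ containing both points by setting $x_* := \tfrac12(x_1+x_2)$ and $t_* := \max\{t_1,t_2\} + \sigma^2\lambda^{2-p}/2$; the slight forward shift in time ensures $z_1, z_2 \in Q_\sigma^{(\lambda)}(z_*)$, while the preliminary enlargement $\mathsf K' \supset \mathsf K$ guarantees that $z_*$ still lies in $Q_\varrho(\tilde z)$ for some admissible $\tilde z$. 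Then the triangle inequality
\begin{equation*}
  |Du(z_1) - Du(z_2)|
  \le
  \sum_{i=1,2}\Bigl[\bigl|Du(z_i) - (Du)_{z_i,r}^{(\lambda)}\bigr| + \bigl|(Du)_{z_i,r}^{(\lambda)} - (Du)_{z_*,\sigma}^{(\lambda)}\bigr|\Bigr]
\end{equation*}
bounds the first-type terms directly by $C(r/R_o)^{\alpha_o}\lambda$ via~\eqref{cont-Lebesgue}, whereas the second-type terms are controlled by using $Q_r^{(\lambda)}(z_i) \subset Q_\sigma^{(\lambda)}(z_*)$, the comparability $|Q_r^{(\lambda)}(z_i)| \approx |Q_\sigma^{(\lambda)}(z_*)|$, and H\"older's inequality together with~\eqref{Campanato-u-2} applied at $z_*$, again producing $C(r/R_o)^{\alpha_o}\lambda$. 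Rewriting $R_o = \tfrac12\min\{1,\lambda^{(p-2)/2}\}\varrho$ yields the claimed estimate.

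The main technical obstacle is the \emph{backward} nature of the intrinsic cylinders $Q_\sigma^{(\lambda)}$, which prevents the naive choice $z_* = z_1$ when $t_2 > t_1$: the construction of the common reference cylinder forces the small forward time-shift above, which in turn demands the preliminary enlargement $\mathsf K' \supset \mathsf K$ so that the shifted vertex $z_*$ continues to satisfy the hypotheses of Section~\ref{sec:Schauder-for-Lipschitz}. A minor bookkeeping matter is the passage from the almost-everywhere Hölder estimate (which identifies $Du(z_i)$ with the Lebesgue representative $\Gamma_{z_i}$) to a pointwise statement on all of $\mathsf K$; this is automatic because the almost-everywhere estimate forces the existence of a continuous representative of $Du$, with which $Du$ is then identified on the compact set $\mathsf K$.
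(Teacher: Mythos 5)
Your overall architecture is the same as the paper's: use Corollary~\ref{cor:global-gradient-bound} to fix $\lambda$ so that \eqref{def-lambda} holds uniformly, invoke the Campanato-type decay \eqref{Campanato-u-2} and the pointwise estimate \eqref{cont-Lebesgue} for the Lebesgue representatives, and conclude by a dichotomy between $\mathrm{d}_{\mathrm{par}}^{(\lambda)}(z_1,z_2)$ small (telescoping through a common intrinsic cylinder) and large (trivial via the sup-bound). The one structural difference is that you route the comparison through the mean value $(Du)_{z_*,\sigma}^{(\lambda)}$ at an auxiliary vertex $z_*$, which forces you to have the Campanato estimate at $z_*$ itself; the paper instead averages $|\Gamma_{z_1}-\Gamma_{z_2}|$ over a small cylinder $Q_{r/2}^{(\lambda)}(z_*)$ with vertex time $\min\{t_1,t_2\}$, which is contained in both $Q_r^{(\lambda)}(z_1)$ and $Q_r^{(\lambda)}(z_2)$, so only the decay estimate \eqref{decay} centered at $z_1,z_2\in\mathsf K$ is ever used and no auxiliary center or enlargement of $\mathsf K$ is needed.

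The genuine gap is in your construction of the common cylinder. The forward time shift $t_*=\max\{t_1,t_2\}+\tfrac12\sigma^2\lambda^{2-p}$ can push $z_*$ out of $E_T$: since $E\times\{T\}$ is not part of $\partial_{\mathrm{par}}E_T$, a compact set $\mathsf K$ with positive parabolic distance to $\partial_{\mathrm{par}}E_T$ may contain points with $t=T$ (for instance $\bar B_\epsilon(x_0)\times[T/2,T]$ with $\bar B_\epsilon(x_0)\subset E$), and then no enlargement $\mathsf K'\subset E_T$ can contain the shifted vertex, so the Campanato estimate at $z_*$ is simply unavailable and this step of your proof fails for such pairs. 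Moreover, the shift is not ``forced'': what your comparison of means actually requires is the inclusion $Q_r^{(\lambda)}(z_i)\subset Q_\sigma^{(\lambda)}(z_*)$, and because these are \emph{backward} cylinders this already holds with $t_*=\max\{t_1,t_2\}$ and no shift, since $t_*-t_i\le|t_1-t_2|\le\lambda^{2-p}r^2\le\lambda^{2-p}(\sigma^2-r^2)$; with that choice $z_*$ lies in $Q_{R_1}(z_j)$ for the point $z_j\in\mathsf K$ realizing the maximal time, so \eqref{Campanato-u-2} applies to $z_*$ within the setup \eqref{def:cylinders} and the enlargement $\mathsf K'$ becomes unnecessary (alternatively, take the vertex at $\min\{t_1,t_2\}$ as the paper does). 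A related soft spot: verifying \eqref{def-lambda} with $R_2=2\varrho$ for centers $\tilde z$ merely ``close to $\mathsf K$'' is problematic because $Q_{2\varrho}(\tilde z)\Subset E_T$ may then fail; the paper avoids this by taking $R_1=\tfrac12\varrho$, $R_2=\varrho$ and $\tilde z\in\mathsf K$. With these repairs (drop the shift and the enlargement, adjust $R_1,R_2,R_o$ accordingly, and require $\sigma\le R_o$ in the dichotomy), your argument goes through.
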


\begin{proof}
Let
$R_1:=\frac12\varrho$, $R_2:=\varrho$, and $R_o:=\frac14\min\big\{ 1,\lambda^\frac{p-2}{2}\big\}\varrho$. We define the set $\mathsf{U}$ as in~\eqref{def:U}. 
Then, for any $\tilde z\in\mathsf{K}$ the gradient bound from Corollary~\ref{cor:global-gradient-bound} and the definition of $\lambda$ imply
\begin{equation*}
    \big(\mu^2 +\|Du\|_{L^\infty (Q_{R_2}(\tilde z))}^2\big)^\frac12
    \le
    \big(\mu^2 +\|Du\|_{L^\infty (\mathsf{U})}^2\big)^\frac12
    \le A\lambda,
\end{equation*}
where $A=A(N,p,C_o,C_1,\alpha,k)$ denotes the constant previously denoted by $C$ in Corollary \ref{cor:global-gradient-bound}.
This ensures that \eqref{def-lambda} holds on $Q_{R_2}(\tilde z)$ for any $\tilde z\in \mathsf{K}$. Therefore, the Campanato-type estimate \eqref{Campanato-u-2} is available for any $z_o\in \mathsf{K}$ and we obtain for every
$0<r\le R_o$ that
\begin{align*}
       \biint_{Q_r^{(\lambda)}(z_o) }&\big|Du -(Du)_{z_o,r}^{(\lambda)}\big|^p\,\dx\dt
     \le
    C \Big(\frac{r}{R_o}\Big)^{\alpha_op }\lambda^p
    \quad \forall\, r\in (0,R_o].
\end{align*}
In addition, \eqref{cont-Lebesgue} from the proof of Proposition~\ref{prop:quant-bound-for-Du} applies. Note that for the derivation of \eqref{cont-Lebesgue} we have chosen $\lambda$ according to \eqref{def-lambda}, which is also satisfied in our present setting.
With $\epsilon =1$ inequality \eqref{cont-Lebesgue} takes the form
\begin{align*}
    \big|\Gamma_{z_o}- (Du)_{z_o,r}^{(\lambda)}\big|^p
    &\le 
    C\Big(\frac{r}{R_o}\Big)^{\alpha_o p}
    \max\Bigg\{\underbrace{\bigg[
   \biint_{Q_{R_o}^{(\lambda)}(z_o)}
    \big(\mu^2+|Du|^2\big)^\frac{p}{2}\,\dx\dt\bigg]^\frac{1}{p}}_{\le\,A\lambda} \,,
    \,\lambda\Bigg\}^p\\
    &\le 
    C\Big(\frac{r}{R_o}\Big)^{\alpha_o p}\lambda^p.
\end{align*}
Joining this inequality with the Campanato-type estimate for $u$ from above, we obtain
\begin{align}\label{decay}
       \biint_{Q_r^{(\lambda)}(z_o) }&|Du -\Gamma_{z_o}|^p\,\dx\dt
     \le
    C \Big(\frac{r}{R_o}\Big)^{\alpha_op }\lambda^p
\end{align}
for any $z_o\in \mathsf{K}$ and any $r\in(0, R_o]$.
Now, consider $z_1,z_2\in\mathsf{K}$ such that
\begin{equation*}
    0<\mathrm{d}_{\mathrm{par}}^{(\lambda)} (z_1,z_2)=
    |x_1-x_2|+\sqrt{\lambda^{p-2}|t_1-t_2|}\le \tfrac12\varrho,
\end{equation*}
and let
\begin{equation*}
    r:=2\mathrm{d}_{\mathrm{par}}^{(\lambda)} (z_1,z_2)
    \quad\mbox{and}\quad
    z_\ast:= \big(\tfrac12 (x_1+x_2), \min\{ t_1,t_2\}\big).
\end{equation*}
Then, $Q_{\frac12 r}^{(\lambda)}(z_\ast)\subset Q_{ r}^{(\lambda)}(z_i)$ for $i=1,2$. Assuming that $r\le R_o$,
we can apply the decay estimate \eqref{decay} on $Q_{ r}^{(\lambda)}(z_1)$ and $Q_{ r}^{(\lambda)}(z_2)$ to conclude that
\begin{align*}
    | Du(z_1)&-Du(z_2)|^p\\
    &=
    \biint_{Q_{\frac12 r}^{(\lambda)}(z_\ast)}
    |\Gamma_{z_1}-\Gamma_{z_2}|^p\,\dx\dt\\
    &\le
    C(N,p)\Bigg[ 
    \biint_{Q_{ r}^{(\lambda)}(z_1)}
    |Du- \Gamma_{z_1}|^p\,\dx\dt
    +
    \biint_{Q_{ r}^{(\lambda)}(z_2)}
    |Du-\Gamma_{z_2}|^p\,\dx\dt\Bigg]\\
    &\le 
    C \Big(\frac{r}{R_o}\Big)^{\alpha_op }\lambda^p.
\end{align*}
In the case  $r>R_o$ we have
\begin{align*}
    | Du(z_1)-Du(z_2)|
    &\le 2\underbrace{\| Du\|_{L^\infty (\mathsf{K})}}_{\le\,A\lambda}
    \le 
    2A\lambda \Big(\frac{r}{R_o}\Big)^{\alpha_o }.
\end{align*}
The same inequality therefore holds true in both cases.  If we also keep in mind the definitions of $r$ and $R_o$ we then get 
\begin{align*}
    | Du(z_1)-Du(z_2)|
    &\le 
    C \lambda \bigg[ \frac{\mathrm{d}_{\mathrm{par}}^{(\lambda)} (z_1,z_2)}{\min\{1,\lambda^\frac{p-2}2\}\varrho}\bigg]^{\alpha_o }
\end{align*}
for any $z_1,z_2\in\mathsf K$ with $\mathrm{d}_{\mathrm{par}}^{(\lambda)} (z_1,z_2)\le\frac12\varrho$. In cases the  distance $\mathrm{d}_{\mathrm{par}}^{(\lambda)} (z_1,z_2)$
is larger than $\frac12\varrho$, we again argue using the quantitative gradient estimate. Indeed, we have
\begin{align*}
    | Du(z_1)-Du(z_2)|
    &\le 2\underbrace{\| Du\|_{L^\infty (\mathsf K)}}_{\le\,A\lambda}
    \le 
    2A \lambda\bigg[\frac{\mathrm{d}_{\mathrm{par}}^{(\lambda)} (z_1,z_2)}{\frac12 \varrho}\bigg]^{\alpha_o}\\
    &\le 
    4A\lambda \bigg[ \frac{\mathrm{d}_{\mathrm{par}}^{(\lambda)} (z_1,z_2)}{\min\{1,\lambda^\frac{p-2}2\}\varrho}\bigg]^{\alpha_o }.
\end{align*}
All together, the last two inequalities prove the assertion.
\end{proof}

\subsection{Schauder estimates for parabolic 
\texorpdfstring{$p$-}{p-}Laplace systems}\label{S:6}
In the previous sections, we considered bounded weak solutions to the parabolic $p$-Laplace system \eqref{p-laplace-intro} with Hölder continuous coefficients $a(x,t)$ satisfying \eqref{prop-a-intro}, assuming that $\mu>0$ and the gradients are locally bounded.
Now, we shall remove the assumption that  $|Du|\in L^\infty_{\rm loc}$ and also consider $\mu=0$.
To achieve this we consider smooth approximation $a_i(x,t)$ of $a(x,t)$ that retains the conditions \eqref{prop-a-intro} and generate approximating solutions $u_i$. Then we apply to $u_i$ the {\it a priori} estimates from previous sections and manage to pass to the limit. In particular, we can cover the case $\mu=0$ as these {\it a priori} estimates are independent of $\mu$. 

\begin{proof}[\rm{\textbf{Proof of Theorem~\ref{theorem:schauder}}}]
Let $\mathsf K\subset E_T$ and $\varrho:= \tfrac14\min\{1,\mathrm{dist}_{\mathrm{par}}(\mathsf K,\partial_\mathrm{par} E_T)\}$. Let $\epsilon_o\in (0,\tfrac12\varrho)$ and define the inner parallel set with distance $\epsilon_o$ to $\partial E$ by
\begin{equation*}
    \widetilde E:=\big\{ x\in E\colon \dist (x,\partial E)>\epsilon_o\big\} .  
\end{equation*}
Choose a standard mollifier $\phi\in C^\infty_0(B_1)$ with $\int_{B_1}\phi\,\dx =1$. For a sequence $\epsilon_i\in (0,\epsilon_o)$ with $\epsilon_i\downarrow 0$ define the scaled mollifiers by $\phi_{\epsilon_i}(x):= \epsilon_i^{-N}\phi (\frac{x}{\epsilon_i})$. The regularized coefficients $a_i$ are defined by
\begin{equation*}
    a_i(x,t):=\int_{B_{\epsilon_i}}a(x-y,t)\phi_{\epsilon_i}(y)\,\dy
\end{equation*}
for every $x\in\widetilde E$ and $t\in(0,T]$. Using the properties \eqref{prop-a-intro} of the coefficients $a$, it is not difficult to show that the following properties of $a_i$ are valid:
\begin{equation}\label{properties:a_i}
    \left\{
    \begin{array}{c}
        C_o\le a_i(x,t)\le C_1,\\[6pt]
        |a_i(x,t)-a_i(y,t)|\le C_1|x-y|^\alpha,\\[6pt]
        |D a_i(x,t)|\le c(N)C_1\epsilon_i^{-1}\|D\phi\|_{L^\infty},\\[6pt]
        | a_i(x,t)-a(x,t)|\le C_1\epsilon_i^\alpha,
    \end{array}
    \right.
\end{equation}
for every $i\in\N$, for any $x,y\in\widetilde E$, and $t\in (0,T]$. Here, $C_o,C_1$ are the constants from \eqref{prop-a-intro}. The regularization parameter $\mu_i$ is defined by 
\begin{equation*}
    \mu_i:=\left\{
    \begin{array}{cl}
        \epsilon_i,&\mbox{for $\mu=0$,}\\[5pt]
         \mu,&\mbox{for $\mu\in (0,1]$.}
    \end{array}
    \right.
\end{equation*}
With the coefficients $a_i$ and the parameters $\mu_i$ we denote by 
\begin{equation*}
    u_i\in L^p\big(0,T; W^{1,p}(\widetilde E,\R^k)\big)
    \cap C\big( [0,T]; L^2(\widetilde E,\R^k)\big)
\end{equation*}
the unique weak solution to the regularized Cauchy-Dirichlet problem (see Definition \ref{def:weak})
\begin{equation*}
    \left\{
    \begin{array}{cl}
        \partial_t u_i-\Div\Big( a_i(x,t)\big(\mu_i^2 + |Du_i|^2
        \big)^\frac{p-2}{2}Du_i\Big)=0,&\mbox{in $\widetilde E_T$,}\\[6pt]
        u_i=u,&\mbox{on $\partial_{\mathrm{par}}\widetilde E_T$.}
    \end{array}
    \right.
\end{equation*}
To $u_i$ and $u$ 
we apply the comparison estimate from Lemmas~\ref{lem:comp-2-mu>0} and~\ref{lem:comp-2-mu=0},  respectively,  with $a(x,t)$ and $b(x,t):= a_i(x,t)$ on $\widetilde E_T$. Then,  $
\mathsf A=\|a-a_i\|_{L^\infty (\widetilde E_T)}\le C_1\epsilon_i^\alpha$ and therefore,
\begin{align*}
    \iint_{\widetilde E_T}|Du-Du_i|^p\,\dx\dt
    &\le
    C\Big( \epsilon_i^{\alpha p}+\epsilon_i^\frac{\alpha p}{p-1}\Big)
    \iint_{\widetilde E_T} \big(\mu^2 +|Du|^2\big)^\frac{p}{2}\,\dx\dt,
\end{align*}
if $\mu\in (0,1]$, while if $\mu=0$ we have
\begin{align*}
    \iint_{\widetilde E_T}|Du-Du_i|^p\,\dx\dt
    &\le
    C\Big( (\epsilon_i^{\alpha}+\epsilon_i)^p+
    (\epsilon_i^{\alpha}+\epsilon_i)^\frac{p}{p-1}\Big)
    \iint_{\widetilde E_T} \big(1+|Du|^2\big)^\frac{p}{2}\,\dx\dt.
\end{align*}
In both cases we have strong convergence of $Du_i$ to $Du$, i.e.
\begin{equation*}
    Du_i\to Du \quad \mbox{in $L^p\big(\widetilde E_T,\R^k\big)$ as $i\to\infty$.}
\end{equation*}
Next, the comparison principle from Lemma \ref{lem:comp-1} yields
that $u_i\in L^\infty (\widetilde E_T,\R^k)$ and
that
\begin{align}\label{est:omega_i}
    \boldsymbol\omega_i
    :=
    \osc_{\widetilde E_T} u_i
    \le
    \sqrt{k}\osc_{\widetilde E_T} u
    \le
    \sqrt{k}\osc_{ E_T} u
    =:\sqrt{k}\boldsymbol\omega.
\end{align}
Now, \eqref{properties:a_i}$_1$ and \eqref{properties:a_i}$_3$ ensure that the coefficients $a_i$ satisfy the requirements of \eqref{ass:b}, i.e.~they are trapped by $C_o$ and $C_1$, and their gradient is bounded by $C_{2,i}:=c(N)C_1\epsilon_i^{-1}\|D\phi\|_{L^\infty}$,  $i\in\N$. 
By Propositions~\ref{lem:L-infty-est-qual-p<2} and \ref{lem:L-infty-est-p>2-intrinsic} we have that $|Du_i|\in L^\infty_{\rm loc} (\widetilde E_T)$. However, to obtain quantitative gradient bounds we cannot rely on those results in Section \ref{S:grad-bound} since $C_{2,i}\uparrow\infty$ as $i\to\infty$.  Instead, we apply the {\it a priori} estimates from Sections \ref{sec:gradient-bound} and \ref{sec:Hoelder}. Such application is viable as $a_i$ fulfill, thanks to \eqref{properties:a_i}$_1$ and \eqref{properties:a_i}$_2$, the conditions \eqref{prop-a-intro}, and moreover $|Du_i|\in L^\infty_{\rm loc}(\widetilde E_T)$. Consequently, Corollary~\ref{cor:global-gradient-bound} and Proposition~\ref{prop:h-grad} give that
\begin{align*}
    \sup_{\mathsf K} |Du_i|
    &\le 
    C\bigg[ 
    \frac{\boldsymbol \omega_i}{\varrho}
    +
    \Big(\frac{\boldsymbol \omega_i}{\varrho}\Big)^\frac2p
    +\mu_i
    \bigg]=:
    C\lambda_i
\end{align*}
and
\begin{align}\label{est:Hoelder-Du_i}
    |Du_i(z_1)-Du_i(z_2)|
    &\le
    C\lambda_i 
    \Bigg[ 
    \frac{\mathrm{d}_{\mathrm{par}}^{(\lambda_i)} (z_1,z_2)}{\min\{1,\lambda_i^\frac{p-2}2\}\varrho}
    \Bigg]^{\alpha_o}
\end{align}
for any subset $\mathsf K\subset \widetilde E_T$, and for any $z_1,z_2\in\mathsf K$. Note that we also used the simple fact $\tfrac14\min\{1,\mathrm{dist}_{\mathrm{par}}(\mathsf K,\partial_\mathrm{par} \widetilde E_T)\}\ge\rho-\varep_o\ge\frac12\rho$. The constants $C$ and $\alpha_o\in (0,1)$  in the preceding estimates
depend only on $N$, $p$, $C_o$, $C_1$, and $\alpha$ (and on $k$ if $p<2$), and are in particular independent of $i\in\N$. 
We abbreviate
\begin{align*}
    \lambda
    := 
    \frac{\osc_{E_T}u }{\varrho} +\Big(
    \frac{\osc_{E_T}u }{\varrho}\Big)^\frac2p +\mu.
\end{align*}
Then, \eqref{est:omega_i} implies
\begin{equation}\label{li-l}
    \limsup_{i\to\infty}\lambda_i\le \max\big\{ k^\frac12, k^\frac1p\big\}\lambda
    =:
    \mathsf k\lambda
    \quad\forall\, i\in\N,
\end{equation}
so that
\begin{equation*}
    \limsup_{i\to\infty}\Big(\sup_{\mathsf K} |Du_i|\Big)
    \le
    C\mathsf k\lambda.
\end{equation*}

Now, let $z_1,z_2\in\mathsf K$. 
Note that
the right-hand side of \eqref{est:Hoelder-Du_i} can be re-written as
\begin{equation}\label{dependence-lambda-i}
    C
    \bigg[
    \max\Big\{ \lambda_i^\frac{1}{\alpha_o}, \lambda_i^{\frac{1}{\alpha_o}+\frac{2-p}2}\Big\}
    \frac{|x_1-x_2|}{\varrho} +
    \max\Big\{ \lambda_i^{\frac{1}{\alpha_o}+\frac{p-2}2}, \lambda_i^\frac{1}{\alpha_o}\Big\}
    \frac{\sqrt{|t_1-t_2|}}{\varrho}
    \bigg]^{\alpha_o}.
\end{equation}
All exponents of $\lambda_i$ are non-negative. In fact, recalling the definition of
$\alpha_o$ from \eqref{def:alpha_o} we have $\alpha_o=\frac{\alpha_\ast\beta}{2(N+2+\beta+\alpha_\ast p)}\le\frac12\alpha_\ast$. In view of the definition of 
$\alpha_\ast$ from \eqref{alpha_ast} this shows in the case $1<p\le 2$ that $\frac{1}{\alpha_o}+\frac{p-2}2\ge 2 + \frac{p-2}2=\frac{p+2}{2}>0$ and in the case $p\ge 2$ that $\frac{1}{\alpha_o}+\frac{2-p}2\ge 2(p-1)+\frac{2-p}2\ge\frac{3p-2}{2}>0$.
This proves that the right-hand side of \eqref{est:Hoelder-Du_i} is non-decreasing in $\lambda_i$. Moreover, in view of \eqref{li-l} we have for the first term in \eqref{dependence-lambda-i} that
\begin{align*}
    \max\Big\{ \lambda_i^\frac{1}{\alpha_o}, \lambda_i^{\frac{1}{\alpha_o}+\frac{2-p}2}\Big\}
    &\le 
    \max\Big\{ (\mathsf k\lambda) ^\frac{1}{\alpha_o}, (\mathsf k\lambda)^{\frac{1}{\alpha_o}+\frac{2-p}2}\Big\}\\
    &\le
    \left\{
    \begin{array}{cl}
    \mathsf k^{\frac{1}{\alpha_o}+\frac{2-p}2}\max\Big\{ \lambda ^\frac{1}{\alpha_o}, \lambda^{\frac{1}{\alpha_o}+\frac{2-p}2}\Big\},&\mbox{if $1<p<2$,}\\[6pt]
    \mathsf k^{\frac{1}{\alpha_o}}\max\Big\{ \lambda ^\frac{1}{\alpha_o}, \lambda^{\frac{1}{\alpha_o}+\frac{2-p}2}\Big\},&\mbox{if $p\ge 2$,}
    \end{array}
    \right.
\end{align*}
while the second one is estimated by
\begin{align*}
    \max\Big\{ \lambda_i^{\frac{1}{\alpha_o}+\frac{p-2}2}, \lambda_i^\frac{1}{\alpha_o}\Big\}
    &\le 
    \max\Big\{ (\mathsf k\lambda)^{\frac{1}{\alpha_o}+\frac{p-2}2},
    (\mathsf k\lambda) ^\frac{1}{\alpha_o} \Big\}\\
    &\le
    \left\{
    \begin{array}{cl}
    \mathsf k^{\frac{1}{\alpha_o}}\max\Big\{ \lambda ^{\frac{1}{\alpha_o}+\frac{p-2}2}, \lambda^{\frac{1}{\alpha_o}}\Big\},&\mbox{if $1<p<2$,}\\[6pt]
    \mathsf k^{\frac{1}{\alpha_o}+\frac{p-2}2}\max\Big\{ \lambda ^{\frac{1}{\alpha_o}+\frac{p-2}2}, \lambda^{\frac{1}{\alpha_o}}\Big\},&\mbox{if $p\ge 2$.}
    \end{array}
    \right.
\end{align*}
This leads to
\begin{align*}
    \limsup_{i\to\infty}
    \big|Du_i(z_1)-Du_i(z_2)\big|
    &\le
    C\lambda
    \Bigg[ 
    \frac{\mathrm{d}_{\mathrm{par}}^{(\lambda)} (z_1,z_2)}{\min\{1,\lambda^\frac{p-2}2\}\varrho}
    \Bigg]^{\alpha_o},
\end{align*}
for any $z_1,z_2\in\mathsf{K}$. 
Therefore, the sequence $(Du_i)_{i\in\N}$ is bounded in $C^{\alpha_o,\alpha_o/2}\big(\mathsf K,\R^{Nk}\big)$. By the theorem of Ascoli-Arzel\`a there exists a uniformly converging subsequence, and since $Du_i\to Du$ in $L^p\big(\mathsf K,\R^{Nk}\big) $ we get that $Du_i\to Du$ uniformly in $\mathsf K$ as $i\to\infty$. Passing to the limit we find that
\begin{equation*}
    \sup_{\mathsf K} |Du|
    \le
    C\mathsf k\lambda
    \quad\mbox{and}\quad
    \big|Du(z_1)-Du(z_2)\big|
    \le
    C\lambda
    \Bigg[ 
    \frac{\mathrm{d}_{\mathrm{par}}^{(\lambda)} (z_1,z_2)}{\min\{1,\lambda^\frac{p-2}2\}\varrho}
    \Bigg]^{\alpha_o}
\end{equation*}
for any $z_1,z_2\in \mathsf K$. This finishes the proof of \eqref{gradient-sup-bound-final} and \eqref{gradient-holder-bound-final}.
\end{proof}

\section{Applications to doubly non-linear equations}
\label{sec:DNL}

\subsection{Regularity for doubly non-linear parabolic equations}\label{sec:intro-intro}

The main result of the present section deals with the gradient regularity of weak solutions $u: E_T\to \R$
to doubly non-linear parabolic equations 
\begin{equation}\label{doubly-nonlinear-prototype}
    \partial_t u^q - 
	\Div\big(|\nabla u|^{p-2}\nabla u\big)
	=
	0\quad \mbox{in $ E_T$,}
\end{equation}
in the {\it super-critical fast diffusion range} of $p$ and $q$. The formulation of the result is based, among other things, on the pointwise value $u(x_o,t_o)>0$. For this reason, it is convenient to state the result for continuous weak solutions; see Remark~\ref{Rmk:semicontin}. 

\begin{theorem}\label{THM:REGULARITY-INTRO}
Assume that $0<p-1< q<\frac{N(p-1)}{(N-p)_+}$.
There are constants $C,\widetilde{\boldsymbol\gm}>1$ and $\alpha_o\in(0,1)$, which  depend only on $N$, $p$ and $q$, such that if $u$ is a 
non-negative,  continuous, weak solution to \eqref{doubly-nonlinear-prototype} in $E_T$,
if $u_o:=u(x_o,t_o)>0$, and if the set inclusion
\[
\widetilde{\bg}Q_o
:=
K_{\widetilde{\bg}\rho}(x_o) \times \big(t_o - u_o^{q+1-p}(\widetilde{\boldsymbol\gm}\rho)^p, t_o + u_o^{q+1-p}(\widetilde{\boldsymbol\gm}\rho)^p\big)\subset E_T
\]
holds, then we have the gradient bound
\begin{equation}\label{grad-est-intro}
    \sup_{Q_o}|\nabla u|
    \le
    \frac{C u_o}{\rho},
\end{equation}
the Lipschitz estimate
\begin{equation}\label{diff-u-intro}
    |u(z_1)-u(z_2)|
    \le
    C u_o\,
    \Bigg[\frac{|x_1-x_2|}{\rho}+\sqrt{\frac{|t_1-t_2|}{u_o^{q+1-p}\rho^p}} \,\Bigg],
\end{equation}
and the gradient H\"older estimate
\begin{equation}\label{C1alph-intro}
    |\nabla u(z_1) - \nabla u(z_2)|
    \le
    \frac{C u_o}{\rho}\,
    \Bigg[\,\frac{|x_1-x_2|}{\rho}+\sqrt{\frac{|t_1-t_2|}{
      u_o^{q+1-p}\rho^p}}\, \Bigg]^{\alpha_o},
\end{equation}
for all $z_1=(x_1,t_1), z_2=(x_2,t_2)\in Q_o$. 
\end{theorem}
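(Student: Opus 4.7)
The idea is to reduce Theorem~\ref{THM:REGULARITY-INTRO} to an application of Theorem~\ref{theorem:schauder}, by combining the intrinsic rescaling
\[
\tilde u(x,t):=\frac{1}{u_o}\,u\bigl(x_o+\rho x,\, t_o+u_o^{q+1-p}\rho^p t\bigr),
\]
which leaves \eqref{doubly-nonlinear-prototype} invariant and normalizes $\tilde u(0,0)=1$, with the pointwise substitution $w:=\tilde u^q$, which converts the doubly non-linear structure into a $p$-Laplace equation with a H\"older continuous coefficient outside the divergence. On the rescaled cylinder, whose dimensions are governed by $\widetilde{\boldsymbol\gm}$, the assumption $p-1<q<N(p-1)/(N-p)_+$ places us in the super-critical fast diffusion range, for which the Harnack inequality / expansion of positivity and the DeGiorgi--Moser H\"older theory for \eqref{doubly-nonlinear-prototype} are available. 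Choosing $\widetilde{\boldsymbol\gm}>1$ sufficiently large in terms of $N,p,q$, I would extract from this theory the two-sided bound $0<c_1\le\tilde u\le c_2$ and a quantitative H\"older estimate
\[
|\tilde u(z_1)-\tilde u(z_2)|\le c_3\bigl(|x_1-x_2|+\sqrt{|t_1-t_2|}\,\bigr)^\beta
\]
on, say, $K_2\times(-2^p,2^p)$, with constants $c_i$ and exponent $\beta$ depending only on $N,p,q$.

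Next, substituting $w:=\tilde u^q$, using $\nabla\tilde u=q^{-1}w^{(1-q)/q}\nabla w$ and $\partial_t\tilde u^q=\partial_t w$, a direct computation rewrites the equation for $\tilde u$ as
\[
\partial_t w-\Div\bigl(a(x,t)\,|\nabla w|^{p-2}\nabla w\bigr)=0,\qquad a(x,t):=q^{1-p}\,w(x,t)^{(1-q)(p-1)/q}.
\]
Because $w$ takes values in the fixed compact subinterval $[c_1^q,c_2^q]\subset(0,\infty)$, the real function $s\mapsto s^{(1-q)(p-1)/q}$ is Lipschitz on that range, so the non-degeneracy and boundedness $0<C_o\le a\le C_1$ follow, and the spatial H\"older continuity of $w$ (inherited from that of $\tilde u$) immediately yields $|a(x,t)-a(y,t)|\le C_1|x-y|^\beta$. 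Thus $w$ is a bounded scalar weak solution ($k=1$, $\mu=0$) of the parabolic system~\eqref{p-laplace-intro} with H\"older continuous coefficient of exponent $\beta$, precisely in the setting of Theorem~\ref{theorem:schauder}.

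Applying Theorem~\ref{theorem:schauder} on a slightly shrunken inner cylinder, and using the oscillation bound $\osc w\le c_2^q-c_1^q$, I would obtain the quantitative gradient sup-bound~\eqref{gradient-sup-bound-final} and gradient H\"older estimate~\eqref{gradient-holder-bound-final} for $w$. Since $\nabla\tilde u=q^{-1}w^{(1-q)/q}\nabla w$ with $w$ bounded away from zero and H\"older continuous, these estimates transfer at once to $\nabla\tilde u$, with the exponent replaced by $\min\{\alpha_o,\beta\}$, which I relabel as $\alpha_o$. Undoing the intrinsic rescaling of Step~1 (a mere relabeling of variables together with the scale factors $1/\rho$ for space and $u_o^{p-1-q}\rho^{-p}$ for time) produces \eqref{grad-est-intro} and \eqref{C1alph-intro} in the stated form, while \eqref{diff-u-intro} follows from \eqref{grad-est-intro} by integration of $\tilde u$ along paths inside the rescaled cylinder.

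\textbf{Main obstacle.} The only non-routine input is the first step: fixing a universal $\widetilde{\boldsymbol\gm}>1$, depending only on $N,p,q$, for which $\tilde u$ is trapped in a fixed positive interval $[c_1,c_2]$ and is H\"older continuous on a fixed inner cylinder, independent of the specific solution $u$. This is where the super-critical fast diffusion assumption $p-1<q<N(p-1)/(N-p)_+$ is really used, and it requires invoking the Harnack / expansion-of-positivity and H\"older regularity theory for the prototype doubly non-linear equation \eqref{doubly-nonlinear-prototype}. Once this structural information is in hand, the remaining reduction is algebraic and the Schauder machinery developed in Sections~\ref{S:grad-bound}--\ref{sec:Schauder-for-Lipschitz} delivers all three estimates of Theorem~\ref{THM:REGULARITY-INTRO} with the intrinsic parabolic scaling displayed there.
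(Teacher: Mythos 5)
Your overall route coincides with the paper's: the time-insensitive Harnack inequality of \cite{BDGLS-23} gives the two-sided bound $u\simeq u_o$ on the intrinsic cylinder (this is exactly where $\widetilde{\boldsymbol\gm}$ is fixed), the substitution $w=\tilde u^{q}$ turns the equation into a $p$-Laplace equation with coefficient $a=q^{1-p}w^{(1-q)(p-1)/q}$ that is bounded between positive constants and spatially H\"older continuous, and Theorem~\ref{theorem:schauder} with $\mu=0$, $k=1$ then yields the gradient sup-bound and the gradient H\"older estimate, which transfer back to $\tilde u$ as you indicate. Two minor organizational differences: the paper first proves the intermediate statement for solutions trapped in $[1/k,k]$ (Theorem~\ref{thm:doubly-bounded}) and only rescales in time in the final step, and it obtains the H\"older continuity of $w$ (hence of the coefficient) from DiBenedetto's theory for $p$-Laplace equations with merely measurable bounded coefficients applied \emph{after} the substitution, rather than quoting H\"older regularity for the doubly non-linear equation itself; with the two-sided bound in hand these are interchangeable, so none of this affects the substance.

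There is, however, one step that does not work as written: the Lipschitz estimate \eqref{diff-u-intro}. Integrating $\tilde u$ along paths using \eqref{grad-est-intro} only controls spatial increments at a fixed time; it gives no control of $|\tilde u(x,t_1)-\tilde u(x,t_2)|$, and a weak solution has no pointwise time derivative to integrate, so the $\sqrt{|t_1-t_2|}$ modulus in \eqref{diff-u-intro} is not a consequence of the gradient bound alone. It requires using the equation: the paper invokes the gluing Lemma~\ref{lm:gluing} and the resulting oscillation Lemma~\ref{lem:osc-all-p}, which bound the time variation of spatial averages of $w$ by the flux term $\|\nabla w\|_{L^\infty}^{p-1}(t_2-t_1)/r$, and then chooses $r\simeq\max\{|x_1-x_2|,\sqrt{|t_1-t_2|}\}$ to combine this with the gradient bound and obtain $|w(z_1)-w(z_2)|\le C\,\mathrm{d}_{\mathrm{par}}(z_1,z_2)$, which is \eqref{diff-u-bd} and, after undoing the powers and the scaling, \eqref{diff-u-intro}. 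Your sketch needs this (or an equivalent use of the weak formulation) to close the time direction; apart from this point the proposal matches the paper's proof.
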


Figure \ref{Fig:har} illustrates the region where Theorem~\ref{THM:REGULARITY-INTRO} holds. 
We emphasize that Theorem~\ref{THM:REGULARITY-INTRO} is false for $q=p-1$ and $q=\frac{N(p-1)}{(N-p)_+}$; see Section~\ref{sec:reg:opt-1}.

\begin{figure}[H]
\psfragscanon
\psfrag{p}{$\scriptstyle p$}
\psfrag{q}{$\scriptstyle q$}
\psfrag{pzer}{$\scriptstyle p=1$}
\psfrag{qzero}{$\scriptstyle q=0$}
\psfrag{pn}{$\scriptstyle p=N$}
\psfrag{line}{$\scriptstyle q=p-1$}
\psfrag{curve}{$\scriptstyle q=\frac{N(p-1)}{N-p}$}
\includegraphics[width=.75\textwidth]{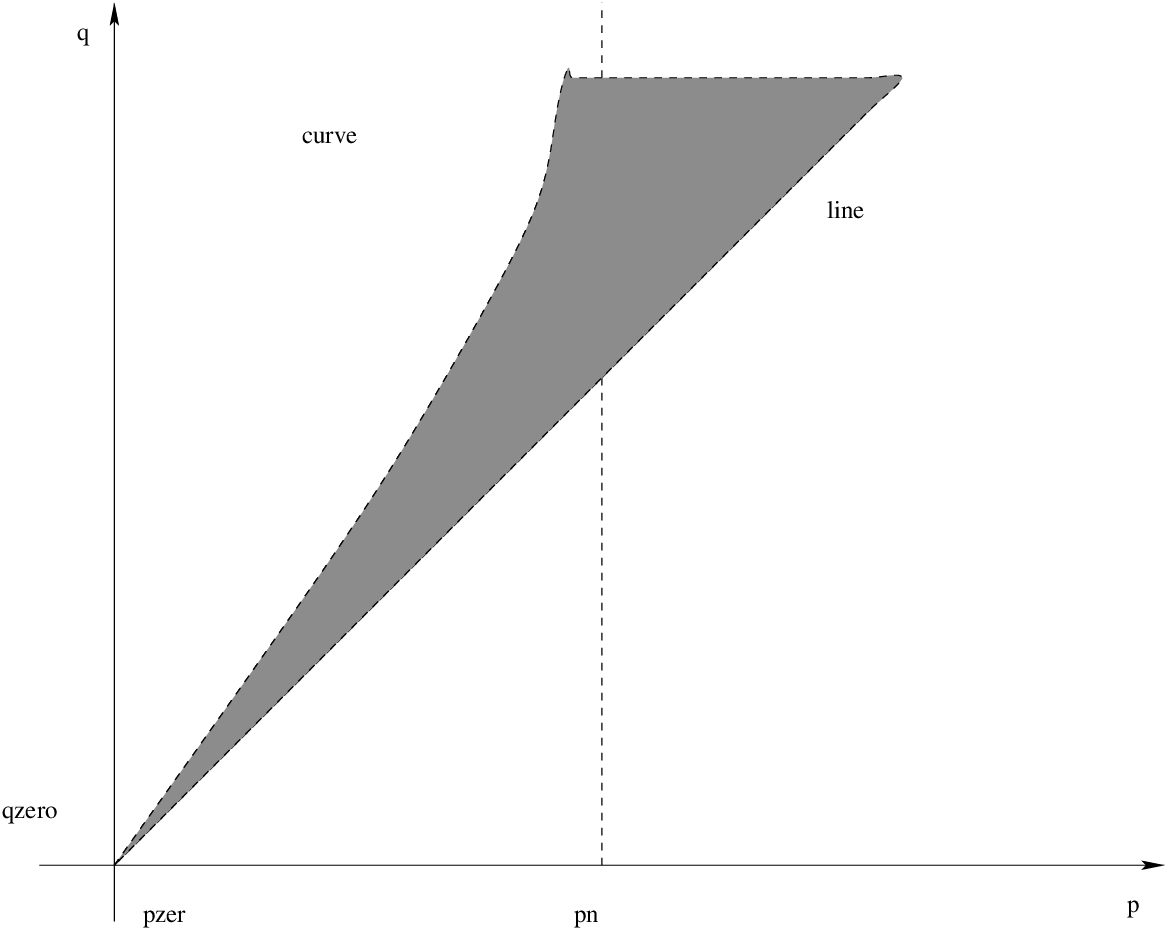}
\caption{\label{Fig:har}}
\end{figure}

\begin{remark}\label{Rmk:semicontin}\upshape
In the statement of gradient regularity, the continuity of $u$ is assumed in order to give an unambiguous meaning to $u(x_o,t_o)$. It suffices to state these estimates for the \textit{ upper semicontinuous regularization} $u^*$ of $u$, which is, for locally bounded solutions, uniquely determined and verifies $u=u^*$~a.e. in $E_T$. See~\cite[Theorem~2.3]{Liao-JMPA-21} for details. 
\end{remark}

Based on Theorem~\ref{THM:REGULARITY-INTRO}, the gradient estimates can also be formulated in a generic compact subset $\mathcal{K}$ of $E_T$. In particular, $\nabla u$ is H\"older continuous over $\mathcal{K}$, and precise estimates can be obtained in terms of $\sup_{\mathcal{K}}u$ and $\dist(\mathcal{K},\pl E_T)$.

\begin{corollary}\label{COR:GRAD-REG}
Under the assumptions of Theorem~\ref{THM:REGULARITY-INTRO} there exists $\alpha_1\in(0,1)$, depending only on $N,p$ and $q$,  such that the following holds.  Let
  $\mathcal{K}\subset E\times(0,T)$ be a compact subset and define
  \begin{equation*}
    \rho_o:=\inf_{\genfrac{}{}{0pt}{2}{(x,t)\in \mathcal K,}{ (y,s)\in\partial E_T}}\big(|x-y|+|t-s|^{\frac1p}\big)
  \end{equation*}
  and $\mathcal{M}:=\max\{1,\sup_{\mathcal{K}} u\}$. Then we have the gradient $L^\infty$-estimate
  \begin{equation}\label{grad-bound-K}
    \sup_{\mathcal{K}}|\nabla u|\le C\frac{\mathcal{M}^{\frac{q+1}{p}}}{\rho_o},
  \end{equation}
  as well as the gradient H\"older estimate
  \begin{equation}\label{grad-holder-K}
    |\nabla u(x_1,t_1)-\nabla u(x_2,t_2)|
     \le
    C\frac{\mathcal{M}^{\frac{q+1}{p}}}{\rho_o}
    \Bigg[\mathcal{M}^{\frac{q+1-p}{p}}\frac{|x_1-x_2|}{\rho_o}
    +\sqrt{\frac{|t_1-t_2|}{\rho_o^p}}\, \Bigg]^{\alpha_1}
\end{equation} 
for every $(x_1,t_1),$ $(x_2,t_2)\in \mathcal{K}$.     
\end{corollary}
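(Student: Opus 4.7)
My plan is to deduce Corollary~\ref{COR:GRAD-REG} from Theorem~\ref{THM:REGULARITY-INTRO} by applying the theorem pointwise at each $z_o\in\mathcal K$ with an appropriately chosen intrinsic radius, and then translating the resulting intrinsic estimates into the fixed geometry governed by $\rho_o$ and $\mathcal M$. The key observation is that, since $q+1-p>0$ and $\mathcal M\ge 1$, the factor $\mathcal M^{(p-q-1)/p}\le 1$, so the uniform choice
\begin{equation*}
    \bar\rho:=\frac{\mathcal M^{(p-q-1)/p}\rho_o}{\widetilde{\boldsymbol\gamma}}
\end{equation*}
satisfies the required cylinder inclusion $\widetilde{\boldsymbol\gamma}Q_o\subset E_T$ at every $(x_o,t_o)\in\mathcal K$ with $u_o:=u(x_o,t_o)>0$: the spatial constraint reduces to $\widetilde{\boldsymbol\gamma}\bar\rho\le\rho_o\le\dist(x_o,\partial E)$, and the temporal one to $u_o^{q+1-p}(\widetilde{\boldsymbol\gamma}\bar\rho)^p=(u_o/\mathcal M)^{q+1-p}\rho_o^p\le\rho_o^p\le\min\{t_o,T-t_o\}$. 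Points where $u_o=0$ are covered by approximation, exploiting the continuity of $u$.

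Applying \eqref{grad-est-intro} with $\rho=\bar\rho$ gives $|\nabla u(z_o)|\le Cu_o/\bar\rho\le C\widetilde{\boldsymbol\gamma}\mathcal M^{(q+1)/p}/\rho_o$ after using $u_o\le\mathcal M$, which establishes \eqref{grad-bound-K}. For \eqref{grad-holder-K} I would fix the exponent
\begin{equation*}
    \alpha_1:=\min\Big\{\alpha_o,\,\tfrac{2}{q+1-p}\Big\}\in(0,1),
\end{equation*}
ensuring that $1-\alpha_1(q+1-p)/2\in[0,1]$, the crucial condition allowing the conversion of powers of $u_o$ into powers of $\mathcal M$ in the final bounds.

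Given $z_1,z_2\in\mathcal K$ with WLOG $u_1:=u(z_1)\ge u_2:=u(z_2)>0$, I would apply \eqref{C1alph-intro} at $z_1$ with radius $\bar\rho$ and distinguish two cases. If $z_2$ lies in the intrinsic cylinder $Q_1:=K_{\bar\rho}(x_1)\times(t_1\pm u_1^{q+1-p}\bar\rho^p)$, the substitutions
\begin{equation*}
    \frac{|x_1-x_2|}{\bar\rho}=\widetilde{\boldsymbol\gamma}\mathcal M^{\frac{q+1-p}{p}}\frac{|x_1-x_2|}{\rho_o},\qquad
    \sqrt{\frac{|t_1-t_2|}{u_1^{q+1-p}\bar\rho^p}}=\widetilde{\boldsymbol\gamma}^{p/2}\Big(\frac{\mathcal M}{u_1}\Big)^{\frac{q+1-p}{2}}\sqrt{\frac{|t_1-t_2|}{\rho_o^p}}
\end{equation*}
reduce \eqref{C1alph-intro} to an expression in which, after passing from exponent $\alpha_o$ to $\alpha_1\le\alpha_o$ (admissible because each summand of the bracket is at most $1$ in Case A), the only $u_1$-dependence is a factor $u_1^{1-\alpha_1(q+1-p)/2}\mathcal M^{\alpha_1(q+1-p)/2}$, which by the choice of $\alpha_1$ and $u_1\le\mathcal M$ is bounded by $\mathcal M$; combined with the $\mathcal M^{(q+1-p)/p}$ already coming from $\bar\rho^{-1}$ this yields exactly \eqref{grad-holder-K}. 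If $z_2\notin Q_1$, then either $|x_1-x_2|>\bar\rho$ -- in which case the first summand in the bracket of \eqref{grad-holder-K} is bounded below by $1/\widetilde{\boldsymbol\gamma}$, so the uniform gradient bound suffices -- or $|t_1-t_2|>u_1^{q+1-p}\bar\rho^p$. In the latter sub-case I would invoke the sharper pointwise bound $|\nabla u(z_i)|\le C\widetilde{\boldsymbol\gamma}\max\{u_i,u_i^{(q+1)/p}\}/\rho_o$, obtained by applying Theorem~\ref{THM:REGULARITY-INTRO} at each $z_i$ with the largest admissible radius $\min\{1,u_i^{(p-q-1)/p}\}\rho_o/\widetilde{\boldsymbol\gamma}$, and verify by the same exponent calculation as in Case A that the lower bound $\sqrt{|t_1-t_2|/\rho_o^p}\ge(u_1/\mathcal M)^{(q+1-p)/2}/\widetilde{\boldsymbol\gamma}^{p/2}$ exactly compensates the $u_1$-powers.

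The main obstacle is precisely this temporal sub-case of Case B when $u_1\ll\mathcal M$: the intrinsic time-window at $z_1$ collapses, the uniform $L^\infty$-bound $|\nabla u|\le C\mathcal M^{(q+1)/p}/\rho_o$ is too crude, and the lower bound on $\sqrt{|t_1-t_2|/\rho_o^p}$ degenerates in $u_1$. The resolution, as outlined, is to replace the uniform bound by the sharper pointwise gradient bound proportional to $u$ itself at small values of $u$, and to rely on the explicit choice $\alpha_1\le 2/(q+1-p)$ to keep all the exponent bookkeeping consistent.
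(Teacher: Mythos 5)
Your argument is essentially the paper's own proof: the same uniform radius $\bar\rho=\widetilde{\boldsymbol\gamma}^{-1}\mathcal M^{-\frac{q+1-p}{p}}\rho_o$, the same reduced exponent $\alpha_1=\min\{\alpha_o,\tfrac{2}{q+1-p}\}$, and the same two-case analysis in which the condition $\alpha_1(q+1-p)\le 2$ makes the bound monotone in $u_1$, so that $u_1$ can be replaced by $\mathcal M$; your exponent bookkeeping in Case A and in both sub-cases of Case B is correct. In the temporal sub-case the plain pointwise bound $|\nabla u(z_i)|\le C u_i/\bar\rho$ already suffices (the paper multiplies it by the bracket, which is $\ge 1$ there, and uses the same monotonicity in $u_i$), so your detour through the sharper bound obtained at the radius $\min\{1,u_i^{(p-q-1)/p}\}\rho_o/\widetilde{\boldsymbol\gamma}$ is admissible but unnecessary.

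The one genuine gap is the treatment of points where $u(x_o,t_o)=0$. ``Covered by approximation, exploiting the continuity of $u$'' does not work as stated: Theorem~\ref{THM:REGULARITY-INTRO} gives no information at such a point (it does not even guarantee that $\nabla u$ has a well-defined pointwise value there), and transferring gradient bounds from nearby points with $u>0$ to $z_o$ would require continuity of $\nabla u$ at $z_o$, which is precisely what is unknown where $u$ vanishes. The paper closes this case structurally: if $u(x_o,t_o)=0$, then repeated applications of the Harnack inequality force $u(\cdot,t_o)\equiv0$ in $E$, hence $\nabla u(\cdot,t_o)\equiv0$, and both \eqref{grad-bound-K} and the Case-2 Hölder estimate hold trivially at such points. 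You need this (or an equivalent) argument; continuity of $u$ alone is not enough.
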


For  $p=2$ our result reduces to the case of the singular porous medium equation in the range $1<q<\frac{N}{(N-2)_+}$. With $q=\frac1m$ 
and the substitution $w=u^\frac1m$ this corresponds to the bound $\frac{(N-2)_+}{N}<m<1$ for the standard form $\partial_tw-\Delta w^m=0$ of the porous medium equation. Therefore, Theorem \ref{THM:REGULARITY-INTRO} recovers
as a special case the gradient estimates of DiBenedetto \& Kwong \& Vespri \cite[Theorem 2.1]{DBKV} for weak solutions to the singular porous medium equation.

Our approach relies on a ``time-insensitive" Harnack inequality established in \cite{BDGLS-23} that is particular about this regime and the Schauder estimates for the parabolic $p$-Laplace equation developed here. We will give more details about the strategy in Section \ref{S:DN-strategy}.
The thrust of our contribution is to generate precise, quantitative, local estimates regarding weak solutions as well as their gradient that dictate their behavior. Moreover, it turns out that such estimates are sharp as evidenced by various explicit examples (see~Section~\ref{sec:reg:opt-1}).

\subsection{Notation and notion of solutions}
By $K_\rho(x_o)$
we denote the cube in $\R^N$ with center $x_o\in\R^N$ and side length $2\rho>0$, whose faces are parallel to the coordinate planes in $\mathbb R^N$. When $x_o=0$ we simply write $K_\rho$, omitting the reference to the center. In this section, space-time cylinders are denoted by 
\[
Q_R(z_o):=K_R(x_o)\times(t_o-R^p, t_o]\quad\text{and}\quad \mathcal{Q}_R(z_o):=K_R(x_o)\times(t_o-R^p, t_o+R^p).
\]
The use of cubes instead of balls is brought by the Harnack inequality established in \cite{BDGLS-23}.

\begin{definition}\label{Def:notion-sol}
A function $u$
is termed a {\bf weak super(sub)-solution} to the parabolic equation \eqref{doubly-nonlinear-prototype} in $E_T$, if
\begin{equation*}  
	u\in C\big( [0,T]; L^{q+1}(E)\big) \cap L^p \big(0,T; W^{1,p} (E)\big) 
\end{equation*}
and if the integral identity 
\begin{equation} \label{Eq:weak-form}
	\iint_{E_T} \Big[-|u|^{q-1}u\pl_t\z+|\nabla u|^{p-2}\nabla u\cdot \nabla\z\Big]\dx\dt
	\ge (\le) 0
\end{equation}
is satisfied for all non-negative test functions 
\begin{equation}\label{Eq:test-func}
\z\in  W_0^{1,q+1} \big(0,T; L^{q+1}(E)\big)\cap L^p \big(0,T; W^{1,p}_{0}(E)\big).
\end{equation}
A function that is both a weak super- and sub-solution is called a {\bf weak solution}. 
\end{definition}

\begin{remark}\upshape
    The function space \eqref{Eq:test-func} of $\z$ guarantees the convergence of the integral in \eqref{Eq:weak-form}.
\end{remark}

\begin{remark}\upshape
A notion of {\bf local solution} is commonly used in the literature, cf.~\cite{DB, BDL-21, BDLS-22}. 
We stress that this notion makes no essential difference from Definition~\ref{Def:notion-sol} modulo a localization.  
\end{remark}

\subsection{Strategy}\label{S:DN-strategy}
A few words about the proof  and the underlying ideas are in order here. 
The first observation is general in nature, and reflects a common principle in the treatment of non-linear parabolic equations with degeneracy as in \eqref{doubly-nonlinear-prototype}. One crucial building block in the study of the problem is the choice of the correct geometry in space and time: only the proper choice of geometry will lead to homogeneous estimates. In the present situation it turns out that the natural cylinders for our purposes are given by  $Q_o:=K_\rho(x_o)\times
\big( t_o-u_o^{q+1-p}\rho^p, t_o+u_o^{q+1-p}\rho^p\big)$, provided $u_o>0$. The geometry reflected by such cylinders is \emph{intrinsic} as it varies according to the value of the solution at the center. Once the correct
\textit{ intrinsic geometry}  
is identified, one can turn to the actual regularity assertions.

The starting point is the fact that, in the described parameter range a Harnack inequality without waiting-time phenomenon holds true. Therefore, from the assumption $u_o>0$ we may conclude that $u$ is controlled from above and below in terms of $u_o$ on the symmetric, intrinsic, space-time cylinder $Q_o$ (backward-forward cylinder).
More precisely, if $\bg_\mathrm{h}$ denotes the Harnack constant, then $\bg_\mathrm{h}^{-1}\le u/u_o\le \bg_\textrm{h}$ on $Q_o$. 
Rescaling $u/u_o$ in time and space to
the symmetric unit cylinder $\mathcal Q_1:= K_1(0)\times (-1,1)$ leads to a weak solution $\tilde u$ to the doubly non-linear parabolic equation \eqref{doubly-nonlinear-prototype} in $\mathcal Q_1$ with values in $[\bg_\textrm{h}^{-1}, \bg_\textrm{h}]$. This allows us to pass from   $\tilde u$ to $v:=\tilde u^q$.  Then  $v(y,s)$ is a weak solution to the parabolic equation
\begin{equation*}
	\partial_s v - 
	\Div\Big(\big(\tfrac1q\big)^{p-1}\tilde u^{(p-1)(1-q)}|\nabla v|^{p-2}\nabla v\Big)
	=
	0
	\quad\mbox{in $\mathcal{Q}_1$,}
\end{equation*}
with values in the interval $[\bg_\textrm{h}^{-q}, \bg_\textrm{h}^{q}]$. Furthermore, denoting
\begin{align*}
	a(y,s)
	:=
	\big(\tfrac1q\big)^{p-1}\tilde u(y,s)^{(p-1)(1-q)},
\end{align*}
the above equation can be interpreted as a parabolic equation of $p$-Laplace-type with measurable and bounded coefficients that are also uniformly bounded away from zero. This allows us to apply
the by now classical $C^{0,\alpha}$-regularity results from \cite{DB} to deduce that $v$ is $\alpha$-Hölder continuous in $\frac12\mathcal{Q}_{1}$ with some Hölder exponent $\alpha\in(0,1)$. Together with the lower and upper bound for $v$, such a fact implies that the coefficient $a$ is also $\alpha$-Hölder continuous. In particular, this shows that $v$ is a weak solution to a parabolic equation of $p$-Laplace-type with H\"older continuous coefficients in $\frac12\mathcal{Q}_{1}$. This reduces questions about Lipschitz and gradient H\"older regularity of $v$ to whether or not corresponding Schauder estimates for weak solutions can be established. Since the exponent $p$ can take values close to $1$ in the described parameter range, boundedness of weak solutions must be assumed in the context of Schauder estimates. However, in the range of $p$ and $q$ required, this is indeed the case. Theorem~\ref{theorem:schauder} 
ensures  $\alpha_o$-H\"older continuity of $Dv$ for some  H\"older exponent $\alpha_o\in(0,1)$ together with the quantitative gradient bound 
\[
    \sup_{\frac14\mathcal Q_1}|\nabla v|\le C,
\]
and the quantitative $\alpha_o$-H\"older gradient estimate 
\begin{equation*}
    \big|\nabla v(y_1,s_1) - \nabla v(y_2,s_2)\big|
    \le
    C\Big[ |y_1-y_2|+\sqrt{|s_1-s_2|}\Big]^{\al_o}
\end{equation*}
for any $(y_1,s_1), (y_2,s_2)\in \tfrac14 \mathcal{Q}_{1}$.
The asserted inequalities for the original solution $u$ are obtained by rescaling $v$ to $u$.

 To summarize, the proof strategy consists of three major steps. In the first step, we prove that weak solutions are bounded in the parameter range under consideration. In the second step we show that for non-negative weak solutions an elliptic Harnack-type inequality holds. This means that the Harnack inequality holds without waiting-time (backward-forward Harnack inequality). Thus, in a neighborhood near a point $(x_o,t_o)$ with $u_o=u(x_o,t_o)>0$ weak solutions are controlled from below and from above by $u_o$.
In the third and last step, one  substitutes $v=u^q$. The resulting equation of $v$ can be interpreted as a parabolic $p$-Laplace equation with measurable coefficients, so that  the classical result about Hölder regularity of weak solutions can be directly applied to it, and consequently the coefficients (essentially power functions of the solution) are identified as being themselves Hölder continuous. At this point the desired gradient regularity can be deduced from suitable Schauder estimates for parabolic $p$-Laplace equations with Hölder continuous coefficients. The peculiarity (and new feature) in this step is that we need to establish these Schauder estimates for any $p>1$. In particular, $p$ may be close to $1$.
Needless to say, Schauder estimates  for $p$ close to 1, can only be true under the additional assumption that weak solutions are bounded, which, however, is granted for our application here. 

A unique feature of weak solutions to \eqref{doubly-nonlinear-prototype} in the fast diffusion regime is that they might become extinct abruptly.
As a direct application of the gradient estimates from Theorem~\ref{THM:REGULARITY-INTRO} and the integral Harnack inequality from \cite[Theorem~3.13]{BDGLS-23}, decay estimates for weak solutions at their extinction time can be established.

\begin{corollary}\label{Cor:12:4}
Let $p$ and $q$ be as in Theorem \ref{THM:REGULARITY-INTRO}. Then there exist
 constants $C>1$ and $\bg_o>1$ depending only on $N$, $p$ and $q$, such that if $u$ is a non-negative weak solution
to the doubly non-linear parabolic equation \eqref{doubly-nonlinear-prototype} in $E_T$ and if $T$ is an extinction time for $u$, i.e.~$u(\cdot,T)=0$ a.e.~in $E$,
then, for any $x_o\in E$ and $t_o\in(\tfrac12T,T)$ we have
\begin{align*}\label{ext-speed}
    u(x_o,t_o)
    &
    \le C\bigg[\frac{T-t_o}{d^p(x_o)}\bigg]^{\frac{1}{q+1-p}},\\
    \big|\nabla u(x_o,t_o)\big|
    &
    \le \frac{C}{d(x_o)}\bigg[ \frac{T-t_o}{d^p(x_o)}\bigg]^{\frac{1}{q+1-p}}.
\end{align*}
Here, $d(x_o):=\dist(x_o,\partial E)$ denotes the usual Euclidean distance of $x_o$ to the boundary $\partial E$. Moreover, for any $r\in(0,d(x_o)/\bg_o)$
we have the following decay estimates for the oscillation of $u$ and its gradient, i.e.
\begin{align*}
   \osc_{K_r(x_o)} u(\cdot, t_o) 
   &\le 
   C\frac{ r}{d(x_o)}\bigg[\frac{T-t_o}{d^p(x_o)}\bigg]^{\frac{1}{q+1-p}},\\    \osc_{K_r(x_o)} \nabla u(\cdot, t_o) 
   &\le
\frac{C}{d(x_o)}\bigg[\frac{r}{d(x_o)}\bigg]^{\al_o}\bigg[\frac{T-t_o}{d^p(x_o)}\bigg]^{\frac{1}{q+1-p}}.
\end{align*}
Here,  $\al_o\in(0,1)$ is the H\"older exponent from Theorem~\ref{THM:REGULARITY-INTRO}. 
\end{corollary}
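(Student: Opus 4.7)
The plan is to derive all four estimates as direct consequences of the gradient bound \eqref{grad-est-intro}, the Lipschitz estimate \eqref{diff-u-intro}, and the gradient H\"older estimate \eqref{C1alph-intro} from Theorem~\ref{THM:REGULARITY-INTRO}, once the intrinsic radius is correctly calibrated in terms of $d(x_o)$ and the extinction data. The starting point---and at the same time the heart of the matter---is the pointwise decay bound
\begin{equation}\label{plan:drive}
  u(x_o,t_o)^{q+1-p}\le \bg_1\,\frac{T-t_o}{d^p(x_o)}
\end{equation}
for some constant $\bg_1>0$ depending only on $N,p,q$, which is precisely the first assertion of the corollary. Estimate \eqref{plan:drive} should follow from the integral Harnack-type inequality of \cite[Theorem~3.13]{BDGLS-23}, applied on a cube of radius comparable to $d(x_o)$ around $x_o$ over a time interval reaching up to $T$: the extinction condition $u(\cdot,T)\equiv 0$ kills the forward contribution of that estimate and leaves precisely the factor $(T-t_o)/d^p(x_o)$ on the right-hand side. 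This is the step where the extinction hypothesis is converted into quantitative information, and it is by far the most delicate point of the proof; everything afterwards is algebra.

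Granted \eqref{plan:drive}, set $u_o:=u(x_o,t_o)$ and fix the intrinsic radius
\[
  \rho:=\frac{d(x_o)}{\bg_o},\qquad \bg_o:=\widetilde{\bg}\max\big\{1,\bg_1^{1/p}\big\},
\]
with $\widetilde{\bg}$ the universal constant from Theorem~\ref{THM:REGULARITY-INTRO}. For this choice, the spatial part satisfies $K_{\widetilde{\bg}\rho}(x_o)\subset E$ by construction, while by \eqref{plan:drive} the temporal part satisfies
\[
  u_o^{q+1-p}(\widetilde{\bg}\rho)^p
  \le
  \bg_1\,\frac{\widetilde{\bg}^p}{\bg_o^p}\,(T-t_o)
  \le T-t_o\le t_o,
\]
the last inequality using $t_o\in(T/2,T)$. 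Hence $\widetilde{\bg}Q_o\subset E_T$, so Theorem~\ref{THM:REGULARITY-INTRO} applies at $(x_o,t_o)$ with this radius. The sup-bound \eqref{grad-est-intro} then reads $|\nabla u(x_o,t_o)|\le Cu_o/\rho$, and substituting \eqref{plan:drive} together with $\rho=d(x_o)/\bg_o$ yields the pointwise gradient estimate in \eqref{ext-speed}.

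For the two oscillation statements, fix $r\in(0,d(x_o)/\bg_o)=(0,\rho)$ and arbitrary points $x_1,x_2\in K_r(x_o)\subset K_\rho(x_o)$. Both $(x_1,t_o)$ and $(x_2,t_o)$ lie in $Q_o$, so \eqref{diff-u-intro} with vanishing time increment gives
\[
  |u(x_1,t_o)-u(x_2,t_o)|\le Cu_o\,\frac{|x_1-x_2|}{\rho}\le \frac{Cu_o r}{\rho},
\]
while \eqref{C1alph-intro} with $t_1=t_2=t_o$ yields
\[
  |\nabla u(x_1,t_o)-\nabla u(x_2,t_o)|\le \frac{Cu_o}{\rho}\Big(\frac{|x_1-x_2|}{\rho}\Big)^{\alpha_o}\le \frac{Cu_o}{\rho}\Big(\frac{r}{\rho}\Big)^{\alpha_o}.
\]
Inserting the bound for $u_o$ from \eqref{plan:drive} and recalling $\rho=d(x_o)/\bg_o$, these inequalities translate directly into the two oscillation bounds of the corollary, with the same H\"older exponent $\alpha_o\in(0,1)$ supplied by Theorem~\ref{THM:REGULARITY-INTRO}. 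The only non-mechanical step in the whole argument is thus the derivation of \eqref{plan:drive}; all other pieces are straightforward substitutions into the Schauder-type estimates.
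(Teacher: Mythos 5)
Your proposal is correct and follows essentially the same route as the paper: the pointwise decay bound is obtained from the integral Harnack inequality of \cite{BDGLS-23} on a cylinder of spatial size $d(x_o)$ reaching up to the extinction time, and then the radius $\rho\sim d(x_o)/\bg_o$ is calibrated (using that bound and $t_o>\tfrac12 T$) so that the intrinsic cylinder $\widetilde{\bg}Q_o$ fits inside $E_T$, after which \eqref{grad-est-intro}, \eqref{diff-u-intro} and \eqref{C1alph-intro} give the gradient bound and the two oscillation estimates by direct substitution. Your explicit choice $\bg_o=\widetilde{\bg}\max\{1,\bg_1^{1/p}\}$ plays the same role as the paper's ``choose $\bg_o$ sufficiently large,'' so no genuine difference remains.
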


Corollary~\ref{Cor:12:4} will be proved in Section~\ref{sec:ext-time}, where we will also provide an estimate for the extinction time of the solution to a Cauchy-Dirichlet problem.

To prove Theorem~\ref{THM:REGULARITY-INTRO} we will take advantage of the Schauder estimates  established in this manuscript, and the ``time-insensitive'' Harnack inequality from~\cite{BDGLS-23}. However, before giving the proof, we recall some relevant results from the literature, and then briefly describe the optimality of the range of exponents $ 0<p-1<q<\frac{N(p-1)}{(N-p)_+}$ considered in Theorem~\ref{THM:REGULARITY-INTRO}.

\subsection{Remarks on the literature}\label{sec:reg:opt}

The investigation of regularity of weak solutions was initiated by Ivanov in a series of papers. Local boundedness of weak solutions has been studied in~\cite{Ivanov-1995-2}. Regarding this topic, we refer to \cite{BDGLS-23} for a more detailed exposition. The issue of H\"older continuity of weak solutions requires more delicate analysis and have attracted a number of authors. The well-studied ranges are the borderline case $p-1=q$, the doubly singular case, i.e.~$p-1<q$ and $1<p<2$, as well as the doubly degenerate case, i.e.~$q<p-1$ and $p>2$. See~\cite{Ivanov-1989, Ivanov-1994, Ivanov-1995-3, Porzio-Vespri, Urbano-08, KSU-12, BDL-21, BDLS-22, Liao-Schaetzler, Vespri-Vestberg}, and also Figure~\ref{Fig:hold}, where the white regions give a graphical representation of the well-studied ranges.

\begin{figure}
\psfragscanon
\psfrag{p}{$\scriptstyle p$}
\psfrag{q}{$\scriptstyle q$}
\psfrag{pzero}{$\scriptstyle p=1$}
\psfrag{qzero}{$\scriptstyle q=0$}
\psfrag{ptwo}{$\scriptstyle p=2$}
\psfrag{line}{$\scriptstyle q=p-1$}
\includegraphics[width=.75\textwidth]{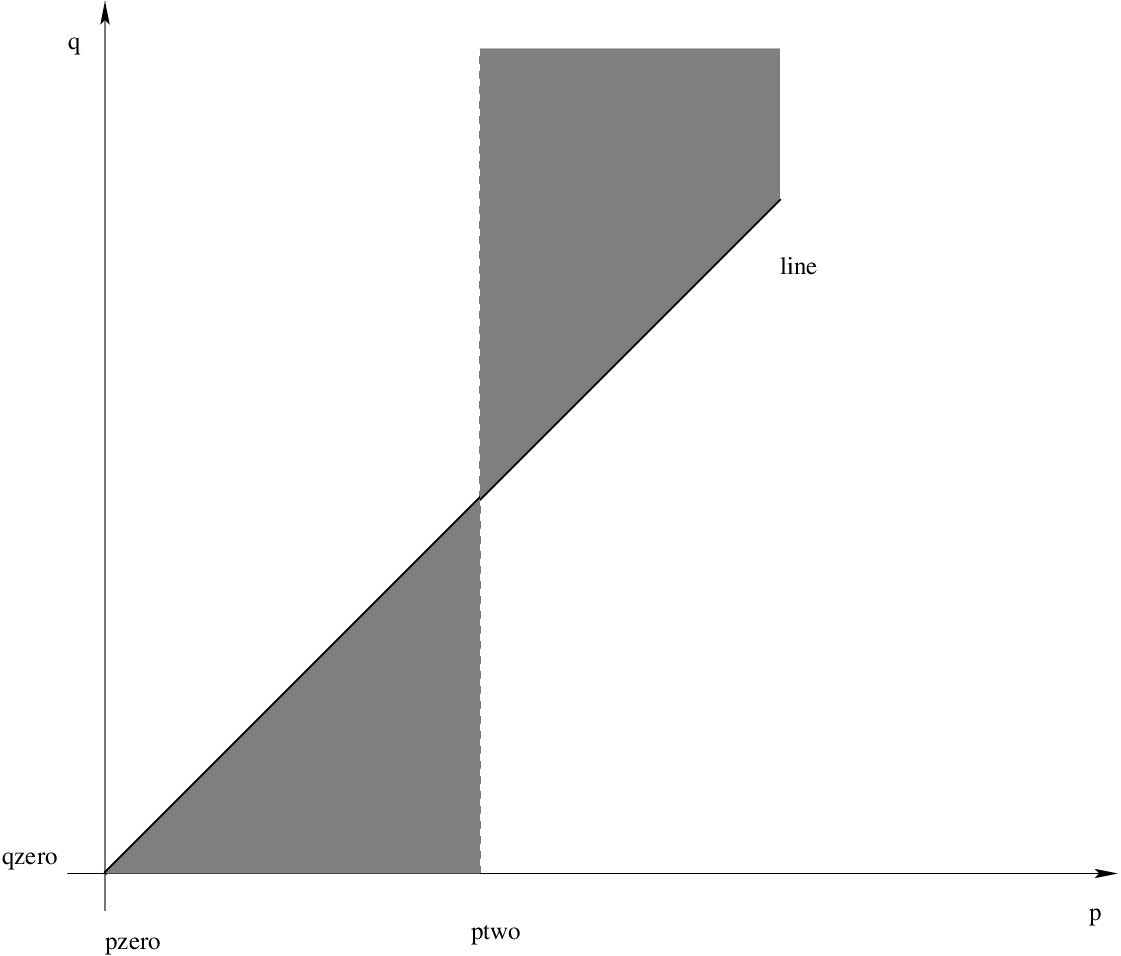}
\caption{\label{Fig:hold}}
\end{figure}

The theory becomes even more fragmented when it comes to gradient regularity. 
The well-studied case is the parabolic $p$-Laplace equation starting from DiBenedetto \& Friedman \cite{DiBenedetto-Friedman, DiBenedetto-Friedman3}. Indeed, the gradient of weak solutions is H\"older continuous.  

On the other hand, gradient regularity for solutions to the porous medium equation is more involved and much less understood. 
In the slow diffusion regime ($p=2$ \& $q<1$), only the one-dimensional case is reasonably understood. Aronson studied the Lipschitz continuity of $x\mapsto u^{1-q}(x,t)$ 
in \cite{Aronson-69}. For $0<q\le\frac12$ DiBenedetto~\cite{DB-1d} investigated the Lipschitz continuity of $t\mapsto u^{1-q}(x,t)$. The full range $0<q<1$ has later been established by B\'enilan~\cite{Benilan} and Aronson \& Caffarelli~\cite{Aronson-Caffarelli}. 
In the same regime, the optimal regularity in the multi-dimensional case is still a major open problem. 
Caffarelli \& V\'azquez \& Wolanski \cite{CVW} proved for a solution $u$ to the Cauchy problem that $\nabla u^{1-q}$ becomes bounded after some waiting time. This result was localized and improved for $0<q\le\frac12$ by Gianazza \& Siljander \cite{Gianazza-Siljander} who were, roughly speaking, able to quantify the waiting time. More precisely, if at some point $(x_o,t_o)$ the average $a^{q}=\bint_{B_r(x_o)} u^{q}(x,t_o) \,\dx$ is strictly positive, then $\nabla u^{1-q}$ is locally bounded in a certain cylinder, whose center lies at time $t_o+a^{q-1} r^2$. 

In the fast diffusion regime, ($p=2$ \& $q>1$) the picture is clearer. In the super-critical range $1<q<\frac{N}{(N-2)_+}$ DiBenedetto \& Kwong \& Vespri \cite{DBKV} proved that weak solutions are locally analytic in the space variable and at least Lipschitz-continuous in time up to the extinction time. They presented very precise, quantitative, regularity estimates up to the boundary.

The first investigations of the gradient regularity of solutions to the doubly non-linear equation \eqref{doubly-nonlinear-prototype} are due to Ivanov \& Mkrtychyan \cite{Ivanov-Mk}, Ivanov \cite{Ivanov-1996}, and Savar\'e \& Vespri \cite{Savare}. 
The results of  \cite{Ivanov-Mk} are not easily comparable with ours, since they are given for so-called {\em regular} solutions, that is, weak solutions to a Cauchy-Dirichlet problem built as suitable limits.
In the slow diffusion regime ($p>1$ \& $q<p-1$) which we do not deal with here,  a bound for $\nabla u^\alpha$ with $\alpha\in(0,1)$ holds for $q\le \frac{p-1}p$. 

As far as $\nabla u$ is concerned, combining \cite[Theorem~6.1]{Ivanov-1996} and the correction pointed out in \cite[\S~5]{Ivanov-1997}, yields the claim that the range of values of $p$ and $q$ in order for $\nabla u$ to be bounded is 
\[
p>1,\quad 0<q\le 1,\quad  \frac{(p-1)(1-q)}{q}<\frac{2\min\{1,p-1\}}{2\min\{1,p-1\}+\frac N4}.
\]
Even if we limit ourselves to the fast diffusion regime ($0<p-1<q$), which is the case we are interested in here, this does not correspond to the results of Theorem~\ref{THM:REGULARITY-INTRO}, and it is not clear how the Cauchy-Dirichlet problem Ivanov starts from, affects the interior regularity of the gradient. 

Finally, 
in \cite[Theorem~1.5]{Savare}, with a different notion of solution to \eqref{doubly-nonlinear-prototype} Savar\'e \& Vespri provided a gradient estimate
\[
|\nabla u(x_o,t_o)|\le \frac{C u(x_o,t_o)}{\rho}
\]
for some $C=C(p,q,N)$ and for parameters $p<N$, $0<p-1< q<\frac{N(p-1)}{N-p}$.

\subsection{Optimality}\label{sec:reg:opt-1}
Let us now present some counterexamples which illustrate that the statement of Theorem~\ref{THM:REGULARITY-INTRO} breaks down in the cases $q=p-1$ and $q=\frac{N(p-1)}{(N-p)_+}$. 

We start with $q=p-1$: in this case the function 
\[
u(x,t)=Ct^{-\frac{N}{p(p-1)}} \exp\left(-\frac{p-1}p\left(\frac{|x|^p}{pt}\right)^{\frac1{p-1}}\right)
\]
is a solution to \eqref{doubly-nonlinear-prototype} in $\R^N\times\R_+$ for arbitrary $C>0$. If we choose $t=1$ and $\rho=1$ and pick a sequence of points $x_n$ such that $|x_n|\to\infty$, it is easy to see that \eqref{grad-est-intro} cannot hold. This fact was first pointed out in \cite[Remark~1.6]{Savare}.

The estimate \eqref{grad-est-intro} does not hold in general when $q\ge\frac{N(p-1)}{(N-p)_+}$. Suppose the estimate were to hold for some $u_o>0$. Then letting $\rho\to\infty$ it would have implied $u$ is constant in $\R^N\times\R$. However, this contradicts the existence of non-constant solutions. Indeed, for parameters
\[
N\ge2,\quad N>p,\quad q=\frac{N(p-1)}{N-p},\quad b=\left(\frac Nq\right)^q,
\]
the function
\[
u(x,t)=\left(|x|^{\frac{N(q+1)}{q(N-1)}}+e^{bt}\right)^{-\frac{N-1}{q+1}}
\]
is a non-negative solution to \eqref{doubly-nonlinear-prototype} in $\R^N\times\R$. 

Moreover, the right-hand side of \eqref{grad-est-intro} reduces to $\frac C\varrho$. As for the left-hand side, considering the previous explicit solution,
we have
\[
|\nabla u(x,t)|=\frac Nq\frac{|x|^{\frac{N+q}{q(N-1)}}}{\left[|x|^{\frac{N(q+1)}{q(N-1)}}+e^{bt}\right]^{\frac{N+q}{q+1}}}.
\]
It is apparent that 
\[
\sup_{Q_o}|\nabla u|\ge\sup_{K_\varrho}|\nabla u(\cdot,0)|=\sup_{r\in(0,\varrho)} f(r),
\]
where we have set 
\[
|x|=r,\quad f(r)=\frac Nq\frac{r^{\frac{N+q}{q(N-1)}}}{\left[r^{\frac{N(q+1)}{q(N-1)}}+1\right]^{\frac{N+q}{q+1}}}.
\]
It is a matter of straightforward computations to check that if
\[
\rho>\left(\frac1{N-1}\right)^{\frac{q(N-1)}{N(q+1)}}:=\bar r,
\]
we have 
\[
\sup_{0<r<\varrho} f(r)=f(\bar r)=C(N,q).
\]
Hence, $\sup_{Q_o}|\nabla u|\ge C(N,q)$, and provided $\varrho$ is chosen sufficiently large, \eqref{grad-est-intro} cannot hold.

\subsection{Proof of Theorem~\ref{THM:REGULARITY-INTRO} and Corollary~\ref{COR:GRAD-REG}}
The first step is given by the following result. It shows that once two-sided bounds are imposed to a solution, then it behaves like the one to the parabolic $p$-Laplace equation. Here, $p>1$ and $q>0$ can be arbitrary.
\begin{theorem}\label{thm:doubly-bounded}
Let $p>1$, $q>0$, and $k>1$.
There are constants $C>1$ and $\alpha_o\in(0,1)$, which  depend only on $N, p,q$ and $k$, such that if $u$ is a 
non-negative weak solution to \eqref{doubly-nonlinear-prototype} in $E_T$, and if 
\begin{equation}\label{u-lu-k}
     \tfrac{1}{k}\le u \le k
     \qquad\mbox{in $E_T$}
\end{equation}
holds, then for any cylinder $Q_\rho(z_o)\subset E_T$ we have the gradient bound
\begin{equation}\label{grad-est-bd}
    \sup_{Q_{\rho/2}(z_o)}|\nabla u|
    \le
    \frac{C}{\rho},
\end{equation}
the Lipschitz estimate
\begin{equation}\label{diff-u-bd}
    |u(z_1)-u(z_2)|
    \le
    C
    \Bigg[\frac{|x_1-x_2|}{\rho}+\sqrt{\frac{|t_1-t_2|}{\rho^p}} \,\Bigg],
\end{equation}
and the gradient H\"older estimate
\begin{equation}\label{C1alph-bd}
    |\nabla u(z_1) - \nabla u(z_2)|
    \le
    \frac{C}{\rho}\,
    \Bigg[\,\frac{|x_1-x_2|}{\rho}+\sqrt{\frac{|t_1-t_2|}{
      \rho^p}}\, \Bigg]^{\alpha_o},
\end{equation}
for all $z_1=(x_1,t_1), z_2=(x_2,t_2)\in Q_{\rho/8}(z_o)$. 
\end{theorem}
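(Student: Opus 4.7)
The plan is to implement the strategy sketched in Section~\ref{S:DN-strategy}, using Theorem~\ref{theorem:schauder} as the key input. By the parabolic scaling $\tilde u(y,s) := u(x_o + \rho y, t_o + \rho^p s)$, which preserves both \eqref{doubly-nonlinear-prototype} and the bound \eqref{u-lu-k}, we may assume $z_o = (0,0)$ and $\rho = 1$; the $\rho$-dependence in the conclusions is then recovered by undoing the scaling.

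Setting $v := u^q$, a direct computation yields that $v$ is a bounded weak solution of
\begin{equation*}
    \partial_s v - \Div\bigl(a(y,s)|\nabla v|^{p-2}\nabla v\bigr) = 0,
    \qquad
    a(y,s) := q^{1-p}\,u(y,s)^{(1-q)(p-1)},
\end{equation*}
and \eqref{u-lu-k} translates into $C_o \le a \le C_1$ and $k^{-q} \le v \le k^q$, for constants depending only on $k,p,q$. In particular $v$ is a bounded weak solution of a parabolic $p$-Laplace-type equation with bounded measurable coefficients. The classical Hölder regularity theory of DiBenedetto (see \cite[Chapter~IV]{DB}), applicable to bounded weak solutions for any $p>1$, now gives $v \in C^{\beta,\beta/2}_{\mathrm{loc}}(Q_1)$ with some $\beta = \beta(N,p,q,k)\in(0,1)$. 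Since $v$ is bounded above and below by positive constants, the map $v \mapsto v^{(1-q)(p-1)/q}$ is smooth on the range of $v$, so $a = q^{1-p}\, u^{(1-q)(p-1)}$ inherits the same Hölder regularity on, say, $Q_{3/4}$. Thus \eqref{prop-a-intro} is satisfied with structural constants depending only on $N,p,q,k$.

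At this point Theorem~\ref{theorem:schauder} applies to $v$ on $Q_{3/4}$ with $\mu=0$, Hölder exponent $\beta$, and oscillation controlled by $2k^q$. It yields $\sup_{Q_{1/2}}|\nabla v| \le C$ and an $\alpha_o$-Hölder estimate for $\nabla v$ on $Q_{1/8}$, both with constants depending only on $N,p,q,k$. The estimates for $u$ follow from the identity $\nabla u = q^{-1} v^{(1-q)/q}\nabla v$ together with the two-sided bound on $v$: the sup-bound on $\nabla v$ yields \eqref{grad-est-bd}; the parabolic Lipschitz estimate \eqref{diff-u-bd} is obtained by combining the spatial Lipschitz bound for $v$ with $\tfrac12$-Hölder continuity of $v$ in time — which itself is produced by feeding the just-established $L^\infty$-bound on $\nabla v$ into the Gluing Lemma~\ref{lm:gluing} and optimizing the balance radius to $R \sim \sqrt{|s_1-s_2|}$; finally, \eqref{C1alph-bd} is derived by splitting
\begin{equation*}
    \nabla u(z_1) - \nabla u(z_2)
    =
    q^{-1}\bigl[v(z_1)^{\frac{1-q}{q}} - v(z_2)^{\frac{1-q}{q}}\bigr]\nabla v(z_1)
    +
    q^{-1} v(z_2)^{\frac{1-q}{q}}\bigl[\nabla v(z_1) - \nabla v(z_2)\bigr]
\end{equation*}
and applying the Hölder bound on $v$ from Step~3 together with the sup- and Hölder-bounds on $\nabla v$ from Step~4; the final Hölder exponent is the minimum of $\beta$ and the one from Theorem~\ref{theorem:schauder}, which we may relabel $\alpha_o$.

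The conceptual heart of the argument — and the main subtlety — is the bootstrap converting the \emph{a priori} bound \eqref{u-lu-k} into Hölder continuity of the coefficient $a$ via intrinsic Hölder regularity of $v$ itself. This is what enables the invocation of Theorem~\ref{theorem:schauder}, whose treatment of the sub-critical range $1 < p \le \tfrac{2N}{N+2}$ is precisely the novelty of the present paper; note that no intrinsic, waiting-time-type geometry is required here because \eqref{u-lu-k} has already frozen the natural scaling of~\eqref{doubly-nonlinear-prototype}.
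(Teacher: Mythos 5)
Your proposal is correct and follows essentially the same route as the paper's proof: rescale to the unit cylinder, substitute $v=u^q$ so that the two-sided bound turns the equation into a parabolic $p$-Laplace equation with bounded measurable (then, by DiBenedetto's classical H\"older theory, H\"older continuous) coefficients, invoke Theorem~\ref{theorem:schauder} with $\mu=0$, recover time regularity via the gluing/oscillation lemma, and transfer the bounds back to $u$ through $\nabla u=q^{-1}v^{(1-q)/q}\nabla v$. The only differences are cosmetic (e.g.\ the paper routes the time estimate through Lemma~\ref{lem:osc-all-p} and notes the adjustment $\alpha_o\le\frac{2}{p-2}$ when $p>2$ to pass from the intrinsic to the standard parabolic distance), so no further comment is needed.
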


\begin{proof}
We consider a cylinder $Q_\rho(z_o):=K_\rho(x_o)\times(t_o-\rho^p,t_o]\subset E_T$. Re-scaling to the unit cylinder $Q_1:=K_1(0)\times(-1,0]$ leads to a weak solution
\begin{align*}
	\tilde u(y,s)
	:=
	u\big(x_o+\rho y, t_o+ \rho^p s\big)
	\quad
	\mbox{for $(y,s)\in Q_1$,}
\end{align*}
to the doubly non-linear parabolic equation
\begin{equation}\label{DN-tilde}
	\partial_t \tilde u^q - 
	\Div\big(|\nabla\tilde u|^{p-2}\nabla\tilde u\big)
	=
	0
	\quad\mbox{in $Q_1$,}
\end{equation}
with values in $[k^{-1} ,k]$.
Substituting $v=\tilde u^q$, equation \eqref{DN-tilde} is equivalent to 
\begin{equation*}
	\partial_t v - 
	\Div\Big(\big(\tfrac1q\big)^{p-1} \tilde{u}^{(p-1)(1-q)}|\nabla v|^{p-2}\nabla v\Big)
	=
	0
	\qquad\mbox{in $Q_1$}
\end{equation*}
and assumption~\eqref{u-lu-k} yields
\begin{align}\label{lower-upper-v-k}
	k^{-q}
	\le
	v
	\le
	k^{q}
	\quad
	\mbox{in $Q_1$.}
\end{align}
With the abbreviation
\begin{align*}
	a(y,s)
	:=
	\big(\tfrac1q\big)^{p-1} [\tilde u(y,s)]^{(p-1)(1-q)},
\end{align*}
the equation for $v$ can be interpreted as a parabolic $p$-Laplace-type equation with
measurable coefficients $a(y,s)$ in $Q_1$. Indeed, $v$ is a bounded weak solution to 
\begin{equation}\label{equation-for-v-k}
	\partial_t v - 
	\Div\big(a(y,s)|\nabla v|^{p-2}\nabla v\big)
	=
	0
	\qquad
	\mbox{in $Q_1$.}
\end{equation}
For the coefficients $a$ we have  lower and upper bounds in terms of $k$, i.e.
\begin{equation} \label{bounds-for-a-k}
	\big(\tfrac1q\big)^{p-1}k^{-(p-1)|1-q|}
	\le 
	a(y,s)
	\le 
	\big(\tfrac1q\big)^{p-1}k^{(p-1)|1-q|}
	\quad
	\mbox{for a.e.~$(y,s)\in Q_1$.}
\end{equation}
Therefore, we may apply Chapter III, \S\,1, Theorem~1.1, resp. Chapter IV, \S\,1, Theorem~1.1 in \cite{DB} to deduce that $v$ is locally Hölder continuous in $Q_1$ with some Hölder exponent $\alpha\in(0,1)$ depending only on $N,p,q$ and $k$. 
In particular, $v$ is $\alpha$-Hölder continuous in $Q_{\frac{1}{2}}=K_{\frac{1}{2}}\times(-\frac{1}{4},0]$. 
Recalling \eqref{lower-upper-v-k}, the quantitative H\"older estimates from \cite{DB} yield
\begin{align*}
    |v(y_1,s)-v(y_2,s)|
    \le
    C |y_1-y_2|^\alpha
\end{align*}
for any $y_1,y_2\in K_{\frac{1}{2}}$ and $s\in (-\frac{1}{4},0]$, where $C=C(N,p,q,k)$. 
This and the lower and upper bound for $v$ in \eqref{lower-upper-v-k} imply that $a$ is also $\alpha$-Hölder continuous with the  quantitative  estimate 
\begin{align*}
    |a(y_1,s)-a(y_2,s)|
    \le
    C |y_1-y_2|^\alpha
\end{align*}
for any $y_1,y_2\in K_{\frac{1}{2}}$ and $s\in (-\frac{1}{4},0]$, where $C=C(N,p,q,k)$. 
Together with \eqref{equation-for-v-k} and \eqref{bounds-for-a-k} this shows that the hypotheses \eqref{prop-a-intro} of Theorem~\ref{theorem:schauder} (with $\mu=0$)
are fulfilled with constants $C_o,C_1$ depending only on $N,p,q,k$. 
Therefore, the Schauder-estimates from the same theorem are at our disposal. More precisely, we have $\nabla v\in C^{\alpha_o,\alpha_o/2}_{\mathrm{loc}}(Q_{1/2},\R^N)$ for some  H\"older exponent $\alpha_o\in(0,1)$, depending only on $N,p,q$ and $k$. 
Furthermore, the Schauder-estimates from the same theorem provide quantitative estimates. To be more precise, there exists a constant $C=C(N,p,q,k)\ge1$ such that for any compact subset $\mathcal{K}\subset Q_{1/2}$ with 
$\rho:= \tfrac14\mathrm{dist}_\mathrm{par}(\mathcal{K},\partial_\mathrm{par} Q_{1/2})>0$,
the $L^\infty$-gradient bound
\begin{equation*}
    \sup_{\mathcal{K}}|\nabla v|
    \le
    C\bigg[\frac{\boldsymbol\omega}{\rho}
    +
    \Big(\frac{\boldsymbol\omega}{\rho}\Big)^{\frac{2}{p}}
    \bigg]
    =:
    C\lambda,
\end{equation*}
and the $\alpha_o$-H\"older gradient estimate 
\begin{equation*}
    |\nabla v(\mathfrak z_1) - \nabla v(\mathfrak z_2)|
    \le
    C\lambda
    \bigg[\frac{\mathrm{d}_\mathrm{par}^{(\lambda)}(\mathfrak z_1,\mathfrak z_2)}{\min\{1,\lambda^{\frac{p-2}{2}}\}\rho}\bigg]^{\alpha_o}
    \quad\mbox{for any $\mathfrak z_1,\mathfrak z_2\in\mathcal{K}$,} 
\end{equation*}
hold true. Here we defined $\boldsymbol \omega:=\osc_{Q_{\frac12}} v$.
For $\mathcal K=Q_{\frac14}$, $\rho$ can be precisely quantified,
and $\boldsymbol \omega\le k^q$. Therefore, we have
\begin{equation*}
    \lambda
    =
    \frac{\boldsymbol\omega}{\rho}
    +
    \Big(\frac{\boldsymbol\omega}{\rho}\Big)^{\frac{2}{p}}
    \le 
    C(N,p,q,k),
\end{equation*}
so that 
\begin{equation}\label{sup-bound-Dv-k}
    \sup_{Q_{1/4}}|\nabla v|
    \le
    C(N,p,q,k),
\end{equation}
and 
\begin{equation}\label{osc-bound-Dv-k}
    |\nabla v(\mathfrak z_1) - \nabla v(\mathfrak z_2)|
    \le
    C(N,p,q,k)\,\mathrm{d}_\mathrm{par}(\mathfrak z_1,\mathfrak z_2)^{\alpha_o}\quad \mbox{for any $\mathfrak z_1,\mathfrak z_2\in Q_{1/4}$.}
\end{equation}
In the case $p>2$, to convert the intrinsic parabolic distance $\mathrm{d}_\mathrm{par}^{(\lambda)}$
into the parabolic distance $\mathrm{d}_\mathrm{par}$, we take advantage of the fact that we may assume $\alpha_o\le \frac{2}{p-2}$.

Once boundedness of  $\nabla v$ is established, we can derive a bound for the oscillation of $v$ on $Q_{\frac18}$ by an application of Lemma~\ref{lem:osc-all-p}. Indeed, for two points $\mathfrak z_1=(y_1,s_1),\,\mathfrak z_2=(y_2,s_2)\in  Q_{\frac18}$ with $s_1\le s_2$ we apply Lemma~\ref{lem:osc-all-p} (with $\mu=0$) to $v$ on the cylinder $Q=K_r(\tilde y)\times(s_1,s_2]\subset Q_{1/4}$, where $\tilde y=\frac12(y_1+y_2)$ and $\frac12|y_1-y_2|\le r\le\frac18$.  The application yields
\begin{equation*}
    |v(\mathfrak z_1)-v(\mathfrak z_2)|
    \le
    \osc_{Q} v
    \le 
    C\bigg[r\|\nabla v\|_{L^\infty(Q)} +
    \frac{s_2-s_1}{r} \|\nabla v\|_{L^\infty(Q)}^{p-1}\bigg]
    \le
    C\bigg[r + \frac{s_2-s_1}{r} \bigg].
\end{equation*}
Now, if $\sqrt{s_2-s_1}\le|y_1-y_2|$ we choose $r=\frac12|y_1-y_2|$, so that the right-hand side is bounded by $C|y_1-y_2|$. Otherwise, if $\sqrt{s_2-s_1}>|y_2-y_1|$ we choose $r=\sqrt{s_2-s_1} \le \frac18$ which results in the bound  $C\sqrt{s_2-s_1}$. Joining both cases, we obtain  
\begin{equation*}
    |v(\mathfrak z_1)-v(\mathfrak z_2)|
    \le
    C\,\mathrm{d}_\mathrm{par}(\mathfrak z_1,\mathfrak z_2)
    \quad \mbox{for any $\mathfrak z_1,\mathfrak z_2\in Q_{1/8}$}
\end{equation*}
for a constant $C= C(N,p,q,k)$.
In view of \eqref{lower-upper-v-k} this implies for any $\beta\in \R$ 
that
\begin{equation}\label{holder-v-beta-k}
    |v^\beta(\mathfrak z_1)-v^\beta(\mathfrak z_2)|
    \le
    C \,k^{q|\beta-1|} \mathrm{d}_\mathrm{par}(\mathfrak z_1,\mathfrak z_2)
    \quad \mbox{for any $\mathfrak z_1,\mathfrak z_2\in Q_{1/8}$, } 
\end{equation}
where $C=C(N,p,q,\beta,k)$. 

Finally, we scale back to the original solution $u$. From \eqref{holder-v-beta-k} with $\beta =\frac1q$ we obtain
\begin{equation*}
    |u(z_1)-u(z_2)|
    \le
    C 
    \Bigg[\frac{|x_1-x_2|}{\rho}+\sqrt{\frac{|t_1-t_2|}{\rho^p}} \,\Bigg]
      \quad 
      \mbox{for any $z_1,z_2\in Q_{\rho/8}$,} 
\end{equation*}
where $C=C(N,p,q,k)$.
This proves \eqref{diff-u-bd}. 
Next, computing the spatial gradient $\nabla u$ in terms of $v$ gives
\begin{align*}
    \nabla u(x,t)
    =
    \frac{v^{\frac1q-1}}{q\rho} 
    \bigg(\frac{x-x_o}{\rho}\,,\, \frac{t-t_o}{ \rho^p}\bigg) 
    \nabla v\bigg(\frac{x-x_o}{\rho}\,,\, \frac{t-t_o}{ \rho^p}\bigg).
\end{align*}
Then, \eqref{lower-upper-v-k} and \eqref{sup-bound-Dv-k} imply
\begin{equation*}
    \sup_{Q_{\rho/4}}|\nabla u|
    \le
    \frac{C}{\rho}.
\end{equation*}
By the same modifications as before, this yields \eqref{grad-est-bd}.
Combining  \eqref{sup-bound-Dv-k}, \eqref{osc-bound-Dv-k},  and  \eqref{holder-v-beta-k} (with $\beta =\frac1q-1$) gives
\begin{equation*}
    |\nabla u(z_1) - \nabla u(z_2)|
    \le
    \frac{C}{\rho}
    \Bigg[\,\frac{|x_1-x_2|}{\rho}+\sqrt{\frac{|t_1-t_2|}{
      \rho^p}}\, \Bigg]^{\alpha_o}
\end{equation*}
for any $z_1,z_2\in Q_{\rho/8}$, which implies \eqref{C1alph-bd}  as before. \end{proof}

\begin{remark}\label{rem:sym}\upshape
Note that Theorem~\ref{thm:doubly-bounded} continues to hold for symmetric cylinders of the type $\mathcal{Q}_\rho:=K_\rho(0)\times(-\rho^p,\rho^p)$.
In fact, in the statement and the proof, the one-sided cylinders $Q_\rho(z_o)$ can be replaced by the symmetric ones $\mathcal{Q}_\rho$. 
\end{remark}

We are now in the position to prove the main regularity results for doubly non-linear equations. Essentially, in the super-critical regime the artificial condition \eqref{u-lu-k} can be quantified by Harnack's inequality, and hence regularity estimates are formulated with respect to the equation's own intrinsic geometry.

\begin{proof}[{\textbf{\upshape Proof of Theorem~\ref{THM:REGULARITY-INTRO}}}]
As before, we note that the continuity of $u$ is only used in order to give $u(x_o,t_o)$ an unambiguous meaning; see also Remark \ref{Rmk:semicontin}.
From \cite[Theorem~1.5]{BDGLS-23} 
we infer that $u$ is locally bounded in $E_T$. 
We consider $z_o=(x_o,t_o)\in E_T$ such that $u_o:=u(x_o,t_o)>0$. By $\sigma=\sigma(N,p,q)\in(0,1)$ and 
$\boldsymbol \gm_\mathrm{h}=\boldsymbol\gm_\mathrm{h} (N,p,q)>1$ we denote the constants from the Harnack inequality of \cite[Theorem~1.11]{BDGLS-23}.  
Now, let $\rho>0$ and define $\widetilde{\boldsymbol\gm}:= \frac{8\bg_\mathrm{h}}{\sigma}>1$ and assume that
\[
	K_{\widetilde{\boldsymbol\gm}\rho}(x_o) \times \big(t_o - u_o^{q+1-p}(\widetilde{\boldsymbol\gm}\rho)^p, t_o + u_o^{q+1-p}(\widetilde{\boldsymbol\gm}\rho)^p\big)
	\subset E_T.
\]
Then the smaller cylinder
\[
    K_{8\rho/\sigma^\frac1p}(x_o)\times 
	\Big(t_o- \bg_\mathrm{h} u_o^{q+1-p}\big(8\rho/\sigma^\frac1p\big)^p,t_o+\bg_\mathrm{h} u_o^{q+1-p}\big(8\rho/\sigma^\frac1p\big)^p\Big)
\]
is also contained in $E_T$, so that we are allowed to apply Harnack's inequality on this cylinder. Moreover, we have 
\begin{align*}
    Q_o
    &:=
    K_{\rho}(x_o)\times 
	\big(t_o- u_o^{q+1-p}\rho^p,t_o+ u_o^{q+1-p}\rho^p\big) \\
	&\,\subset
    K_{\rho/\sigma^\frac1p}(x_o)\times 
	\big(t_o- u_o^{q+1-p}\rho^p,t_o+ u_o^{q+1-p}\rho^p\big) \\
	&\,=
    K_{\rho/\sigma^\frac1p}(x_o)\times 
	\Big(t_o- \sigma u_o^{q+1-p}\big(\rho/\sigma^\frac1p\big)^p,t_o+\sigma u_o^{q+1-p}\big(\rho/\sigma^\frac1p\big)^p\Big).
\end{align*}
Therefore, by Harnack's inequality we conclude
\begin{align*}
	\tfrac1{\boldsymbol\gm_\mathrm{h}} u_o
	\le
	u(x,t)
	\le
	\boldsymbol\gm_\mathrm{h} u_o
	\quad\mbox{for a.e.~$(x,t)\in Q_o$.}
\end{align*}
Re-scaling to the symmetric cylinder $\mathcal{Q}_\rho:=K_\rho(x_o)\times(t_o-\rho^p,t_o+\rho^p)$,
i.e. 
\begin{align*}
	\tilde u(y,s)
	:=
	\tfrac1{u_o}\, u\big(y, u_o^{q+1-p} s\big)
	\quad
	\mbox{for $(y,s)\in \mathcal{Q}_\rho(z_o)$,}
\end{align*}
leads to a weak solution $\tilde u$ to the doubly non-linear parabolic equation
\begin{equation*}
	\partial_t \tilde u^q - 
	\Div\big(|\nabla\tilde u|^{p-2}\nabla \tilde u\big)
	=
	0
	\quad\mbox{in $\mathcal{Q}_\rho(z_o)$,}
\end{equation*}
with values in $[\bg_\mathrm{h}^{-1} ,\bg_\mathrm{h}]$. At this point we can apply Theorem~\ref{thm:doubly-bounded} to $\tilde u$ in $\mathcal{Q}_\rho(z_o)$ (recall Remark~\ref{rem:sym}), to infer that 
\begin{equation*}
    \sup_{\mathcal{Q}_{\rho/2}(z_o)}|\nabla\tilde u|
    \le
    \frac{C}{\rho},
\end{equation*}
the Lipschitz estimate
\begin{equation*}
    |\tilde u(z_1)-\tilde u(z_2)|
    \le
    C
    \Bigg[\frac{|x_1-x_2|}{\rho}+\sqrt{\frac{|t_1-t_2|}{\rho^p}} \,\Bigg],
\end{equation*}
and the gradient H\"older estimate
\begin{equation*}
    |\nabla\tilde u(z_1) - \nabla\tilde u(z_2)|
    \le
    \frac{C}{\rho}\,
    \Bigg[\,\frac{|x_1-x_2|}{\rho}+\sqrt{\frac{|t_1-t_2|}{
      \rho^p}}\, \Bigg]^{\alpha_o},
\end{equation*}
for all $z_1=(x_1,t_1), z_2=(x_2,t_2)\in \mathcal{Q}_{\rho/8}(z_o)$. 
Rescaling back to the original solution $u$ yields the assertions of Theorem~\ref{THM:REGULARITY-INTRO}. 
\end{proof}

An immediate consequence of Theorem~\ref{THM:REGULARITY-INTRO} is as follows.
\begin{corollary}
Under the assumptions of Theorem~\ref{THM:REGULARITY-INTRO} we have 
\begin{equation*}
    |u(z_1)-u(z_2)|\le \frac{C ru_o}{\rho}.
\end{equation*}
and
\begin{equation*}
    |\nabla u(z_1) - \nabla u(z_2)|
    \le 
    C\Big(\frac{r}{\rho}\Big)^{\al_o}\,\frac{u_o}{\rho}
\end{equation*}
for any points $z_1,z_2\in K_r(x_o)\times(t_o- u_o^{q+1-p}\rho^{p-2}r^2,t_o+ u_o^{q+1-p}\rho^{p-2}r^2)$ and any radius $0<r\le \rho$.

\end{corollary}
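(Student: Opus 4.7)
The plan is to deduce both inequalities directly from the quantitative estimates \eqref{diff-u-intro} and \eqref{C1alph-intro} of Theorem~\ref{THM:REGULARITY-INTRO}, by plugging in the specific spatial and temporal scales defining the smaller cylinder
\[
    Q_r^\ast := K_r(x_o)\times\big(t_o- u_o^{q+1-p}\rho^{p-2}r^2,\,t_o+ u_o^{q+1-p}\rho^{p-2}r^2\big),
\]
and observing that the intrinsic parabolic distance becomes balanced.

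First I will verify that $Q_r^\ast\subset Q_o$ whenever $0<r\le\rho$. Since $K_r(x_o)\subset K_\rho(x_o)$ is clear, and because $p>1$ implies $\rho^{p-2}r^2\le \rho^{p-2}\rho^2=\rho^p$ for $r\le\rho$, we obtain $u_o^{q+1-p}\rho^{p-2}r^2\le u_o^{q+1-p}\rho^p$, so the temporal slab of $Q_r^\ast$ sits inside the temporal slab of $Q_o$. In particular the estimates \eqref{diff-u-intro} and \eqref{C1alph-intro} of Theorem~\ref{THM:REGULARITY-INTRO} hold for any pair $z_1,z_2\in Q_r^\ast$.

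Next, for $z_1=(x_1,t_1), z_2=(x_2,t_2)\in Q_r^\ast$ I estimate the two scale-invariant quantities appearing on the right-hand sides of \eqref{diff-u-intro} and \eqref{C1alph-intro}. Since $x_1,x_2\in K_r(x_o)$ we have $|x_1-x_2|\le 2r$, hence
\[
    \frac{|x_1-x_2|}{\rho}\le \frac{2r}{\rho}.
\]
For the temporal piece, $|t_1-t_2|\le 2u_o^{q+1-p}\rho^{p-2}r^2$, so that
\[
    \sqrt{\frac{|t_1-t_2|}{u_o^{q+1-p}\rho^p}}
    \le
    \sqrt{\frac{2u_o^{q+1-p}\rho^{p-2}r^2}{u_o^{q+1-p}\rho^p}}
    =
    \sqrt{2}\,\frac{r}{\rho}.
\]
Thus the bracket $\bigl[\tfrac{|x_1-x_2|}{\rho}+\sqrt{\tfrac{|t_1-t_2|}{u_o^{q+1-p}\rho^p}}\,\bigr]$ is bounded by $(2+\sqrt{2})\,r/\rho$.

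Inserting this bound into \eqref{diff-u-intro} yields
\[
    |u(z_1)-u(z_2)|\le C u_o\cdot (2+\sqrt{2})\,\frac{r}{\rho}\le \frac{C\, r\, u_o}{\rho},
\]
after absorbing the numerical constant into $C$. Likewise, inserting it into \eqref{C1alph-intro} gives
\[
    |\nabla u(z_1)-\nabla u(z_2)|
    \le
    \frac{Cu_o}{\rho}\bigl[(2+\sqrt{2})r/\rho\bigr]^{\alpha_o}
    \le
    C\Big(\frac{r}{\rho}\Big)^{\alpha_o}\frac{u_o}{\rho},
\]
again up to an inessential relabelling of the constant. I do not expect any significant obstacle: the corollary is purely a rescaling/specialisation of Theorem~\ref{THM:REGULARITY-INTRO}, with the choice $u_o^{q+1-p}\rho^{p-2}r^2$ for the half-length of the time interval being exactly the threshold at which the spatial and intrinsic temporal contributions balance.
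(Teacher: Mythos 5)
Your proposal is correct and follows exactly the paper's (one-line) proof: a direct application of \eqref{diff-u-intro} and \eqref{C1alph-intro} after checking that the smaller cylinder lies in $Q_o$ and that both terms in the bracket are of order $r/\rho$. The only cosmetic slip is that $K_r(x_o)$ is a cube of side length $2r$, so $|x_1-x_2|\le 2\sqrt{N}\,r$ rather than $2r$, but this dimensional factor is harmlessly absorbed into the constant $C=C(N,p,q)$.
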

\begin{proof}
The direct application of \eqref{diff-u-intro}, respectively \eqref{C1alph-intro} yields the claimed estimates.
\end{proof}

Next, we show how to extend the gradient-sup-bound and the $C^{1,\alpha}$-estimates to general compact subsets in $E_T$. This is exactly the assertion of Corollary~\ref{COR:GRAD-REG}. 

\begin{proof}[{\textbf{\upshape Proof of Corollary~\ref{COR:GRAD-REG}}}]
Let $\widetilde\bg>1$ be the constant from Theorem \ref{THM:REGULARITY-INTRO} and   $\rho:=\tfrac1{\widetilde\bg} \mathcal M^{-\frac{q+1-p}{p}}\rho_o$. Then the definition of $\rho_o$ implies 
  \begin{equation*}
    K_{\widetilde\bg\rho}(x)\times\big(t-\mathcal M^{q+1-p}(\widetilde\bg\rho)^p,t+\mathcal M^{q+1-p}(\widetilde\bg\rho)^p\big)\Subset E_T
    \qquad\mbox{for all }(x,t)\in \mathcal{K}.
  \end{equation*}
  Let us fix two points $(x_i,t_i)\in \mathcal{K}$ and denote $u_i:=u(x_i,t_i)$ for $i=1,2$.
  If $u_i>0$, we can apply the gradient bound \eqref{grad-est-intro} on the cylinders
  \begin{equation*}
    Q_i:= K_\rho(x_i)\times
    \big(t_i-u_i^{q+1-p}\rho^p,t_i+u_i^{q+1-p}\rho^p\big)\Subset E_T,
  \end{equation*}
  which implies in particular
  \begin{align}\label{est:Du_i}
    |\nabla u(x_i,t_i)|
    \le
    C\frac{u_i}{\rho}
    \le
    C\frac{\mathcal{M}}{\rho}
    =
    C\widetilde\bg\frac{\mathcal{M}^{\frac{q+1}{p}}}{\rho_o}.
\end{align}
In the case $u_i=0$, however, we have $u=0$
on the whole time slice $E\times\{t_i\}$, since otherwise,
repeated applications of Harnack's inequality would imply $u_i>0$. 
Therefore, we infer $\nabla u(x_i,t_i)=0$, so that~\eqref{est:Du_i} is trivially satisfied also in this case.
Since $(x_i,t_i)\in \mathcal{K}$ is arbitrary, this implies the asserted gradient bound~\eqref{grad-bound-K}.

For the proof of the $C^{1,\alpha}$-estimate, we distinguish  two cases.

\noindent \textbf{Case 1: } $|x_1-x_2|< \rho$ and $|t_1-t_2|< \max\{u_1,u_2\}^{q+1-p}\rho^p$. 

Without loss of generality, we assume that $u_1\ge u_2$, so that 
  $|t_1-t_2|< u_1^{q+1-p}\rho^p$. Note that this implies in particular  $u_1>0$. The case $u_1=u_2=0$ is included in Case~2. In the present situation we have
  \begin{equation}\label{Case1}
    (x_2,t_2)\in Q_1:= K_\rho(x_1)\times \big(t_1-u_1^{q+1-p}\rho^p,t_1+u_1^{q+1-p}\rho^p\big).
  \end{equation}
  Therefore, the $C^{1,\alpha}$-estimate \eqref{C1alph-intro} on the cylinder $Q_1$ implies
  \begin{equation*}
    |\nabla u(x_1,t_1)-\nabla u(x_2,t_2)|
    \le C\frac{u_1}{\rho}
    \Bigg[\frac{|x_1-x_2|}{\rho}+\sqrt{\frac{|t_1-t_2|}{u_1^{q+1-p}\rho^p}}\, \Bigg]^{\alpha_o}.
  \end{equation*}
  In view of~\eqref{Case1}, the term in brackets on the
  right-hand side is bounded by $2$. Therefore, we may diminish the
  power $\alpha_o$ and replace it by 
  $\alpha_1:=\min\{\alpha_o,\frac{2}{q+1-p}\}$. The resulting term is
  increasing in $u_1$, so that we can bound it from above by replacing
  $u_1$ by $\mathcal M$. Hence, we obtain the estimate
  \begin{align*}
    |\nabla u(x_1,t_1)-\nabla u(x_2,t_2)|
    &\le
      C\frac{\mathcal{M}}{\rho}
      \Bigg[\frac{|x_1-x_2|}{\rho}+\sqrt{\frac{|t_1-t_2|}{\mathcal{M}^{q+1-p}\rho^p}}\,\Bigg]^{\alpha_1}\\
    &\le
      C\frac{\mathcal{M}^{\frac{q+1}{p}}}{\rho_o}
      \Bigg[\mathcal{M}^{\frac{q+1-p}{p}}\frac{|x_1-x_2|}{\rho_o}
      +\sqrt{\frac{|t_1-t_2|}{\rho_o^p}}\,\Bigg]^{\alpha_1}.
  \end{align*} 
  In the last step, we used the definition of $\rho$. This yields the
  asserted estimate~\eqref{grad-holder-K} in the first case.

  \noindent \textbf{Case 2: } $|x_1-x_2|\ge \rho$ or
  $|t_1-t_2|\ge\max\{u_1,u_2\}^{q+1-p}\rho^p$.

In this case, we use the gradient bound~\eqref{est:Du_i}, which implies
  \begin{equation*}
    |\nabla u(x_i,t_i)|
    \le
    C\frac{u_i}{\rho}
    \le
    C\frac{u_i}{\rho}
    \Bigg[\frac{|x_1-x_2|}{\rho}+\sqrt{\frac{|t_1-t_2|}{u_i^{q+1-p}\rho^p}}\,\Bigg]^{\alpha_1},  
\end{equation*}
since the term in the brackets is bounded from below by one in the
current case.  
Similarly as above, we can
further estimate the right-hand side by monotonicity and replace $u_i$ by $\mathcal{M}$. In this way, we again obtain
\begin{align*}
  |\nabla u(x_1,t_1)-\nabla u(x_2,t_2)|
  &\le
  |\nabla u(x_1,t_1)|+|\nabla u(x_2,t_2)|\\
  &\le
  C\frac{\mathcal{M}^{\frac{q+1}{p}}}{\rho_o}
  \Bigg[\mathcal{M}^{\frac{q+1-p}{p}}\frac{|x_1-x_2|}{\rho_o}
  +\sqrt{\frac{|t_1-t_2|}{\rho_o^p}}\,\Bigg]^{\alpha_1}.
\end{align*}
This establishes the claim~\eqref{grad-holder-K} in the second case as well. 
\end{proof}

\subsection{Decay estimates at the extinction time}\label{sec:ext-time}

In the fast diffusion range solutions might become extinct abruptly. This means that for some $T>0$ we have $u(\cdot,T)=0$ a.e. in $E$. Such a time $T$ is called \textit{extinction time}. In this section we prove the decay estimates at the extinction time stated in Corollary~\ref{Cor:12:4}. Subsequently we derive an estimate for the extinction time of the solution to a Cauchy-Dirichlet problem in terms of the measure of the set $E$ and the $L^{q+1}$-norm of the initial values $u_o$.

\begin{proof}[{\textbf{\upshape Proof of Corollary~\ref{Cor:12:4}}}]
  For $x_o\in E$ and $t_o\in(\tfrac12T,T)$, we apply the integral Harnack inequality from \cite[Theorem~1.13]{BDGLS-23} on the
  cylinder $K_{d(x_o)}(x_o)\times(T-2(T-t_o),T]\subset E_T$. Since
  $u(\cdot,T)=0$, we infer the bound
  \begin{equation*}
    u_o:=u(x_o,t_o)\le C \bigg[\frac{T-t_o}{d^p(x_o)}\bigg]^{\frac{1}{q+1-p}}
  \end{equation*}
  with a constant $C$ depending on $N,p$, and $q$. This yields the first asserted estimate. Next, we choose $\rho_o:= \bg_o^{-1} d(x_o)$, where the
  constant $\bg_o> \widetilde\bg$ is at our disposal. From the last displayed estimate we have
  \begin{equation*}
    u_o^{q+1-p}(\widetilde\bg \rho_o)^p
    \le
    C^{q+1-p}\frac{T-t_o}{d^p(x_o)} \bigg[\frac{\widetilde\bg d(x_o)}{\bg_o}\bigg]^p
    \le
    \tfrac12(T-t_o).
  \end{equation*}
  The last inequality follows by choosing $\bg_o$  sufficiently large.
  The above choice of $\rho_o$ therefore implies
  \begin{align*}
    \widetilde\bg Q_o&:=K_{\widetilde\bg \rho_o}(x_o)\times
    (t_o-u_o^{q+1-p}(\widetilde\bg\rho_o)^p,t_o+u_o^{q+1-p}(\widetilde\bg\rho_o)^p)\\
    &\,\subset
    K_{\widetilde\bg\rho_o}(x_o)\times
    (t_o-\tfrac12(T-t_o),t_o+\tfrac12(T-t_o))
    \Subset E_T.
  \end{align*}
  Therefore, the gradient estimate \eqref{grad-est-intro} is available on the cylinder
  $Q_o$. By combining this estimate with the already established bound for $u$, we infer 
  \begin{equation*}
    |\nabla u(x_o,t_o)|
    \le
    \frac{C u_o}{\rho_o}
    \le
    \frac{C}{d(x_o)}\bigg[\frac{T-t_o}{d^p(x_o)}\bigg]^{\frac{1}{q+1-p}}.
  \end{equation*}
Moreover, for $r\in(0,\rho_o)$ we obtain from \eqref{diff-u-intro} that
\begin{equation*}
    \osc_{K_r(x_o)} u(\cdot,t_o) 
    \le
   \frac{C ru_o}{\rho_o}
   \le
   C\frac{ r}{d(x_o)}\bigg[\frac{T-t_o}{d^p(x_o)}\bigg]^{\frac{1}{q+1-p}}
\end{equation*}
and from \eqref{C1alph-intro} that
\begin{align*}
    \osc_{K_r(x_o)} \nabla u(\cdot,t_o) 
    \le 
    C\Big(\frac{r}{\rho_o}\Big)^{\al_o}\,\frac{u_o}{\rho_o}
    \le 
    \frac{C}{d(x_o)}\bigg[\frac{r}{d(x_o)}\bigg]^{\al_o}\bigg[\frac{T-t_o}{d^p(x_o)}\bigg]^{\frac{1}{q+1-p}},
\end{align*}
which completes the proof. 
\end{proof}

Concerning the extinction time $T$ we refer to Corollary~\ref{Cor:12:4}, where an upper bound for it can be given in a simple but nevertheless interesting situation. Consider the Cauchy-Dirichlet Problem
\begin{equation*}
\left\{
\begin{array}{cl}
    \partial_t \big(|u|^{q-1}u\big) - \Div (|\nabla u|^{p-2}\nabla u)=0  & \quad \mbox{in  $E_T$,}\\[6pt]
    u =0 &\quad \mbox{on $\partial E\times(0,T]$,}\\[6pt]
    u(\cdot,0)= u_o &\quad\mbox{in $E$,}
\end{array}
\right.
\end{equation*}
where $T\in (0,\infty]$ and the initial datum $u_o\ge 0$ is non-negative. For finite $T<\infty$ the existence of a non-negative solution to the Cauchy-Dirichlet Problem is discussed in \cite[Chapter~4, Remark~4.5]{BDGLS-23}. The case $T=\infty$ follows by the uniqueness from \cite[Chapter~4, Corollary~4.7]{BDGLS-23}.

\begin{proposition}
Let $0<p-1<q<\frac{N(p-1)+p}{(N-p)_+}$,
$E\subset\R^N$  a bounded domain, and $u_o\in L^{q+1}(E)$ with $u_o\ge0$. Moreover, let
\[
    u\in C\big([0,+\infty);L^{q+1}(E)\big)
    \cap L^p\big(0,+\infty;W^{1,p}_0(E)\big)
\]
be the unique non-negative weak solution to the Cauchy-Dirichlet problem. There exists a finite time $T>0$ depending only on $N$, $p$, $q$, $|E|$, and $u_o$, such that 
\[
    u(\cdot,t)=0\quad\text{ for all }\,\,t\ge T.
\]
Moreover, we have
\[
0<T\le \frac {q\, C^p}{q+1-p}\, |E|^{\frac{\lambda_{q+1}}{N(q+1)}}\|u_o\|_{L^{q+1}(E)}^{q+1-p},
\]
where $C$ is a constant 
that depends on $N$ and $p$,
and $\lambda_{q+1}:=N(p-q-1)+p(q+1)$.
\end{proposition}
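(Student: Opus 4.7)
The plan is to derive the extinction-time bound by combining the natural $L^{q+1}$-energy identity for the equation with a Sobolev--Poincaré inequality, yielding a Bernoulli-type ODE for $Y(t):=\int_E u^{q+1}(\cdot,t)\dx$ whose exponent lies below one.

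First I would test the Cauchy--Dirichlet equation by $u$ itself (formally, after a Steklov-averaging procedure in time, justified since $u\in L^p(0,T;W_0^{1,p}(E))\cap C([0,+\infty);L^{q+1}(E))$ and the initial trace is attained in $L^{q+1}$). Because $\partial_t(u^q)\cdot u=\tfrac{q}{q+1}\partial_t u^{q+1}$ whenever $u\ge 0$, this gives the \emph{energy identity}
\begin{equation*}
    \tfrac{q}{q+1}\frac{d}{dt}\int_E u^{q+1}(\cdot,t)\dx
    +\int_E|\nabla u(\cdot,t)|^p\dx=0
    \qquad\text{for a.e.\ }t>0.
\end{equation*}
In particular, the map $t\mapsto Y(t):=\|u(\cdot,t)\|_{L^{q+1}(E)}^{q+1}$ is non-increasing, so once $Y(T)=0$ one has $Y(t)=0$ for all $t\ge T$, which is the statement that $u(\cdot,t)=0$ for $t\ge T$.

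Next I would use a Sobolev/Poincaré-type inequality to bound $\|u\|_{L^{q+1}(E)}$ by $\|\nabla u\|_{L^p(E)}$. In the range $p<N$, the standard embedding $W_0^{1,p}(E)\hookrightarrow L^{p^*}(E)$ with $p^*=\tfrac{Np}{N-p}$ combined with Hölder's inequality on $E$ yields
\begin{equation*}
    \|u\|_{L^{q+1}(E)}
    \le C\,|E|^{\frac{1}{q+1}-\frac{1}{p^*}}\|\nabla u\|_{L^p(E)},
\end{equation*}
where $C=C(N,p)$. The assumption $q<\frac{N(p-1)+p}{(N-p)_+}=p^*-1$ is precisely what ensures $q+1<p^*$, so Hölder applies. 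A direct computation shows $\tfrac{1}{q+1}-\tfrac{1}{p^*}=\tfrac{\lambda_{q+1}}{Np(q+1)}$, so raising to the $p$-th power gives $\|\nabla u\|_{L^p(E)}^p\ge C^{-p}|E|^{-\lambda_{q+1}/(N(q+1))}Y(t)^{p/(q+1)}$. (In the case $p\ge N$ the same type of inequality holds with any exponent $q+1$, and the constant $|E|^{\lambda_{q+1}/(N(q+1))}$ still makes sense.)

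Inserting this bound into the energy identity produces the differential inequality
\begin{equation*}
    Y'(t)\le -\tfrac{q+1}{q}C^{-p}|E|^{-\lambda_{q+1}/(N(q+1))}\,Y(t)^{\gamma},
    \qquad \gamma:=\tfrac{p}{q+1}.
\end{equation*}
Since $p-1<q$ we have $\gamma<1$, hence $(Y^{1-\gamma})'\le -(1-\gamma)K$ with $K$ the prefactor above. Integrating from $0$ to $t$ gives $Y(t)^{1-\gamma}\le Y(0)^{1-\gamma}-(1-\gamma)Kt$, and therefore $Y(t)=0$ no later than
\begin{equation*}
    T=\frac{Y(0)^{1-\gamma}}{(1-\gamma)K}
    =\frac{q\,C^p}{q+1-p}\,|E|^{\frac{\lambda_{q+1}}{N(q+1)}}\|u_o\|_{L^{q+1}(E)}^{q+1-p},
\end{equation*}
which is the claimed bound. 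The only genuinely delicate point is the justification of the testing procedure: one must verify that the Steklov-averaged identity can be passed to the limit with $u$ as test function (which requires $u\in L^p(0,T;W_0^{1,p})$ and the continuity in $L^{q+1}$) so that the boundary term at $t=0$ produces exactly $\tfrac{q}{q+1}Y(0)$; this is standard and parallels the treatment in \cite[Proposition~4.9]{BDGLS-23}. Everything else is a direct ODE computation.
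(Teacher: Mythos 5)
Your proposal is correct and follows essentially the same route as the paper: testing the equation with $u$ itself (justified via Steklov averaging and the Alt--Luckhaus chain rule, which the paper cites), the H\"older--Sobolev inequality producing the factor $|E|^{\frac{\lambda_{q+1}}{Np(q+1)}}$ with the case distinction $p<N$ versus $p\ge N$, and a Bernoulli-type decay argument for $Y(t)=\|u(\cdot,t)\|_{L^{q+1}(E)}^{q+1}$ giving exactly the claimed bound on $T$. The only difference is technical rather than conceptual: the paper keeps the energy relation in integral form and compares $v(t)=Y(t)$ with the explicit solution of the corresponding ODE via a contradiction argument, thereby avoiding a.e.\ differentiation of $Y$ and the chain rule for $Y^{1-\gamma}$ near its zero set, which your direct integration handles implicitly by arguing up to the first vanishing time.
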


\begin{proof} 
Taking $u$ as test function in the weak formulation of the Cauchy-Dirichlet Problem, \cite[Lemma~1.5]{AL} yields
\[
\|u(\cdot,t)\|_{L^{q+1}(E)}^{q+1}-\|u_o\|_{L^{q+1}(E)}^{q+1}+\frac{q+1}q\int_0^t\|\nabla u(\cdot,\tau)\|_{L^p(E)}^p\,\d\tau=0
\]
for any $t>0$. Let us first assume that $1<p<N$; the ranges of $p$ and $q$ ensure that $q+1<\frac{Np}{N-p}=:p_\ast$, so that the H\"older and the Sobolev inequalities give
\begin{equation}\label{Eq:11:1}
\|u(\cdot,t)\|_{L^{q+1}(E)}\le |E|^{\frac{\lambda_{q+1}}{Np(q+1)}}\,\|u(\cdot,t)\|_{L^{p_*}(E)}\le C|E|^{\frac{\lambda_{q+1}}{Np(q+1)}}\,\|\nabla u(\cdot,t)\|_{L^p(E)},
\end{equation}
where $C=C(N,p)$ is the optimal constant of the Sobolev inequality. 

Next, we consider
the case $p\ge N\ge1$. 
Let us first suppose $N\ge2$ and take $s\in(1,N)$ that satisfies $s_*:=\frac{Ns}{N-s}>q+1$; by H\"older's inequality, we have
\[
\|u(\cdot,t)\|_{L^{q+1}(E)}\le |E|^{\frac1{q+1}-\frac{N-s}{Ns}}\|u(\cdot,t)\|_{L^{s_*}(E)}.
\]
By the Sobolev and H\"older inequalities, we estimate
\[
\|u(\cdot,t)\|_{L^{s_*}(E)}\le C\|\nabla u(\cdot,t)\|_{L^s(E)}\le C |E|^{\frac1s -\frac1p}\|\nabla u(\cdot,t)\|_{L^p(E)},
\]
where again $C=C(N,s)$ is the optimal constant of the Sobolev inequality.
Joining the last two inequalities yields inequality \eqref{Eq:11:1}
in the case $p\ge N\ge2$. If $N=1$, by the Sobolev inequality we directly obtain that
\[
\|u(\cdot,t)\|_{L^{q+1}(E)}\le |E|^{\frac1{q+1}}\|u(\cdot,t)\|_{L^{\infty}(E)}\le C |E|^{\frac1{q+1}+1 -\frac1p}\|\nabla u(\cdot,t)\|_{L^p(E)},
\]
and once more we end up with \eqref{Eq:11:1}.

Consequently, combining the previous estimates yields in any case that
\[
\|u(\cdot,t)\|_{L^{q+1}(E)}^{q+1}-\|u_o\|_{L^{q+1}(E)}^{q+1}+\frac{q+1}{q C^p |E|^{\frac{\lambda_{q+1}}{N(q+1)}}}\int_0^t\|u(\cdot,\tau)\|_{L^{q+1}(E)}^{p}\,\d\tau\le0
\]
for every $t>0$.
Setting $\displaystyle v(t):=\|u(\cdot,t)\|_{L^{q+1}(E)}^{q+1}$, the previous inequality can be rewritten as
\[
v(t)-v(0)+\mu \int_0^t [v(\tau)]^{\frac p{q+1}}\,\d\tau\le0,
\]
where $v(0)=\|u_o\|_{L^{q+1}(E)}^{q+1}$ and 
\[
\mu:=\frac{q+1}{q C^p |E|^{\frac{\lambda_{q+1}}{N(q+1)}}}.
\]
By the regularity of $u$, $v$ is a continuous function of $t$. 
Just as before, we can prove that 
\[
    v(t_2)-v(t_1)\le-\mu\int_{t_1}^{t_2}[v(\tau)]^{\frac p{q+1}}\,\d\tau
    \qquad\forall\,t_2>t_1\ge0.
\]
Since $v\ge0$ by definition, it is apparent that $v$ is strictly decreasing wherever it is positive, $\lim_{t\to \infty}v(t)=0$, and if there exists $T>0$ such that $v(T)=0$, then $v(t)=0$ for any $t\ge T$.

Now, let $w:[0,+\infty)\to\R$ be the unique solution to the Cauchy Problem
\[
\left\{
\begin{array}{c}
w'(t)=-\mu[w(t)]^{\frac p{q+1}},\\[6pt]
w(0)=v(0),
\end{array}
\right.
\]
which can be computed as 
\[
w(t)=v(0)\left[1-\frac{\mu(q+1-p)}{(q+1)[v(0)]^{\frac{q+1-p}{q+1}}}\,t\right]_+^{\frac{q+1}{q+1-p}}.
\]
We obviously have
\[
    w(t_2)-w(t_1)=-\mu\int_{t_1}^{t_2}[w(\tau)]^{\frac p{q+1}}\,\d\tau
    \qquad \forall\,t_2>t_1\ge0.
\]
Suppose there exists $\tilde t\in(0,\infty)$ such that $v(\tilde t)>w(\tilde t)$ and let
\[
t_o:=\sup\big\{t\in(0,\tilde t):\, v(t)\le w(t)\big\}.
\]
We have $v(t_o)=w(t_o)$, and for any $t\in[t_o,\tilde t\,]$ there holds
\[
v(t)-v(t_o)\le-\mu\int_{t_o}^t [v(\tau)]^{\frac p{q+1}}\,\d\tau,
\quad 
w(t)-v(t_o)=-\mu\int_{t_o}^t [w(\tau)]^{\frac p{q+1}}\,\d\tau.
\]
Subtracting from one another yields that for any $t\in(t_o,\tilde t]$
\[
v(t)-w(t)\le-\mu\int_{t_o}^t\left([v(\tau)]^{\frac p{q+1}}-[w(\tau)]^{\frac p{q+1}}\right)\,\d\tau,
\]
which is a contradiction, since the left-hand side is positive, whereas the right-hand side is negative. Hence, we necessarily conclude that 
\[
v(t)\le w(t)\qquad\forall\,t\in[0,\infty) 
\]
which gives
\[
v(t)\le v(0)\left[1-\frac{\mu(q+1-p)}{(q+1)[v(0)]^{\frac{q+1-p}{q+1}}}\,t\right]_+^{\frac{q+1}{q+1-p}}.
\]
From this, reverting to $\|u(\cdot,t)||_{L^{q+1}(E)}$, we have
\[
\|u(\cdot,t)\|_{L^{q+1}(E)}\le\|u_o\|_{L^{q+1}(E)}\left[1-\frac{q+1-p}{q C^p |E|^{\frac{\lambda_{q+1}}{N(q+1)}} \|u_o\|_{L^{q+1}(E)}^{q+1-p}}\,t\right]_+^{\frac1{q+1-p}},
\]
and
\[
0<T\le \frac {q\, C^p}{q+1-p}\, |E|^{\frac{\lambda_{q+1}}{N(q+1)}}\|u_o\|_{L^{q+1}(E)}^{q+1-p}.
\]
This proves the claim.
\end{proof}

\begin{remark}\upshape
The same estimate of the extinction time $T$ is given in \cite[Proposition~3.5]{MKS}, assuming $1<p<N$, $q\ge1$, $p-1<q<\frac{N(p-1)+p}{N-p}$. In this paper there is also an estimate from below in the same ranges for $p$ and $q$. Indeed, in \cite[Proposition~4.2 and Corollary~4.3]{MKS} Misawa \& Nakamura \& Sarkar prove that when $u_o\in W^{1,p}_0(E)$ one has
\[
T\ge\frac{q}{q+1-p}\frac{\|u_o\|_{L^{q+1}(E)}^{q+1}}{\|D u_o\|_{L^p(E)}^p}.
\]
\end{remark}


\begin{thebibliography}{99}
\bibitem{Acerbi-Fusco}
E.~Acerbi and N.~Fusco, 
{Regularity for minimizers of non-quadratic functionals: the case $1<p<2$}.
\emph{J. Math. Anal. Appl.} \textbf{140} (1989), 115--135.

\bibitem{AL} H.~W.~Alt and S.~Luckhaus, 
Quasilinear elliptic-parabolic differential equations. 
\emph{Math. Z.} \textbf{183} (1983), 311--341.

\bibitem{Aronson-69}
D.~G~Aronson, 
Regularity properties of flows through porous media. 
\emph{SIAM J. Appl. Math.} \textbf{17} (1969), 461--467.

\bibitem{Aronson-Caffarelli}
D.~G.~Aronson and L.~A.~Caffarelli, 
Optimal regularity for one dimensional porous medium flow. 
\emph{Rev. Mat. Iberoamericana} \textbf{2} (1986), 357--366.

\bibitem{Benilan}
P.~B\'enilan, 
A strong regularity $L^p$ for solution of the porous media equation. 
In \emph{Contributions to nonlinear partial differential equations (Madrid, 1981)},
Res. Notes in Math. 89, Pitman, Boston, MA, 1983, pp. 39--58.

\bibitem{Boegelein-Duzaar:p(z)}
V.~B\"ogelein and F.~Duzaar, 
H\"older estimates for parabolic $p(x,t)$-Laplacian systems.
\emph{Math. Ann.} \textbf{354} (2012), no.~3, 907--938.

\bibitem{BDL-21} 
V.~B\"ogelein,  F.~Duzaar and N.~Liao,
On the H\"older regularity of signed solutions to a doubly nonlinear equation.
\emph{J. Funct. Anal.} \textbf{281} (2021), no.~9, Paper No. 109173, 58 pp.

\bibitem{BDGLS-23}
V.~B\"ogelein,  F.~Duzaar, U. Gianazza, N.~Liao and C. Scheven, Hölder Continuity of the Gradient of Solutions to Doubly Non-Linear Parabolic Equations, \textit{arXiv:2305.08539.}

\bibitem{BDLS-22}
V.~B\"ogelein,  F.~Duzaar, N.~Liao and L.~Schätzler,
On the Hölder regularity of signed solutions to a doubly nonlinear equation. Part II.
\emph{Rev. Mat. Iberoam.} \textbf{39} (2023), no.~3, 1005--1037.

\bibitem{BDLS-boundary}
V.~Bögelein, F.~Duzaar, N.~Liao and C.~Scheven, 
Boundary regularity for parabolic systems in convex domains. 
\emph{J. Lond. Math. Soc.} (2) \textbf{105} (2022), no.~3, 1702--1751.

\bibitem{BDLS-Tolksdorf}
V.~Bögelein, F.~Duzaar, N.~Liao and C.~Scheven,
Gradient Hölder regularity for degenerate parabolic systems.
\emph{Nonlinear Anal.} \textbf{225} (2022), no.~113119, 61 pp. 

\bibitem{CVW}
L.~A.~Caffarelli, J.~L.~V\'azquez and N.~I.~Wolanski, 
Lipschitz continuity of solutions and interfaces of the N-dimensional porous medium equation. 
\emph{Indiana Univ. Math. J.} \textbf{36} (1987), no.~2, 373--401.

\bibitem{Chen} 
Y.~Z.~Chen, 
H\"older continuity of the gradient of solutions of nonlinear degenerate parabolic systems. 
\emph{Acta Math. Sinica (N.S.)} \textbf{2}  (1986), no.~4, 309--331.

\bibitem{Chen-DB-89} 
Y.~Z.~Chen and E.~DiBenedetto, 
Boundary estimates for solutions of nonlinear degenerate parabolic systems.
\emph{J. Reine Angew. Math.} \textbf{395} (1989), 102--131.

\bibitem{Choe:1991} 
H.~Choe,
H\"older regularity for the gradient of solutions of certain singular parabolic systems.
\emph{Comm. Partial Differential Equations} \textbf{16} (1991), no.~11, 1709--1732.

\bibitem{DB-1d}
E.~DiBenedetto, 
Regularity results for the porous media equation. 
\emph{Ann. Mat. Pura Appl. (4)} \textbf{121} (1979), 249--262.

\bibitem{DB} 
E.~DiBenedetto,
\emph{Degenerate parabolic equations}.
Universitext, Springer-Verlag, New York, 1993. 

\bibitem{DiBenedetto-Friedman} 
E.~DiBenedetto and A.~Friedman, 
H\"older estimates for nonlinear degenerate parabolic systems. 
\emph{J. Reine Angew. Math.} \textbf{357} (1985), 1--22.

\bibitem{DiBenedetto-Friedman2}
E.~DiBenedetto and A.~Friedman, 
Regularity of solutions of nonlinear degenerate parabolic systems. 
\emph{J. Reine Angew. Math.} \textbf{349} (1984), 83--128.

\bibitem{DiBenedetto-Friedman3}
E.~DiBenedetto and A.~Friedman, 
Addendum to: ``H\"older estimates for nonlinear degenerate parabolic systems.'' 
\emph{J. Reine Angew. Math.} \textbf{363} (1985), 217--220.

\bibitem{DBKV}
E.~DiBenedetto, Y.~Kwong and V.~Vespri, 
Local space-analyticity of solutions of certain singular parabolic equations. 
\emph{Indiana Univ. Math. J.} \textbf{40} (1991), no.~2, 741--765.

\bibitem{Gianazza-Siljander}
U.~Gianazza and J.~Siljander, 
Local bounds of the gradient of weak solutions to the porous medium equation. 
\emph{Partial Differ. Equ. Appl.} \textbf{4} (2023), no.~2, Paper No. 8, 35 pp. 

\bibitem{GiaquintaModica:1986-a} 
M.~Giaquinta and G.~Modica,  
Remarks on the regularity of the minimizers of certain degenerate functionals.
\emph{Manuscripta Math.} \textbf{57} (1986), no.~1, 55--99.

\bibitem{Giusti}
E.~Giusti,
\emph{Direct methods in the calculus of variations}.
World Scientific, Singapore, 2003.

\bibitem{Ivanov-1989}
A.~V.~Ivanov,
Hölder estimates for quasilinear doubly degenerate parabolic equations. 
Zap. Nauchn. Sem. Leningrad. Otdel. Mat. Inst. Steklov. (LOMI) 171 (1989), 
Kraev. Zadachi Mat. Fiz. i Smezh. Voprosy Teor. Funktsiĭ. 20, 70--105, 185; 
translation in \emph{J. Soviet Math.} \textbf{56} (1991), no.~2, 2320--2347.

\bibitem{Ivanov-1994}
A.~V.~Ivanov,
H\"older estimates for equations of slow and normal diffusion type.
Zap. Nauchn. Sem. S.-Peterburg. Otdel. Mat. Inst. Steklov. (POMI) 215 (1994), Differentsial'naya Geom. Gruppy Li i Mekh. 14, 130--136, 311; translation in \emph{J. Math. Sci. (New York)} \textbf{85} (1997), no.~1, 1640--1644.

\bibitem{Ivanov-1995-2}
A.~V.~Ivanov,
Maximum modulus estimates for generalized solutions to doubly nonlinear parabolic equations. 
Zap. Nauchn. Sem. S.-Peterburg. Otdel. Mat. Inst. Steklov. (POMI) 221 (1995), Kraev. Zadachi Mat. Fiz. i Smezh. Voprosy Teor. Funktsiĭ. 26, 83–113, 257; 
translation in \emph{J. Math. Sci. (New York)} \textbf{87} (1997), no.~2, 3322--3342. 

\bibitem{Ivanov-1995-3}
A.~V.~Ivanov,
Hölder estimates for a natural class of equations of fast diffusion type.  
Zap. Nauchn. Sem. S.-Peterburg. Otdel. Mat. Inst. Steklov. (POMI) 229 (1995), Chisl. Metody i Voprosy Organ. Vychisl. 11, 29--62, 322; translation in \emph{J. Math. Sci. (New York)} \textbf{89} (1998), no.~6, 1607--1630.

\bibitem{Ivanov-1996}
A.~V.~Ivanov,
Gradient estimates for doubly nonlinear parabolic equations. 
Zap. Nauchn. Sem. S.-Peterburg. Otdel. Mat. Inst. Steklov. (POMI) 233 (1996), Kraev. Zadachi Mat. Fiz. i Smezh. Vopr. Teor. Funkts. 27, 63--100, 256; 
reprinted in \emph{J. Math. Sci. (New York)} \textbf{93} (1999), no.~5, 661--688.

\bibitem{Ivanov-1997}
A.~V.~Ivanov, 
The regularity theory for $(m,l)$-Laplacian parabolic equation. 
\emph{Zap. Nauchn. Sem. POMI} \textbf{243} 1997, 87--110.

\bibitem{Ivanov-Mk} 
A.~V.~Ivanov and P.~Z.~Mkrtychyan,
A weighted estimate of the gradient for nonnegative generalized solutions of quasilinear doubly degenerate parabolic equations (Russian. English summary). 
Zap. Nauchn. Sem. Leningrad. Otdel. Mat. Inst. Steklov. (LOMI) 181 (1990), Differentsial'naya Geom. Gruppy Li i Mekh. 11, 3–23, 186; translation in \emph{J. Soviet Math.} \textbf{62} (1992), no.~2, 2605--2619.

\bibitem{Kuusi-Mingione}
T.~Kuusi and G.~Mingione, 
New perturbation methods for nonlinear parabolic problems.
\emph{J. Math. Pures Appl. (9)} \textbf{98} (2012), no.~4, 390--427.

\bibitem{KSU-12} 
T.~Kuusi, J.~Siljander and J.~M.~Urbano, 
Local H\"older continuity for doubly nonlinear parabolic equations. 
\emph{Indiana Univ. Math. J.} \textbf{61} (2012), no.~1, 399--430. 

\bibitem{Liao-JMPA-21} 
N.~Liao,  
Regularity of weak supersolutions to elliptic and parabolic equations: Lower semicontinuity and pointwise behavior. 
\emph{J. Math. Pures Appl. (9)} \textbf{147} (2021), 179--204.

\bibitem{Liao-Schaetzler}
N.~Liao and L.~Schätzler, 
On the Hölder regularity of signed solutions to a doubly nonlinear equation. Part III. 
\emph{Int. Math. Res. Not. IMRN} \textbf{2022}, no.~3, 2376--2400.

\bibitem{Misawa-Schauder}
M.~Misawa, 
Local Hölder regularity of gradients for evolutional p-Laplacian systems. 
\emph{Ann. Mat. Pura Appl. (4)} \textbf{181} (2002), 389--405.

\bibitem{MKS} 
M.~Misawa, K.~Nakamura and M.~A.~H.~Sarkar,
A ﬁnite time extinction proﬁle and optimal decay for a fast diffusive doubly nonlinear equation.
\emph{Nonlinear Diﬀer. Equ. Appl.} \textbf{30} (2023), no.~43, 1--48.

\bibitem{Porzio-Vespri}
M.~M.~Porzio and V.~Vespri, 
Hölder estimates for local solutions of some doubly nonlinear degenerate parabolic equations.
\emph{J. Differential Equations} \textbf{103} (1993), no.~1, 146--178. 

\bibitem{Savare}
G.~Savar\'e and V.~Vespri,
The asymptotic profile of solutions of a class of doubly nonlinear equations. 
\emph{Nonlinear Anal.} \textbf{22} (1994), no.~12, 1553--1565. 

\bibitem{Showalter}
R.~E.~Showalter, 
\emph{Monotone operators in Banach space and nonlinear partial differential equations}.
Mathematical Surveys and Monographs, 49. American Mathematical Society, Providence, RI, 1997. 

\bibitem{Urbano-08}
J.~M.~Urbano,   
\emph{The method of intrinsic scaling. A systematic approach to regularity for degenerate and singular PDEs}. 
Lecture Notes in Mathematics, 1930. Springer-Verlag, Berlin, 2008.

\bibitem{Vespri-Vestberg}
V.~Vespri and M.~Vestberg, 
An extensive study of the regularity properties of solutions to doubly singular equations.
\emph{Adv. Calc. Var.} \textbf{15} (2022), no. 3, 435--473. 

\end{thebibliography}
\end{document}